\titleformat*{\section}{\bfseries}
\titleformat*{\subsection}{\bfseries}
\newtheorem{theorem}{Theorem}[subsection]
\renewcommand*{\thetheorem}{
  \ifnum\value{subsection}<1 
    \thesection
  \else
    \thesubsection
  \fi
  .\arabic{theorem}
}
\newtheorem{corollary}[theorem]{Corollary}
\newtheorem{lemma}[theorem]{Lemma}
\newtheorem{proposition}[theorem]{Proposition}
\newtheorem{definition}[theorem]{Definition}
\newtheorem{example}[theorem]{Example}
\newtheorem{remark}[theorem]{Remark}
\newtheorem{conjecture}[theorem]{Conjecture}
\newtheorem{myfigure}[theorem]{Fig.}
\newtheorem{mytable}{Table}
\newcommand{\no}{\noindent}
\newcommand{\pp}{\,\,\,}
\newlength\dlf
\newcommand\alignedbox[2]{
  &
  \begingroup
  \settowidth\dlf{$\displaystyle #1$}
  \addtolength\dlf{\fboxsep+\fboxrule}
  \hspace{-\dlf}
  \fcolorbox{Blue!20}{VioletRed!20}{$\displaystyle #1 #2$}
  \endgroup
}
\newlist{myenum}{enumerate}{3}
\setlist[myenum,1]{label=\textbf{(\arabic*)},
                   ref  =\textbf{(\arabic*)}}
\setlist[myenum,2]{label=\bfseries(\roman*),
                   ref  =\themyenumii\textbf{(\roman*)}}
\setlist[myenum,3]{label=\textbf{(\alph*)},
                   ref  =\themyenumi\textbf{(\alph*)}}
\newcommand{\cpref}[2]{%
  \hyperref[#2]{\namecref{#1}~\labelcref*{#1}~\ref*{#2}}
}
\begin{document}
\title{An Indicator Formula for the Hopf Algebra $k^{S_{n-1}}\#kC_n$}

\author{\vspace{0.5cm} Kayla Orlinsky
\\ University of Southern California \\ Los Angeles, CA 90089-1113}



\noindent\textbf{\Large An Indicator Formula for the Hopf Algebra $k^{S_{n-1}}\#kC_n$}\\

\noindent\textbf{Kayla Orlinsky} University of Southern California, Los Angeles, CA 90089-1113\\ 
Email: korlinsk@usc.edu; Phone: (949) 579-0796\\

\noindent\textbf{Abstract } The semisimple bismash product Hopf algebra $J_n=k^{S_{n-1}}\#kC_n$ for an algebraically closed field $k$ is constructed using the matched pair actions of $C_n$ and $S_{n-1}$ on each other. In this work, we reinterpret these actions and use an understanding of the involutions of $S_{n-1}$ to derive a new Froebnius-Schur indicator formula for irreps of $J_n$ and show that for $n$ odd, all indicators of $J_n$ are nonnegative. We also derive a variety of counting formulas including \cref{thm:classifying indicators when t=2} which fully describes the indicators of all $2$-dimensional irreps of $J_n$ and \cref{thm:number of positive indicators of dimension} which fully describes the indicators of all odd-dimensional irreps of $J_n$ and use these formulas to show that nonzero indicators become rare for large $n$.\\

\noindent\textbf{Keywords } Hopf Algebras $\cdot$ Frobenius Schur Indicator $\cdot$ Representations of Hopf Algebras $\cdot$ Bismash Product $\cdot$ Symmetric Group \\

\noindent\textbf{Mathematics Subject Classification (2000) } 16G99 $\cdot$ 16T30 $\cdot$ 20B35

\section{Introduction}\label{sec:intro}

In this paper, we study the indicators of the simple modules (irreps) of a particular semisimple Hopf algebra, specifically the bismash product $J_n=\mathbb{C}^{S_{n-1}}\#\mathbb{C}C_n$ where $S_n$ is the symmetric group. We rely especially on the works of \cite{mo} which served as a general resource on Hopf Algebras as well as \cite{jm} which derived an indicator formula for irreps of bismash product Hopf algebras. Additionally, a key result of \cite{kmm} (\cref{thm:[KMM] inducing modules}) showed that there is a one-to-one correspondence between irreducible representations (irreps) of $J_n$ and group representations of $F_x\subset C_n$ where $F_x$ is the stabilizer of a permutation $x\in S_{n-1}$ under the matched pair action of $C_n$ on $S_{n-1}$ (see \cref{sec:Group Action Results}). Using this correspondence, we adapt the formulas of \cite{jm} and find new ways to explicitly describe and count irreps of $J_n$ with certain indicators. Finally, we use code developed in Python to provide insight into negative indicators.

The idea that indicators give insight into the structure of representations started with groups. Given an irreducible character $\chi$ of a finite group $G$, averaging the values outputted by the character evaluated at the square of each group element is called the Frobenius-Schur \cite{fs} indicator $$\nu(\chi)=\frac{1}{|G|}\sum_{g\in G}\chi(g^2).$$ First discovered by its namesakes in 1906, the magic of the Frobenius-Schur indicator is that it always takes the value $0$, $1,$ or $-1$. These three values determine whether the associated representation--thought of as $kG$-module--admits a nondegenerate $G$-invariant bilinear form ($\nu(\chi)\not=0$) and if so, if that form is symmetric or skew-symmetric ($\nu(\chi)=1$ or $-1$ respectively).

Although \cite{fgsv} extended the indicator to Kac algebras in 1999, explicitly constructing an analogue of the Frobenius-Schur indicator for semisimple Hopf algebras was first done by Linchenko and Montgomery in 2000 \cite{lm}. The key idea of \cite{lm} came from the fact that a semisimple Hopf algebra $H$ over an algebraically closed field admits a unique two-sided integral element $\Lambda$ satisfying $h\Lambda=\varepsilon(h)\Lambda=\Lambda h$ for all $h\in H.$ It was then found that by computing the (irreducible) character of the second Sweedler power of this integral $$\nu(\chi)=\chi((m\circ\Delta)(\Lambda))=\chi\left(\sum\Lambda_1\Lambda_2\right),$$ that the resulting sum would also only return $0,1,$ or $-1$. Here $m$ and $\Delta$ are the multiplication and comultiplication on the Hopf algebra. Furthermore, these three values given by $\nu(\chi)$ determine whether the associated $H$-module admits a nondegenerate $H$-invariant bilinear form ($\nu(\chi)\not=0$) and if so, if that form is symmetric or skew-symmetric ($\nu(\chi)=1$ or $-1$ respectively). It is a perfect analogue of its group-theoretic counterpart for semisimple Hopf algebras.

Frobenius-Schur indicators on Hopf algebras are of interest for a variety of reasons, three of which stand out. First, and most obviously, they determine if the associated $H$-module (representation) has an $H$-invariant, bilinear form and further if it is symmetric or skew-symmetric. In fact, it was shown by Linchenko and Montgomery \cite{lm} that $\nu(\chi)\not=0$ if and only if the associated simple module (with character $\chi$) is self dual.

Secondly, the indicator has been found to aid in the classification of finite dimensional Hopf algebras. This problem of describing all Hopf algebras of a certain dimension is in general very difficult and tedious. For example, in 2000, Kashina \cite{k} classified all semisimple Hopf alebgras of dimension $16$ and then in 2001, Kashina, Mason, and Montgomery \cite{kmm} showed that the computations of all Hopf Algebras of dimension 16 could be improved using the (second) indicator $\nu$ \cite{kmm}. Also \cite{ksz1} used the indicator to put restrictions on the dimension of a Hopf algebra based on the dimension of its simple modules. For example, they showed that if $H$ has an even dimensional simple module, then the dimension of $H$ itself must be even \cite{ksz1}.

Thirdly, and most generally, Frobenius-Schur indicators can be adapted to have broader applications to category theory. If $H$ is a semisimple Hopf algebra over $\mathbb{C}$, then we can define Rep$(H)$, which is the category of finite dimensional representation of $H$. In fact, Rep$(H)$ is a pivotal fusion tensor category meaning that it is a category with finitely many isomorphism classes of simple objects (fusion) with a notion of dualization (pivotal) and tensor product (tensor). It was shown by Mason and Ng in 2005 \cite{mn} that the Frobenius-Schur indicator is a categorical invariant. That is, if $H$ and $H'$ are two semisimple Hopf algebras over an algebraically closed field of characteristic $0$ such that Rep$(H)$ is equivalent to Rep$(H')$ as $k$-linear tensor categories, then $\{\nu(\chi)\}_{\chi\in Irr(H)}$ and $\{\nu(\chi')\}_{\chi\in Irr(H')}$ are identical.

Although we do not discuss them in this paper, there are abundant applications if one were to expand out to higher indicators. That is, by computing the $n^{\text{th}}$-Sweedler power of $\Lambda$ (as described above). For example, in the same paper where the idea is introduced, \cite{ksz2} prove a version of Cauchy's Theorem for semisimple Hopf alebras using the regular representation.

In this paper, we focus on the semisimple Hopf algebra $J_n=\mathbb{C}^{S_{n-1}}\#\mathbb{C}C_n$ which is a bismash product of the two groups $S_{n-1}$ and $C_n$. Matched pairs of groups have been known to group theorists as factorizable groups since the 1950s, but the construction of Hopf algebras from a mathced pair of groups was first proposed by Takeuchi \cite{ta} in 1981. In fact, it was shown by Masuoka \cite{ma} that the Hopf algebra of interest in this paper, $J_n$, is one of only two Hopf algebras that arise from the decomposition of $S_n$ as a matched pair of $S_{n-1}$ and $C_n$. The other Hopf alebgra being $H_n=(J_n)^*$ the dual of $J_n$ for which \cite{jm} have already shown that all indicators of simple modules are $+1$.

Similar work has been done by Timmer \cite{ti} in 2015 using different factorizations of the symmetric group $S_n=S_{n-r}G$ and considering $H=k^{G}\#kS_{n-r}$. Timmer \cite{ti} shows that the indicator is nonnegative, a result which we find to be false for $J_n$ for certain $n.$

The goal of this work was to fully describe all irreps of $J_n$ and state their second indicators $\nu(\chi)=\nu_2(\chi)$. When $n$ is odd, this can be done explicitly using an understanding of involutions in the symmetric group $S_{n-1}$ and some computation. However, when $n$ is even, more work is needed to completely describe all irreps and their indicators. Specifically, when $n$ is odd, we prove the indicator is nonnegative but for $n\ge 12$ such that $4$ divides $n$, there exists representations of $J_n$ with negative indicator which has complicated the problem of classifcation. The organization of the paper is as follows.

In \cref{sec:Group Action Results}, we discuss the group actions which determine the underlying structure of our Hopf algebra. This involves describing facorizable groups, which are groups $L$ which contain two subgroups $F,G$ such that $F\cap G=1$ and $L=GF=FG$. Note that this allows every element in $L$ to be written uniquely as a product of elements in $F$ and $G$, but does not imply that $L$ can be written as a direct or even semi-direct product of $F$ and $G$. Since we can write every element $l\in L$ as a product $xa\in GF=L$ with $a\in F$ and $x\in G$, two natural group actions arise of these subgroups on each other which describe how to write $xa$ uniquely as an element of $FG$. Note that each of these actions are group actions in the sense that $G$ acts on the set $F$ and $F$ acts on the set $G$. However, neither respects the group operation of the other. The factorizable group of interest to us will be the symmetric group of order $n$, $S_n$ which we will show can be written as a product of $C_n$ and $S_{n-1}$, where $C_n$ here is the cyclic group generated by the standard $n$-cycle $a=(1\pp 2\pp \cdots \pp n)$. We will discuss the orbits, stabilizers, and other useful sets and give some lemmas which we will need in order to prove our indicator counting formulas later on.

In \cref{sec:Hopf and rep results}, we explicitly describe the Hopf algebra $J_n$ and summarize relevant results in \cite{jm}, which include a discussion of the classification of the indicators of $H_n=(J_n)^*$ the dual of $J_n$. In \cite{jm}, a complete classification of indicators is given for $J_p$ where $p$ is prime, and also for $H_n$ for all $n.$ While we have not been able to fully describe all indicators of $J_n$ for all $n$, in this section we discuss the methods used for when $p$ is prime and provide some useful results which apply for general $n.$

In \cref{sec:reinterpretting the action}, we give a rewritten indicator formula which describes how to compute indicators from irreps induced from permutations using minimal information about the permutation. However, to obtain it, we must first completely re-interpret the actions of $C_n$ on $S_{n-1}$. We describe how viewing each permutation in the symmetric groups as a bijection on $\mathbb{Z}/n\mathbb{Z}$ and then using some modular arithmetic, we uncover new relationships within the orbits and stabilizers of the group actions described in \cref{sec:Hopf and rep results}. For example, when we consider the action of $C_n=\langle a \rangle$ on $S_{n-1}$, we show that if a permutation $x\in S_{n-1}$ (viewed as a permutation in $S_n$ which fixes $n$) has stabilizer $F_x=\langle a^t\rangle$, then $x$ exhibits a number of remarkable properties including a sort of linearity in $t$. We use these properties to derive \cref{thm:indicator reduced} which is an improved indicator formula which drastically reduces--but does not eliminate--the computation requirement. Using this, we show that for any $n$ not divisible by $4$, the indicator of any irrep of $J_n$ is always nonnegative.

In \cref{sec:explicitly counting}, we cover majority of key results for this paper. Due to the computational nature of the approach we take, we have broken this section into subsections. Throughout this section, we use the relations discovered in \cref{sec:reinterpretting the action} to fully construct (and therefore count) many sets of interest. We show that any permutation $x\in S_{n-1}$ with stabilizer $\langle a^t\rangle\subset C_n$ can in fact be constructed from some permutation in $S_{t-1}$.

In \cref{sec:revised indicator}, we use the formulas from the previous section to show that if $n$ is odd, then counting irreducible representation of $J_n$ is equivalent to counting involutions of $S_{n-1}$ with certain properties and, given an odd integer $t$, provide a formula for counting all $t$-dimensional irreps with indicator $+1$ (\cref{thm:number of positive indicators of dimension}). Although we cannot describe all indicators of $t$-dimensional irreps of $J_n$ when $t$ is even, we are able to fully describe the indicators of all $2$-dimensional irreps of $J_n$ (\cref{thm:classifying indicators when t=2}).

In \cref{sec:limit behavior}, we discuss limiting behavior using the formulas we developed previously. We show that the ratio of nonzero irreps of dimension $t$ (odd or $t=2$) with indicator $+1$ to the total number of irreps of dimension $t$ tends to $0$ as $n\to\infty$ (\cref{thm:percent +1 ind goes to zero} and \cref{thm:percent nonzero ind goes to zero}).

\section{\bf Group Action Results}\label{sec:Group Action Results}

In this section, we describe the underlying group actions which make up the structure of the bismash product Hopf algebra $J_n$. Throughout, $L,F,G$ will be finite groups. Elements $a,b\in F$ and $x,y\in G$ and $l\in L$.

Consider a (finite) group $L$ with two subgroups $F$ and $G$ which intersect trivially and whose product is the entire group $L$. We call such a group factorizable and such objects have been studied for decades due to their unique decomposition.

\begin{definition}\label{def:factorizable group}
{\it 
A group $L$ is factorizable if it has two subgroups $F$ and $G$ such that $F\cap G=1$ and $L=GF=FG$.
}
\end{definition}
 
Being able to write that $L=GF$ and knowing that $F\cap G=1$ implies that $L=FG$ and it also implies that every element $l\in L$ can be written uniquely as a product of something in $F$ and something in $G$. 

Note we cannot say that $L$ is a direct (or even semi-direct) product of $F$ and $G$ because nowhere are we assuming normality of either subgroup.

\begin{definition}[\cite{ta}]\label{def:matched pair}
{\it 
Let $F$ and $G$ be groups. Let $\lhd:G\times F\to G$ and $\rhd:G\times F\to F$ be group actions. Then $(F,G,\rhd,\lhd)$ is a matched pair if for all $x,y\in G$ and all $a,b,\in F$, \begin{align*}
    x\rhd ab&=(x\rhd a)((x\lhd a)\rhd c)\\
    xy\lhd a&=(x\lhd(y\rhd a))(y\lhd a)
\end{align*} 
}
\end{definition}

While this definition turns out to be equivalent to that of a factorizable group, \cite{ta} did not know that when he coined this term in the '80s.

\begin{lemma}\label{lem:factorizable is matched pair}
{\it Let $L=GF$ be a factorizable group. Then $(F,G,\rhd,\lhd)$ is a matched pair where for $x\in G$ and $b\in F$, the decomposition below is unique: $$xb=(x\rhd b)(x\lhd b).$$ Conversely, if $(F,G,\rhd,\lhd)$ is a matched pair, then $L$ which has underlying set $G\times F$ and multiplication defined by $$(a,x)(b,y)=(a(x\rhd b),(x\lhd b)y)$$ is a factorizable groups}
\end{lemma}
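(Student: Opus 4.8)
The plan is to prove the two directions separately, letting the unique factorization of $L$ carry the forward implication and the two matched-pair identities carry the converse.

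For the forward direction I would first record that, because $F\cap G=1$, the representation $L=FG$ is \emph{unique}: if $ax=by$ with $a,b\in F$ and $x,y\in G$, then $b^{-1}a=yx^{-1}\in F\cap G=1$, so $a=b$ and $x=y$ (existence of such a representation is the hypothesis $L=FG$, which is part of \cref{def:factorizable group} and in any case follows from $L=GF$ by the cardinality count $|FG|=|GF|=|F||G|$). Using this, for $x\in G$ and $b\in F$ I would \emph{define} $x\rhd b\in F$ and $x\lhd b\in G$ to be the unique elements with $xb=(x\rhd b)(x\lhd b)$; well-definedness is immediate. The unit axioms $1\rhd b=b$, $1\lhd b=1$, $x\rhd 1=1$, $x\lhd 1=x$ fall out of the trivial factorizations $1\cdot b=b\cdot 1$ and $x\cdot 1=1\cdot x$. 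The crux is then a single associativity computation in each variable. Expanding $(xy)a=x(ya)$ inside $FG$ and invoking uniqueness yields simultaneously the left-action law $(xy)\rhd a=x\rhd(y\rhd a)$ \emph{and} the second matched-pair identity $(xy)\lhd a=(x\lhd(y\rhd a))(y\lhd a)$; symmetrically, expanding $x(ab)=(xa)b$ yields at once the right-action law $x\lhd(ab)=(x\lhd a)\lhd b$ and the first matched-pair identity $x\rhd(ab)=(x\rhd a)((x\lhd a)\rhd b)$. Thus each associativity relation supplies one action axiom and one compatibility condition together.

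For the converse I would take the set of pairs $(a,x)$ with $a\in F$, $x\in G$, equipped with the stated product $(a,x)(b,y)=(a(x\rhd b),(x\lhd b)y)$, and check the group axioms. First I note that the normalizations $x\rhd 1=1$ and $1\lhd a=1$ are \emph{forced} by the two identities: setting $a=1$ in $x\rhd(ab)=(x\rhd a)((x\lhd a)\rhd b)$ and cancelling gives $x\rhd 1=1$, while setting $x=1$ in the second identity, cancelling $(y\lhd a)$, and using that $a\mapsto y\rhd a$ is a bijection of $F$ gives $1\lhd a=1$; with these, $(1,1)$ is a two-sided identity. The main obstacle is associativity: expanding both $((a,x)(b,y))(c,z)$ and $(a,x)((b,y)(c,z))$, matching the first coordinates needs exactly the first matched-pair identity applied to $x\rhd(b\,(y\rhd c))$ together with the left-action law, and matching the second coordinates needs exactly the second matched-pair identity applied to $((x\lhd b)y)\lhd c$ together with the right-action law. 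Finally, inverses exist: $\big(x^{-1}\rhd a^{-1},\,(x\lhd(x^{-1}\rhd a^{-1}))^{-1}\big)$ is a right inverse of $(a,x)$, and universal right inverses in a monoid force a group structure, so $L$ is a group.

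To finish I would exhibit factorizability of this $L$: identifying $F$ with the pairs $(a,1)$ and $G$ with the pairs $(1,x)$, the unit normalizations show these are subgroups isomorphic to $F$ and $G$, they meet only in $(1,1)$, and $(a,1)(1,x)=(a,x)$ gives $L=FG$ (hence $L=GF$ as well). I expect the associativity verification in the converse to be the only genuinely laborious step, since it is the one place where \emph{both} matched-pair identities must be used at once; everything else is bookkeeping against uniqueness of factorization. It is worth remarking that the two constructions are mutually inverse—the actions extracted from a factorizable $L$ rebuild its product, and factoring the rebuilt group returns the original actions—which is the structural reason the two halves of the argument mirror one another.
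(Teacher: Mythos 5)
Your proposal is correct and follows exactly the route the paper intends: the paper gives no written proof, remarking only that the lemma "is proved from the definitions using direct computation," and your argument is precisely that computation (uniqueness of the $FG$-factorization defining the actions and yielding the four compatibility/action laws from the two associativity expansions, then the reverse bookkeeping for the bicrossed product). No gaps.
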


We use the same definitions as from \cite{jm}. Many of the results given next proved in \cite{jm} or follow immediately from the definitions.

\begin{proposition}\label{prop:facts about matched pair}
{\it Let $L=GF=FG$ be a factorizable group where the actions $\lhd$ and $\rhd$ are defined such that $xb=(x\rhd b)(x\lhd b)$ is unique for all $x\in G$ and $b\in F$. Then the following properties hold:
\begin{center}
\begin{minipage}{0.45\textwidth}
\begin{myenum}
    \item \label{prop:facts about matched pair (1)} $x\rhd bc=(x\rhd b)((x\lhd b)\rhd c)$ 
    \item \label{prop:facts about matched pair (2)} $xy\lhd b=(x\lhd (y\rhd b))(y\lhd b)$
    \item \label{prop:facts about matched pair (3)} $xy\rhd b=x\rhd (y\rhd b)$
    \item \label{prop:facts about matched pair (4)} $x\lhd bc=(x\lhd b)\lhd c$
    \item \label{prop:facts about matched pair (5)}$1\rhd b=b$
\end{myenum}
\end{minipage}\hspace{0.25cm}\begin{minipage}{0.45\textwidth}
\begin{enumerate}[label=\textbf{(\arabic*)}]
\setcounter{enumi}{5}
    \item \label{prop:facts about matched pair (6)} $1\lhd b=1$
    \item \label{prop:facts about matched pair (7)} $x\lhd 1=x$
    \item \label{prop:facts about matched pair (8)} $x\rhd 1=1$
    \item \label{prop:facts about matched pair (9)} $(x\rhd b)^{-1}=(x\lhd b)\rhd b^{-1}$
    \item \label{prop:facts about matched pair (10)} $(x\lhd b)^{-1}=x^{-1}\lhd (x\rhd b)$
\end{enumerate}
\end{minipage}
\end{center}}
\end{proposition}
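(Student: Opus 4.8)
The plan is to derive all ten identities from a single principle: the uniqueness of the $FG$-factorization. Since $L=FG$ with $F\cap G=1$, every element of $L$ has a unique expression as a product of an element of $F$ followed by an element of $G$, and by construction $x\rhd b\in F$ while $x\lhd b\in G$. The one move I will use repeatedly is to evaluate a product in $L$ in two different ways, regroup each side into an $F$-part times a $G$-part (legitimate because $F$ and $G$ are subgroups, hence closed under multiplication), and then extract two identities at once by matching the $F$-parts and the $G$-parts. I would prove the identities in the order that lets later ones reuse earlier ones.

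First I would obtain (1) and (4) simultaneously from associativity in the form $x(bc)=(xb)c$. The left side is by definition $(x\rhd bc)(x\lhd bc)$. Expanding the right side gives $(x\rhd b)(x\lhd b)\,c$, and factoring $(x\lhd b)c$ via the defining decomposition yields $(x\rhd b)\bigl((x\lhd b)\rhd c\bigr)\bigl((x\lhd b)\lhd c\bigr)$, whose first two factors lie in $F$ and whose last factor lies in $G$. Matching $F$- and $G$-parts gives (1) and (4) respectively. The twin computation $x(yb)=(xy)b$, regrouped the same way into $\bigl(x\rhd(y\rhd b)\bigr)\bigl(x\lhd(y\rhd b)\bigr)(y\lhd b)$, yields (3) and (2).

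Next I would dispatch the boundary identities (5)--(8) by factoring the trivial products. Writing $1_G\,b$ two ways, as $(1\rhd b)(1\lhd b)$ and as the factorization $b\cdot 1_G$, forces $1\rhd b=b$ and $1\lhd b=1$, i.e. (5) and (6); symmetrically, reading $x\cdot 1_F=x=1_F\cdot x$ off the decomposition $(x\rhd 1)(x\lhd 1)$ gives (8) and (7). Finally, the inverse formulas (9) and (10) fall out as specializations of the identities already proved: setting $c=b^{-1}$ in (1) and using $x\rhd 1=1$ from (8) gives $1=(x\rhd b)\bigl((x\lhd b)\rhd b^{-1}\bigr)$, which rearranges to (9); dually, setting $x=y^{-1}$ in (2) and using $1\lhd b=1$ from (6) gives (10).

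There is no deep obstacle here --- every step is forced by uniqueness of the factorization --- so the only thing requiring care is the bookkeeping: at each comparison I must verify that the grouped expression really is an $F$-element times a $G$-element before invoking uniqueness, which amounts to checking that each $\rhd$-term sits in $F$ and each $\lhd$-term sits in $G$ and that I have not silently commuted an $F$-factor past a $G$-factor. Keeping the two actions' codomains straight is the whole game.
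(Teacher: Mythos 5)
Your proposal is correct, and it is exactly the argument the paper has in mind: the paper offers no written proof beyond the remark that \cref{prop:facts about matched pair} is ``proved from the definitions using direct computation,'' and your computation --- evaluating $x(bc)=(xb)c$ and $(xy)b=x(yb)$ in two ways and matching $F$- and $G$-parts via uniqueness of the $FG$-factorization, then specializing to get the unit and inverse identities --- is precisely that direct computation, carried out correctly.
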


Both \cref{lem:factorizable is matched pair} and \cref{prop:facts about matched pair} are proved from the definitions using direct computation.

\begin{definition}\label{def:orbits and stabs}
{\it 
Let $L=FG$ be a factorizable group. Then for $x\in G$ and $b\in F$, we define the following:
\begin{myenum}
    \item \label{def:orbits and stabs (1)} The orbits $\mathcal{O}_x=\{x\lhd b\,|\,b\in F\}\subset G$
    \item \label{def:orbits and stabs (2)} the stabilizers $F_x=\{b\in F\,|\, x\lhd b=x\}\subset F$
    \item \label{def:orbits and stabs (3)} The sets $F_{x^{-1},x}=\{b\in F\,|\, x^{-1}\lhd b=x\}$
\end{myenum}}
\end{definition}

\begin{lemma}[\cite{jm}, Corollary 4.3]\label{lem:orbit and stab inv equiv}
{\it Let $L=FG$. Let $x\in G$, then the following are equivalent: \begin{myenum}
    \item \label{lem:orbit and stab inv equiv (1)} $x^{-1}\in\mathcal{O}_x$
    \item \label{lem:orbit and stab inv equiv (2)} $y^{-1}\in\mathcal{O}_x$ for all $y\in\mathcal{O}_x$
    \item \label{lem:orbit and stab inv equiv (3)} $F_{x^{-1},x}\not=\varnothing$
    \item \label{lem:orbit and stab inv equiv (4)} $F_{y^{-1},y}\not=\varnothing$ for all $y\in\mathcal{O}_x$
\end{myenum}

In fact, it is always true that $|F_{x^{-1},x}|=|F_{y^{-1},y}|$ for all $y\in\mathcal{O}_x$.}
\end{lemma}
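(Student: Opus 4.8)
The plan is to first recognize that $\lhd$ is a genuine right action of the group $F$ on the set $G$: identity~(4) of \cref{prop:facts about matched pair} gives $x\lhd bc=(x\lhd b)\lhd c$ and identity~(7) gives $x\lhd 1=x$. Consequently the sets $\mathcal{O}_x$ are exactly the orbits of this action, so they partition $G$, and the stabilizers of any two elements lying in a common orbit are conjugate and in particular have equal cardinality. This single observation yields the equivalence of \ref{lem:orbit and stab inv equiv (1)} and \ref{lem:orbit and stab inv equiv (3)} almost for free: saying $F_{x^{-1},x}\neq\varnothing$ means there is $b\in F$ with $x^{-1}\lhd b=x$, i.e.\ $x\in\mathcal{O}_{x^{-1}}$, and since distinct orbits are disjoint this happens if and only if $\mathcal{O}_x=\mathcal{O}_{x^{-1}}$, which is the same as $x^{-1}\in\mathcal{O}_x$.

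The crux of the argument is the implication \ref{lem:orbit and stab inv equiv (1)}$\Rightarrow$\ref{lem:orbit and stab inv equiv (2)}, and this is the step I expect to be the main obstacle, since it is the only place a genuine computation is required rather than bookkeeping about orbits. Assume $x^{-1}\in\mathcal{O}_x$, say $x^{-1}=x\lhd c$, and let $y=x\lhd b$ be an arbitrary element of $\mathcal{O}_x$. Using identity~(10), $(x\lhd b)^{-1}=x^{-1}\lhd(x\rhd b)$, followed by the substitution $x^{-1}=x\lhd c$ and the action axiom~(4), I would compute
\[
y^{-1}=(x\lhd b)^{-1}=x^{-1}\lhd(x\rhd b)=(x\lhd c)\lhd(x\rhd b)=x\lhd\bigl(c\,(x\rhd b)\bigr),
\]
which exhibits $y^{-1}$ as an element of $\mathcal{O}_x$. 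The reverse implication \ref{lem:orbit and stab inv equiv (2)}$\Rightarrow$\ref{lem:orbit and stab inv equiv (1)} is immediate by taking $y=x$, noting $x\in\mathcal{O}_x$ because $x\lhd 1=x$.

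To incorporate \ref{lem:orbit and stab inv equiv (4)}, I would apply the already-established equivalence \ref{lem:orbit and stab inv equiv (1)}$\Leftrightarrow$\ref{lem:orbit and stab inv equiv (3)} not at $x$ but at each $y\in\mathcal{O}_x$: since $\mathcal{O}_y=\mathcal{O}_x$, the statement $y^{-1}\in\mathcal{O}_x$ is exactly $y^{-1}\in\mathcal{O}_y$, which is equivalent to $F_{y^{-1},y}\neq\varnothing$. Thus \ref{lem:orbit and stab inv equiv (2)} and \ref{lem:orbit and stab inv equiv (4)} assert the same thing clause-by-clause, and \ref{lem:orbit and stab inv equiv (4)}$\Rightarrow$\ref{lem:orbit and stab inv equiv (3)} follows by specializing to $y=x$. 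This closes the ring of equivalences.

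Finally, for the cardinality assertion I would show that whenever $F_{x^{-1},x}$ is nonempty, with some fixed $b_0\in F_{x^{-1},x}$, it is a single coset $b_0F_x$ of the stabilizer $F_x$: if $c\in F_x$ then $x^{-1}\lhd(b_0c)=(x^{-1}\lhd b_0)\lhd c=x\lhd c=x$, while conversely if $x^{-1}\lhd b=x$ then $x\lhd(b_0^{-1}b)=(x^{-1}\lhd b_0)\lhd(b_0^{-1}b)=x^{-1}\lhd b=x$ forces $b_0^{-1}b\in F_x$. Hence $|F_{x^{-1},x}|=|F_x|$, and likewise $|F_{y^{-1},y}|=|F_y|$ for any $y\in\mathcal{O}_x$ with $F_{y^{-1},y}\neq\varnothing$. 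Since $y$ and $x$ share an orbit their stabilizers have equal order, giving $|F_{x^{-1},x}|=|F_x|=|F_y|=|F_{y^{-1},y}|$; and in the degenerate case where these sets are empty (equivalently, none of the four conditions hold) both cardinalities are $0$. Either way the asserted equality holds for all $y\in\mathcal{O}_x$.
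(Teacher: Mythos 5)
Your proof is correct, and it is self-contained where the paper is not: the paper simply cites this statement from [jm, Corollary 4.3] without reproducing an argument. Every step you use is available in \cref{prop:facts about matched pair} — the action axioms (4) and (7) make $\lhd$ a genuine right action so that orbits partition $G$ and stabilizers along an orbit are conjugate, and identity (10), $(x\lhd b)^{-1}=x^{-1}\lhd(x\rhd b)$, is exactly the computation needed for (1)$\Rightarrow$(2). Your closing observation that $F_{x^{-1},x}$ is a single left coset $b_0F_x$ of the stabilizer is essentially the same device the paper does spell out one lemma later, in the proof of \cref{lem:sizes of stabs and inv stabs}, where the bijection $c\mapsto bc$ from $F_{x^{-1},x}$ to $F_x$ is constructed; your version additionally handles the non-abelian case cleanly by routing the comparison $|F_x|=|F_y|$ through conjugacy of stabilizers, which is needed since the lemma is stated for general $F$.
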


As we will see in the next section, the heart of our problem of computing second indicators will become a problem of understanding which $x\in S_{n-1}$ satisfy that $x^{-1}\in\mathcal{O}_x$.

\begin{lemma}\label{lem:sizes of stabs and inv stabs}
{\it Let $x^{-1}\in\mathcal{O}_x$. Then $|F_{y^{-1},y}|=|F_x|$ for every $y\in\mathcal{O}_x$.}
\end{lemma}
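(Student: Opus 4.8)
The plan is to recognize $F_{x^{-1},x}$ as a single coset of an ordinary point stabilizer and then exploit the fact that $\lhd$ is a genuine right action of $F$ on the set $G$, so that all point stabilizers inside one $\lhd$-orbit are conjugate and hence have equal size. Since \cref{lem:orbit and stab inv equiv} already tells us $|F_{y^{-1},y}|=|F_{x^{-1},x}|$ for every $y\in\mathcal{O}_x$, it will suffice to prove the single identity $|F_{x^{-1},x}|=|F_x|$.

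First I would record that $\lhd$ is a right action: by \cref{prop:facts about matched pair}, property \ref{prop:facts about matched pair (4)} gives $x\lhd bc=(x\lhd b)\lhd c$ and property \ref{prop:facts about matched pair (7)} gives $x\lhd 1=x$. Consequently the usual orbit–stabilizer computation (carried out with the correct right-handed order) yields $F_{x\lhd b}=b^{-1}F_x\,b$ for every $b\in F$, so the stabilizers of points lying in a common $\lhd$-orbit are conjugate in $F$ and therefore equinumerous. Because $x^{-1}\in\mathcal{O}_x$ by hypothesis, this immediately gives $|F_{x^{-1}}|=|F_x|$.

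Next I would identify $F_{x^{-1},x}$ with a right coset of $F_{x^{-1}}$. By \cref{lem:orbit and stab inv equiv} (the equivalence of \ref{lem:orbit and stab inv equiv (1)} and \ref{lem:orbit and stab inv equiv (3)}), the hypothesis $x^{-1}\in\mathcal{O}_x$ guarantees $F_{x^{-1},x}\neq\varnothing$, so I may fix some $b_0$ with $x^{-1}\lhd b_0=x$. For an arbitrary $b\in F$, using that $\lhd$ is a right action one checks that $x^{-1}\lhd b=x=x^{-1}\lhd b_0$ holds if and only if $x^{-1}\lhd(bb_0^{-1})=x^{-1}$, that is, if and only if $bb_0^{-1}\in F_{x^{-1}}$. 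Hence $F_{x^{-1},x}=F_{x^{-1}}\,b_0$ is a single coset, so $|F_{x^{-1},x}|=|F_{x^{-1}}|=|F_x|$. Combining this with the final assertion of \cref{lem:orbit and stab inv equiv} gives $|F_{y^{-1},y}|=|F_{x^{-1},x}|=|F_x|$ for all $y\in\mathcal{O}_x$, as desired.

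The argument is essentially a routine orbit–stabilizer count, so the only real obstacle is bookkeeping: since $\lhd$ is a \emph{right} action rather than a left one, the stabilizer-conjugation formula and the coset identity must be written with the correct order of multiplication, and the nonemptiness of $F_{x^{-1},x}$ must be imported from \cref{lem:orbit and stab inv equiv} rather than taken for granted. Once the handedness is pinned down, every step is immediate.
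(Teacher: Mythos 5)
Your proof is correct and is essentially the paper's argument: the paper fixes $b$ with $x\lhd b=x^{-1}$ and exhibits the explicit bijection $c\mapsto bc$ from $F_{x^{-1},x}$ onto $F_x$ (equivalently, $F_{x^{-1},x}=b^{-1}F_x$), which is the same translate-by-a-witness idea as your coset identification. The only cosmetic difference is that you realize $F_{x^{-1},x}$ as a right coset of $F_{x^{-1}}$ and then invoke conjugacy of stabilizers within an orbit to get $|F_{x^{-1}}|=|F_x|$, a step the paper bypasses by translating directly onto $F_x$; both arguments then appeal to \cref{lem:orbit and stab inv equiv} to pass from $x$ to an arbitrary $y\in\mathcal{O}_x$.
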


\no{\it Proof.} Since $x^{-1}\in\mathcal{O}_x$, there exists a $b\in F$ such that $x^{-1}=x\lhd b$. 

Define a function $\Lambda:F_{x^{-1},x}\to F_x$ such that $\Lambda(c)=bc$ for all$c\in F_{x^{-1},x}$. 

This function is well defined since $$x\lhd bc=(x\lhd b)\lhd c=x^{-1}\lhd c=x.$$

This map is also clearly a bijective with inverse $\Lambda^{-1}(d)=b^{-1}d$ for all $d\in F_x$. 
\qed\\

\begin{theorem}[\cite{jm}, Lemma 4.2 (4)]\label{thm:product in stab}
{\it Let $F$ be an abelian group and let $x^{-1}\in\mathcal{O}_x.$ Then for all $y\in\mathcal{O}_x$ and all $b\in F_{y^{-1},y}$, $(y^{-1}\rhd b)b\in F_x$}
\end{theorem}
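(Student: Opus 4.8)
The plan is to introduce the shorthand $c=y^{-1}\rhd b$ and reduce the assertion to a single stabilizer condition. By \cref{def:orbits and stabs}, the hypothesis $b\in F_{y^{-1},y}$ means precisely $y^{-1}\lhd b=y$, and the standing assumption $x^{-1}\in\mathcal{O}_x$ guarantees (through \cref{lem:orbit and stab inv equiv}) that $F_{y^{-1},y}\neq\varnothing$, so the statement is not vacuous. With this notation the goal becomes showing $cb=(y^{-1}\rhd b)b\in F_x$, i.e.\ that $cb$ fixes $x$ under $\lhd$.

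First I would pin down how $c$ moves $y$. Instantiating the inverse identity \cref{prop:facts about matched pair (10)} with its acting variable specialized to $y^{-1}$ gives $(y^{-1}\lhd b)^{-1}=y\lhd(y^{-1}\rhd b)=y\lhd c$; since $y^{-1}\lhd b=y$, the left-hand side collapses to $y^{-1}$, yielding $y\lhd c=y^{-1}$. Feeding this into the right-action identity \cref{prop:facts about matched pair (4)} then gives
$$y\lhd(cb)=(y\lhd c)\lhd b=y^{-1}\lhd b=y,$$
where the final equality again uses $b\in F_{y^{-1},y}$. Hence $cb\in F_y$.

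It then remains to upgrade membership in $F_y$ to membership in $F_x$, and this is exactly where the hypothesis that $F$ is abelian is needed. Since $y\in\mathcal{O}_x$, I would write $y=x\lhd d$ for some $d\in F$. For any $e\in F$, one has $e\in F_y$ iff $x\lhd(de)=x\lhd d$ (using \cref{prop:facts about matched pair (4)}); because $F$ is abelian one may replace $de$ by $ed$ and rewrite this as $(x\lhd e)\lhd d=x\lhd d$, and since $\lhd d$ is a bijection of $G$ with inverse $\lhd d^{-1}$, this is equivalent to $x\lhd e=x$, i.e.\ $e\in F_x$. Thus $F_y=F_x$, and combined with $cb\in F_y$ this delivers $cb\in F_x$.

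I expect the main obstacle to be conceptual rather than computational: the natural direct computation lands $cb$ in $F_y$ rather than in $F_x$, so the crucial realization is that commutativity of $F$ forces all stabilizers along a single orbit to coincide, $F_y=F_x$ for every $y\in\mathcal{O}_x$, which is precisely the role played by the abelian hypothesis. The only other delicate point is choosing the correct instantiation of \cref{prop:facts about matched pair (10)} (taking the acting element to be $y^{-1}$) so that the inverse term $(y^{-1}\lhd b)^{-1}$ simplifies to $y^{-1}$.
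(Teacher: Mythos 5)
Your argument is correct, and each step checks out against the identities in \cref{prop:facts about matched pair}: specializing (10) to $y^{-1}$ gives $y\lhd c=y^{-1}$ for $c=y^{-1}\rhd b$, identity (4) then puts $cb$ in $F_y$, and the abelian hypothesis upgrades $F_y$ to $F_x$ (in general $F_{x\lhd d}=d^{-1}F_xd$, which collapses to $F_x$ when $F$ is commutative). The paper itself states this result as a citation of [\cite{jm}, Lemma 4.2 (4)] and supplies no proof, so there is no in-paper argument to compare against; your derivation is a sound, self-contained substitute, and your key auxiliary fact that stabilizers coincide along an orbit is the same observation the paper verifies later in its concrete setting (\cref{prop:equivalence of sets}, part (2)).
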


The specific factorizable group of interest to us is $L=S_n$. Letting $a=(1\pp2\pp\cdots\pp n)$ be the standard $n$-cycle and considering $F=C_n=\langle a\rangle$ and $G=S_{n-1}$ we can write $L=FG$ as a factorizable group. Note that clearly $F\cap G=1$ and since $|L|=|FG|=|F||G|$ we have that $L=GF=FG$. To describe the action explicitly, we use the following theorem.

\begin{proposition}\label{prop:[M] Properties of action}
{\it For any $\lambda\in S_n,$ let $a$ be the standard $n$-cycle and write $\lambda=a^rx$ where $x=a^{-r}\lambda \in S_{n-1}$ and $r=\lambda(n)$. This is the unique way of representing $\lambda$ as a product of elements in $C_nS_{n-1}$.}
\end{proposition}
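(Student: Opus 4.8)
The plan is to reduce the whole statement to a single evaluation at the symbol $n$, using that $C_n=\langle a\rangle$ acts simply transitively on $\{1,\ldots,n\}$ and that $S_{n-1}$ is precisely the stabilizer of $n$ inside $S_n$. Throughout I would fix the composition convention $(\sigma\tau)(i)=\sigma(\tau(i))$ and record the elementary fact that $a^k(i)\equiv i+k\pmod{n}$, with representatives taken in $\{1,\ldots,n\}$; in particular $a^k(n)\equiv k\pmod n$, so that $a^r(n)=r$ for $r=\lambda(n)\in\{1,\ldots,n\}$.

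For existence, the formula is already supplied, so it remains only to verify that the proposed $x$ lies in $S_{n-1}$, i.e.\ that it fixes $n$. Setting $r=\lambda(n)$ and $x=a^{-r}\lambda$, I would compute $x(n)=a^{-r}(\lambda(n))=a^{-r}(r)\equiv r-r\equiv 0\equiv n\pmod n$, so $x(n)=n$ and hence $x\in S_{n-1}$. Since $a^r\in C_n$ by construction, this exhibits $\lambda=a^rx\in C_nS_{n-1}$.

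For uniqueness, I would take an arbitrary factorization $\lambda=a^sy$ with $a^s\in C_n$ and $y\in S_{n-1}$ and show the cyclic exponent is forced. Evaluating at $n$ and using $y(n)=n$ gives $\lambda(n)=a^s(y(n))=a^s(n)\equiv s\pmod n$, so $s\equiv\lambda(n)=r\pmod n$ and therefore $a^s=a^r$ as elements of $C_n$. Left-cancelling $a^r$ then forces $y=a^{-r}\lambda=x$. Equivalently, this is the general uniqueness attached to a factorizable group: any two factorizations differ by an element of $C_n\cap S_{n-1}$, which is trivial as already observed.

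The computation carries no real obstacle; the only point demanding care is the bookkeeping of the composition convention together with the choice of residue representatives. Under the right-to-left convention the cyclic exponent is read off as $\lambda(n)$ precisely because $a^r$ sends $n$ to $r$, whereas under the opposite convention the two factors exchange roles and the relevant value becomes $\lambda^{-1}(n)$. Pinning the convention down at the outset makes the identity $a^r(n)=r$ unambiguous, after which the entire statement collapses to a one-line evaluation at $n$.
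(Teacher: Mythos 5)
Your proof is correct, and it takes a different route from the paper's. The paper argues abstractly: it observes that $C_n\cap S_{n-1}=1$ (only the identity in $C_n$ fixes $n$) and then counts, $|C_n||S_{n-1}|=n\cdot(n-1)!=n!=|S_n|$, to conclude that $S_n$ is factorizable as $C_nS_{n-1}$, whence existence and uniqueness of the decomposition follow formally. You instead verify the explicit formula head-on: evaluating at $n$ shows $x=a^{-r}\lambda$ fixes $n$ and hence lies in $S_{n-1}$, and evaluating any factorization $\lambda=a^sy$ at $n$ forces $s\equiv\lambda(n)\pmod n$, giving uniqueness by cancellation. The trade-off is that the paper's counting argument is shorter and immediately exhibits the factorizable-group structure used throughout \cref{sec:Group Action Results}, but it never actually checks that $r=\lambda(n)$ is the correct exponent --- that part of the statement is left implicit. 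Your evaluation-at-$n$ argument supplies exactly that verification, and your care with the composition convention (right-to-left versus left-to-right, which swaps $\lambda(n)$ for $\lambda^{-1}(n)$) is a genuinely relevant point, since the formula in the statement is convention-dependent. Either proof suffices; yours is the more complete justification of the statement as literally written.
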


\no{\it Proof.} Clearly $C_n\cap S_{n-1}=1$ since aside from the identity, permutations in $S_{n-1}$ fix $n$ and permutations in $C_n$ do not. Furthermore, $C_nS_{n-1}\subset S_n$ and $|C_n||S_{n-1}|=n(n-1)!=n!=|S_n|$, we have that $S_n$ is a factorizable with $F=C_n$ and $G=S_{n-1}$. \qed\\

In \cref{sec:reinterpretting the action}, we will use the properties described in this section to better understand and reinterpret the group actions of $F$ and $G$ on each other. But before we can derive anything useful, we must describe the Hopf structure which is at the heart of this paper.

\section{\bf Hopf and Representation Results}\label{sec:Hopf and rep results}

Here we construct the Hopf Algebra that will be the object of study for the remainder of the paper. Throughout this section, we will take $k$ to be an algebraically closed field of characteristic $0$. When doing explicit computations, we will often take $k=\mathbb{C}$. A general resource on Hopf Algebras, \cite{mo}, was consulted often for background definitions and facts about Semisimple Hopf Algebras.

The first half of this section is largely a summary of key results of \cite{jm} since they provided the indicator formula for all semisimple Hopf algebras which are bismash products and they also classified all indicators of our main example in the prime case. We conclude that further simplifications to their formula are possible in the case where $F$ is abelian.

\begin{definition}[\cite{ta}]\label{def:Bismash Hopf algebra}
Let $(F,G,\lhd,\rhd)$ be a matched pair of finite groups. Then the bismash product $J=k^G\#kF$ has underlying set $k^G\otimes_k kF$. Let $x,y\in G$ and $a,b\in F$ and define $p_x:G\to k$ such that $$p_x(y)=\delta_{x,y}=\begin{cases} 1 & \text{ if }y=x\\ 0 &\text{ otherwise}\end{cases}$$ which is basis of $k^G$ dual to $kG$. We will typically write $p_x\otimes a=p_x\#a$ for elements of $J$.
\begin{myenum}
    \item unit: $1\# 1$
    \item multiplication: $(p_x\#a)(p_y\#b)=\delta_{y,x\lhd a}p_x\#ab$
    \item counit: $\varepsilon(p_x\#a)=\delta_{x,1}$
    \item comultiplication: $$\Delta(p_x\#a)=\sum_{y\in G}(p_{xy^{-1}}\#(y\rhd a))\otimes (p_y\#a)$$
    \item antipode: $S(p_x\#a)=p_{(x\lhd a)^{-1}}\#(x\rhd a)^{-1}$
\end{myenum}
\end{definition}
 
We will now summarize a variety of general results in representation theory which we will be using as a starting point for computing the second indicators of $J$. Additionally, we will provide results on bismash products from \cite{jm} that are on significance.

\begin{theorem}[\cite{kmm}]\label{thm:[KMM] inducing modules}
Let $J=k^G\#kF$. Let $x\in G$ have orbit $\mathcal{O}_x$ and stabilizer $F_x$. Let $V$ be an irreducible representation of $F_x$, thought of as a simple $kF_x$-module. Then $$\hat{V}=kF\otimes_{kF_x}V$$ is an irreducible $J$-module under the action of $$(p_y\#a)\cdot (b\otimes v)=\delta_{y\lhd(ab),x}(ab\otimes v)$$ for all $x\in G$, $a,b\in F$ and $v\in V$. Furthermore, every irreducible representation of $J$ is induced from an irreducible representation of subgroups of $F$ in this way.
\end{theorem}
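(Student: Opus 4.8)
The plan is to verify three things in turn: that the stated formula gives a well-defined $J$-module structure on $\hat V=kF\otimes_{kF_x}V$, that this module is irreducible, and that the assignment $(\mathcal O_x,V)\mapsto\hat V$ exhausts all irreducibles. The organizing principle throughout is that the commutative subalgebra $k^G=\{p_y\#1\mid y\in G\}$ supplies a system of orthogonal idempotents, so every $J$-module carries a weight decomposition indexed by $G$, and submodules respect it.

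First I would treat the routine verifications. To see the action descends to the balanced tensor product, take $c\in F_x$ and use $y\lhd(abc)=(y\lhd ab)\lhd c$ together with $x\lhd c=x$ (both from \cref{prop:facts about matched pair}) to get $\delta_{y\lhd abc,x}=\delta_{y\lhd ab,x}$, matching $abc\otimes v=ab\otimes(c\cdot v)$. The module axioms follow similarly: associativity reduces to the identity $\delta_{z,y\lhd a}\delta_{y\lhd acb,x}=\delta_{z\lhd cb,x}\delta_{y\lhd acb,x}$, which holds because $\lhd(cb)$ is a bijection of $G$, and the unit $1\#1=\sum_y p_y\#1$ acts as the identity since exactly one $y$ satisfies $y\lhd b=x$. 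A useful reformulation obtained here is that the elements $1\#a=\sum_y p_y\#a$ act by $a\cdot(b\otimes v)=ab\otimes v$, so as a $kF$-module $\hat V$ is precisely the induced module, of dimension $[F:F_x]\dim V=|\mathcal O_x|\dim V$.

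For irreducibility I would run the Clifford-theoretic argument. Fixing coset representatives $b_i$ of $F_x$, the idempotent $p_y$ projects onto the summand $b_i\otimes V$ determined by $y\lhd b_i=x$; as $b_i$ varies this sets up a bijection between cosets and the orbit $\mathcal O_x$, giving $\hat V=\bigoplus_{y\in\mathcal O_x}\hat V_y$ with $\hat V_y=p_y\hat V$. Any nonzero submodule $W$ is $k^G$-stable, hence $W=\bigoplus_y(W\cap\hat V_y)$. The element $1\#a$ sends $\hat V_y$ isomorphically to $\hat V_{y\lhd a^{-1}}$, and since $F$ acts transitively on $\mathcal O_x$ these maps permute the weight spaces transitively; thus $W$ meets every $\hat V_y$ nontrivially, in particular the weight space $\hat V_x=1\otimes V$ at $y=x$. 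On $\hat V_x$ the elements $p_x\#a$ with $a\in F_x$ act by $1\otimes v\mapsto 1\otimes(a\cdot v)$ and form a copy of $kF_x$, so $W\cap\hat V_x$ is an $F_x$-submodule of the irreducible $V$; being nonzero it equals $\hat V_x$. Spreading this back across all weight spaces via the $1\#a$ forces $W=\hat V$.

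Finally, completeness follows from a dimension count once $J$ is known to be semisimple over the algebraically closed characteristic-zero field $k$. Running over one representative $x$ per $F$-orbit in $G$ and over $V\in\mathrm{Irr}(F_x)$, the squared dimensions sum to $\sum_{\mathcal O}[F:F_x]^2\sum_V(\dim V)^2=\sum_{\mathcal O}[F:F_x]^2|F_x|=\sum_{\mathcal O}|F|\,|\mathcal O|=|F|\,|G|=\dim J$. It remains to check the $\hat V$ are pairwise non-isomorphic: the $k^G$-support of $\hat V$ is exactly $\mathcal O_x$, so distinct orbits give non-isomorphic modules, while within a fixed orbit the subalgebra $\{p_x\#a:a\in F_x\}\cong kF_x$ recovers $V$ as $p_x\hat V$, separating distinct $V$. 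The main obstacle is orchestrating the weight-space bookkeeping in the irreducibility step — establishing that submodules are graded, that $F$ permutes the weights transitively, and that the $x$-weight space recovers the $F_x$-action faithfully — whereas the remaining verifications are routine applications of \cref{prop:facts about matched pair} and the dimension identity.
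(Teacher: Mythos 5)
This theorem is quoted in the paper from \cite{kmm} and no proof is given there, so there is nothing internal to compare against; judged on its own, your argument is correct and complete, and it follows the standard route of the cited source. The three checkpoints are all handled properly: well-definedness over the balanced tensor product and the module axioms reduce, as you say, to $y\lhd(abc)=(y\lhd ab)\lhd c$ together with the fact that $\lhd b$ is a bijection of $G$; the weight decomposition $\hat V=\bigoplus_{y\in\mathcal O_x}p_y\hat V$ with $1\# a$ permuting the weight spaces transitively and $\{p_x\# a: a\in F_x\}\cong kF_x$ acting irreducibly on the $x$-weight space gives irreducibility; and the sum-of-squares count $\sum_{\mathcal O}|F|\,|\mathcal O|=|F||G|=\dim J$, combined with the pairwise non-isomorphism via $k^G$-support and the recovered $F_x$-module $p_x\hat V$, gives exhaustion. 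The only external input is semisimplicity of $J$ over an algebraically closed field of characteristic $0$, which you flag explicitly and which the paper also assumes throughout, so there is no gap.
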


Explicitly, if we want to construct an irreducible representation $\hat{V}$ of the Hopf algebra $J$ we follow the following steps. \begin{myenum}
    \item choose an element $x\in G$
    \item under the matched pair action of $F$ on $G$, compute the stabilizer of $x$ which is $F_x$
    \item $F_x$ is a subgroup of $F$ and so has irreducible (group) representations $V$
    \item for each of these $F_x$-group-representations $V$, there is an induced $J$-module $\hat{V}$ whose structure is described by \cref{thm:[KMM] inducing modules}.
\end{myenum}

Since all irreps of $J$ are constructed in this way, the first step in understanding the irreps of $J$ (which is ultimately how we will understand the indicators) is to understand the orbits and stabilizers of the action of $F$ on $G.$

\begin{lemma}[\cite{kmm}]\label{lem:[KMM] dimension of induced rep}
If $F$ is abelian, the dimension of $\hat{V}$ as described in \cref{thm:[KMM] inducing modules} is $[F:F_x]$.
\end{lemma}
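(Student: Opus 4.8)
The plan is to observe that the dimension of $\hat{V}$ over $k$ depends only on its structure as a $k$-vector space, namely $\hat{V} = kF \otimes_{kF_x} V$, and is entirely independent of the $J$-action recorded in \cref{thm:[KMM] inducing modules}. Thus it suffices to compute $\dim_k\!\left(kF \otimes_{kF_x} V\right)$ and then to invoke the hypothesis that $F$ is abelian to pin down $\dim_k V$.

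First I would compute the dimension of the induced module by the usual coset argument. Writing $m = [F:F_x]$ and choosing left coset representatives $t_1, \dots, t_m$ so that $F = \bigsqcup_{i=1}^m t_i F_x$, the group algebra $kF$ is free of rank $m$ as a right $kF_x$-module with basis $\{t_1, \dots, t_m\}$. Tensoring over $kF_x$ then yields the direct sum decomposition $kF \otimes_{kF_x} V = \bigoplus_{i=1}^m \left(t_i \otimes V\right)$, where each summand $t_i \otimes V$ is isomorphic to $V$ as a $k$-vector space. Hence $\dim_k \hat{V} = m \cdot \dim_k V = [F:F_x]\,\dim_k V$. This is just the standard multiplicativity of dimension under induction, and it requires only the freeness of $kF$ over $kF_x$.

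It remains to show that the abelian hypothesis forces $\dim_k V = 1$. Since $F_x$ is a subgroup of the abelian group $F$, it is itself abelian, and $V$ is an irreducible representation of $F_x$ over the algebraically closed field $k$ of characteristic $0$. For each $g \in F_x$, the operator by which $g$ acts commutes with the entire $F_x$-action, so by Schur's lemma it is a scalar; consequently every subspace of $V$ is $F_x$-stable, and irreducibility forces $\dim_k V = 1$. Substituting into the formula above gives $\dim_k \hat{V} = [F:F_x]$, as claimed.

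I expect no serious obstacle here: the argument is routine once one separates the purely linear-algebraic dimension count from the representation-theoretic input. The only point demanding genuine care is the justification that $kF$ is free as a right $kF_x$-module, which underlies the multiplicativity step; the abelian hypothesis enters solely to guarantee that the inducing representation $V$ is one-dimensional.
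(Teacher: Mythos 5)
Your proposal is correct and follows essentially the same route as the paper: the paper simply cites the formula $\dim(\hat{V})=[F:F_x]\dim(V)$ from the original proof in \cite{kmm} and then observes that $F_x$, being a subgroup of the abelian group $F$, has only one-dimensional irreducibles. The only difference is that you prove the multiplicativity of dimension under induction via the coset/freeness argument rather than citing it, which is a harmless (and correct) elaboration.
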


In fact, in the original proof of \cref{thm:[KMM] inducing modules}, we know that dim$(\hat{V})=[F:F_x]$dim$(V)$. So, if $F$ is abelian, so is $F_x$ for all $x$ so $\dim(V)=1$ for all irreducible representations of $F_x$ so $\dim(\hat{V})=[F:F_x].$

We now introduce a crucial formula, the indicator formula for bimash product Hopf Algebras.

\begin{theorem}[\cite{jm}]\label{thm:[JM] indicator formula}
{\it For $J=k^G\#kF$ and $x\in G$, the indicator of $\hat{\chi}$ is given by \begin{equation}\nu(\hat\chi)=\frac{1}{|F|}\sum_{y\in\mathcal{O}_x}\sum_{a\in F_{y^{-1},y}}\hat{\chi}(p_y\#(y^{-1}\rhd a)a) \label{eq:[JM] indicator formula}\end{equation} where $F_{y^{-1},y}=\{a\in F\,|\,y^{-1}\lhd a=y\}$. Furthermore, if $\hat{V}$ is an irreducible representation, then $\nu(\hat{\chi})$ takes values in $\{1,0,-1\}$}
\end{theorem}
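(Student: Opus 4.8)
The plan is to apply the Linchenko--Montgomery formula $\nu(\hat\chi)=\hat\chi\big((m\circ\Delta)(\Lambda)\big)$, valid for any semisimple Hopf algebra over an algebraically closed field, where $\Lambda$ is the normalized two-sided integral. So the first task is to identify $\Lambda$ for $J=k^G\#kF$. Since $p_1$ is the integral of $k^G$ and $\frac{1}{|F|}\sum_{a\in F}a$ is the normalized integral of $kF$, I expect $\Lambda=\frac{1}{|F|}\sum_{a\in F}p_1\#a$, and I would verify this directly: using the multiplication of \cref{def:Bismash Hopf algebra} together with the fact that $\lhd$ acts by bijections fixing $1\in G$ (so $x\lhd b=1$ iff $x=1$), one checks $(p_x\#b)\Lambda=\delta_{x,1}\Lambda=\varepsilon(p_x\#b)\Lambda$ and symmetrically on the right, confirming $\Lambda$ is a two-sided integral with $\varepsilon(\Lambda)=1$.

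The heart of the argument is the computation of the second Sweedler power $(m\circ\Delta)(\Lambda)$. Applying $\Delta$ from \cref{def:Bismash Hopf algebra} to $p_1\#a$ and then multiplying the two tensor legs gives
$$(m\circ\Delta)(p_1\#a)=\sum_{y\in G}\delta_{y,\,y^{-1}\lhd(y\rhd a)}\;p_{y^{-1}}\#(y\rhd a)a,$$
where the Kronecker delta is exactly the condition produced by the rule $(p_x\#a)(p_y\#b)=\delta_{y,x\lhd a}p_x\#ab$. The key simplification is to recognize this delta using the matched-pair identity $(y\lhd a)^{-1}=y^{-1}\lhd(y\rhd a)$ from \cref{prop:facts about matched pair}: the condition $y=y^{-1}\lhd(y\rhd a)$ is precisely $y\lhd a=y^{-1}$. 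Summing over $a\in F$, dividing by $|F|$, and then reindexing $y\mapsto y^{-1}$ turns the constraint $y\lhd a=y^{-1}$ into $y^{-1}\lhd a=y$, i.e.\ $a\in F_{y^{-1},y}$, and yields
$$(m\circ\Delta)(\Lambda)=\frac{1}{|F|}\sum_{y\in G}\;\sum_{a\in F_{y^{-1},y}}p_y\#(y^{-1}\rhd a)a.$$

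It then remains to apply $\hat\chi$ and restrict the outer sum from $G$ to $\mathcal{O}_x$. Here I would use the module structure of $\hat V=kF\otimes_{kF_x}V$ from \cref{thm:[KMM] inducing modules}: taking coset representatives for $F/F_x$ as a basis, the action of $p_y\#c$ sends $b\otimes v\mapsto\delta_{y\lhd(cb),x}(cb\otimes v)$, so a diagonal (trace) contribution forces $y=x\lhd(cb)^{-1}\in\mathcal{O}_x$. Consequently $\hat\chi(p_y\#\,\cdot\,)=0$ whenever $y\notin\mathcal{O}_x$, and discarding those terms produces exactly \eqref{eq:[JM] indicator formula}. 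Finally, the assertion $\nu(\hat\chi)\in\{1,0,-1\}$ for irreducible $\hat V$ is the general Linchenko--Montgomery theorem: $J$ is semisimple over an algebraically closed field of characteristic $0$, and $\nu(\hat\chi)$ is the trace on $\hat V$ of an involutive operator built from the antipode, hence equals $0$ when $\hat V$ is not self-dual and $\pm1$ according to the symmetry type of the invariant form otherwise.

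I expect the main obstacle to be the Sweedler-power computation rather than the trichotomy. Combining $\Delta$ and $m$ introduces the composite condition $y=y^{-1}\lhd(y\rhd a)$, and the whole formula hinges on correctly recognizing it as $a\in F_{y^{-1},y}$ via the antipode-type identity of \cref{prop:facts about matched pair} and on keeping the $y\mapsto y^{-1}$ reindexing consistent with the definition of $F_{y^{-1},y}$; bookkeeping errors there are easy to make and would corrupt the final summation range.
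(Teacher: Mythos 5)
Your derivation is correct: the integral $\Lambda=\frac{1}{|F|}\sum_{a\in F}p_1\#a$, the Sweedler-square computation, the identification of the delta condition with $y\lhd a=y^{-1}$ via $(y\lhd a)^{-1}=y^{-1}\lhd(y\rhd a)$, the reindexing $y\mapsto y^{-1}$, and the restriction to $\mathcal{O}_x$ all check out. Note, however, that the paper does not prove this statement at all --- it is quoted from \cite{jm} (with the trichotomy attributed to \cite{lm}) --- and your argument is essentially the standard derivation given in that source, so there is nothing in the present paper to compare it against beyond the surrounding remarks, with which it is consistent.
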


This formula was adapted from a formula derived by [\cite{n}, Proposition 5.5] from [\cite{kmm}, p 898]. Additionally, it was shown in \cite{lm} that the formula below--as with its group analogue--always outputs either $1,0,$ or $-1$ and these values determine if the associated module admits a nondegenerate, $J$-invariant, bilinear form (iff $\nu(\hat\chi)\not=0)$, and if it does admit such a form, if this form is symmetric (iff $\nu(\hat\chi)=1$) or skew-symmetric (iff $\nu(\hat\chi)=-1$).

\begin{theorem}[\cite{jm}]\label{thm:[JM] indicator when Fx=1}
{\it Let $L=GF=FG$ be a factorizable group and $J=k^G\#kF$. Let $x\in G$ and $F_x=\{1\}.$ Then there is a unique simple $F_x$-module $V\cong k$ and the induced simple $J$-module $\hat{V}$ has Schur indicator equal to $1$ if and only if $x^{-1}\in\mathcal{O}_x$. Otherwise the indicator is $0$.}
\end{theorem}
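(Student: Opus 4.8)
The plan is to feed the character of $\hat V$ directly into the bismash indicator formula of \cref{thm:[JM] indicator formula} and then read off the two cases. Since $F_x=\{1\}$, the only simple $F_x$-module is the trivial one $V\cong k$, so by \cref{thm:[KMM] inducing modules} (and \cref{lem:[KMM] dimension of induced rep}) the induced module $\hat V=kF\otimes_{kF_x}V$ is just $kF$ as a vector space, with basis $\{\,b\otimes v\,\}_{b\in F}$ indexed by $F$ and dimension $|F|$. First I would compute $\hat\chi(p_y\#c)$ for an arbitrary group element $c\in F$ using the action $(p_y\#c)\cdot(b\otimes v)=\delta_{y\lhd(cb),x}(cb\otimes v)$ from \cref{thm:[KMM] inducing modules}.

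The character computation is the routine engine of the proof. The operator $p_y\#c$ sends the basis vector $b\otimes v$ to a scalar multiple of $cb\otimes v$, so it contributes to the trace only when $cb=b$, i.e.\ only when $c=1$, in which case its diagonal entry is $\delta_{y\lhd b,x}$. Hence $\hat\chi(p_y\#c)=\delta_{c,1}\,\bigl|\{b\in F: y\lhd b=x\}\bigr|$. Because the stabilizers of all points of $\mathcal O_x$ are conjugate to $F_x=\{1\}$, the action of $F$ on $\mathcal O_x$ is free; thus for each $y\in\mathcal O_x$ there is a unique $b$ with $y\lhd b=x$, and $\hat\chi(p_y\#c)=\delta_{c,1}$ for all $y\in\mathcal O_x$. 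Substituting $c=(y^{-1}\rhd a)a$ into the formula of \cref{thm:[JM] indicator formula} collapses it to
\[
\nu(\hat\chi)=\frac{1}{|F|}\sum_{y\in\mathcal O_x}\ \sum_{a\in F_{y^{-1},y}}\delta_{(y^{-1}\rhd a)a,\,1}.
\]

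Next I would split on whether $x^{-1}\in\mathcal O_x$. If $x^{-1}\notin\mathcal O_x$, then by \cref{lem:orbit and stab inv equiv} every set $F_{y^{-1},y}$ with $y\in\mathcal O_x$ is empty, the double sum is vacuous, and $\nu(\hat\chi)=0$. If instead $x^{-1}\in\mathcal O_x$, then \cref{lem:sizes of stabs and inv stabs} gives $|F_{y^{-1},y}|=|F_x|=1$ for every $y\in\mathcal O_x$, while freeness of the action gives $|\mathcal O_x|=[F:F_x]=|F|$; so the sum has exactly $|F|$ terms, one per $y$, each equal to $\delta_{(y^{-1}\rhd a)a,1}$ for the unique $a\in F_{y^{-1},y}$.

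The one genuine step, and where I expect the real work, is showing each surviving delta equals $1$, i.e.\ that $(y^{-1}\rhd a)a=1$ for this unique $a$. I would prove this by showing the element lies in the stabilizer $F_y$, which is trivial (being conjugate to $F_x=\{1\}$). Using the identity $x\lhd bc=(x\lhd b)\lhd c$ from \cref{prop:facts about matched pair}, write $y\lhd\bigl[(y^{-1}\rhd a)a\bigr]=\bigl(y\lhd(y^{-1}\rhd a)\bigr)\lhd a$; then the identity $(x\lhd b)^{-1}=x^{-1}\lhd(x\rhd b)$ from \cref{prop:facts about matched pair}, applied to the pair $(y^{-1},a)$ together with $y^{-1}\lhd a=y$, yields $y\lhd(y^{-1}\rhd a)=y^{-1}$, so the expression reduces to $y^{-1}\lhd a=y$. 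Hence $(y^{-1}\rhd a)a\in F_y=\{1\}$, the delta is $1$, and $\nu(\hat\chi)=\tfrac{1}{|F|}\cdot|F|=1$. I would note that this manipulation never uses commutativity of $F$, so it covers the fully general factorizable $L$ in the statement; the main obstacle is exactly this matched-pair bookkeeping—extracting $y\lhd(y^{-1}\rhd a)=y^{-1}$ from the correct identity—rather than anything representation-theoretic.
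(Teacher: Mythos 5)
Your proof is correct. Note that the paper does not actually prove this statement --- it is imported from \cite{jm} as a black box (the paper only reproves the ``$0$'' direction in general via \cref{lem:inv element in orbit means 0 indicator}, and recovers the full statement for the special case $J_n$ as a consequence of \cref{thm:indicator reduced}, where $F_x=\{1\}$ forces $t=n$ and $\frac{n}{t}=1$). Your argument is a complete, self-contained derivation directly from \cref{thm:[JM] indicator formula}: the character computation $\hat\chi(p_y\#c)=\delta_{c,1}$ for $y\in\mathcal{O}_x$ uses freeness of the action on the orbit (stabilizers in an orbit are conjugate, hence trivial), the vanishing case follows from \cref{lem:orbit and stab inv equiv}, and the nontrivial case reduces to showing $(y^{-1}\rhd a)a\in F_y=\{1\}$. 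That last step is exactly the content of \cref{thm:product in stab}, except that you prove it without the abelian hypothesis by landing in $F_y$ rather than $F_x$ --- a genuine (if small) improvement in generality that matches the fully general factorizable $L$ in the statement. The matched-pair manipulation $(y^{-1}\lhd a)^{-1}=y\lhd(y^{-1}\rhd a)=y^{-1}$ via \cpref{prop:facts about matched pair}{prop:facts about matched pair (10)} is applied correctly, and the final count $\frac{1}{|F|}\cdot|\mathcal{O}_x|\cdot 1=1$ is right since $|\mathcal{O}_x|=[F:F_x]=|F|$.
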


\begin{theorem}[\cite{jm}]\label{thm:JM indicator when Fx=F}
{\it Let $x\in G$ and $F_x=F$. Then any simple module $V$ of $kF_x$ is already an $F$-module and so $\hat{V}=V$ becomes a simple $J$-module. Then:
\begin{myenum}
    \item \label{thm:JM indicator when Fx=F (1)} if $x=1$ then the value of the Frobenius-Schur indicator of $\hat{V}$ is the same as the indicator of $V$ as a simple $kF$-module;
    \item \label{thm:JM indicator when Fx=F (2)} if $x^2\not=1$ then the value of the Frobenius-Schur indicator of $\hat{V}$ is $0$;
    \item \label{thm:JM indicator when Fx=F (3)} if $x^2=1$ (for $x\not=1$) and additionally if $F=C_{p^r}$ for $p$ and odd prime, then the Frobenius-Schur indicator of $\hat{V}$ is $1.$
\end{myenum}}
\end{theorem}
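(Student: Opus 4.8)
The plan is to feed the hypothesis $F_x = F$ into the indicator formula of \cref{thm:[JM] indicator formula} and watch it collapse. Since $F_x = F$ means $x \lhd b = x$ for every $b \in F$, the orbit is the singleton $\mathcal{O}_x = \{x\}$, so the outer sum in the formula has only the term $y = x$ and reduces to $\nu(\hat\chi) = \frac{1}{|F|}\sum_{a \in F_{x^{-1},x}}\hat\chi\big(p_x \#(x^{-1}\rhd a)a\big)$. By \cref{lem:orbit and stab inv equiv}, $F_{x^{-1},x} \neq \varnothing$ exactly when $x^{-1} \in \mathcal{O}_x = \{x\}$, i.e.\ when $x^2 = 1$. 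This immediately disposes of part (2): when $x^2 \neq 1$ the index set is empty, the inner sum is vacuous, and $\nu(\hat\chi) = 0$.

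For the remaining parts I would first pin down the module. When $F_x = F$, \cref{thm:[KMM] inducing modules} gives $\hat V = kF \otimes_{kF} V = V$, and for $y = x$ the scalar $\delta_{x \lhd c,\, x}$ is always $1$ (again because $x \lhd c = x$), so $p_x \# c$ acts on $V$ exactly as the group element $c \in F$ does; hence $\hat\chi(p_x \# c) = \chi_V(c)$ for every $c \in F$. Since $x^2 = 1$ forces $x^{-1} = x$ and $F_{x^{-1},x} = F_x = F$, the formula becomes $\nu(\hat\chi) = \frac{1}{|F|}\sum_{a \in F}\chi_V\big((x \rhd a)a\big)$. Part (1) is then instant: if $x = 1$ then $x \rhd a = a$ by \cref{prop:facts about matched pair (5)}, the summand is $\chi_V(a^2)$, and $\nu(\hat\chi)$ is literally the group Frobenius--Schur indicator of $V$ as a $kF$-module.

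For part (3) the idea is to analyze the map $\phi \colon F \to F$ given by $\phi(a) = x \rhd a$. Using \cref{prop:facts about matched pair (1)} together with $x \lhd a = x$ shows $\phi$ is a group homomorphism, while \cref{prop:facts about matched pair (3)} with $x^2 = 1$ and \cref{prop:facts about matched pair (5)} give $\phi^2 = \mathrm{id}$; thus $\phi$ is an involutive automorphism of $F = C_{p^r}$. Because $p$ is odd, $\mathrm{Aut}(C_{p^r})$ is cyclic of even order $p^{r-1}(p-1)$ and so contains a unique involution, namely inversion $a \mapsto a^{-1}$; hence $\phi$ is either the identity or inversion. If $\phi$ is inversion then $(x \rhd a)a = a^{-1}a = 1$, $\chi_V(1) = \dim V = 1$ (as $F$ is abelian), and $\nu(\hat\chi) = \frac{1}{|F|}\sum_{a \in F}1 = 1$, as claimed.

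The main obstacle is ruling out $\phi = \mathrm{id}$ when $x \neq 1$; this is genuinely necessary, since $\phi = \mathrm{id}$ would give $\nu(\hat\chi) = \frac{1}{|F|}\sum_{a}\chi_V(a^2)$, which equals $1$ only for the trivial $V$ because squaring is a bijection of the odd-order group $F$. The way to exclude it is to recognize $\phi$ as conjugation by $x$ inside $L$: from $xa = (x \rhd a)(x \lhd a) = (x \rhd a)\,x$ one reads off $x a x^{-1} = \phi(a)$. In the setting of this paper, where $F = C_n = \langle a_0\rangle$ with $a_0$ the $n$-cycle and part (3) concerns $n = p^r$, the relation $\phi = \mathrm{id}$ says that $x$ centralizes $a_0$; but the centralizer of an $n$-cycle in $S_n$ is exactly $\langle a_0\rangle = C_n$, forcing $x \in C_n \cap S_{n-1} = \{1\}$ and contradicting $x \neq 1$. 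Hence $\phi$ must be inversion, and $\nu(\hat\chi) = 1$.
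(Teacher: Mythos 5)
The paper does not prove this statement; it is quoted from \cite{jm} and used as a black box, so there is no in-text argument to compare yours against. Your derivation is correct and is assembled entirely from results the paper does state: the collapse of \cref{eq:[JM] indicator formula} to $\nu(\hat\chi)=\frac{1}{|F|}\sum_{a\in F}\chi_V\bigl((x\rhd a)a\bigr)$ when $F_x=F$ and $x^2=1$, the vacuous-sum argument for part (2) via \cref{lem:orbit and stab inv equiv}, and the identification $\hat\chi(p_x\#a)=\chi_V(a)$ all check out. The most valuable thing you did is isolate the real content of part (3): showing that the automorphism $\phi(a)=x\rhd a$ (a homomorphism by \cpref{prop:facts about matched pair}{prop:facts about matched pair (1)}, involutive by \cpref{prop:facts about matched pair}{prop:facts about matched pair (3)}) must be inversion rather than the identity. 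You are right that this step is genuinely necessary and cannot follow from the abstract hypotheses alone --- for a matched pair with trivial actions (e.g.\ $L=F\times G$) one has $F_x=F$ and $\phi=\mathrm{id}$ for every $x$, and the indicator of a nontrivial $V$ is then $0$, so the statement as transcribed here is really implicitly relying on the concrete factorization. Your resolution via $xax^{-1}=x\rhd a$ and the fact that the centralizer of the $n$-cycle in $S_n$ is $C_n$ itself is exactly the right input in the paper's setting $L=S_n$, $n=p^r$; it is also consistent with the paper's own machinery, since \cref{cor:x(i)=ix(1) for Fx=F} and \cpref{lem:x(t)=kt some k}{lem:x(t)=kt some k (2)} show $\phi(a)=a^{x(1)}$ with $x(1)$ coprime to $n$, so $\phi=\mathrm{id}$ forces $x=1$. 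No gaps.
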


Now we describe a main result proved in \cite{jm}. This is a complete classification of the indicators of the Hopf algebra $J_p$ where $p$ is a prime.

\begin{definition}\label{def:Jn}
Let $S_n=S_{n-1}C_n=C_nS_{n-1}$ be a matched pair of finite groups. Then the bismash product $J_n=k^{S_{n-1}}\#kC_n$ where $F=C_n$ and $G=S_{n-1}$.
\end{definition}

\begin{theorem}[\cite{jm}]\label{thm:[JM] indicator Fx=F=Cp}{\it Consider $J_p=k^G\#kC_p$ where $p>2$ is prime. There are two cases: 

{\bf I: $\hat{V}$ has dimension $1$}. Here $F_x=F$, so that $x\in G^F=\{x\in G\,|\, x\lhd a=x$ for all $a\in F\}$. Then either,
\begin{enumerate}[label=\textbf{(\alph*)}]
    \item $x=1$, in which case we have the trivial module $\hat{V}_0$ with $\nu(\hat{V}_0)=1$, and $p-1$ other $1$-dim simple modules $\hat{V}$, all with indicator $0$;
    \item $x$ has order $2$, in which case we have $p$ simple modules with $\nu(\hat{V})=1$;
    \item $x$ has order $>2$, in which case we have $p$ simple modules with $\nu(\hat{V})=0.$
\end{enumerate}

{\bf II: $\hat{V}$ has dimension $p$}. Here $F_x=\{1\}$ and either
\begin{enumerate}[label=\textbf{(\alph*)}]
\setcounter{enumi}{4}
    \item $\mathcal{O}_x$ contains an element of order $2$. Then there is one simple module with $\nu(\hat{V})=1;$
    \item $\mathcal{O}_x$ contains no elements of order $2$. Then there is one simple module with $\nu(\hat{V})=0.$
\end{enumerate} }
\end{theorem}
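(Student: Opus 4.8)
The plan is to exploit that $F = C_p$ has only the subgroups $\{1\}$ and $C_p$ in order to reduce the entire classification to the two indicator theorems already proved, \cref{thm:JM indicator when Fx=F} and \cref{thm:[JM] indicator when Fx=1}, supplemented by one genuinely new parity observation. By \cref{thm:[KMM] inducing modules} every irrep $\hat V$ of $J_p$ is induced from a pair $(x, V)$ with $x \in G = S_{p-1}$ and $V$ an irrep of the stabilizer $F_x \le C_p$. Primality of $p$ forces $F_x \in \{\{1\}, C_p\}$, and since $C_p$ is abelian \cref{lem:[KMM] dimension of induced rep} gives $\dim \hat V = [F : F_x] \in \{1, p\}$, which is exactly the split into Cases I and II.

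First I would dispatch Case I, where $F_x = F$ and $x$ lies in the fixed set $G^F$, so that each of the $p$ characters of $C_p$ produces a one-dimensional $\hat V = V$. Here \cref{thm:JM indicator when Fx=F} applies verbatim. When $x = 1$ it identifies $\nu(\hat V)$ with the ordinary group indicator of $V$ over $kC_p$; the trivial character gives $+1$ and the other $p - 1$ nontrivial characters of an odd cyclic group are non-self-dual and give $0$, yielding (a). An element $x$ of order two lands in the part of \cref{thm:JM indicator when Fx=F} covering $x^2 = 1$ with $F = C_{p^1}$ for an odd prime $p$, so all $p$ modules have $\nu = 1$, giving (b); and $x^2 \neq 1$ lands in the part giving $\nu = 0$, yielding (c). These three subcases are bookkeeping once the cited theorem is invoked.

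The substance lies in Case II, where $F_x = \{1\}$, the induced module $\hat V$ is the unique one coming from the trivial representation, and $|\mathcal{O}_x| = [F : F_x] = p$. By \cref{thm:[JM] indicator when Fx=1}, $\nu(\hat V) = 1$ precisely when $x^{-1} \in \mathcal{O}_x$ and is $0$ otherwise, so (e) and (f) reduce to the single claim that, for an orbit of size $p$, one has $x^{-1} \in \mathcal{O}_x$ if and only if $\mathcal{O}_x$ contains an element of order two. For the easy direction, if $y \in \mathcal{O}_x$ has order two then the identity $x \lhd 1 = x$ of \cref{prop:facts about matched pair} gives $y^{-1} \lhd 1 = y^{-1} = y$, so $1 \in F_{y^{-1}, y}$; applying \cref{lem:orbit and stab inv equiv} at the basepoint $y$ (noting $\mathcal{O}_y = \mathcal{O}_x$) then shows $\mathcal{O}_x$ is closed under inversion, whence $x^{-1} \in \mathcal{O}_x$.

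The hard part, and the only place where primality is used in an essential rather than formal way, is the converse. I would argue that if $x^{-1} \in \mathcal{O}_x$ then \cref{lem:orbit and stab inv equiv} makes $y \mapsto y^{-1}$ an involutive bijection of the finite set $\mathcal{O}_x$. Since $|\mathcal{O}_x| = p$ is odd, this involution must fix some $y$, so $y^2 = 1$. To finish I rule out $y = 1$: because $\lhd$ is a right action its orbits partition $G$, and the identity $1 \lhd b = 1$ from \cref{prop:facts about matched pair} gives $\mathcal{O}_1 = \{1\}$, so $1 \in \mathcal{O}_x$ would force $|\mathcal{O}_x| = 1 \neq p$. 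Thus the fixed point $y$ has order exactly two, establishing the converse and with it subcases (e) and (f). I expect this fixed-point/parity step to be the crux; every remaining step is a direct appeal to the results already in hand.
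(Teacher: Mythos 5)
Your proposal is correct. Note that the paper does not actually prove this theorem itself --- it is imported from \cite{jm} and stated without proof --- so there is no in-paper argument to compare against; what you have written is a sound reconstruction from the ingredients the paper does supply. The reduction to two cases via the subgroup lattice of $C_p$ and \cref{lem:[KMM] dimension of induced rep}, the disposal of Case I by \cref{thm:JM indicator when Fx=F} together with \cref{thm:indicator on reps of kCn} for the $x=1$ subcase, and the reduction of Case II to \cref{thm:[JM] indicator when Fx=1} are all exactly as intended. You have also correctly isolated the one nontrivial step: for an orbit of odd size $p$, closure under inversion is equivalent to containing an involution, via the fixed point of the inversion involution on $\mathcal{O}_x$ plus the observation that $\mathcal{O}_1=\{1\}$ excludes $y=1$. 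This is the same parity argument the paper itself deploys later in the proof of \cpref{prop:observations}{prop:observations (3)}, and your use of \cref{lem:orbit and stab inv equiv} to pass between ``some $y\in\mathcal{O}_x$ has $y^{-1}=y$'' and ``$x^{-1}\in\mathcal{O}_x$'' is exactly right. I see no gaps.
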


We now cite a slightly revised theorem of \cite{jm}.

\begin{theorem}[\cite{jm}]\label{thm:[JM] counting all irrepts prime p}{\it Consider $J_p=k^G\#kC_p$ for $p>2$ prime. Let $m_{p,1}$ be the number of orbits of size $p$ (so $F_x=\{1\}$) containing at least one involution. Let $m_{p,0}$ be the number of orbits of size $p$ containing no involutions. Then \begin{myenum}
    \item \label{thm:[JM] counting all irrepts prime p (1)} $\displaystyle m_{p,1}=\frac{i_p-1}{p}-1\qquad i_p=\#\{\sigma\in S_n\,|\,\sigma^2=1\}$ 
    \item \label{thm:[JM] counting all irrepts prime p (2)} $\displaystyle m_{p,1}+m_{p,0}=(p-1)\frac{(p-2)!-1}{p}$
    \item \label{thm:[JM] counting all irrepts prime p (3)} There are $pm_{p,1}$ irreps of dimension $p$ with indicator $+1$
    \item \label{thm:[JM] counting all irrepts prime p (4)} There are $p(p-1)$ irreps of dimension $1$ and of these $p+1$ have indicator $+1$
\end{myenum}}
\end{theorem}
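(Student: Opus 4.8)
The plan is to run everything through the parametrization of irreducibles in \cref{thm:[KMM] inducing modules}, using that $|C_p|$ is prime to split into exactly two cases, and then to convert every indicator question into a statement about involutions in $S_p$ and self-inverse double cosets of $C_p$. Since $F=C_p$ has prime order, for each $x\in G=S_{p-1}$ the stabilizer $F_x$ is either all of $C_p$ (singleton orbit, and by \cref{lem:[KMM] dimension of induced rep} an induced module of dimension $[F:F_x]=1$) or trivial (orbit of size $p$, induced module of dimension $p$). So the irreps of $J_p$ are the $1$-dimensional ones attached to $x\in G^F$ together with the $p$-dimensional ones, one per orbit of size $p$. The backbone is the correspondence between $F$-orbits on $G$ and double cosets $C_p\backslash S_p/C_p$: each right coset $C_p\lambda$ has a unique representative in $S_{p-1}$, and the map $C_px\mapsto x$ carries right multiplication by $b\in C_p$ to $x\mapsto x\lhd b$, so orbits match double cosets and the condition $x^{-1}\in\mathcal O_x$ matches the double coset $C_pxC_p$ being self-inverse.

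For the one-dimensional modules I would first pin down $G^F$. An element $x$ lies in $G^F$ iff $xC_px^{-1}=C_p$, so $G^F=N_{S_p}(C_p)\cap S_{p-1}$; writing $N_{S_p}(C_p)=\mathrm{AGL}(1,p)$ as the affine maps $i\mapsto\alpha i+\beta$ on $\mathbb Z/p$, the requirement $x(p)=p$ forces $\beta=0$, so $G^F=\{m_\alpha:i\mapsto\alpha i\}\cong(\mathbb Z/p)^\times$ has order $p-1$. Each such $x$ has $F_x=C_p$ with $p$ characters, giving $p(p-1)$ modules of dimension $1$. Their indicators come straight from \cref{thm:JM indicator when Fx=F}: its first case shows that for $x=1$ only the trivial character of $C_p$ is self-dual (as $p$ is odd), contributing one $+1$; its order-two case applies to the unique involution $m_{-1}$ in $G^F$ and forces indicator $+1$ for all $p$ of its characters; and every remaining $m_\alpha$ has $x^2\neq1$, contributing $0$. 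Summing gives $p+1$.

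For the $p$-dimensional modules I would count orbits. The singleton orbits are exactly the $p-1$ points of $G^F$, so the number of size-$p$ orbits is $((p-1)!-(p-1))/p=(p-1)\frac{(p-2)!-1}{p}$, an integer by Wilson's theorem ($(p-2)!\equiv1\bmod p$); this is $m_{p,1}+m_{p,0}$. Each size-$p$ orbit yields exactly one $p$-dimensional irrep, whose indicator is $+1$ precisely when $x^{-1}\in\mathcal O_x$ by \cref{thm:[JM] indicator when Fx=1}, equivalently (by \cref{thm:[JM] indicator Fx=F=Cp}) when the orbit contains an involution; hence the number of $p$-dimensional irreps with indicator $+1$ is $m_{p,1}$, one per such orbit.

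The hard part—and the genuinely new ingredient needed for $m_{p,1}=\tfrac{i_p-1}{p}-1$—is to tie $m_{p,1}$ to the global count $i_p=\#\{\sigma\in S_p:\sigma^2=1\}$. First I would dispose of the normalizer: among the $p(p-1)$ elements of $N_{S_p}(C_p)$ the square-one elements are the identity together with the $p$ reflections $i\mapsto-i+\beta$ lying in the double coset $C_pm_{-1}$, for a total of $p+1$; all other involutions lie in the size-$p^2$ (``big'') double cosets. The key lemma is that every self-inverse big double coset contains exactly $p$ involutions. To prove it I would use that $C_p\times C_p$ acts freely on $D=C_pxC_p$ by $(a^i,a^j)\cdot g=a^iga^{-j}$, so $D\cong(\mathbb Z/p)^2$ via $(i,j)\leftrightarrow a^ixa^{-j}$; writing $x^{-1}=a^sxa^{-t}$ (possible iff $D$ is self-inverse), the map $g\mapsto g^{-1}$ becomes the affine involution $(i,j)\mapsto(j+s,\,t+i)$, which has $p$ fixed points when $s+t=0$ and none otherwise. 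Then I would show $s+t=0$ is forced: inverting $x^{-1}=a^sxa^{-t}$ gives $x=a^{s+t}xa^{-(s+t)}$, so $x$ commutes with $a^{s+t}$, and if $s+t\neq0$ then $a^{s+t}$ generates $C_p$, forcing $x\in C_{S_p}(C_p)\cap S_{p-1}=\{1\}$, contradicting $x\notin G^F$. Thus each self-inverse big double coset has exactly $p$ involutions and each non-self-inverse one has none, so their number is $(i_p-(p+1))/p$. Finally, because $p$ is odd every involution of $S_p$ fixes a point, so a suitable $C_p$-conjugate of any involution in $D$ fixes $p$ and lands in $\mathcal O_x\subset S_{p-1}$ (with \cref{lem:orbit and stab inv equiv} giving the converse), making ``self-inverse'' coincide with ``orbit contains an involution''; hence $m_{p,1}=(i_p-(p+1))/p=\tfrac{i_p-1}{p}-1$. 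I expect this counting lemma, and in particular verifying the forced relation $s+t=0$, to be the main obstacle, while the rest is orbit bookkeeping and direct appeals to the cited indicator theorems.
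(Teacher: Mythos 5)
Your argument is correct in substance but follows a genuinely different route from the paper's. The paper does not reprove this cited theorem from first principles; it verifies it in \cref{prop:our formula matches p} by specializing its own recursive counting machinery: it writes $m_{p,1}=\sum_{r=1}^{p}\tfrac{1}{r}|X_{p/p,r}|$, where $|X_{p/p,r}|$ counts involutions of $S_{p-1}$ whose orbit contains exactly $r$ involutions (the number of involutions with $r$ fixed points minus the two lying in the normalizer), and then reindexes the sum by $l=\tfrac{p-r}{2}$ to recognize $\tfrac{i_p-1}{p}-1$. You instead work upstairs in $S_p$: you identify $F$-orbits with double cosets $C_p\backslash S_p/C_p$, show that inversion on a self-inverse double coset of size $p^2$ is an affine involution of $(\mathbb{Z}/p)^2$ with exactly $p$ fixed points (after forcing $s+t=0$ from the fact that a $p$-cycle is self-centralizing in $S_p$), and partition the $i_p-(p+1)$ involutions outside $N_{S_p}(C_p)$ into packets of $p$. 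I checked the individual steps --- freeness of the $C_p\times C_p$ action off the normalizer, the fixed-point count, the identification $D\cap S_{p-1}=\mathcal{O}_x$, the use of a fixed point of an involution (which exists since $p$ is odd) to conjugate it into $S_{p-1}$, and the appeal to \cref{lem:orbit and stab inv equiv} for the converse --- and they are sound; your analysis of the $1$-dimensional case via $G^F=N_{S_p}(C_p)\cap S_{p-1}\cong(\mathbb{Z}/p)^{\times}$ also reproduces the $p+1$ out of $p(p-1)$ count. Your route is more structural and arguably cleaner than the paper's summation.

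The one place you do not establish the statement as written is part (3): you conclude there are $m_{p,1}$ irreps of dimension $p$ with indicator $+1$, ``one per orbit,'' whereas the theorem asserts $pm_{p,1}$. This is a convention mismatch rather than a mathematical error, but it must be reconciled. Throughout the paper (see \cref{prop:our formula matches p} and \cref{thm:number of positive indicators of dimension}, where the total number of $t$-dimensional irreps is taken to be $\tfrac{n}{t}|M_{n/t}|$), an ``irrep'' is counted once for every inducing pair $(x,V)$ with $x$ ranging over \emph{all} permutations with the given stabilizer, so each isomorphism class of a $p$-dimensional simple is counted $p$ times, once per element of its orbit. Under that convention each of your $m_{p,1}$ self-inverse big double cosets contributes $p$ copies, giving $pm_{p,1}$. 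You should either adopt this convention explicitly or note that in terms of isomorphism classes the correct count is $m_{p,1}$; as written, your proof of (3) lands a factor of $p$ away from the target.
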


Let us reiterate. In their paper, Jedwab and Montgomery fully describe all indicators for the Hopf algebra $J_p=k^{S_{p-1}}\#kC_p$ where $p$ is an odd prime. Specifically, they describe completely all $x\in G=S_{p-1}$ for which an irrep $\hat{V}$ of $J_p$ induced from the (group) representation $V$ of the group $F_x=(C_p)_x$ has nonzero indicator and further show that all indicators are either $0$ or $1$. In \cref{sec:indicator formula (odd)} and \cref{sec:t=2}, we do the same for $J_n$ for $n$ odd and $n=2$ respectively.

Note that \cite{jm} show that the algebraic dual $(J_n)^*=H_n=\mathbb{C}^{C_n}\#\mathbb{C}S_{n-1}$ is totally orthogonal. That is, every simple $H_n$-module admits an $H_n$-invariant, symmetric, nondegenerate, bilinear form. However, we will see in the next section that the only $n$ for which $J_n$ is totally orthogonal is $n=2.$
 
Thus, the question of describing indicators becomes much more interesting for $J_n$.

\begin{theorem}[\cite{jm}]\label{thm:[JM] indicator of Hn}
{\it Let $H_n=\mathbb{C}^{C_n}\#\mathbb{C}S_{n-1}$ be the algebraic dual of $J_n$. Then if $\hat{V}$ is an irreducible representation of $H_n$ with character $\hat{\chi}$, $\nu(\hat{\chi})=+1.$ }
\end{theorem}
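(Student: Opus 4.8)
The plan is to apply the general indicator formula \cref{thm:[JM] indicator formula} to $H_n = \mathbb{C}^{C_n}\#\mathbb{C}S_{n-1}$, keeping in mind that the two tensor factors play reversed roles compared with $J_n$: here the ``$F$'' of the bismash construction is $S_{n-1}$ and the ``$G$'' is $C_n$, with the matched pair still arising from $S_n = C_nS_{n-1}=S_{n-1}C_n$. By \cref{thm:[KMM] inducing modules}, every irrep $\hat V$ of $H_n$ is induced from an irrep $V$ of a stabilizer $F_x\le S_{n-1}$ for some $x\in C_n$, so it suffices to prove $\nu(\hat\chi)=+1$ for each such $\hat V$. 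The first step is therefore to determine the orbits and stabilizers of the action of $S_{n-1}$ on $C_n$.

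For $x=a^r\in C_n$ and $b\in S_{n-1}$, I would decompose $a^r b=(a^r\rhd b)(a^r\lhd b)$ in $S_{n-1}C_n$ by tracking the image of $n$ exactly as in \cref{prop:[M] Properties of action}. Writing $y'=\lambda a^{-s}$ and forcing $y'(n)=n$, one gets $a^r\lhd b = a^{s}$ with $s\equiv n-b^{-1}(n-r)\pmod n$. The consequences are clean: if $x=1$ then $F_1=S_{n-1}$ and $\mathcal{O}_1=\{1\}$; if $x=a^r\ne 1$ then the condition $a^r\lhd b=a^r$ reads $b(n-r)=n-r$, so $F_x=\mathrm{Stab}_{S_{n-1}}(n-r)\cong S_{n-2}$, while letting $b$ range over $S_{n-1}$ shows $\mathcal{O}_x=C_n\setminus\{1\}$. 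In particular $x^{-1}=a^{n-r}\in\mathcal{O}_x$ for every $x$, so by \cref{lem:orbit and stab inv equiv} and \cref{lem:sizes of stabs and inv stabs} we have $|F_{y^{-1},y}|=|F_x|$ for all $y\in\mathcal{O}_x$.

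I would then split into two cases. When $x=1$, the module $\hat V = V$ is just a simple $S_{n-1}$-module, and \cref{thm:JM indicator when Fx=F (1)} identifies $\nu(\hat\chi)$ with the ordinary Frobenius--Schur indicator of $V$ over $\mathbb{C}S_{n-1}$; this equals $+1$ because every irreducible representation of a symmetric group is realizable over $\mathbb{Q}$ (the classical Frobenius--Young theory), hence is orthogonal. When $x\ne 1$, I would feed the orbit/stabilizer data into \cref{thm:[JM] indicator formula} and evaluate $\hat\chi(p_y\#(y^{-1}\rhd a)a)$ using the induced-character formula coming from \cref{thm:[KMM] inducing modules}, namely the contribution $\sum_{b}\delta_{y\lhd(cb),\,x}\,\chi_V(b^{-1}cb)$ over coset representatives $b$ with $b^{-1}cb\in F_x$. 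The double sum over $(y,a)$ has exactly $(n-1)(n-2)!=(n-1)!=|F|$ terms, so after the $1/|F|$ prefactor the goal is to reorganize it into the Frobenius--Schur indicator of $V$ as a representation of $F_x\cong S_{n-2}$, which is again $+1$ by total orthogonality of the symmetric group.

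The hard part will be exactly this last reduction in the case $x\ne 1$. Because $F=S_{n-1}$ is non-abelian, the simplification \cref{thm:product in stab} is \emph{not} available, so I cannot short-circuit the twist $a\mapsto(y^{-1}\rhd a)a$; instead one must carry it together with the conjugation $b^{-1}(\cdot)b$ in the induced character and verify that the net effect is a genuine sum of $\chi_V$ evaluated on squares in $S_{n-2}$, which is what forces the value $+1$ rather than $-1$. Self-duality (the statement $\nu\ne 0$) comes quickly from $x^{-1}\in\mathcal{O}_x$ combined with the self-duality of symmetric-group irreps, so the real content is symmetry of the invariant form, i.e. ruling out the skew-symmetric case; establishing that the matched-pair twist preserves the orthogonal (as opposed to symplectic) type of the $S_{n-2}$-form is the step I expect to require the most care.
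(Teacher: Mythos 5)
The paper does not actually prove this statement; it is imported verbatim from \cite{jm} with no internal argument, so there is no in-paper proof to compare against. Judged on its own terms, your setup is sound: with the roles of the two factors reversed ($F=S_{n-1}$, $G=C_n$), the decomposition $a^r b = (a^r\rhd b)(a^r\lhd b)$ obtained by tracking the image of $n$ does give $a^r\lhd b=a^{s}$ with $s\equiv n-b^{-1}(n-r)\pmod n$, hence $F_{a^r}=\mathrm{Stab}_{S_{n-1}}(n-r)\cong S_{n-2}$ and $\mathcal{O}_{a^r}=C_n\setminus\{1\}$ for $a^r\neq 1$. So every orbit is closed under inversion, and the $x=1$ case correctly reduces to the classical Frobenius--Schur indicator of an irreducible $S_{n-1}$-representation, which is $+1$ by rationality of symmetric-group characters.

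The gap is that the argument stops exactly where the theorem's content begins. For $x\neq 1$ you state the goal --- reorganize $\sum_{y\in\mathcal{O}_x}\sum_{a\in F_{y^{-1},y}}\hat\chi\bigl(p_y\#(y^{-1}\rhd a)a\bigr)$ into a Frobenius--Schur sum for $V$ over $F_x\cong S_{n-2}$ --- and then explicitly defer it, correctly observing that \cref{thm:product in stab} is unavailable because $S_{n-1}$ is non-abelian. But without that reduction nothing forces the value $+1$: self-duality considerations give at most $\nu\neq 0$, and excluding $-1$ is precisely the nontrivial claim. Closing the gap requires showing that for each $y$ and each $b$ with $y^{-1}\lhd b=y$ the element $(y^{-1}\rhd b)b$ lands in an appropriate conjugate of $F_x$, and that after inserting the induced-character formula of \cref{prop:[JM] character formula} the surviving terms assemble into $\chi_V$ evaluated at squares of elements of $S_{n-2}$; that computation is the substance of the proof in \cite{jm} and is absent here. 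As written, your proposal is an accurate and well-oriented plan, but not yet a proof.
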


\begin{lemma}\label{lem:inv element in orbit means 0 indicator}
{\it Let $x\in S_{n-1}$. If $x^{-1}\notin\mathcal{O}_x$, then $\hat{V}$ has indicator $0$ for any representation $V$ of $F_x$.}
\end{lemma}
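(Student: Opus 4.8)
The plan is to read off the vanishing directly from the indicator formula of \cref{thm:[JM] indicator formula}, by showing that under the hypothesis $x^{-1}\notin\mathcal{O}_x$ every index set that appears in that formula is empty. Recall that the indicator of $\hat\chi$ is
$$\nu(\hat\chi)=\frac{1}{|F|}\sum_{y\in\mathcal{O}_x}\sum_{a\in F_{y^{-1},y}}\hat\chi(p_y\#(y^{-1}\rhd a)a),$$
where the inner sum ranges over $F_{y^{-1},y}=\{a\in F\mid y^{-1}\lhd a=y\}$. Hence it suffices to prove that $F_{y^{-1},y}=\varnothing$ for \emph{every} $y\in\mathcal{O}_x$; then each inner sum is a sum over the empty set, the whole expression collapses to $0$, and the conclusion holds for any $V$, since $V$ enters only through $\hat\chi$, which is never actually evaluated.

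The first step is to convert the orbit hypothesis into a statement about these sets. By \cref{lem:orbit and stab inv equiv} (the equivalence of conditions \ref{lem:orbit and stab inv equiv (1)} and \ref{lem:orbit and stab inv equiv (3)}), the condition $x^{-1}\in\mathcal{O}_x$ is equivalent to $F_{x^{-1},x}\neq\varnothing$; contrapositively, the hypothesis $x^{-1}\notin\mathcal{O}_x$ forces $F_{x^{-1},x}=\varnothing$, i.e. $|F_{x^{-1},x}|=0$.

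The second, and genuinely load-bearing, step is to propagate this emptiness across the whole orbit. A naive appeal to the equivalence \ref{lem:orbit and stab inv equiv (1)} $\iff$ \ref{lem:orbit and stab inv equiv (4)} of \cref{lem:orbit and stab inv equiv} only yields the \emph{existence} of some $y\in\mathcal{O}_x$ with $F_{y^{-1},y}=\varnothing$, which is not enough to annihilate every inner sum. The clean way around this universal-versus-existential gap is the final assertion of \cref{lem:orbit and stab inv equiv}, namely $|F_{y^{-1},y}|=|F_{x^{-1},x}|$ for all $y\in\mathcal{O}_x$. Combined with the first step this gives $|F_{y^{-1},y}|=0$, hence $F_{y^{-1},y}=\varnothing$, for every $y\in\mathcal{O}_x$. (Alternatively, one can note that condition \ref{lem:orbit and stab inv equiv (4)} is orbit-intrinsic: applying \cref{lem:orbit and stab inv equiv} with any $y\in\mathcal{O}_x$ in place of $x$ produces the identical condition \ref{lem:orbit and stab inv equiv (4)}, since $\mathcal{O}_y=\mathcal{O}_x$, so $y^{-1}\in\mathcal{O}_x$ either holds for all $y$ or fails for all $y$; under our hypothesis it fails at $y=x$, hence for all $y$, and then \ref{lem:orbit and stab inv equiv (1)} $\iff$ \ref{lem:orbit and stab inv equiv (3)} yields $F_{y^{-1},y}=\varnothing$.)

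With the emptiness of every $F_{y^{-1},y}$ in hand, the proof concludes by substituting into \cref{thm:[JM] indicator formula}: each inner sum over $a\in F_{y^{-1},y}$ is empty and therefore $0$, whence $\nu(\hat\chi)=0$. I expect the only subtle point to be exactly the universal-versus-existential gap in the second step; recognizing that the size-equality clause of \cref{lem:orbit and stab inv equiv} is precisely what bridges it is the key move, after which the argument is immediate and manifestly independent of the choice of $V$.
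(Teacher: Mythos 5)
Your proof is correct and follows essentially the same route as the paper's: both invoke \cref{lem:orbit and stab inv equiv} to conclude $F_{y^{-1},y}=\varnothing$ for every $y\in\mathcal{O}_x$ and then read off $\nu(\hat\chi)=0$ from the indicator formula of \cref{thm:[JM] indicator formula}. Your explicit handling of the universal-versus-existential point via the size-equality clause is a nice clarification of a step the paper leaves implicit, but it is not a different argument.
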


\no{\it Proof.} From \cref{thm:[JM] indicator formula}, if $F_{y^{-1},y}=\varnothing$, then $\nu(\hat{\chi})=0$. From \cref{lem:orbit and stab inv equiv}, if $x^{-1}\notin\mathcal{O}_x$, then $F_{y^{-1},y}=\varnothing$ for all $y\in\mathcal{O}_x$ and so $\nu(\hat{\chi})=0$. \qed\\

\cref{lem:inv element in orbit means 0 indicator} is a more general version of \cref{thm:[JM] indicator when Fx=1}.

\begin{proposition}[\cite{jm}]\label{prop:[JM] character formula}
{\it Let $L=FG$ as usual and let $V$ be an irreudcible representation of $F_x$ with character $\chi$. Then for $x\in G$, the induced $J=k^G\#kF$-module $\hat{V}$ has character $\hat{\chi}$ which is defined by $$\hat{\chi}(p_y\#a)=\sum_{\substack{b\in T\\\text{ where }b^{-1}ab\in F_x}}\delta_{y\lhd b}(x)\chi(b^{-1}ab)$$ where, the sum is taken over a set of representatives $T$ for the right cosets of $F/F_x$. }
\end{proposition}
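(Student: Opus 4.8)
The plan is to compute $\hat\chi(p_y\#a)$ directly as the trace of the operator $p_y\#a$ acting on $\hat V=kF\otimes_{kF_x}V$, using the explicit action recorded in \cref{thm:[KMM] inducing modules}. First I would fix a transversal $T$ for the cosets in $F/F_x$ and invoke the standard vector-space decomposition of an induced module, $\hat V=\bigoplus_{b\in T}(b\otimes V)$, which holds because the tensor relation $bc\otimes v=b\otimes cv$ for $c\in F_x$ collapses each coset to a single copy of $V$. With respect to this block decomposition the character is the sum over $b\in T$ of the trace of $p_y\#a$ on the diagonal part of its action on $b\otimes V$, since a block contributes to the trace only when it is mapped back to itself.

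Next I would pin down which blocks contribute and compute their diagonal traces. By \cref{thm:[KMM] inducing modules}, $(p_y\#a)\cdot(b\otimes v)=\delta_{y\lhd(ab),x}\,(ab\otimes v)$, so the block $b\otimes V$ is carried into the block indexed by the coset of $ab$. This returns to $b\otimes V$ exactly when $ab\in bF_x$, i.e.\ when $c:=b^{-1}ab\in F_x$; writing $ab=bc$ gives $ab\otimes v=b\otimes cv$, so on this block the operator acts as $v\mapsto\delta_{y\lhd(ab),x}\,(cv)$, whose trace is $\delta_{y\lhd(ab),x}\,\chi(b^{-1}ab)$. Summing over the contributing representatives yields the intermediate formula $\hat\chi(p_y\#a)=\sum_{b\in T,\ b^{-1}ab\in F_x}\delta_{y\lhd(ab),x}\,\chi(b^{-1}ab)$, which is the twisted analogue of the classical Frobenius induced-character formula.

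Finally I would show that the weight $\delta_{y\lhd(ab),x}$ coincides with the claimed $\delta_{y\lhd b}(x)=\delta_{y\lhd b,x}$ under the summation constraint $b^{-1}ab\in F_x$. Writing $ab=bc$ with $c=b^{-1}ab\in F_x$ and using the mixed associativity $y\lhd(bc)=(y\lhd b)\lhd c$ from \cref{prop:facts about matched pair}, I get $y\lhd(ab)=(y\lhd b)\lhd c$. Since $c$ stabilizes $x$ and $F_x$ is a subgroup (so $c^{-1}\in F_x$ as well), we have $(y\lhd b)\lhd c=x\iff y\lhd b=x$: one implication inserts $c$ into the fixed point $x$, the reverse cancels it using $c^{-1}$. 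Hence $\delta_{y\lhd(ab),x}=\delta_{y\lhd b,x}$ everywhere in the sum, and substituting gives the stated formula.

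The argument is essentially bookkeeping around a standard induced-character computation, so I do not expect a deep obstacle; the one genuinely substantive step is the last equivalence $(y\lhd b)\lhd c=x\iff y\lhd b=x$, where it is crucial that $F_x$ is closed under inverses (not merely that $c$ fixes $x$), and it is precisely this that licenses replacing the action-dependent weight $\delta_{y\lhd(ab),x}$ by the cleaner $\delta_{y\lhd b}(x)$. A minor point worth flagging is the coset convention: the decomposition of $kF\otimes_{kF_x}V$ naturally uses the cosets $bF_x$, so $T$ should be read as a transversal for $F/F_x$ in that sense; when $F$ is abelian, which is the case relevant to $J_n$, the left/right distinction is irrelevant and the formula applies verbatim.
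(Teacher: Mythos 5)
Your proof is correct. The paper does not prove this proposition --- it is imported from \cite{jm} as a cited result --- but your argument is the standard induced-character (Frobenius trace) computation adapted to the twisted action of \cref{thm:[KMM] inducing modules}, which is exactly how such a formula is established; the block decomposition, the identification of the diagonal blocks via $b^{-1}ab\in F_x$, and the replacement of $\delta_{y\lhd(ab),x}$ by $\delta_{y\lhd b}(x)$ using $y\lhd(bc)=(y\lhd b)\lhd c$ together with the fact that the stabilizer $F_x$ is a subgroup (so $c^{-1}\in F_x$) are all sound.
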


\begin{lemma}\label{lem:character abelian case}
{\it If $F$ is abelian, then the formula for $\hat{\chi}$ From \cref{prop:[JM] character formula} reduces to $$\hat{\chi}(p_y\#a)=\begin{cases} \chi(a) & \text{ if } y\in\mathcal{O}_x \text{ and } a\in F_x\\
0 & \text{ otherwise}\end{cases}$$}
\end{lemma}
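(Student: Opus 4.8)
The plan is to specialize the general character formula from Proposition~\ref{prop:[JM] character formula} to the abelian setting. In that formula, the character value $\hat\chi(p_y\#a)$ is computed as a sum over a set $T$ of right-coset representatives of $F/F_x$, of terms $\delta_{y\lhd b}(x)\,\chi(b^{-1}ab)$, restricted to those $b$ with $b^{-1}ab\in F_x$. The two simplifications that occur when $F$ is abelian are independent and should be handled separately: first the conjugation $b^{-1}ab$ collapses because $F$ is commutative, and second the Kronecker-delta condition $y\lhd b = x$ forces a clean case split depending on whether $y\in\mathcal{O}_x$.

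First I would use commutativity to rewrite $b^{-1}ab = a$ for every $b\in F$. This has two consequences: the summand $\chi(b^{-1}ab)$ becomes simply $\chi(a)$, which no longer depends on $b$, and the membership condition ``$b^{-1}ab\in F_x$'' becomes simply ``$a\in F_x$''. Thus if $a\notin F_x$ the entire sum is empty (the restriction kills every term) and $\hat\chi(p_y\#a)=0$; this already disposes of the $a\notin F_x$ case regardless of $y$. So I may assume $a\in F_x$, in which case the sum reduces to $\chi(a)\sum_{b\in T}\delta_{y\lhd b}(x)$, and the problem is reduced to counting how many coset representatives $b$ satisfy $y\lhd b = x$.

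Next I would analyze the counting factor $\sum_{b\in T}\delta_{y\lhd b}(x)$ under the standing hypothesis $a\in F_x$. If $y\notin\mathcal{O}_x$, then $x$ is not in the orbit of $y$ under the $\lhd$-action of $F$, so $y\lhd b\neq x$ for every $b\in F$ and the sum vanishes, giving $\hat\chi(p_y\#a)=0$; combined with the previous paragraph this establishes the ``otherwise'' branch in both of its forms. The remaining case is $y\in\mathcal{O}_x$ and $a\in F_x$. Here I need the count $\#\{b\in T : y\lhd b = x\}$ to be exactly $1$. The key point is that since $T$ is a transversal for $F/F_x$ and the stabilizer of any point in the orbit has the same size (the orbit-stabilizer correspondence, using that $F_y$ is a conjugate/translate of $F_x$ of equal order), exactly one coset of $F_x$ maps $y$ to $x$ under $\lhd$, so exactly one representative $b\in T$ satisfies $y\lhd b=x$. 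That yields $\hat\chi(p_y\#a)=\chi(a)$, matching the claimed formula.

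The main obstacle I anticipate is making the last counting step fully rigorous, namely verifying that precisely one coset representative satisfies $y\lhd b = x$ rather than some other count. This requires knowing that the elements $b$ with $y\lhd b = x$ form a single right coset of $F_x$: if $y\lhd b_1 = y\lhd b_2 = x$, then using property~\ref{prop:facts about matched pair (4)} one shows $x\lhd(b_1^{-1}b_2)$-type relations forcing $b_1^{-1}b_2\in F_x$ (here abelianness again lets me pass freely between left and right translates), so the solution set is a coset, and nonemptiness follows from $y\in\mathcal{O}_x$. Checking this coset structure against the definition of $F_x$ in Definition~\ref{def:orbits and stabs}\ref{def:orbits and stabs (2)}, together with the fact that $T$ contains exactly one representative per coset, closes the argument. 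I do not expect the commutativity simplification to pose any difficulty; the delicacy is entirely in confirming the multiplicity is one.
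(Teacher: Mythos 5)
Your proposal is correct and follows essentially the same route as the paper's proof: commutativity trivializes the conjugation so the condition becomes $a\in F_x$ and the summand becomes $\chi(a)$, the delta term vanishes when $y\notin\mathcal{O}_x$, and in the remaining case the set $\{b : y\lhd b = x\}$ is shown to be a single coset of $F_x$ via \cpref{prop:facts about matched pair}{prop:facts about matched pair (4)}, so exactly one transversal representative contributes. The coset-counting step you flag as the delicate point is exactly the step the paper spells out, and your sketch of it is sound.
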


\no{\it Proof.} Fix the element $p_y\#a$. Consider the following cases.

\begin{myenum}
    \item First, if $y\notin\mathcal{O}_x$, then there does not exist any $b\in F$ for which $y\lhd b=x$. Namely, $\delta_{y\lhd b}(x)=0$ for all $b\in F$ and so $\hat{\chi}(p_y\#a)=0$.
    \item Second, if $y\in\mathcal{O}_x$ but $a\notin F_x$, then there is no $b\in F$ for which $bab^{-1}=bb^{-1}a=a\notin F_x$ ($F$ is abelian), so there is no sum and thus $\hat{\chi}(p_y\#a)=0$. 
    \item Now, if $y\in\mathcal{O}_x$ and $a\in F_x$, then by \cref{prop:[JM] character formula}, we claim that there is only one coset of $F/F_x$ for which $y\rhd b=x$.
    
    First, since $y\in\mathcal{O}_x=\{x\rhd b\,|\,b\in F\}$ there exists a $b\in F$ so $x\lhd b=y$. Namely, $x=y\lhd b^{-1}$ and so there is at least one coset ($b^{-1}F_x$) of $F/F_x$ whose representative satisfies that $\delta_{y\lhd b^{-1}}(x)=1$.
    
    Now, assume $c\in F$ is another element such that $y\lhd c=x$ (and so equivalently $x\lhd c^{-1}=y$). Then $$y=x\lhd c^{-1}=x\lhd b\implies x=x\lhd bc.$$ Thus, $bc\in F_x$ so $c\in b^{-1}F_x$ (keep in mind $F$ is abelian). 
    
    Thus, there is exactly one coset for which $\delta_{y\lhd b^{-1}}(x)\not=0$.
\end{myenum}
Finally, we obtain the result $$\hat{\chi}(p_y\#a)=\begin{cases} \chi(a) & \text{ if } y\in\mathcal{O}_x \text{ and } a\in F_x\\
0 & \text{ otherwise}\end{cases} \eqno\qed$$

\begin{theorem}\label{thm:indicator on reps of kCn}
{\it Let $C_n=\langle a\rangle$. Let $k=\mathbb{C}$ and $\zeta_n\in k$ be a primitive $n^{\text{th}}$-root of unity. Let $V_i$ be a simple $kC_n$-module equivalent to the irreducible (group) representation $\rho_i$ of $C_n$ defined by \begin{align*}
    \rho_i:C_n&\to\mathbb{C}^\times\\
    a&\mapsto \zeta_n^i.
\end{align*} Then, for any $0\le i<n$, $$\nu(V_i)=\begin{cases} +1 & \text{ if }n|2i\\
0 & \text{ otherwise} \end{cases}$$

Finally, this gives us that for $n$ even, all the representations have indicator $0$ except for exactly the trivial representation and the representation defined by $\rho_i(a)=(\zeta_n)^{\frac{n}{2}}$ both of which have indicator $+1$. 

And for $n$ odd, only the trivial representation has indicator $+1$ and the rest have indicator $0.$ }
\end{theorem}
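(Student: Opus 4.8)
The plan is to reduce the Hopf-algebraic indicator of $V_i$ to the classical Frobenius-Schur average over $C_n$ and then evaluate a single geometric sum of roots of unity. Since $kC_n$ is a semisimple Hopf algebra whose comultiplication is group-like, $\Delta(a^j)=a^j\otimes a^j$, and its unique two-sided integral normalized by $\varepsilon(\Lambda)=1$ is $\Lambda=\frac1n\sum_{j=0}^{n-1}a^j$, the second Sweedler power is $(m\circ\Delta)(\Lambda)=\frac1n\sum_{j=0}^{n-1}a^{2j}$. First I would record that the defining indicator formula $\nu(\chi)=\chi\big((m\circ\Delta)(\Lambda)\big)$ therefore collapses to the familiar group expression $\nu(\chi_i)=\frac1n\sum_{j=0}^{n-1}\chi_i(a^{2j})$, so the indicator of $V_i$ as a $kC_n$-module coincides with the ordinary Frobenius-Schur indicator of the one-dimensional character $\chi_i$.

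Next I would substitute the character values. Because $\rho_i(a)=\zeta_n^i$, we have $\chi_i(a^{j})=\zeta_n^{ij}$, and hence $\nu(\chi_i)=\frac1n\sum_{j=0}^{n-1}\zeta_n^{2ij}=\frac1n\sum_{j=0}^{n-1}(\zeta_n^{2i})^j$. The whole computation then rests on recognizing this as a geometric series whose ratio $\zeta_n^{2i}$ is an $n$th root of unity. When $n\mid 2i$ we have $\zeta_n^{2i}=1$, every summand equals $1$, and the average is $1$; when $n\nmid 2i$ the ratio is a nontrivial $n$th root of unity, so $\sum_{j=0}^{n-1}(\zeta_n^{2i})^j=\frac{(\zeta_n^{2i})^n-1}{\zeta_n^{2i}-1}=0$, giving $\nu(\chi_i)=0$. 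This establishes the dichotomy $\nu(V_i)=1$ if $n\mid 2i$ and $\nu(V_i)=0$ otherwise.

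Finally I would translate the condition $n\mid 2i$ on the range $0\le i<n$ into the explicit lists claimed. For $n$ even, $n\mid 2i$ is equivalent to $\tfrac n2\mid i$, so within the range the only solutions are $i=0$ (the trivial module) and $i=\tfrac n2$ (the character $a\mapsto\zeta_n^{n/2}$), both with indicator $+1$ and all others $0$. For $n$ odd, $\gcd(2,n)=1$ forces $n\mid 2i\iff n\mid i$, so $i=0$ is the unique solution and only the trivial module has indicator $+1$.

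I do not anticipate a genuine obstacle here; the one point requiring care is the passage from the Hopf integral to the group average, i.e.\ verifying that the group-likeness of $kC_n$ makes the abstract indicator $\chi\big((m\circ\Delta)(\Lambda)\big)$ agree with the classical Frobenius-Schur expression, after which the root-of-unity sum is entirely routine.
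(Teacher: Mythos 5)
Your proposal is correct and follows essentially the same route as the paper: both reduce to the classical average $\nu(\chi_i)=\frac1n\sum_{j=0}^{n-1}\chi_i(a^{2j})$, evaluate the geometric sum $\frac1n\sum_j\zeta_n^{2ij}$, and translate $n\mid 2i$ into $i=0$ or $i=\tfrac n2$. The only difference is that you explicitly derive the group formula from the Hopf integral $\Lambda=\frac1n\sum_j a^j$ and the group-like comultiplication, a step the paper takes for granted.
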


\no{\it Proof.} Let $\chi_i$ be the character of $V_i$. First, note that $V_i\cong k$ and so writing $\rho_i(a)=\zeta_n^i$ makes sense and so Tr$(\rho_i(a))=\zeta_n^i$. 

Since $C_n=\langle a\rangle$, we can write \begin{align*}
    \nu(V_i)&=\frac{1}{|C_n|}\sum_{b\in C_n}\chi_i(b^2)=\frac{1}{|C_n|}\sum_{t=0}^{n-1}\chi_i((a^t)^2)\\
    &=\frac{1}{n}\sum_{t=0}^{n-1}\text{Tr}(\rho_i(a^{2t}))=\frac{1}{n}\sum_{t=0}^{n-1}\rho_i(a)^{2t}\\
    &=\frac{1}{n}\sum_{t=0}^{n-1}(\zeta_n^i)^{2t}=\frac{1}{n}\sum_{t=0}^{n-1}\zeta_n^{2it}\\
    &=\begin{cases}
    1 & \text{ if }\zeta_n^{2i}=1\\
    \frac{1}{n}\cdot\frac{\zeta_n^{2ni}-1}{\zeta_n^{2i}-1} & \text{ if }\zeta_n^{2i}\not=1
    \end{cases}\\
    &=\begin{cases}
    1 & \text{ if }\zeta_n^{2i}=1\\
    0 & \text{ if }\zeta_n^{2i}\not=1
    \end{cases}\\
\end{align*}  

Recall that for any $w\not=1$, $\displaystyle \frac{w^n-1}{w-1}=\sum_{t=0}^{n-1}w^t$. Since $\zeta_n$ is a primitive $n^{\text{th}}$-root of unity, $\zeta_n^{2i}=1$ if and only if $n|2i$.

Note that since $i<n$, $2i<2n$ and so $n|2i$ if and only if $n=2i$ (so $i=\frac{n}{2}$) or $i=0$. Namely, the trivial representation always has indicator $+1$, and there is a non-trivial representation with nonzero indicator if and only if $n$ is even.
\qed\\

\section{\bf Reinterpreting the Action}\label{sec:reinterpretting the action}

Consider $x\in S_n$ such that $x(n)=n$. Then, letting $C_n=\langle a\rangle$ where $a=(1\pp2\pp\cdots\pp n)$ is the standard $n$-cycle, we can multiply powers of $a$ with $x$ without issue.

Now, we note that $S_n\cong Bij(\mathbb{Z}/n\mathbb{Z})$ which is the group of bijections of $\mathbb{Z}/n\mathbb{Z}$ where permutation multiplication is now viewed as function composition. By viewing $a$ as a shifting map which sends $i$ to $i+1$, we can consider how the action of $C_n$ on $S_{n-1}$ affects each permutation $x$ pointwise.

Using this notation, we will often describe addition and multiplication of the outputs of $x$ and its shifts. This makes sense using the ring structure of $\mathbb{Z}/n\mathbb{Z}$. Unless otherwise stated, all arithmetic in this section will be done modulo $n$.

\subsection{Reinterpreting Orbits and Stabilizers}\label{sec:reinterpret orb set}

\begin{proposition}\label{prop:for python easier formulas}
{\it Let $L=FG=C_nS_{n-1}$ where $F=C_n=\langle a\rangle$ and $G=S_{n-1}$. consider the actions $\lhd$ and $\rhd$ of $F$ and $G$ on each other as from \cref{sec:Group Action Results} and let $x\in S_{n-1}$. Then,
\begin{myenum}
    \item \label{prop:for python easier formulas (1)} Let $a^r\in C_n$. Then $x\lhd a^r = a^{-x(r)}xa^r$ and $x\rhd a^r = a^{x(r)}$;
    \item \label{prop:for python easier formulas (2)} $F_x=\{a^t\,|\,a^{-x(t)}xa^t=x\}=\{a^t\,|\, x(u+t)-x(t)=x(u)$ for $u=1,...,n\}$;
    \item \label{prop:for python easier formulas (3)} $F_{x^{-1},x}=\{a^s\, |\, x^{-1}(u+s)-x^{-1}(s) =x(u) \text{ for }i=1,...,n\}$
    \item \label{prop:for python easier formulas (4)} $\mathcal{O}_x = \{a^{-x(l)}xa^l\,|\, a^l\in C_n\}$;
\end{myenum}
}
\end{proposition}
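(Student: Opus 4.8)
The plan is to reduce the whole proposition to part (1), since the three identities in parts (2)--(4) are nothing more than the defining conditions of $F_x$, $F_{x^{-1},x}$, and $\mathcal{O}_x$ from \cref{def:orbits and stabs}, with the explicit form of $x\lhd a^r$ substituted in and then unwound pointwise. So the genuine content lies in computing $x\lhd a^r$ and $x\rhd a^r$ explicitly, and everything else is mechanical.

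For part (1), the key observation is that the matched-pair factorization $xa^r=(x\rhd a^r)(x\lhd a^r)$ furnished by \cref{lem:factorizable is matched pair} is exactly the unique $C_nS_{n-1}$ decomposition of \cref{prop:[M] Properties of action}: here $x\rhd a^r\in F=C_n$ sits on the left and $x\lhd a^r\in G=S_{n-1}$ on the right, matching the shape $\lambda=a^{r'}x'$. Applying that proposition to $\lambda=xa^r$, I would set $r'=(xa^r)(n)$ and read off $x\rhd a^r=a^{r'}$ and $x\lhd a^r=a^{-r'}xa^r$. It then remains only to evaluate $r'$: viewing $a$ as the shift $i\mapsto i+1$ on $\mathbb{Z}/n\mathbb{Z}$ gives $a^r(n)=r$, and composing as functions yields $r'=(xa^r)(n)=x(a^r(n))=x(r)$. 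This produces $x\rhd a^r=a^{x(r)}$ and $x\lhd a^r=a^{-x(r)}xa^r$, which is exactly part (1). (As a sanity check, $a^{-x(r)}xa^r$ indeed fixes $n$, so it lies in $S_{n-1}$ as required.)

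With part (1) established, parts (2)--(4) follow by translating an equality of permutations into a pointwise identity in $\mathbb{Z}/n\mathbb{Z}$. For part (2), $a^t\in F_x$ means $a^{-x(t)}xa^t=x$, equivalently $xa^t=a^{x(t)}x$; evaluating both sides at $u$ gives $x(u+t)=x(u)+x(t)$, i.e.\ $x(u+t)-x(t)=x(u)$ for all $u$. For part (3), applying part (1) to $x^{-1}$ (which also fixes $n$), the condition $x^{-1}\lhd a^s=x$ becomes $x^{-1}a^s=a^{x^{-1}(s)}x$, which evaluated at $u$ reads $x^{-1}(u+s)-x^{-1}(s)=x(u)$. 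Part (4) is immediate from the definition of the orbit together with part (1): $\mathcal{O}_x=\{x\lhd a^l : a^l\in C_n\}=\{a^{-x(l)}xa^l : a^l\in C_n\}$.

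The only delicate step is the shift bookkeeping in part (1): getting $a^r(n)=r$ correct under the identification $S_n\cong\mathrm{Bij}(\mathbb{Z}/n\mathbb{Z})$, keeping the function-composition order straight, and confirming that the matched-pair product really coincides with the $C_nS_{n-1}$ factorization of \cref{prop:[M] Properties of action}. Once that convention is pinned down, the remaining work is routine substitution, so I expect the main obstacle to be notational rather than mathematical.
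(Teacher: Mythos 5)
Your proposal is correct and follows essentially the same route as the paper: part (1) is obtained by matching the unique $C_nS_{n-1}$ factorization of \cref{prop:[M] Properties of action} against the matched-pair decomposition of \cref{lem:factorizable is matched pair}, and parts (2)--(4) are then just the definitions from \cref{def:orbits and stabs} unwound pointwise. If anything, your write-up supplies more detail for part (1) than the paper does (the paper merely cites the two results), and your computation $r'=(xa^r)(n)=x(r)$ and the check that $a^{-x(r)}xa^r$ fixes $n$ are both correct.
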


\begin{proof} Note that all arithmetic is done modulo $n$. That is, when we write $x(u+t)-x(t)=x(u)$ we really mean $(x(u+t\mod n)-x(t))\mod n=x(u)$.

\begin{myenum}
    \item \cref{prop:[M] Properties of action} tells us that $S_n$ is factorizable with subgroups $F=C_n$ and $G=S_{n-1}$ and \cref{lem:factorizable is matched pair} tells us how we can obtain a matched pair from $F$ and $G$.
\item Recall \cpref{def:orbits and stabs}{def:orbits and stabs (2)}. From (1) \begin{align*}
    F_x&=\{a^t\in C_n \, |\, x\lhd a^t=x\}\\
    &=\{a^t\in C_n\,|\,a^{-x(t)}xa^t=x\}\\
    &=\{a^t\in C_n\,|\,(a^{-x(t)}xa^t)(u)=x(u)\text{ for all }u=1,...,n\}\\
    &=\{a^t\in C_n \, |\, x(u+t)-x(t)=x(u)\text{ for all}u=1,...,n\}\
\end{align*}
\item Recall \cpref{def:orbits and stabs}{def:orbits and stabs (3)}. Then, by similar reasoning as (2), \begin{align*}
    F_{x^{-1},x}&=\{a^s\in C_n \, |\, x^{-1}\lhd a^s=x\}\\
    &=\{a^s\in C_n\,|\,a^{-x^{-1}(s)}x^{-1}a^s=x\}\\
    &=\{a^s\in C_n\,|\,(a^{-x^{-1}(s)}x^{-1}a^s)(u)=x(u)\text{ for all }u=1,...,n\}\\
    &=\{a^s\in C_n \, |\, x^{-1}(u+s)-x^{-1}(s)=x(u)\text{ for all}u=1,...,n\}\
\end{align*}
\item Recall \cpref{def:orbits and stabs}{def:orbits and stabs (1)}. Again from (1) \begin{align*}
    \mathcal{O}_x&=\{x\lhd a^l\,|\, a^l\in C_n\}\\
    &=\{a^{-x(l)}xa^l\,|\, \text{ for }l=1,...,n\}\qedhere
\end{align*}
\end{myenum}
\end{proof}

Let us consider an example.

\begin{example}\label{ex rewriting permutation as a bijection}$\,$

Let $n=6$ and $x=(1\pp 2)(3\pp4\pp5)$. Then, using permutation multiplication we get that $x\lhd a=a^{-x(1)}xa=a^{-2}xa=a^4xa$. Since $a=(1\pp2\pp3\pp4\pp5\pp6)$, direct computation by multiplying cycles gives that $x\lhd a=(1\pp5\pp4)$.

However, instead of multiplying permutations, this result can also be seen by understanding $x\lhd a$ pointwise. That is, using ideas from the proof of \cpref{prop:for python easier formulas}{prop:for python easier formulas (1)}, $$(x\lhd a)(i)=(a^4xa)(i)=x(i+1)+4.$$ This gives us a formula to write out the product as a bijection
\begin{center}
    \begin{minipage}{0.3\textwidth}
    \begin{align*}
    1&\mapsto x(2)+4=5\\
    2&\mapsto x(3)+4=2\\
    3&\mapsto x(4)+4=3\\
\end{align*}
\end{minipage}\hspace{0.5cm}\begin{minipage}{0.3\textwidth}
    \begin{align*}
    4&\mapsto x(5)+4=1\\
    5&\mapsto x(6)+4=4\\
    6&\mapsto x(1)+4=6\\
\end{align*}
\end{minipage}
\end{center} which is, of course, the same permutaiton we described via direct computation.

\end{example}

\begin{proposition}\label{prop:equivalence of sets}
{\it Let $x\in S_{n-1}.$ Let $F_x=\langle a^t\rangle$. Then \begin{myenum}
    \item \label{prop:equivalence of sets (1)} $a^{-x(l)}xa^l=a^{-x(s)}xa^s$ if and only if $x(s-l)=x(s)-x(l)$ and $s=l\mod t$ 
    \item \label{prop:equivalence of sets (2)} $\mathcal{O}_x=\{a^{-x(l)}xa^l\,|\,l=1,...,t\}$ and so if $y=a^{-x(l)}xa^l\in\mathcal{O}_x$, then $y(u)=x(u+l)-x(l)$ for all $u.$ Specifically, $F_y=F_x$ and $y(t)=x(t).$
    \item \label{prop:equivalence of sets (33)} If $x^{-1}=a^{-x(s)}xa^s\in\mathcal{O}_x$. Then $x(s)+s$ is a fixed point of $x$ and $x(s)+s=ut$ for some $u$.
    \item \label{prop:equivalence of sets (44)} If $x^{-1}=a^{-x(s)}xa^s\in\mathcal{O}_x$, and we let $y\in\mathcal{O}_x$ with $y=a^{-x(l)}xa^l$. Then $$F_{y^{-1},y}=\{a^{mt+l-s-x(l)}\,|\,m=0,1,...,\frac{n}{t}-1\}$$
\end{myenum}}
\end{proposition}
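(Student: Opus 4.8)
The plan is to treat every permutation as a bijection of $\mathbb{Z}/n\mathbb{Z}$ fixing $0$ (the condition $x(n)=n$ becomes $x(0)=0$), and to run everything through the pointwise formulas of \cref{prop:for python easier formulas}: $y=a^{-x(l)}xa^l$ acts by $y(u)=x(u+l)-x(l)$, the set $F_x=\langle a^t\rangle$ is cut out by $x(u+t)-x(t)=x(u)$ for all $u$, and similarly for $F_{x^{-1},x}$. I will also use that $\lhd$ is a genuine right action of $F=C_n$ on the set $G=S_{n-1}$ (it satisfies $x\lhd bc=(x\lhd b)\lhd c$ and $x\lhd 1=x$ by \cref{prop:facts about matched pair}), so orbit--stabilizer applies and, $F$ being abelian, all points of one orbit share a stabilizer; this gives $F_y=F_x$ for free. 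Part~(1) is the elementary fact everything else rests on: reading $a^{-x(l)}xa^l=a^{-x(s)}xa^s$ pointwise gives $x(u+l)-x(l)=x(u+s)-x(s)$ for all $u$, and after the substitution $v=u+l$ this becomes ``$x(v+(s-l))-x(v)$ is the constant $x(s)-x(l)$''. Evaluating at $v=0$ and using $x(0)=0$ extracts $x(s-l)=x(s)-x(l)$; feeding that back identifies the constant as $x(s-l)$, so the identity is exactly the stabilizer condition $a^{s-l}\in F_x=\langle a^t\rangle$, i.e. $s\equiv l\pmod t$. Both implications traverse this chain. (Note $s\equiv l\pmod t$ already forces $x(s-l)=x(s)-x(l)$, so the two displayed conditions are not independent; both are recorded because later parts invoke each separately.)

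Part~(2) is then bookkeeping: by part~(1) the representatives $a^{-x(l)}xa^l$ coincide precisely when the $l$'s agree mod $t$, so $l=1,\dots,t$ is a complete list, consistent with $|\mathcal{O}_x|=[F:F_x]=t$. The formula $y(u)=x(u+l)-x(l)$ is read directly from \cref{prop:for python easier formulas}, the equality $F_y=F_x$ is the abelian orbit--stabilizer remark above, and $y(t)=x(t)$ falls out of putting $u=l$ into the stabilizer relation $x(u+t)=x(u)+x(t)$ that holds because $a^t\in F_x$.

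The substance is in parts~(3) and~(4). For part~(3) I would first record two functional equations drawn from the hypothesis $x^{-1}=a^{-x(s)}xa^s$: the hypothesis itself reads $x^{-1}(u)=x(u+s)-x(s)$, whence applying $x$ gives $x(x(w)+s)=w+x(s)$ for all $w$; inverting the hypothesis to $x=a^{-s}x^{-1}a^{x(s)}$ gives $x^{-1}(u+x(s))=x(u)+s$. Setting $w=s$ in the first equation yields $x(x(s)+s)=x(s)+s$, so $p:=x(s)+s$ is a fixed point of $x$. For the sharper claim $t\mid p$, I would prove $x(u+p)=x(u)+p$ for all $u$: since $u+p=x\bigl(x^{-1}(u+x(s))\bigr)+s$, the first equation turns $x(u+p)$ into $x^{-1}(u+x(s))+x(s)$, which by the second equation equals $x(u)+s+x(s)=x(u)+p$. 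Because $x(p)=p$, the identity $x(u+p)=x(u)+p$ is precisely the statement $a^p\in F_x=\langle a^t\rangle$, so $p=ut$. This interlocking of the forward and inverse functional equations is the step I expect to be the main obstacle; the fixed-point claim is easy, but pinning down divisibility by $t$ genuinely requires using both equations together.

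Finally, I would reduce part~(4) back to part~(1). From $y=a^{-x(l)}xa^l$ I get $y^{-1}(u)=x^{-1}(u+x(l))-l$, and substituting $x^{-1}(w)=x(w+s)-x(s)$ gives $y^{-1}(u)=x(u+x(l)+s)-x(s)-l$. Plugging this into the defining condition $y^{-1}(u+r)-y^{-1}(r)=y(u)$ for $a^r\in F_{y^{-1},y}$, the constants $-x(s)-l$ cancel in the difference and the condition collapses to $x(u+q)-x(q)=x(u+l)-x(l)$ for all $u$, where $q=r+x(l)+s$. This is exactly the part~(1) criterion comparing the orbit representatives indexed by $q$ and $l$, so it holds iff $q\equiv l\pmod t$, i.e. $r\equiv l-s-x(l)\pmod t$. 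Enumerating this residue class inside $\mathbb{Z}/n\mathbb{Z}$ produces the $n/t$ exponents $mt+l-s-x(l)$ for $m=0,\dots,\tfrac{n}{t}-1$, as claimed. As consistency checks, this set has size $n/t=|F_x|$ in agreement with \cref{lem:sizes of stabs and inv stabs}, and it is nonempty as guaranteed by \cref{lem:orbit and stab inv equiv} since $x^{-1}\in\mathcal{O}_x$.
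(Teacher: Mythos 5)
Your proposal is correct, and its overall skeleton --- reducing every claim to the pointwise identities of \cref{prop:for python easier formulas} and then to the stabilizer criterion of part (1) --- is the same as the paper's. A few local steps differ. In part (1) you extract $x(s-l)=x(s)-x(l)$ by evaluating at $0$ (i.e.\ at $n$), where the paper evaluates at $t$ and uses the stabilizer relation; your added remark that $s\equiv l\pmod t$ already forces $x(s-l)=x(s)-x(l)$ is correct and not made explicit in the paper. In part (2) you get $F_y=F_x$ from the general fact that stabilizers along an orbit of an action of an abelian group coincide, which is cleaner than the paper's route of computing $a^t\in F_y$ directly and then invoking $\mathcal{O}_x=\mathcal{O}_y$ for the reverse containment. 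The real divergence is in part (3): the paper obtains $a^{x(s)+s}\in F_x$ in one line from the group-element identity
$$a^{-(x(s)+s)}xa^{x(s)+s}=a^{-s}\bigl(a^{-x(s)}xa^{s}\bigr)a^{x(s)}=a^{-s}x^{-1}a^{x(s)}=\bigl(a^{-x(s)}xa^{s}\bigr)^{-1}=(x^{-1})^{-1}=x,$$
using that $x(x(s)+s)=x(s)+s$ was just established, whereas you derive $x(u+p)=x(u)+p$ for all $u$ by interlocking the forward and inverted functional equations. Both are valid; the paper's is shorter, yours makes the pointwise mechanism explicit and avoids manipulating group elements. Part (4) matches the paper's argument essentially verbatim, with the reduction to the part (1) criterion carried out pointwise rather than at the level of orbit representatives.
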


\no{\it Proof.} First, note that \cpref{prop:for python easier formulas}{prop:for python easier formulas (2)} tells us that if $a^t\in F_x$, then $x(t+u)=x(t)+x(u)$ for all $u.$

\begin{myenum}
    \item \begin{enumerate}
        \item[$\implies$] Let $a^{-x(l)}xa^l=a^{-x(s)}xa^s$ for some $l$ and $s$. Then $x=a^{x(l)-x(s)}xa^{s-l}$. Now, $$x(t)=(a^{x(l)-x(s)}xa^{s-l})(t)=x(t+s-l)+x(l)-x(s)=x(t)+x(s-l)+x(l)-x(s)$$ and so $$-x(s-l)=x(l)-x(s).$$ Thus, \begin{align*}
    x&=a^{x(l)-x(s)}xa^{s-l}\\
    &=a^{-x(s-l)}xa^{s-l}
\end{align*} so $a^{s-l}\in F_x$ so there exists a $p$ such that $s-l=pt$ so $s=l\mod t$.
\item[$\impliedby$] Assume there exists $s,l$ so $x(s-l)=x(s)-x(l)$ and $s=l\mod t$. Let $s-l=pt$ for some $p$. Then, since $a^{-pt}=(a^t)^{-p}\in F_x$, \cpref{prop:for python easier formulas}{prop:for python easier formulas (2)} gives that $a^{-x(-pt)}xa^{-pt}=x$. Using the substitution $l=s-pt$, routine computation gives that $a^{-x(l)}xa^l=a^{-x(s)}xa^s$ as desired.
    \end{enumerate}

\item From \cref{prop:for python easier formulas} we know that $\mathcal{O}_x=\{a^{-x(l)}xa^l\,|\,l=1,...,n\}$. Let $y\in\mathcal{O}_x$. Then $y=a^{-x(l)}xa^l$ for some $l$. Thus, $$y(t)=(a^{-x(l)}xa^l)(t)=x(t+l)-x(l)=x(t)+x(l)-x(l)=x(t).$$ Furthermore,  $$a^{-y(t)}ya^t=a^{-x(t)}ya^t=a^{-x(t)}(a^{-x(l)}xa^l)a^t=a^{-x(l)}a^{-x(t)}xa^ta^l=a^{-x(l)}xa^l=y.$$ Therefore, $a^t\in F_y$ and so $F_x\subset F_y$.

However, $\mathcal{O}_x=\mathcal{O}_y$ by definition of group actions the choice of representative does not change the orbit and so this shows that stabilizers are contained within each other. Thus, $F_x=F_y$ for all $y\in\mathcal{O}_x.$

Finally, let $$\mathcal{O}_x=\{x,y_1,y_2,...,y_{n-1}\}$$ where $y_l=a^{-x(l)}xa^l$. Now, since the orbit stabilizer theorem tells us that $|\mathcal{O}_x|=\frac{|F|}{|F_x|}=\frac{n}{\frac{n}{t}}=t$ If $n=t$, then there is nothing to show. However, if $t<n$, then consider $p=ut+q$ for some $q<t$. Then inductively, $x(ut+q)=x(ut)+x(q)$ and so $x(ut+q-q)=x(ut)=x(ut+q)-x(q)$. Clearly $ut+q=q\mod t$ and so (1) gives us that $y_p=y_q$. Therefore, $$\mathcal{O}_x=\{x,y_1,y_2,...,y_{n-1}\}=\{x,y_1,...,y_{t-1},x,y_1,... y_{t-1},...\}=\{x,y_1,...,y_{t-1}\}.$$ That is, $\mathcal{O}_x=\{a^{-x(l)}xa^l\,|\,l=1,...,t\}$ 

\item Let $x^{-1}=a^{-x(s)}xa^s$ for some $s\le t$. Then we have that $$x^{-1}(u)=x(u+s)-x(s)\qquad\text{ for all }u.$$ Namely, $x(x(l)+s)=x^{-1}(x(l))+x(s)=l+x(s)$ and so Then $$s=x^{-1}(x(s))=x(x(s)+s)-x(s)\implies x(x(s)+s)=x(s)+s$$ so $x(s)+s$ is a fixed point of $x.$ Furthermore, \begin{align*}
    a^{-x(x(s)+s)}xa^{x(s)+s}&=a^{-(x(s)-s)}xa^{x(s)+s}=a^{-s}(a^{-x(s)}xa^s)a^{x(s)}\\
    &=a^{-s}x^{-1}a^{x(s)}=(a^{-x(s)}xa^s)^{-1}\\
    &=(x^{-1})^{-1}=x
\end{align*} so $a^{x(s)+s}\in F_x$ so $x(s)+s=ut$ for some $1\le u\le \frac{n}{t}$.

\item Let $x^{-1}=a^{-x(s)}xa^s$ and $y\in\mathcal{O}_x$ with $y=a^{-x(l)}xa^l$. Then \begin{align*}
    y^{-1}&=(a^{-x(l)}x^l)^{-1}=a^{-l}x^{-1}a^{x(l)}\\
    &=a^{-l}a^{-x(s)}xa^sa^{x(l)}=a^{-l-x(s)}xa^{x(l)+s}
\end{align*}

Note that (3) gives that $y^{-1}=a^{-x(x(l)+s)}xa^{x(l)+s}.$

Next, let $a^r\in F_{y^{-1},y}$. Note that since we are assuming that $x^{-1}\in\mathcal{O}_x$, we know that $F_{y^{-1},y}$ is nonempty and has the same size as $F_x$ from \cref{lem:orbit and stab inv equiv}.

Then, \begin{align*}
    y&=a^{-x(l)}xa^l=y^{-1}\lhd a^r\\
    &=a^{-y^{-1}(r)}y^{-1}a^r=a^{-y^{-1}(r)}(a^{-x(x(l)+s)}xa^{x(l)+s})a^r\\
    &=a^{-x(r+x(l)+s)+x(x(l)+s)}a^{-x(x(l)+s)}xa^{x(l)+s}a^r=a^{-x(x(l)+s+r)}xa^{x(l)+s+r}\\
\end{align*}

Thus, we have that $$a^{-x(l)}xa^l=a^{-x(x(l)+s+r)}xa^{x(l)+s+r}$$ and so from (1), there exists a $p$ such that $x(l)+s+r-l=pt$. Namely, $a^r=a^{pt+l-x(l)-s}$.

Conversely, fix $1\le m\le \frac{n}{t}$ and let $r=mt+l-x(l)-s$. Then $$y^{-1}(r)=y^{-1}(mt+l-x(l)-s)=x(mt+l)-l-x(s)=mx(t)+x(l)-l-x(s)$$ and  \begin{align*}
    y^{-1}\lhd a^r&=a^{-y^{-1}(r)}y^{-1}a^r=a^{-y^{-1}(r)}(a^{-l-x(s)}xa^{x(l)+s})a^r\\
    &=a^{-mx(t)-x(l)+l+x(s)}a^{-l-x(s)}xa^{x(l)+s}a^r=a^{-x(l)-x(mt)}xa^{x(l)+s+r}\\
    &=a^{-x(l)}a^{-x(mt)}xa^{mt+l}=a^{-x(l)}xa^l\\
    &=y
\end{align*} and so $a^{mt+l-x(l)-s}\in F_{y^{-1},y}$ for all $m.$

Therefore, $$F_{y^{-1},y}=\{a^{mt+l-x(l)-s}\,|\,m=1,2,...,\frac{n}{t}\}.$$ Since we are taking all integers modulo $n$, we could equivalently consider $u=0,...,\frac{n}{t}-1$, which will be convenient to us later.\qed

\end{myenum}

\subsection{Uncovering New Permutations Properties}\label{sec:unconvered properties}

Now, using our new notation, we will explore exactly how much we can obtain from this shifting. It turns out the answer is quite a lot! First, we dive into understanding what info we can obtain from the stabilizer of $F_x\subset F=C_n$. Because the stabilizer is a subgroup and $C_n$ is cyclic, we can write $F_x=\langle a^t\rangle$ for some value $t$ dividing $n$. Throughout, we will refer to $t$ often and it will always be reserved for the (smallest) power of $a$ which generates the stabilizer of a given $x$.

\begin{lemma}\label{lem:x(t)=kt some k}
Let $x\in S_{n-1}$ and $F_x=\langle a^t\rangle$ where $a=(1\pp2\pp\cdots\pp n)$. Then,
\begin{myenum}
    \item \label{lem:x(t)=kt some k (1)} $x(ut+q)=ux(t)+x(q)$ for all $u\in \mathbb{Z}/n\mathbb{Z}$ and $q<t.$
    \item \label{lem:x(t)=kt some k (2)} $x(t)=jt$ where $1\le j< \frac{n}{t}$ is coprime to $\frac{n}{t}$
    \item \label{lem:x(t)=kt some k (33)} $F_{x^{-1}}=F_x$
    \item \label{lem:x(t)=kt some k (44)} If $x(t)=jt$ then $x^{-1}(t)=j^{-1}t$ where $j^{-1}$ is the multiplicative inverse of $j$ modulo $\frac{n}{t}$.
    \item \label{lem:x(t)=kt some k (55)} if $x^{-1}\in\mathcal{O}_x$ and $x(t)=jt$, then $j^2= 1\mod \frac{n}{t}$.
\end{myenum}
\end{lemma}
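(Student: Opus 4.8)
The plan is to exploit the fact that passing to $x^{-1}$ inside the orbit $\mathcal{O}_x$ leaves the value at $t$ unchanged, and then to combine this with the formula for $x^{-1}(t)$ already obtained in the previous part of the lemma.

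First I would invoke \cref{prop:equivalence of sets (2)}, which asserts that every $y\in\mathcal{O}_x$ satisfies $y(t)=x(t)$. Since by hypothesis $x^{-1}\in\mathcal{O}_x$, this immediately gives $x^{-1}(t)=x(t)=jt$. Next I would substitute the value of $x^{-1}(t)$ supplied by \cref{lem:x(t)=kt some k (44)}, namely $x^{-1}(t)=j^{-1}t$ where $j^{-1}$ denotes the multiplicative inverse of $j$ modulo $\frac{n}{t}$ (this inverse exists because $j$ is coprime to $\frac{n}{t}$ by \cref{lem:x(t)=kt some k (2)}). Equating the two expressions for $x^{-1}(t)$ yields $j^{-1}t\equiv jt\pmod{n}$; cancelling the factor $t$ gives $j^{-1}\equiv j\pmod{\frac{n}{t}}$, and multiplying both sides by $j$ produces $j^2\equiv 1\pmod{\frac{n}{t}}$, which is exactly the claim.

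The computation is short, so I do not expect a serious obstacle; the only point requiring care is the cancellation of $t$, i.e.\ the passage from a congruence modulo $n$ to one modulo $\frac{n}{t}$. This step is legitimate precisely because $n=t\cdot\frac{n}{t}$, so that $j^{-1}t\equiv jt\pmod{n}$ holds if and only if $t(j^{-1}-j)$ is a multiple of $t\cdot\frac{n}{t}$, equivalently if and only if $j^{-1}-j$ is a multiple of $\frac{n}{t}$. I would state this cancellation explicitly, and I would be sure to record up front that $j^{-1}$ is well defined (via the coprimality from \cref{lem:x(t)=kt some k (2)}) before manipulating it.
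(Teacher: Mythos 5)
Your proof is correct and follows essentially the same route as the paper: the key step in both is that $x^{-1}\in\mathcal{O}_x$ forces $x^{-1}(t)=x(t)$, which the paper derives by writing $x^{-1}=a^{-x(s)}xa^s$ and computing $x^{-1}(t)=x(t+s)-x(s)=x(t)$, and which you obtain equivalently by citing \cpref{prop:equivalence of sets}{prop:equivalence of sets (2)}. The only cosmetic difference is the finish: the paper applies $x$ to both sides to get $t=x^2(t)=j^2t$, while you equate $jt$ with the expression $j^{-1}t$ from \cpref{lem:x(t)=kt some k}{lem:x(t)=kt some k (44)}; both reduce to the same congruence $j^2\equiv 1 \bmod \frac{n}{t}$, and your care about cancelling $t$ in passing from a congruence mod $n$ to one mod $\frac{n}{t}$ is exactly right.
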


\no{\it Proof.} 
\begin{myenum}
    \item This is induction on the statment $x(u+t)=x(u)+x(t)$ for all $t$ from \cpref{prop:for python easier formulas}{prop:for python easier formulas (2)}.
    \item Here we are assuming that $F_x=\langle a^t\rangle\subset F=\langle a\rangle$. Since $|F|=n$, and $F_x$ is a subgroup, Lagrange tells us that there exists an $m$ such that $n=mt$. Therefore by (1), $$mt=n=x(n)=x(mt)=mx(t).$$ Namely, using rules of modular arithmetic, $$mx(t)=mt\mod n\implies x(t)=t\mod\left(\frac{n}{m}=t\right).$$ Thus, $x(t)$ is some multiple of $t$.

Now, for contradiction, assume that $x(t)=jt$ where $j$ is not coprime to $\frac{n}{t}$. Then, let $u=\frac{n}{t\text{gcd}(j,\frac{n}{t})}<\frac{n}{t}$ and so $ut<n$ but $$x(ut)=ux(t)=ujt=\frac{njt}{t\text{gcd}(j,\frac{n}{t})}=n\frac{j}{\text{gcd}(j,\frac{n}{t})}=n\mod n$$ which is not possible since $x\in S_{n-1}$ and so $x(n)=n$. Namely, $x(t)=jt$ where $j$ is coprime to $\frac{n}{t}$.
\item Since \cpref{prop:for python easier formulas}{prop:for python easier formulas (1)} gives us that $x(u+t)=x(u)+x(t)$ for all $u$, inductively we get that $$x(mt+u)=x((m-1)t+u+t)=x((m-1)t+u)+x(t)=\cdots=x(u)+mx(t).$$
\item First, since $x\lhd a^t=x$, then inverting both sides and using the properties of \cref{prop:[M] Properties of action} and \cpref{prop:for python easier formulas}{prop:for python easier formulas (1)}, we obtain $$(x\lhd a^t)^{-1}=x^{-1}\lhd(x\rhd a^t)=x^{-1}\lhd a^{x(t)}=x^{-1}.$$ Namely, $a^{x(t)}\in F_{x^{-1}}$. However, $x(t)=jt$ where $j$ is coprime to $\frac{n}{t}$ by (2). Thus, $a^{x(t)}=(a^t)^j$ is a generator of $F_x$ and so $F_x\subset F_{x^{-1}}$. 

Now, the reverse. Let $F_{x^{-1}}=\langle a^s\rangle$, then $x^{-1}\lhd a^s=x^{-1}$ so again using the properties of \cref{prop:[M] Properties of action} and \cpref{prop:for python easier formulas}{prop:for python easier formulas (1)}, $$(x^{-1}\lhd a^s)^{-1}=x\lhd(x^{-1}\rhd a^s)=x\lhd a^{x^{-1}(s)}=x$$ so $a^{x^{-1}(s)}\in F_x$. But (2) tells us that $x^{-1}(s)=ls$ where $l$ is coprime to $\frac{n}{s}$ and so again, $a^{x^{-1}(s)}$ is a power of the generator which is coprime to $\frac{n}{s}=|F_{x^{-1}}|$ and so it is again a generator. Thus, $F_{x^{-1}}\subset F_x$.
\item  Note that from (2), we know that $x(t)=jt$ where $j$ is coprime to $\frac{n}{t}$. Thus, $j$ has a unique multiplicative inverse modulo $\frac{n}{t}$ which we will call $j^{-1}$. Let $x^{-1}(t)=lt$ where $l$ is coprime to $\frac{n}{t}$. Then, from (1) $$t=x^{-1}(x(t))=x^{-1}(jt)=jx^{-1}(t)=jlt\implies 1=jl\mod\frac{n}{t}.$$

Thus, because inverses (if they exist) are unique, we have that $l=j^{-1}\mod\frac{n}{t}$.
\item If there is an $s$ so $x^{-1}=a^{-x(s)}xa^s\in\mathcal{O}_x$, then $x^{-1}(u)=x(u+s)-x(s)$ for all $u=1,...,n-1$. Namely, $$x^{-1}(t)=x(t+s)-x(s)=x(s)+x(t)-x(s)=x(t).$$ That is, $t=x^2(t)=x(jt)=jx(t)=j^2t$ and so $$j^2t= t\mod n\implies j^2= 1\mod\frac{n}{t}.\eqno\qed$$
\end{myenum}

\begin{remark}\label{remark2}
{\it The converse of \cpref{lem:x(t)=kt some k}{lem:x(t)=kt some k (55)} is false. That is, knowing $x$ satisfies that $x(t)=jt$ where $j^2= 1\mod\frac{n}{t}$ does not force $x^{-1}\in\mathcal{O}_x$. }
\end{remark}

\begin{corollary}\label{lem:x deterimined by remainders mod t for Fx=<a^t>}
{\it Let $x\in S_{n-1}$. Then $a^t\in F_x$ if and only if $x(mt+u)=mx(t)+x(u)$ for all $1\le m\le \frac{n}{t}$ and $0\le u<t$.}
\end{corollary}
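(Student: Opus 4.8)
The plan is to deduce both implications from the single-step characterization of the stabilizer recorded in \cpref{prop:for python easier formulas}{prop:for python easier formulas (2)}, namely that $a^t\in F_x$ if and only if $x(u+t)=x(u)+x(t)$ for every $u$. Since $F_x=\langle a^t\rangle$ is a cyclic subgroup of $C_n$, I may assume $t\mid n$, so that $n/t$ is a genuine integer and the indices $1\le m\le n/t$ make sense; all arithmetic is modulo $n$ as throughout this section.

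For the forward direction, suppose $a^t\in F_x$. Then \cpref{prop:for python easier formulas}{prop:for python easier formulas (2)} gives $x(u+t)=x(u)+x(t)$ for all $u$, and I would promote this to the full statement by induction on $m$: the base case $m=1$ (and the trivial $m=0$) is exactly this relation, while the inductive step is $x((m+1)t+u)=x\big(t+(mt+u)\big)=x(t)+x(mt+u)=x(t)+\big(mx(t)+x(u)\big)=(m+1)x(t)+x(u)$. This is precisely the content already isolated in \cpref{lem:x(t)=kt some k}{lem:x(t)=kt some k (1)}, so this direction is essentially free.

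For the converse, assume $x(mt+u)=mx(t)+x(u)$ for all $1\le m\le n/t$ and $0\le u<t$ (the case $m=0$ being the tautology $x(u)=x(u)$). To conclude $a^t\in F_x$ via \cpref{prop:for python easier formulas}{prop:for python easier formulas (2)}, I must verify $x(v+t)=x(v)+x(t)$ for an \emph{arbitrary} $v$, not merely for $v=u$ in the bottom block $[0,t)$. I would write $v=mt+u$ with $0\le u<t$ and $0\le m\le n/t-1$, so that $v+t=(m+1)t+u$ with $1\le m+1\le n/t$; applying the hypothesis to both exponents yields $x(v+t)=(m+1)x(t)+x(u)$ and $x(v)=mx(t)+x(u)$, whose difference is exactly $x(t)$.

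The one place demanding care — and the main (if modest) obstacle — is the wrap-around block $m+1=n/t$, where $(m+1)t+u\equiv u\pmod n$: here the hypothesis at index $n/t$ reads $x(u)=x\big((n/t)t+u\big)=(n/t)x(t)+x(u)$, which is the hidden constraint $(n/t)x(t)\equiv 0\pmod n$ (equivalently obtained by setting $m=n/t,\ u=0$ and using $x(n)=n\equiv 0$). This is exactly why the index range must run all the way to $m=n/t$ rather than stopping at $n/t-1$; with that endpoint included, the subtraction above is uniform across every block and the converse closes. I would double-check the boundary reductions modulo $n$ explicitly, since that is the only point where the finiteness of $\mathbb{Z}/n\mathbb{Z}$ — as opposed to a free linear relation — actually enters.
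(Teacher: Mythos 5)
Your argument is correct and follows essentially the same route as the paper: the forward direction is the induction already recorded in \cpref{lem:x(t)=kt some k}{lem:x(t)=kt some k (1)}, and the converse writes an arbitrary index as $mt+u$ and subtracts the hypothesis at $m+1$ from the hypothesis at $m$ to recover $x(v+t)-x(t)=x(v)$, exactly as the paper does. Your extra remark about the wrap-around block $m+1=n/t$ forcing $(n/t)\,x(t)\equiv 0 \pmod n$ is a point the paper leaves implicit, but it does not change the substance of the proof.
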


\no{\it Proof.} 
\begin{enumerate}
    \item[$\implies$] This follows directly from \cpref{lem:x(t)=kt some k}{lem:x(t)=kt some k (1)}.
    \item[$\impliedby$] Assume $x(mt+u)=mx(t)+x(u)$ for all $1\le m\le \frac{n}{t}$ and $0\le u<t$. Let $1\le q\le n$ and write $q=mt+u$ for some $0\le u< t$. Then, \begin{align*}
    x(q+t)-x(t)&=x(mt+u+t)-x(t)=x((m+1)t+u)-x(t)\\
    &=(m+1)x(t)+x(u)-x(t)=mx(t)+x(u)\\
    &=x(mt+u)=x(q)
\end{align*} So $a^t\in F_x$ by \cref{prop:equivalence of sets}.\qed\\
\end{enumerate}

\begin{corollary}\label{cor:x(i)=ix(1) for Fx=F}
{\it Let $x\in S_{n-1}$. Then $F_x=F=C_n=\langle a\rangle$ if and only if for all $u=1,...,n$, $x(u)=ux(1)$. }
\end{corollary}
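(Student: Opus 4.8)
The plan is to recognize this statement as the $t=1$ specialization of \cref{lem:x deterimined by remainders mod t for Fx=<a^t>}. The key observation is that $F_x=F=\langle a\rangle$ holds precisely when the generator $a=a^1$ lies in $F_x$: since $F_x$ is a subgroup of the cyclic group $C_n$, containing $a$ forces $F_x=C_n$, and conversely $F_x=C_n$ trivially contains $a$. So the biconditional we must prove reduces to characterizing when $a^1\in F_x$, and this is exactly what the preceding corollary does once we set $t=1$.

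For the forward direction, I would assume $F_x=F$, so that $t=1$, and apply \cref{lem:x deterimined by remainders mod t for Fx=<a^t>} with $t=1$. The only residue $u$ satisfying $0\le u<1$ is $u=0$, so the corollary yields $x(m)=mx(1)+x(0)$ for all $1\le m\le n$. The one point requiring care is evaluating the constant $x(0)$: since $x\in S_{n-1}$ fixes $n$ and all arithmetic is modulo $n$, we have $x(0)=x(n)=n\equiv 0$, which collapses the formula to $x(u)=ux(1)$ for all $u=1,\dots,n$ (including $u=n$, where both sides are $\equiv 0$).

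For the reverse direction, I would assume $x(u)=ux(1)$ for all $u$ and verify the stabilizer condition from \cpref{prop:for python easier formulas}{prop:for python easier formulas (2)} directly at $t=1$, computing $x(u+1)-x(1)=(u+1)x(1)-x(1)=ux(1)=x(u)$ for every $u$. This shows $a\in F_x$, and as noted above a subgroup of $C_n$ containing $a$ must equal $C_n$, so $F_x=F$.

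The argument is essentially immediate once \cref{lem:x deterimined by remainders mod t for Fx=<a^t>} is in hand; there is no substantial obstacle. The only genuine (and minor) bookkeeping point is the modular arithmetic at the boundary, namely that $x(0)$ and $x(n)$ both represent $0\bmod n$, which is what makes the additive constant in the $t=1$ formula vanish and produces the clean linear relation $x(u)=ux(1)$.
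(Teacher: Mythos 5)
Your proposal is correct and matches the paper's intent exactly: the paper states this corollary without proof precisely because it is the $t=1$ specialization of \cref{lem:x deterimined by remainders mod t for Fx=<a^t>} combined with the observation that a subgroup of $C_n$ containing $a$ is all of $C_n$. Your handling of the boundary term $x(0)=x(n)\equiv 0 \bmod n$ is the right bookkeeping and the reverse direction via \cpref{prop:for python easier formulas}{prop:for python easier formulas (2)} is exactly as the paper would argue.
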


Let us pause here to emphasize the volume of information we obtain from knowing the size of the stabilizer of $x$. \cref{lem:x(t)=kt some k} tells us that if $F_x=\langle a^t\rangle$, then $x$ is in some sense linear in $t$. That is, $x(q)$ where $q>t$ is determined by where $x$ sends $q\mod t$ and where $x$ sends $t.$

This puts a heavy restriction on what $x$ can look like. All values $x(q)$ for $q>t$ are determined from the values $x(t)$ and $x(u)$ for $u<t$. Thus, if $t$ is small relative to $n$, then the number of permutations in $S_{n-1}$ which have stabilizer $\langle a^t\rangle$ is small relative to the total $(n-1)!$ which we formalize in \cref{sec:explicitly counting}.

Using our new understanding of permutations $x$ satisfying $F_x=\langle a^t\rangle$, we can now better understand the relationship between $x$ and $y$ where $y\in\mathcal{O}_x$.

\begin{lemma}\label{lem:y in orbit x}
{\it Let $x\in S_{n-1}$, $F_x=\langle a^t \rangle$, and $x^{-1}\in\mathcal{O}_x$. Write $\mathcal{O}_x=\{x,y_1,y_2,...,y_{t-1}\}$ where $y_l=a^{-x(l)}xa^l$. Then \begin{myenum}
    \item \label{lem:y in orbit x (1)} If $x^{-1}=y_s$ for $1\le s\le t$, then $y_l^{-1}=y_{x(l)+s \text{ mod } t}$ for all $l=1,...,t.$ (Here we let $y_t=x$).
    \item \label{lem:y in orbit x (2)} for some $1\le l\le t$, $l=x(l)+s\mod t$ if and only if $y_l$ has order $2$ (Here we let $y_t=x$).
\end{myenum}}
\end{lemma}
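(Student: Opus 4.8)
The plan is to reduce both claims to the orbit-reindexing already established in \cref{prop:equivalence of sets}, so that each becomes a statement about the index $l$ modulo $t$ rather than about the permutations themselves. Throughout I use that $l \mapsto y_l$ is a bijection from $\{1,\dots,t\}$ onto $\mathcal{O}_x$, which is exactly part (2) of \cref{prop:equivalence of sets} (the orbit has size $t$ and consists of the $y_l$), together with the convention $y_t = x$ (equivalently $y_0 = y_t$).

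For the first claim I would compute $y_l^{-1}$ directly and then identify the result among the $y_m$. From $y_l = a^{-x(l)}xa^l$ we get $y_l^{-1} = a^{-l}x^{-1}a^{x(l)}$, and substituting the hypothesis $x^{-1} = y_s = a^{-x(s)}xa^s$ gives $y_l^{-1} = a^{-l-x(s)}xa^{x(l)+s}$. This is precisely the computation performed in the proof of part (4) of \cref{prop:equivalence of sets}: applying the relation $x^{-1}(u) = x(u+s)-x(s)$ at $u = x(l)$ yields $x(x(l)+s) = l + x(s)$, so the left exponent rewrites and $y_l^{-1} = a^{-x(x(l)+s)}xa^{x(l)+s}$. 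It then remains to recognize this element as some $y_m$ with $m \in \{1,\dots,t\}$. Writing $r := x(l)+s = pt + m$ with $1 \le m \le t$ (taking $m = t$ when $r \equiv 0 \pmod t$), the linearity $x(pt+m) = p\,x(t) + x(m)$ from \cref{lem:x(t)=kt some k} together with $x(pt) = p\,x(t)$ shows that $x(r) - x(m) = p\,x(t) = x(r-m)$, so both conditions of part (1) of \cref{prop:equivalence of sets} hold and $a^{-x(r)}xa^r = y_m$. Hence $y_l^{-1} = y_{x(l)+s \bmod t}$, which is the first claim.

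For the second claim I would feed the first claim into the bijection $l \mapsto y_l$. Since the $y_l$ for $l = 1,\dots,t$ are distinct, the identity $y_l^{-1} = y_l$ is equivalent to $y_{x(l)+s \bmod t} = y_l$, which in turn is equivalent to the congruence $x(l) + s \equiv l \pmod t$ (reading the index correctly under the $y_0 = y_t$ convention). As $y_l^{-1} = y_l$ is the same as $y_l^2 = 1$, this already yields the stated equivalence up to the distinction between order $1$ and order $2$; I would eliminate the order-$1$ case by noting that $y_l = 1$ would force $1 \in \mathcal{O}_x$, and since orbits of the action $\lhd$ partition $S_{n-1}$ and $\mathcal{O}_1 = \{1\}$, this would collapse $\mathcal{O}_x$ to $\{1\}$, i.e. the degenerate situation $x = 1$, which I exclude.

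The substantive algebra is all inherited from \cref{prop:equivalence of sets} and \cref{lem:x(t)=kt some k}; the genuinely delicate point is the index bookkeeping modulo $t$ — ensuring that the reduction of $x(l)+s$ lands correctly in $\{1,\dots,t\}$ under the convention $y_t = x$, and that this same convention is applied consistently when extracting the congruence in the second claim. I expect that to be the main obstacle, along with the minor care needed to separate order $2$ from order $1$.
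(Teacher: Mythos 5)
Your proposal is correct and follows essentially the same route as the paper: compute $y_l^{-1}=a^{-l-x(s)}xa^{x(l)+s}$, use $x^{-1}(u)=x(u+s)-x(s)$ at $u=x(l)$ to rewrite the left exponent as $-x(x(l)+s)$, and then read off part (2) from the bijectivity of $l\mapsto y_l$. The only differences are that you make explicit the reduction of the index $x(l)+s$ modulo $t$ via \cref{prop:equivalence of sets} and separate the order-$1$ case, both of which the paper leaves implicit.
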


\no{\it Proof.} 
\begin{myenum}
    \item If $x^{-1}=y_s=a^{-x(s)}xa^s$ for some $s\le t$, then \begin{align*}
        y_l^{-1}&=(a^{-x(l)}xa^l)^{-1}=a^{-l}x^{-1}a^{x(l)}\\
        &=a^{-l}a^{-x(s)}xa^sa^{x(l)}=a^{-l-x(s)}xa^{s+x(l)}
    \end{align*}
    
    Now note that $x^{-1}=a^{-x(s)}xa^s$ implies that for all $j$, $x^{-1}(j)=x(j+s)-x(s)$ and so $$-x(x(l)+s)=-(x^{-1}(x(l))+x(s))=-(l+x(s))=-l-x(s)$$ and so we obtain that $$y_l^{-1}=a^{-x(x(l)+s)}xa^{x(l)+s}=y_{x(l)+s\mod t}.$$
    \item This is immediate from (1) since $y_l$ has order two if and only if $y_l^{-1}=y_{x(l)+s}=y_l$ which happens if and only if $x(l)+s= l\mod t$.\qed\\
\end{myenum}

\subsection{A Revised Indicator Formula}\label{sec:revised indicator}

\begin{proposition}\label{prop:x(s)+s=0}
{\it Let $x\in S_{n-1},$ $F_x=\langle a^t\rangle$, $x^{-1}=a^{-x(s)}xa^s\in\mathcal{O}_x$ for some $s$, $x(t)+t=u_1t$, and $x(s)+s=u_2t$. Fix an integer $i$. If $iu_1= 0\mod \frac{n}{t}$ then 
\begin{myenum}
    \item \label{prop:x(s)+s=0 (1)} if $\frac{n}{t}$ is odd, $iu_2= 0\mod\frac{n}{t}$
    \item \label{prop:x(s)+s=0 (2)} if $\frac{n}{t}$ is even, then $iu_2= \frac{n}{2t}\text{ or }0\mod\frac{n}{t}$.
\end{myenum}}
\end{proposition}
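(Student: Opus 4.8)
The plan is to reduce the entire statement to congruences modulo $N := n/t$ and then finish with elementary modular arithmetic. First I would record the structural facts about $x$ forced by $F_x=\langle a^t\rangle$. By \cpref{lem:x(t)=kt some k}{lem:x(t)=kt some k (2)}, we may write $x(t)=jt$ for some $j$ coprime to $N$; in particular $j$ is a unit modulo $N$, and the hypothesis $x(t)+t=u_1t$ then reads $u_1\equiv j+1 \pmod N$. Since $x^{-1}\in\mathcal{O}_x$, \cpref{prop:equivalence of sets}{prop:equivalence of sets (33)} tells me that $x(s)+s=u_2t$ is a fixed point of $x$. These two observations convert the quantities $u_1$ and $u_2$ appearing in the statement into the algebraic data $j$ (a unit) and a fixed-point condition.

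The key step is to extract a usable congruence for $u_2$. Using the linearity in \cpref{lem:x(t)=kt some k}{lem:x(t)=kt some k (1)} I compute $x(u_2t)=u_2\,x(t)=u_2 j t$, while the fixed-point property gives $x(u_2 t)=u_2 t$ modulo $n$. Comparing these yields $u_2 j t\equiv u_2 t\pmod n$, i.e. $u_2(j-1)\equiv 0\pmod N$. Meanwhile the hypothesis $iu_1\equiv 0\pmod N$ becomes $i(j+1)\equiv 0\pmod N$. So at this point I have the two relations $u_2(j-1)\equiv 0$ and $i(j+1)\equiv 0$ modulo $N$, together with the fact that $j$ is invertible modulo $N$.

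To finish, I multiply the first relation by $i$ and the second by $u_2$, obtaining $iu_2(j-1)\equiv 0$ and $iu_2(j+1)\equiv 0$ modulo $N$; adding these gives $2iu_2 j\equiv 0\pmod N$. Cancelling the unit $j$ leaves $2iu_2\equiv 0\pmod N$. The parity split is then immediate: if $N$ is odd then $2$ is also a unit and we get $iu_2\equiv 0\pmod N$, which is \ref{prop:x(s)+s=0 (1)}; if $N$ is even then $2iu_2\equiv 0\pmod N$ says $iu_2\equiv 0\pmod{N/2}$, so $iu_2$ is congruent to $0$ or $N/2=\tfrac{n}{2t}$ modulo $N$, which is \ref{prop:x(s)+s=0 (2)}.

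I expect the only genuine obstacle to be bookkeeping in the key step: making sure the fixed-point identity $x(u_2t)=u_2t$ is applied with the correct reading of arithmetic modulo $n$ versus modulo $N$ (in particular that $x(0)=x(n)=n\equiv 0$, so the $q=0$ case of the linearity lemma gives $x(u_2t)=u_2\,x(t)$ cleanly), and that both the cancellation of $j$ and the reduction from $2iu_2\equiv 0\pmod N$ to the two parity cases are justified by the coprimality of $j$ with $N$. The rest is routine. I note also that $j^2\equiv 1\pmod N$ from \cpref{lem:x(t)=kt some k}{lem:x(t)=kt some k (55)} is consistent with, though not strictly needed for, this argument.
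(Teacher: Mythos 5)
Your proposal is correct and follows essentially the same route as the paper: both establish that $x(s)+s$ is a fixed point of $x$ lying in $t\mathbb{Z}$, use the linearity $x(u_2t)=u_2x(t)$ to get a congruence for $u_2$, combine it with the hypothesis $iu_1\equiv 0$ to reach $2iu_2\equiv 0\pmod{n/t}$, and then split on the parity of $n/t$. The only cosmetic difference is that you phrase the middle step through $j=x(t)/t$ and cancel the unit $j$ after adding two congruences, whereas the paper writes $2u_2\equiv u_1u_2\pmod{n/t}$ directly.
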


\no{\it Proof.} First, note that since $x^{-1}(i)=x(i+s)-x(s)$, letting $i=x(s)$ we get that $$x(x(s)+s)=x^{-1}(x(s))+x(s)=s+x(s).$$

Now, \begin{align*}
    a^{-x(x(s)+s)}xa^{x(s)+s}&=a^{-(x(s)+s)}xa^{x(s)+s}=a^{-s}a^{-x(s)}xa^sa^{x(s)}\\
    &=a^{-s}x^{-1}a^{x(s)}=(a^{-x(s)}xa^s)^{-1}\\
    &=(x^{-1})^{-1}=x.
\end{align*}

Therefore, $a^{x(s)+s}\in F_x=\langle a^t\rangle$ and so $x(s)+s=u_2t$ is a multiple of $t.$

Thus, $$u_2x(t)=x(u_2t)=x(x(s)+s)=x(s)+s=u_2t$$ and so $2u_2t=u_2t+u_2t=u_2x(t)+u_2t=u_1u_2t.$ This gives us the relation that $$2u_2t= u_1u_2t\mod n\implies 2u_2= u_1u_2\mod\frac{n}{t}.$$

Therefore, $$2iu_2= iu_1u_2= 0 \mod\frac{n}{t}.$$

\begin{myenum}
    \item \no{\it Assume $\frac{n}{t}$ is odd.} Then $2$ is invertible $\mod\frac{n}{t}$ so $$iu_2= 0\mod\frac{n}{t}.$$
    \item 
\no{\it Assume $\frac{n}{t}$ is even.} Then $$iu_2= 0\mod\frac{n}{2t}.\eqno\qed$$
\end{myenum}

We are at last ready to give the key theorem so far.

\begin{theorem}\label{thm:indicator reduced}
{\it Consider $J_n=\mathbb{C}^{S_{n-1}}\#\mathbb{C}C_n$. Let $C_n=\langle a\rangle$. Fix $x\in S_{n-1}$ such that $F_x=\langle a^t\rangle$ and $x^{-1}=x\lhd a^s\in\mathcal{O}_x$. Let $\rho_{x,i}:F_x\to\mathbb{C}^\times$ be an irreducible (group) representation of $F_x$ defined by $\rho_{x,i}(a^t)=\zeta_\frac{n}{t}^i$ where $\zeta_{\frac{n}{t}}$ is a primitive $\frac{n}{t}$-root of unity. Let $\chi_{x,i}$ be the character of $\rho_{x,i}$ Then if $\hat{\chi_{x,i}}$ is the character of $J_n$ induced from $\chi_{x,i}$ (as described in \cref{thm:[KMM] inducing modules}) we obtain:
\begin{myenum}
    \item \label{thm:indicator reduced (1)} if $\frac{n}{t}$ is odd, then \begin{equation}\nu(\hat{\chi}_{x,i})=\begin{cases} 1 & \text{ if }(\zeta_{\frac{n}{t}})^{iu_1}=1\\
    0 & \text { if }(\zeta_{\frac{n}{t}})^{iu_1}\not=1
    \end{cases}\label{eq:ind when n/t odd}\end{equation}
    \item \label{thm:indicator reduced (2)} if $\frac{n}{t}$ is even, then \begin{equation}\nu(\hat{\chi}_{x,i})=\begin{cases} (\zeta_{\frac{n}{t}})^{-iu_2}=1\text{ or }-1 & \text{ if }(\zeta_{\frac{n}{t}})^{iu_1}=1\\
    0 & \text { if }(\zeta_{\frac{n}{t}})^{iu_1}\not=1
    \end{cases}\label{eq:ind when n/t even}\end{equation}
\end{myenum}  where $$x(t)+t=u_1t\qquad \text{ and }\qquad x(s)+s=u_2t.$$}
\end{theorem}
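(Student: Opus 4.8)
The plan is to feed the explicit descriptions of the orbit and of the sets $F_{y^{-1},y}$ developed in \cref{sec:reinterpretting the action} directly into the bismash indicator formula of \cref{thm:[JM] indicator formula} and then collapse the resulting double sum into a single geometric series. Concretely, I would write $\mathcal{O}_x = \{y_l = a^{-x(l)}xa^l \mid l = 1,\dots,t\}$ using \cpref{prop:equivalence of sets}{prop:equivalence of sets (2)}, and for each such $y_l$ substitute $F_{y_l^{-1},y_l} = \{a^{r} : r = mt + l - x(l) - s,\ m = 0,\dots,\tfrac{n}{t}-1\}$ from \cpref{prop:equivalence of sets}{prop:equivalence of sets (44)}. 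The indicator then reads $\nu(\hat{\chi}_{x,i}) = \frac{1}{n}\sum_{l=1}^{t}\sum_{m=0}^{n/t-1}\hat{\chi}_{x,i}\big(p_{y_l}\#(y_l^{-1}\rhd a^r)a^r\big)$, so the whole argument reduces to identifying the algebra element $(y_l^{-1}\rhd a^r)a^r$ and applying the abelian character formula of \cref{lem:character abelian case}.

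The crux of the computation is to show that $(y_l^{-1}\rhd a^r)a^r$ is a power of $a^t$ that does not depend on $l$. Using $x\rhd a^r = a^{x(r)}$ from \cpref{prop:for python easier formulas}{prop:for python easier formulas (1)} together with the explicit form of $y_l^{-1}$ worked out in the proof of \cpref{prop:equivalence of sets}{prop:equivalence of sets (44)}, I would first obtain $y_l^{-1}(r) = x(mt+l) - l - x(s)$, and then invoke the linearity \cpref{lem:x(t)=kt some k}{lem:x(t)=kt some k (1)}, namely $x(mt+l) = mx(t) + x(l)$, to get $y_l^{-1}(r) = mx(t) + x(l) - l - x(s)$. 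Adding the exponents of $y_l^{-1}\rhd a^r = a^{y_l^{-1}(r)}$ and $a^r$ (which commute, since $C_n$ is abelian), the $x(l)$ and $l$ terms cancel and I am left with the exponent $m(x(t)+t) - (x(s)+s) = (mu_1 - u_2)t$; that is, $(y_l^{-1}\rhd a^r)a^r = (a^t)^{mu_1 - u_2} \in F_x$, manifestly independent of $l$. By \cref{lem:character abelian case} the corresponding character value is $\chi_{x,i}\big((a^t)^{mu_1-u_2}\big) = \zeta_{\frac{n}{t}}^{i(mu_1-u_2)}$, so the sum over $l$ merely contributes a factor of $t$ and $\nu(\hat{\chi}_{x,i}) = \frac{t}{n}\,\zeta_{\frac{n}{t}}^{-iu_2}\sum_{m=0}^{n/t-1}\big(\zeta_{\frac{n}{t}}^{iu_1}\big)^m$.

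The remaining geometric sum evaluates to $\frac{n}{t}$ when $\zeta_{\frac{n}{t}}^{iu_1} = 1$ and to $0$ otherwise, since the numerator $\big(\zeta_{\frac{n}{t}}^{iu_1}\big)^{n/t} - 1$ vanishes as $\zeta_{\frac{n}{t}}^{iu_1}$ is an $\frac{n}{t}$-th root of unity. This gives $\nu(\hat{\chi}_{x,i}) = 0$ when $\zeta_{\frac{n}{t}}^{iu_1}\neq 1$ and $\nu(\hat{\chi}_{x,i}) = \zeta_{\frac{n}{t}}^{-iu_2}$ when $\zeta_{\frac{n}{t}}^{iu_1} = 1$, proving both cases apart from pinning down the value $\zeta_{\frac{n}{t}}^{-iu_2}$. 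To finish I would apply \cref{prop:x(s)+s=0}: the hypothesis $\zeta_{\frac{n}{t}}^{iu_1}=1$ is exactly $iu_1 \equiv 0 \pmod{n/t}$, so for $\frac{n}{t}$ odd \cpref{prop:x(s)+s=0}{prop:x(s)+s=0 (1)} yields $iu_2 \equiv 0$ and hence $\zeta_{\frac{n}{t}}^{-iu_2} = 1$, giving case (1); while for $\frac{n}{t}$ even \cpref{prop:x(s)+s=0}{prop:x(s)+s=0 (2)} yields $iu_2 \equiv 0$ or $\frac{n}{2t} \pmod{n/t}$, so $\zeta_{\frac{n}{t}}^{-iu_2} \in \{1,-1\}$ using $\zeta_{\frac{n}{t}}^{n/(2t)} = -1$, giving case (2).

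I expect the main obstacle to be the bookkeeping in that crucial cancellation: one must keep every exponent reduced modulo $n$ and apply the shift description of $y_l^{-1}$ and the linearity of $x$ to precisely the right arguments, since a single sign slip would break the $l$-independence that lets the double sum collapse. Once that step is secured, the geometric sum and the case split driven by \cref{prop:x(s)+s=0} are entirely routine.
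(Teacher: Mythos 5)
Your proposal is correct and follows essentially the same route as the paper's own proof: substitute the explicit parametrizations of $\mathcal{O}_x$ and $F_{y_l^{-1},y_l}$ from \cref{prop:equivalence of sets} into \cref{thm:[JM] indicator formula}, observe that $(y_l^{-1}\rhd a^r)a^r=(a^t)^{mu_1-u_2}$ is independent of $l$, collapse the double sum into a geometric series, and settle the value of $\zeta_{n/t}^{-iu_2}$ via \cref{prop:x(s)+s=0}. The only cosmetic difference is that you cite \cref{lem:character abelian case} where the paper cites \cref{thm:product in stab} to justify $\hat{\chi}_{x,i}(p_y\#b)=\chi_{x,i}(b)$; both justifications are valid and the computation is otherwise identical.
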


\no{\it Proof.} Let $\rho_{x,i}:F_x\to\mathbb{C}^\times$ be an irreducible (group) representation of $F_x$ defined by $\rho_{x,i}(a^t)=(\zeta_\frac{n}{t})^i$ where $\zeta_{\frac{n}{t}}$ is a primitive $\frac{n}{t}$-root of unity. Let $\chi_{x,i}$ be the character of $V_{x,i}$ which is the $F_x$-module equivalent to $\rho_{x,i}$. Then if $\hat{\chi_{x,i}}$ is the character of $J_n$ induced from $\chi_{x,i}$ (as described in \cref{thm:[KMM] inducing modules})

First, recall \cref{eq:[JM] indicator formula}. Note that \cref{thm:product in stab} tells us immediately that assuming $x^{-1}\in\mathcal{O}_x$ gives that $\hat{\chi}_{x,i}(p_y\#b)=\chi_{x,i}(b)$ and so we get $$\nu(\hat\chi_{x,i})=\frac{1}{|F|}\sum_{y\in\mathcal{O}_x}\sum_{b\in F_{y^{-1},y}}\chi_{x,i}((y^{-1}\rhd b)b)$$ where $\chi_{x,i}$ is the character of $V_{x,i}$ which is the $F_x$-module which induces $\hat{\chi}_{x,i}$.

Since $x^{-1}\in\mathcal{O}_x$, let $x^{-1}=a^{-x(s)}xa^s$ for some $s\le t$. 

Let $y_l\in\mathcal{O}_x$ with $y_l=a^{-x(l)}xa^l$. Then \cpref{prop:equivalence of sets}{prop:equivalence of sets (44)} gives that we can write $b\in F_{y^{-1},y}$ as $b=a^{mt+l-s-x(l)}$ for some $1\le m\le \frac{n}{t}$. 

Note that we have that $$y_l^{-1}=(a^{-x(l)}xa^l)^{-1}=a^{-l}x^{-1}a^{x(l)}=a^{-l}(a^{-x(s)}xa^s)a^{x(l)}$$ and so $$y_l^{-1}(mt+l-s-x(l))=x(mt+l)-x(s)-l,$$

Thus, \begin{align*}
    (y_l^{-1}\rhd b)b&=a^{y_l^{-1}(mt+l-s-x(l))}a^{mt+l-s-x(l)}=a^{x(mt+l)-x(s)-l}a^{mt+l-s-x(l)}\\
    &=a^{x(mt)+mt-x(s)-s}=a^{m(x(t)+t)-(x(s)+s)}
\end{align*}

Note that $(y^{-1}\lhd b)$ is very much dependent on both $y$ and $b$. However, $(y^{-1}\lhd b)b$ is dependent on only the choice of the $s$ for which $x^{-1}=a^{-x(s)}xa^s$.

Let $x(t)+t=u_1t$ and $x(s)+s=u_2t$. Note that both these sums are necessarily multiplies of $t$ since $x(t)$ is a multiple of $t$ from \cpref{lem:x(t)=kt some k}{lem:x(t)=kt some k (2)} and $x(s)+s$ is a multiple of $t$ from \cpref{prop:equivalence of sets}{prop:equivalence of sets (44)}.

Then, \begin{align*}
    \nu(\hat\chi_{x,i})&=\frac{1}{|F|}\sum_{y\in\mathcal{O}_x}\sum_{b\in F_{y^{-1},y}}\chi_{x,i}((y^{-1}\rhd b)b)\\
    &=\frac{1}{n}\sum_{l=1}^t\sum_{m=0}^{\frac{n}{t}-1}\chi_{x,i}((y_l^{-1}\rhd a^{mt+l-s-x(l)})a^{mt+l-s-x(l)})\\
    &=\frac{1}{n}\sum_{l=1}^t\sum_{m=0}^{\frac{n}{t}-1}\chi_{x,i}(a^{m(x(t)+t)-(x(s)+s)})\\
    &=\frac{t}{n}\sum_{m=0}^{\frac{n}{t}-1}\chi_{x,i}((a^t)^{mu_1-u_2})\\
    &=\frac{t}{n}\sum_{m=0}^{\frac{n}{t}-1}(\zeta_{\frac{n}{t}})^{i(mu_1-u_2)}\\
    &=\frac{t}{n}\sum_{m=0}^{\frac{n}{t}-1}(\zeta_{\frac{n}{t}})^{imu_1}(\zeta_{\frac{n}{t}})^{-iu_2}\\
    &=\frac{t}{n}(\zeta_{\frac{n}{t}})^{-iu_2}\begin{cases} \frac{n}{t} & \text{ if }(\zeta_{\frac{n}{t}})^{iu_1}=1\\
    \frac{(\zeta_{\frac{n}{t}})^{iu_1\frac{n}{t}}-1}{(\zeta_{\frac{n}{t}})^{iu_1}-1} & \text { if }(\zeta_{\frac{n}{t}})^{iu_1}\not=1
    \end{cases}\\
    &=\begin{cases} (\zeta_{\frac{n}{t}})^{-iu_2} & \text{ if }(\zeta_{\frac{n}{t}})^{iu_1}=1\\
    \frac{t}{n}(\zeta_{\frac{n}{t}})^{-iu_2}\frac{(\zeta_{\frac{n}{t}})^{iu_1\frac{n}{t}}-1}{(\zeta_{\frac{n}{t}})^{iu_1}-1} & \text { if }(\zeta_{\frac{n}{t}})^{iu_1}\not=1
    \end{cases}\\
    &=\begin{cases} (\zeta_{\frac{n}{t}})^{-iu_2} & \text{ if }(\zeta_{\frac{n}{t}})^{iu_1}=1\\
    0 & \text { if }(\zeta_{\frac{n}{t}})^{iu_1}\not=1
    \end{cases}
\end{align*}

with the last line reducing since $\zeta_{\frac{n}{t}}$ is a $\frac{n}{t}^{\text{th}}$-root of unity, so $(\zeta_{\frac{n}{t}})^{iu_1\frac{n}{t}}=((\zeta_{\frac{n}{t}})^{\frac{n}{t}})^{iu_1}=1$ so $(\zeta_{\frac{n}{t}})^{iu_1\frac{n}{t}}-1=0.$

\begin{myenum}
    \item Finally, \cref{prop:x(s)+s=0} gives us that $iu_2$ is a multiple of $\frac{n}{t}$ if $\frac{n}{t}$ is odd, and so $(\zeta_{\frac{n}{t}})^{iu_2}=1$ in this case.
    \item 
In the other case, $iu_2$ is a multiple of $\frac{n}{2t}$ if $\frac{n}{t}$ is even. Namely, there exists an $m$ so $iu_2=m\frac{n}{2t}$. Therefore, $(\zeta_{\frac{n}{t}})^{iu_2}=(-1)^m$ which could be $-1$ if $m$ is odd.\qed\\
\end{myenum}

Note that \cref{thm:indicator reduced} agrees completely with the results of \cite{jm}. Namely, \cref{thm:[JM] indicator when Fx=1} tells us that if $F_x=\{1\}$ then $\nu(\hat{\chi})=1$ if and only if $x^{-1}\in\mathcal{O}_x$ and is $0$ otherwise. \cref{thm:indicator reduced} says that if $F_x=\{1\}$ then $t=n$ so $\frac{n}{t}=1$ which is odd. There is then one (group) irrep of $F_x$, the trivial (group) irrep and so the indicator will be $1$ if $x^{-1}\in\mathcal{O}_x$ and $0$ otherwise.  

\cpref{thm:JM indicator when Fx=F}{thm:JM indicator when Fx=F (2)} says that if $F_x=F$ and $x^2\not=1$ then all indicators are $0.$ \cref{thm:indicator reduced} says that if $F_x=F$ then $t=1$ and so $\frac{n}{t}=n$. Since $|\mathcal{O}_x|=t=1$, if $x^2\not=1$ then $x^{-1}\not\in\mathcal{O}_x=\{x\}$ and so all indicators are $0$.

\cpref{thm:JM indicator when Fx=F}{thm:JM indicator when Fx=F (1)} tells us that if $x=1$, then the indicator of \cite{jm} and the Frobenius-Schur group indicator agree. \cref{thm:indicator reduced} states that if $x=1$ then $x^{-1}=x\lhd a^t$ and so $s=t$. Namely, $(\zeta_n)^{iu_1}=(\zeta_n)^{2i}$ and implies $(\zeta_n)^{-iu_2}=(\zeta_n)^{-2i}$. Thus, whether $n$ is even or odd, if $\zeta_n^{2i}=1$ then $\zeta_n^{-2i}=1$ and so the indicator $\nu(\hat{\chi}_{x,i})=1$ if and only if $n$ divides $2i$ and $0$ otherwise. This of course agrees with the indicator $\nu(\chi_{x,i})$ by \cref{thm:indicator on reps of kCn} and so the indicators are the same.

Finally, we will see in the later sections \cref{thm:indicator reduced} agrees with the work of \cite{jm} when $n=p^r$ for $p$ and odd prime and $r>0$.

\begin{corollary}\label{cor:n odd all ind nonnegative}{\it 
If $n$ is odd, then all indicators of $J_n$ are nonnegative.}
\end{corollary}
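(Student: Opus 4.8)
The plan is to combine the classification of irreps from \cref{thm:[KMM] inducing modules} with the explicit indicator computation of \cref{thm:indicator reduced}, observing that the oddness of $n$ eliminates the only branch of that formula which can produce a negative value.

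First I would recall that by \cref{thm:[KMM] inducing modules} every irrep $\hat{V}$ of $J_n$ is induced from an irreducible representation of a stabilizer $F_x$, where $x\in S_{n-1}$, and since $F=C_n$ is cyclic we may write $F_x=\langle a^t\rangle$ with $t\mid n$ by Lagrange. Because $F$ is abelian, every irreducible representation of $F_x$ is one-dimensional (see \cref{lem:[KMM] dimension of induced rep}), so every irrep of $J_n$ has a character of the form $\hat{\chi}_{x,i}$ in the notation of \cref{thm:indicator reduced}. It therefore suffices to check $\nu(\hat{\chi}_{x,i})\ge 0$ for every such character.

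Next I would split on whether $x^{-1}\in\mathcal{O}_x$. If $x^{-1}\notin\mathcal{O}_x$, then \cref{lem:inv element in orbit means 0 indicator} gives $\nu(\hat{\chi}_{x,i})=0$, which is nonnegative. If instead $x^{-1}\in\mathcal{O}_x$, I would invoke \cref{thm:indicator reduced}. The key observation is that since $n$ is odd and $t\mid n$, the quotient $\frac{n}{t}$ is again odd, so we are always in the regime of \cpref{thm:indicator reduced}{thm:indicator reduced (1)}. That case returns $\nu(\hat{\chi}_{x,i})\in\{0,1\}$, hence nonnegative, and this exhausts all irreps.

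The only real content is the observation that negative values in \cref{thm:indicator reduced} can arise solely through the even branch \cpref{thm:indicator reduced}{thm:indicator reduced (2)}, where the leading factor $(\zeta_{n/t})^{-iu_2}=(-1)^m$ may equal $-1$; and that $n$ odd forces $\frac{n}{t}$ odd, so this branch is never entered. I expect no genuine obstacle here: the corollary is essentially immediate once \cref{thm:indicator reduced} is available, the substantive work — in particular pinning down $u_2$ via \cref{prop:x(s)+s=0} — having already been carried out in its proof.
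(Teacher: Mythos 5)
Your proposal is correct and matches the paper's (implicit) argument: the corollary is stated as an immediate consequence of \cref{thm:indicator reduced}, precisely because $n$ odd forces $\frac{n}{t}$ odd for every divisor $t$, so only the branch of \cpref{thm:indicator reduced}{thm:indicator reduced (1)} with values in $\{0,1\}$ is ever used, with the case $x^{-1}\notin\mathcal{O}_x$ covered by \cref{lem:inv element in orbit means 0 indicator}. Your write-up simply makes explicit the details the paper leaves to the reader.
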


An important question is whether the indicator is ever negative. Recall that for $p$ prime, $J_p$ has nonnegative indicator. However, the answer is yes, there exists even $n$ for which $J_n$ has negative indicator.

\begin{example}\label{ex:ind -1}
{\it Let $n=16$, one can show the element $$x=(1\pp 5 \pp 9\pp13)(3\pp 7\pp11\pp15)$$ has stabilizer $F_x=\langle a^2\rangle$ and so $t=2.$ 

Now, let $\zeta_{\frac{n}{t}}=e^{\frac{2\pi i}{8}}\in\mathbb{C}^\times$ be a primitive $8^{\text{th}}$-root of unity. 

Then, $\frac{n}{t}=8$ is certainly even and by theorem, \cref{thm:indicator reduced},  $$\nu(\hat{\chi})=\begin{cases} \zeta_8^{-3i} & \text{ if }\zeta_8^{2i}=1\\
    0 & \text { if }\zeta_8^{2i}\not=1
    \end{cases}$$
    
    Finally, consider the irreducible (group) representation of $F_x$, $\rho_4$ defined by $\rho_4(a^2)=\zeta_8^4$.

That is, we can let $i=4$. Then $\zeta_8^{2i}=\zeta_8^8=1$ and so we have that $\zeta_8^4=e^{4\frac{2\pi i}{8}}=e^{\pi i}=-1$ and so $$\nu(\hat{\chi})=\zeta_8^{-3i}=\zeta_8^{-12}=(\zeta_8^4)^{-3}=(-1)^{-3}=-1.$$}
\end{example}
 
In fact, we will see that there exists irreps of $J_n$ which have negative indicator for any $n\ge12$ with $4|n$.

Before we conclude this section, we provide two important results that we will use in later sections.

\begin{proposition}\label{prop:observations}
{\it Let $x\in S_{n-1},$ $F_x=\langle a^t\rangle$, $x^{-1}=a^{-x(s)}xa^s\in\mathcal{O}_x$, and $x(t)=jt$ for some $j$ coprime to $\frac{n}{t}$. Let $\hat{\chi}$ be the character induced from an irreducible representation of $F_x$. Then,
\begin{myenum}
    \item \label{prop:observations (1)} Permutations in the same orbit have the same indicators. That is, if $y\in\mathcal{O}_x$ then for a given representation, the induced characters $\hat{\chi}_x$ and $\hat{\chi}_y$ have the same indicator. (This is true even if $x^{-1}\not\in\mathcal{O}_x$). 
    \item \label{prop:observations (2)} If $\mathcal{O}_x$ contains an element of order $2$, then $\nu(\hat{\chi})\ge0$
    \item \label{prop:observations (3)} If $t$ is odd, then $\nu(\hat{\chi})\ge0$.
    \item \label{prop:observations (4)} If $n=2m$ for $m$ odd then $\nu(\hat{\chi})\ge0$.
\end{myenum}}
\end{proposition}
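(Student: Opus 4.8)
The four parts are proved in sequence, feeding (2) into (3), and (3) together with \cpref{thm:indicator reduced}{thm:indicator reduced (1)} into (4). A preliminary reduction for (2)--(4): if $x^{-1}\notin\mathcal{O}_x$ then \cref{lem:inv element in orbit means 0 indicator} already gives $\nu(\hat\chi)=0\ge 0$, so throughout (2)--(4) I may assume $x^{-1}\in\mathcal{O}_x$ and work with the explicit formula of \cref{thm:indicator reduced}. For (1) I would avoid the indicator formula and argue at the level of the character. Since $F=C_n$ is abelian, \cref{lem:character abelian case} shows $\hat\chi(p_z\#a)=\chi(a)$ for $z\in\mathcal{O}_x,\,a\in F_x$ and $0$ otherwise, so $\hat\chi$ is determined solely by the orbit $\mathcal{O}_x$ and the character $\chi$ of the inducing module on the stabilizer. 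For $y\in\mathcal{O}_x$, \cpref{prop:equivalence of sets}{prop:equivalence of sets (2)} gives $\mathcal{O}_y=\mathcal{O}_x$ and $F_y=F_x$, so inducing the \emph{same} representation of this common stabilizer produces the identical character, hence the identical indicator. This uses nothing about self-duality, which is exactly why (1) holds even when $x^{-1}\notin\mathcal{O}_x$.

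For (2) I would use (1) to replace $x$ by the order-$2$ representative of its orbit, so that $x=x^{-1}$. Then $x^{-1}=x\lhd a^t$, so the choice $s=t$ is admissible in \cref{thm:indicator reduced} and yields $x(s)+s=x(t)+t$, i.e. $u_2=u_1$. Substituting: if $\frac{n}{t}$ is odd then $\nu\in\{0,1\}$; if $\frac{n}{t}$ is even then $\nu=(\zeta_{n/t})^{-iu_1}$ precisely when $(\zeta_{n/t})^{iu_1}=1$, which forces $(\zeta_{n/t})^{-iu_1}=1$ and hence $\nu=1$, with $\nu=0$ in all remaining cases. Either way $\nu\ge 0$.

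For (3) the idea is to reduce to (2) by producing an order-$2$ element in $\mathcal{O}_x$ whenever $t$ is odd. Index the orbit as $\mathcal{O}_x=\{y_l=a^{-x(l)}xa^l : l\in\mathbb{Z}/t\mathbb{Z}\}$, which is a genuine bijection onto the $t$-element orbit by \cref{prop:equivalence of sets}. Define $\iota(l)=x(l)+s\bmod t$; by \cpref{lem:y in orbit x}{lem:y in orbit x (1)} we have $y_l^{-1}=y_{\iota(l)}$, and inverting twice shows $\iota$ is an involution of the $t$-element set $\mathbb{Z}/t\mathbb{Z}$. An involution of a set of odd cardinality has a fixed point $l$, and by \cpref{lem:y in orbit x}{lem:y in orbit x (2)} such a fixed point means $y_l$ has order at most $2$. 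For $t>1$ no orbit element can be the identity (its orbit would be a singleton, forcing $t=1$), so $y_l$ has order exactly $2$ and (2) applies. The remaining case $t=1$ has $F_x=F$: if $x=1$ then \cpref{thm:JM indicator when Fx=F}{thm:JM indicator when Fx=F (1)} with \cref{thm:indicator on reps of kCn} gives $\nu\in\{0,1\}$, and if $x$ has order $2$ then (2) applies directly.

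For (4), write $n=2m$ with $m$ odd and let $t\mid n$. If $t$ is odd, (3) gives $\nu\ge 0$. If $t$ is even, then $t=2d$ with $d\mid m$ odd, so $\frac{n}{t}=m/d$ is odd and \cpref{thm:indicator reduced}{thm:indicator reduced (1)} gives $\nu\in\{0,1\}$; in all cases $\nu\ge 0$. The main obstacle is part (3): the combinatorial heart is recognizing $l\mapsto x(l)+s\bmod t$ as an involution on the orbit's index set and invoking the odd-cardinality fixed-point principle, while carefully excluding the identity (so that a self-inverse orbit element is genuinely of order $2$) and separately dispatching the degenerate $t=1$ case. The other three parts are then essentially bookkeeping on top of \cref{thm:indicator reduced}, \cref{lem:character abelian case}, and \cref{lem:y in orbit x}.
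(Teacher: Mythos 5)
Your proposal is correct, and parts (2)--(4) follow essentially the paper's own route: order-$2$ representative forces $u_2=u_1$ hence $(\zeta_{n/t})^{-iu_2}=1$; an odd-cardinality orbit closed under inversion must contain a self-inverse element; and $n=2m$ with $m$ odd splits into ``$t$ odd'' or ``$\frac{n}{t}$ odd.'' Where you genuinely diverge is part (1). The paper proves orbit-invariance of the indicator by chasing the reduced formula of \cref{thm:indicator reduced}: it verifies $x(t)=y(t)$ and then, in the $\frac{n}{t}$-even case, computes $y(p-l)+p-l=x(s)+s-q(x(t)+t)$ to show that the exponent $-iu_2$ changes only by a factor $\bigl((\zeta_{n/t})^{iu_1}\bigr)^{q}$, which is $1$ whenever the indicator is nonzero. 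You instead work one level up: by \cref{lem:character abelian case} the induced character is a function of $(\mathcal{O}_x,F_x,\chi)$ alone, and since $\mathcal{O}_y=\mathcal{O}_x$ and $F_y=F_x$ by \cpref{prop:equivalence of sets}{prop:equivalence of sets (2)}, the two induced characters are literally equal, so equality of indicators is immediate and requires no case split on the parity of $\frac{n}{t}$. Your argument is shorter and more conceptual; the paper's computation has the side benefit of exhibiting exactly how $u_2$ transforms along the orbit, a fact it reuses implicitly when counting indicators later. You are also more careful than the paper in (3), where you explicitly rule out the identity as the self-inverse orbit element for $t>1$ and dispatch $t=1$ separately via \cpref{thm:JM indicator when Fx=F}{thm:JM indicator when Fx=F (1)} and \cref{thm:indicator on reps of kCn}; the paper's one-line pairing argument silently absorbs these edge cases (harmlessly, since the identity's orbit is a singleton, but your version is tighter). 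No gaps.
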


\no{\it Proof.} 
\begin{myenum}
    \item Note \cpref{prop:equivalence of sets}{prop:equivalence of sets (2)} tells us that if $y\in\mathcal{O}_x$ then $F_x=F_y$. Thus, any (group) irrep of $F_x$ is equivalent to a group irrep of $F_y$. However, \cref{thm:indicator reduced} uses values dependent on the outputs of $x$ and so we must show that in any case, the values used to compute the indicator in \cref{thm:indicator reduced} are the same. Let $y\in\mathcal{O}_x$.
    \begin{myenum}
        \item  First, \cpref{lem:orbit and stab inv equiv}{lem:orbit and stab inv equiv (2)} tells us that $x^{-1}\not\in\mathcal{O}_x$ if and only if $y^{-1}\not\in\mathcal{O}_y=\mathcal{O}_x$ and so if $x^{-1}\not\in\mathcal{O}_x$ then $y^{-1}\not\in\mathcal{O}_x$ so all indicators are $0$ and certainly they are all the same.
        \item Now, assume $x^{-1}\in\mathcal{O}_x$. Then $y^{-1}\in\mathcal{O}_y=\mathcal{O}_x$. Then, \cpref{prop:equivalence of sets}{prop:equivalence of sets (2)} tells us that $x(t)=y(t)$.
        \begin{myenum}
            \item If $\frac{n}{t}$ is odd, then $x(t)+t=y(t)+t$ and since this is the only value used in theorem \cref{thm:indicator reduced} when computing the indicator, we have that the set of indicators induced from $y$ is the same as the set of indicators induced from $x$ and that they are induced from the same irreducible group representations of $F_x=F_y$.
            \item If $\frac{n}{t}$ is even, then let $s\le t$ be such that $x^{-1}=a^{-x(s)}xa^s$ and let $y=y_l=a^{-x(l)}xa^l\in\mathcal{O}_x.$ Then \cpref{lem:y in orbit x}{lem:y in orbit x (1)} gives that $y^{-1}=a^{-x((x(l)+s)\mod t)}xa^{(x(l)+s)\mod t}$. Let $1\le q\le\frac{n}{t}$ and $1\le p\le t$ be such that $x(l)+s=qt+p$. Then, $y(p-l)=x(p)-x(l)$ and so $$y^{-1}=a^{-x(p)}xa^p=a^{-x(p)}(a^{x(l)}ya^{-l})a^p=a^{-(x(p)-x(l))}ya^{p-l}=a^{-y(p-l)}ya^{p-l}$$ and so $y^{-1}=a^{-y(p-l)}ya^{p-l}\in\mathcal{O}_y$. Finally, $l=x^{-1}(x(l))=x(x(l)+s)-x(s)$ and so \begin{align*}
    y(p-l)+p-l&=x(p)-x(l)+p-l\\
    &=x(x(l)+s-qt)-x(l)+x(l)+s-qt-l\\
    &=x(x(l)+s)-x(qt)+s-qt-l\\
    &=l+x(s)+s-q(x(t)+t)-l\\
    &=x(s)+s-q(x(t)+t)
\end{align*}

Fix $i$ and consider the irreducible group representation $\rho_{x,i}=\rho_{y,i}=\rho_i$ of $F_x$ defined by $\rho_i(a^t)=(\zeta_\frac{n}{t})^i$. Then, $$(\zeta_{\frac{n}{t}})^{-i\frac{y(p-l)+p-l}{t}}=(\zeta_{\frac{n}{t}})^{-i\frac{x(s)+s-q(x(t)+t)}{t}}=(\zeta_{\frac{n}{t}})^{-i\frac{x(s)+s}{t}}((\zeta_{\frac{n}{t}})^{i\frac{x(t)+t}{t}})^q.$$

Namely, either the indicator of the associated character $\nu(\hat{\chi}_i)=0$ or, if $(\zeta_\frac{n}{t})^{i\frac{x(t)+t}{t}}=1$, then $$(\zeta_{\frac{n}{t}})^{-i\frac{y(p-l)+p-l}{t}}=(\zeta_{\frac{n}{t}})^{-i\frac{x(s)+s}{t}}$$ and so in either case, the indicator of $\nu(\hat{\chi}_{x,i})=\nu(\hat{\chi}_{y,i}).$ 
        \end{myenum}
    \end{myenum}
\item If $y\in\mathcal{O}_x$ has order $2$, then $s=t$ and so if $(\zeta_{\frac{n}{t}})^{i\frac{y(t)+t}{t}}=1$ then clearly $(\zeta_{\frac{n}{t}})^{-i\frac{y(s)+s}{t}}=((\zeta_{\frac{n}{t}})^{i\frac{y(t)+t}{t}})^{-1}=1^{-1}=1$ as well. By (1), the indicator of the representation induced by $y$ is the same as that of $x$ and so the indicators are all nonnegative.
\item If $t$ is odd, then $|\mathcal{O}_x|=\frac{|F|}{|F_x|}=\frac{n}{\frac{n}{t}}=t$ and $x^{-1}\in\mathcal{O}_x$, then every $y\in\mathcal{O}_x$ also has its inverse in $\mathcal{O}_x$ and so $\mathcal{O}_x$ contains an involution. By (2), all indicators are nonnegative.
\item If $n=2m$ where $m$ is odd, then either $t$ is odd, in which case (3) tells us $\nu(\hat{\chi})\ge0$, or $\frac{n}{t}$ is odd, in which case \cref{thm:indicator reduced} tells us $\nu(\hat{\chi})\ge0.$\qed\\
\end{myenum}

\begin{proposition}\label{prop:totally orthogonal}
{\it The only $n$ for which $J_n$ is totally orthogonal is $n=2.$}
\end{proposition}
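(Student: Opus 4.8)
The plan is to recast total orthogonality as the statement that every simple $J_n$-module has Frobenius--Schur indicator $+1$. Indeed, by the Linchenko--Montgomery theory quoted after \cref{thm:[JM] indicator formula}, a simple module carries a nondegenerate invariant \emph{symmetric} form precisely when its indicator is $+1$; so $J_n$ is totally orthogonal if and only if $\nu(\hat\chi)=+1$ for every irrep $\hat\chi$. To prove the proposition it therefore suffices to exhibit, for each $n>2$, a single simple $J_n$-module whose indicator is $0$ or $-1$, and then to check directly that all indicators of $J_2$ equal $+1$.

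Before giving the argument I would dismiss the most tempting route, since its failure explains why a more careful choice is needed. One might try to find $x\in G=S_{n-1}$ with full stabilizer $F_x=F=C_n$ and $x^2\neq 1$: by \cref{thm:JM indicator when Fx=F} (part (2)) such an $x$ yields indicator $0$. By \cref{cor:x(i)=ix(1) for Fx=F} the permutations with $F_x=F$ are exactly the multiplication maps $x(u)=cu$ with $c$ a unit mod $n$, and $x^2=1$ amounts to $c^2\equiv 1\pmod n$. Thus a suitable $x$ exists unless every unit squares to $1$, i.e.\ unless $(\mathbb{Z}/n\mathbb{Z})^\times$ has exponent $\le 2$, which happens precisely for the eight divisors of $24$. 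This obstruction is the only subtle point, and it is what forces me to use a different, uniformly valid representation.

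The clean approach I would actually carry out uses the identity $x=1\in G$. By \cref{prop:facts about matched pair} we have $1\lhd b=1$ for all $b\in F$, so $F_{1}=F=C_n$ and the modules $\hat V_i$ induced from the one-dimensional $C_n$-representations $\rho_i$ (in the notation of \cref{thm:indicator on reps of kCn}) are themselves one-dimensional simple $J_n$-modules by \cref{thm:[KMM] inducing modules}. Since $x=1$, \cref{thm:JM indicator when Fx=F} (part (1)) identifies the indicator of $\hat V_i$ with the Frobenius--Schur indicator of $V_i$ as a $kC_n$-module, which \cref{thm:indicator on reps of kCn} evaluates to $+1$ when $n\mid 2i$ and to $0$ otherwise. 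Taking $i=1$, the condition $n\mid 2$ fails for every $n>2$, so $\hat V_1$ is a simple $J_n$-module with indicator $0$, and hence $J_n$ is not totally orthogonal. (As an internal check, \cref{thm:indicator reduced} applied at $x=1$ gives $t=1$, $u_1=u_2=2$, and $\tfrac{n}{t}=n$, reproducing exactly the same vanishing.)

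Finally I would settle $n=2$ directly. Here $G=S_1$ is trivial, so $x=1$ is the only element of $G$ and every simple $J_2$-module is induced from a representation of $F_{1}=C_2$. Applying the same two theorems, the indicators of these modules equal those of the $C_2$-modules $V_0$ and $V_1$, and since $2\mid 2i$ for both $i=0$ and $i=1$, \cref{thm:indicator on reps of kCn} gives indicator $+1$ in each case. Thus $J_2$ is totally orthogonal, completing the characterization. The main obstacle is not computational but conceptual: recognizing that the naive ``$x^2\neq1$ with full stabilizer'' idea breaks down exactly on the divisors of $24$, and replacing it with the nontrivial character at $x=1$, which produces a zero indicator for \emph{all} $n>2$ at once.
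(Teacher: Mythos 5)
Your proposal is correct and follows essentially the same route as the paper: both arguments use the identity permutation $x=1$ with full stabilizer $F_x=F=C_n$, observe that the induced one-dimensional modules have indicator $+1$ exactly when $n\mid 2i$, and conclude that some indicator vanishes for every $n>2$ while all indicators are $+1$ for $n=2$. The only cosmetic difference is that you route the computation through \cref{thm:JM indicator when Fx=F} and \cref{thm:indicator on reps of kCn} rather than plugging directly into \cref{thm:indicator reduced}, which you yourself note gives the same answer.
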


\no{\it Proof.} 
\begin{myenum}
    \item $J_n$ can only be defined for $n\ge 2$ since its structure is derived from $S_{n-1}$ and $S_m$ is not defined for $m<1$. Letting $n=2$, gives that $S_1=\{(1)\}$ so the only possible permutation is the identity $x=(1)$ and so $F_x=F$. In this case, we have that $t=1$. Now, we just plug into \cref{thm:indicator reduced}. $u_1=\frac{x(1)+1}{1}=2$ so if $\zeta_2$ is a primitive square root of unity, then $\zeta_2^{2i}=1$ for all $i=0,1$, we have that $\nu(\hat{\chi}_{(1),i})=+1$.
    \item Now, let $n>2$. Again, consider the identity permutation $(1)\in S_{n-1}$. Then of course, $F_x=F$ again, and $u_1=2$, but now $\zeta_n^{2i}=1$ if and only if $2i=0\mod n$. If $n$ is odd, $i=0$ is the only solution, and if $n$ is even then $i=0,\frac{n}{2}$ are both solutions. Since $n>2$, there exists $0\le i\le n-1$ such that $2i\not=0$ and so for these choices, $\nu(\hat{\chi}_{(1),i})=0$. Therefore, not all the indicators are $+1$ and so $J_n$ is not totally orthogonal for any $n>2.$ \qed\\
\end{myenum}

\section{\bf Explicitly Counting}\label{sec:explicitly counting}

Of course, the goal for the project is to try and count explicitly the number of irreps of $J_n$ which have indicator $+1,-1$, and $0$ for a given dimension. This result is very difficult to obtain directly since there are $(n-1)!$ permutations and we must check the stabilizer of each permutation in order to determine how many associated representations are induced from $x$ (see \cref{thm:[KMM] inducing modules}). However, using our new understanding of the group actions which describe the structure of $J_n$ and our new reduced formulas from the previous section, we are able to remove a lot of extraneous computation.

In this section, we show explicitly how to describe all irreps of $J_n$ of dimension $t$ if $t$ is odd. Not only do we present formulas for counting the total number of such irreps, but we describe explicitly how to construct the set of permutations $x$ which we will later show produce nonzero indicators and what these irreps look like. 

Throughout this section, all arithmetic will be done modulo $n$ unless otherwise specified. For example, when we write $x(2t)=2x(t)$ we mean $x(2t\mod n)=2x(t)\mod n.$

For us, an involution is any permutation $x\in S_{n-1}$ which is its own inverse (including the identity).

Additionally, we will frequently invoke a result from elementary number theory: The equation $ax= b\mod n$ has either no solutions, or gcd$(a,n)$ solutions. Specifically, the equation $ax= 0\mod n$ always has gcd$(a,n)$ solutions.

\subsection{Structure of Permutations with Given Stabilizer}\label{sec:perms from stab}

As we have seen from the previous sections, if $t$ is odd, then all $t$-dimensional irreps of $J_n$ have nonnegative indicators (\cref{prop:observations}). Furthermore, the only way an (odd) $t$-dimensional irrep of $J_n$ can have an indicator of $+1$ is if the permutation $x$ which induces that irrep shares its orbit with an involution.

Therefore, describing the odd dimensional irreps as having indicator either $+1$ or $0$ is equivalent to understanding and counting the orbits under the matched pair action which contain involutions.

\begin{proposition}\label{prop:number of x with Fx=<a^t>}
{\it Let $M_{n/t}$ be the set of $x\in S_{n-1}$ such that $F_x=\langle a^t\rangle$. Let $\varphi(n)$ be the Euler Totient Function which is the number of integers coprime to $n$. Then \begin{align}
    |M_{n/1}|&=\varphi(n)\label{eq:M1}\\
    |M_{n/t}|&=\varphi\left(\frac{n}{t}\right)\left(\frac{n}{t}\right)^{t-1}(t-1)!-\sum_{\substack{s|t\\ s\not=t}}|M_{n/s}|\label{eq:Mt}
\end{align} }
\end{proposition}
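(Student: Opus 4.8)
The plan is to count by relaxing the exact condition ``$F_x=\langle a^t\rangle$'' to the weaker containment ``$\langle a^t\rangle\subseteq F_x$'' and then removing the overcounting along the divisor lattice of $t$. Concretely, set $N_t:=\#\{x\in S_{n-1}:a^t\in F_x\}$. Since every stabilizer has the form $F_x=\langle a^s\rangle$ with $s\mid n$, and $\langle a^t\rangle\subseteq\langle a^s\rangle$ holds precisely when $s\mid t$, the set counted by $N_t$ partitions as the disjoint union $\bigsqcup_{s\mid t}M_{n/s}$ (every divisor of $t$ also divides $n$). This gives the identity $N_t=\sum_{s\mid t}|M_{n/s}|$, and isolating the top term yields $|M_{n/t}|=N_t-\sum_{s\mid t,\,s\ne t}|M_{n/s}|$. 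Thus the recursive clause of the statement follows at once provided I can show $N_t=\varphi(n/t)(n/t)^{t-1}(t-1)!$, and the base case $|M_{n/1}|=\varphi(n)$ is just the instance $t=1$, where the subtracted sum is empty and the product collapses to $\varphi(n)$.

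The core of the argument is therefore the product formula for $N_t$. Here I would invoke \cref{lem:x deterimined by remainders mod t for Fx=<a^t>}, which states that $a^t\in F_x$ is equivalent to the ``$t$-linearity'' $x(mt+u)=m\,x(t)+x(u)$ for all $0\le u<t$ and $0\le m<n/t$. So such an $x$ is completely determined by the single value $x(t)$ together with $x(u)$ for $0\le u<t$, subject only to $x$ being a bijection of $\mathbb{Z}/n\mathbb{Z}$ fixing $0$ (which is exactly $x\in S_{n-1}$). I would parametrize this data in three independent blocks: by \cref{lem:x(t)=kt some k} we must have $x(t)=jt$ with $j$ coprime to $n/t$, giving $\varphi(n/t)$ choices; writing $x(u)=c_ut+r_u$ with $r_u\in\{0,\ldots,t-1\}$ for each $1\le u\le t-1$, the residues $(r_1,\ldots,r_{t-1})$ must range over permutations of $\{1,\ldots,t-1\}$, since $x(0)=0$ forces $r_0=0$, contributing $(t-1)!$; and the ``carries'' $c_u\in\mathbb{Z}/(n/t)$ are unconstrained, contributing $(n/t)^{t-1}$. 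Multiplying gives $N_t=\varphi(n/t)(n/t)^{t-1}(t-1)!$.

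The step that requires the most care, and which I expect to be the main obstacle, is justifying that these three blocks are genuinely independent, that is, that every such choice produces an honest bijection and that the carries $c_u$ impose no constraint at all. The key observation is that $t$-linearity sends the residue class $u\bmod t$ onto the class $r_u\bmod t$ via $mt+u\mapsto(mj+c_u)t+r_u$; because $j$ is a unit modulo $n/t$, the map $m\mapsto mj+c_u$ is a bijection of $\mathbb{Z}/(n/t)$ for \emph{every} value of $c_u$, so each residue class is permuted onto its target no matter what the carry is. Consequently the bijectivity of $x$ decouples into the single requirement that $u\mapsto r_u$ be a bijection of $\mathbb{Z}/t\mathbb{Z}$, which together with $r_0=0$ is exactly the permutation condition above. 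I would make this decoupling explicit, and verify that the assignment $(j,(r_u),(c_u))\mapsto x$ is a genuine bijection onto $\{x:a^t\in F_x\}$ (with $j$, the $r_u$ and the $c_u$ all recoverable from $x$), thereby sealing the product count; the recursion and the $t=1$ base case then close out the proposition.
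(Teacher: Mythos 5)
Your proposal is correct and follows essentially the same route as the paper: both relax to the condition $a^t\in F_x$, obtain the product $\varphi(n/t)(n/t)^{t-1}(t-1)!$ from \cref{lem:x deterimined by remainders mod t for Fx=<a^t>} and \cref{lem:x(t)=kt some k}, and subtract $\sum_{s\mid t,\,s\ne t}|M_{n/s}|$ over the divisor lattice. The only difference is cosmetic — the paper chooses $x(1),\dots,x(t-1)$ sequentially, giving $(t-1)\tfrac{n}{t}\cdot(t-2)\tfrac{n}{t}\cdots\tfrac{n}{t}$, whereas you split each $x(u)$ into residue and carry; your explicit justification that the carries are unconstrained is a welcome extra bit of care the paper glosses over.
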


\begin{proof} Let $\varphi(n)=\{1\le j\le n\,|\,j\text{ coprime to }n\}$ be the Euler Totient function.

The proof is simple counting. \cref{cor:x(i)=ix(1) for Fx=F} tells us that $F_x=F$ if and only if $x(u)=ux(1)$, \cref{lem:x(t)=kt some k} says that $x(1)=j$ for some $j$ coprime to $n$, and \cref{lem:x deterimined by remainders mod t for Fx=<a^t>} states that $x$ is determined by where it sends $1$. Thus, the number of permutations $x\in S_{n-1}$ with full stabilizer is given by the number of choices of $x(1)$, which is the number of integers coprime to $n.$ Therefore, $|M_{n/1}|=\varphi(n)$. 

Now, let $t>1$ be a divisor of $n.$ Again, \cref{lem:x deterimined by remainders mod t for Fx=<a^t>} tell us that if we want to count all permutations $x\in S_{n-1}$ with stabilizer $F_x=\langle a^t\rangle$, then to fully determine $x$, we need only understand where it sends $1,2,3,...,t$. Since $x(t)=jt$ where $j$ is coprime to $\frac{n}{t}$ by \cref{lem:x(t)=kt some k}, we know there are $\varphi(\frac{n}{t})$ choices for where $x$ can send $t.$ 

Now, where can $x$ send $1$? Well it certainly cannot send it to any multiple of $t$ since then $x$ would no longer be a bijection. Thus, it can send $1$ to any number $u$ where $1\le u\le n-1$ and $u$ is not a multiple of $t$. There are $n-\frac{n}{t}=(t-1)\frac{n}{t}$ such options. 

Recall that once we know $x(1)$, we also know $x(1+t)$, $x(1+2t)$,...,$x(1+(\frac{n}{t}-1)t)$. Namely, once we choose $x(1)$ we have fixed $\frac{n}{t}$ outputs.

Similarly, $x$ can send $2$ to any available value. Since all multiples of $t$ and now all values of the form $qt+x(1)$ for $1\le q\le\frac{n}{t}$ are taken. So there are $n-2\frac{n}{t}=(t-2)\frac{n}{t}$ possible options for $x(2)$.

Inductively we obtain: $$\underbrace{\varphi\left(\frac{n}{t}\right)}_{\text{choices for }x(t)}\cdot\underbrace{(t-1)\frac{n}{t}}_{\text{choices for }x(1)}\cdot\underbrace{(t-2)\frac{n}{t}}_{\text{choices for }x(2)}\cdots \underbrace{\frac{n}{t}}_{\text{choices for }x(t-1)}.$$

Now, define $x(qt+u)=qx(t)+x(u)$ and because this is a necessary condition we have that $a^t\in F_x$ by \cref{lem:x deterimined by remainders mod t for Fx=<a^t>}. 

However, we will have over counted here. Because this now counts all $x\in S_{n-1}$ for which $a^t\in F_x$, and it is not necessarily true that $F_x=\langle a^t\rangle$ for each of these permutations. Namely, we must now remove all $x$ for which $F_x=\langle a^s\rangle$ where $s$ divides $t$. That is, we have over counted by exactly $|M_{n/s}|$ for each $s$ which is a divisor of $t$. 

Finally, this gives the desired result, \begin{align*}
    |M_{n/t}|&=\varphi\left(\frac{n}{t}\right)(t-1)\frac{n}{t}(t-2)\frac{n}{t}\cdots \frac{n}{t}-\sum_{\substack{s | t \\ s\not= t}}M_{n/s}\\
    &=\varphi\left(\frac{n}{t}\right)\left(\frac{n}{t}\right)^{t-1}(t-1)!-\sum_{\substack{s|t\\ s\not=t}}|M_{n/s}|\qedhere
\end{align*}
\end{proof}

\begin{remark}\label{remark3 counting M}
{\it For example, when $n=12$ \begin{align*}
    |M_{12/1}|&=4\\
    |M_{12/2}|&=2(6)1!-4=8\\
    |M_{12/3}|&=2(4)^22!-4=60\\
    |M_{12/6}|&=1(2)^55!-4-8-60=3,768\\
    |M_{12/12}|&=1(1)^99!-4-8-60-3768=359,040
\end{align*}}
\end{remark}

In fact, \cref{prop:number of x with Fx=<a^t>} gives a recursive formula for explicitly constructing all permutaitons $x$ which have stabilizer $F_x=\langle a^t\rangle$ for a given $t$.

Using this formula, we will show in \cref{sec:limit behavior} that for a fixed $t$, the ratio $|M_{n/t}|/(n-1)!\to0$ as $n\to\infty.$ This shows that it is rare for a permutation to have stabilizer $\langle a^t\rangle$ for large $n.$

Now, let us describe explicitly how to write $x$ in terms of a permutation in a smaller symmetric group.

\begin{lemma}\label{lem:x generated from smaller sigma}
{\it Let $x\in S_{n-1}$ with $F_x=\langle a^t\rangle$. Then there exists an integer $j$ which is coprime to $\frac{n}{t}$ (and so therefore invertible modulo $\frac{n}{t}$), a permutation $\sigma_x\in S_{t-1}$ and integers $1\le u_i\le \frac{n}{t}$ such that \begin{align*}
    x(i)&=u_it+\sigma_x(i)\mod n\qquad \text{ for all }1\le i\le t-1\\
    x(qt)&=qjt\mod n\qquad\text{ for all } 1\le q\le \frac{n}{t}\\
    x(qt+i)&=qx(t)+x(i) \mod n
\end{align*}

Furthermore, \begin{align*}
    x^{-1}(i)&=-ju_{\sigma_x^{-1}(i)}t+\sigma_x^{-1}(i)\mod n\qquad \text{ for all }1\le i\le t-1\\
    x^{-1}(t)&=j^{-1}t\mod n\\
    x^{-1}(qt+i)&=qx^{-1}(t)+x^{-1}(i)\mod n
\end{align*}

We call $\sigma_x$ a \textit{remainder permutation} for $x$.}
\end{lemma}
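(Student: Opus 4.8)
The plan is to read off the permutation $\sigma_x$ and the integers $u_i$ from the ``linearity in $t$'' already recorded in \cref{lem:x(t)=kt some k}, and then to obtain the three inverse identities by applying $x^{-1}$ to well-chosen inputs. The two forward displays involving multiples of $t$ need no new argument: \cpref{lem:x(t)=kt some k}{lem:x(t)=kt some k (2)} supplies $j$ coprime to $\tfrac nt$ with $x(t)=jt$, and feeding this into the additivity $x(qt+i)=q\,x(t)+x(i)$ from \cpref{lem:x(t)=kt some k}{lem:x(t)=kt some k (1)} gives at once $x(qt)=q\,x(t)=qjt$ and the third display verbatim. So the real content is the definition of $\sigma_x$ and the proof that it is a permutation of $\{1,\dots,t-1\}$.

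First I would show that $x$ permutes the multiples of $t$: by the display $x(qt)=qjt$ every multiple of $t$ has image a multiple of $t$, and since $x$ is a bijection of $\mathbb Z/n\mathbb Z$ it must send every non-multiple of $t$ to a non-multiple of $t$. Hence for $1\le i\le t-1$ the value $x(i)$ is not divisible by $t$, and I can define $\sigma_x(i)\in\{1,\dots,t-1\}$ to be the residue of $x(i)$ modulo $t$ and $u_i$ the associated quotient, placed in $\{1,\dots,\tfrac nt\}$ once everything is read modulo $n$; this is exactly the first forward display. To see that $\sigma_x$ is a bijection, note that additivity gives $x(qt+i)\equiv x(i)\pmod t$ (because $qjt\equiv 0$), so $x$ descends to a well-defined self-map $\bar x$ of $\mathbb Z/t\mathbb Z$ fixing the class of $0$. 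Each class modulo $t$ contains exactly $\tfrac nt$ elements, so if two distinct nonzero classes shared an image under $\bar x$ then their $2\cdot\tfrac nt$ elements would all land in a single class of size $\tfrac nt$, contradicting injectivity of $x$. Thus $\bar x$ is injective, hence a bijection of $\mathbb Z/t\mathbb Z$, and restricting to the nonzero classes gives $\sigma_x\in S_{t-1}$.

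For the inverse identities, I would first invoke \cpref{lem:x(t)=kt some k}{lem:x(t)=kt some k (33)} to get $F_{x^{-1}}=F_x=\langle a^t\rangle$, so that $x^{-1}$ obeys the same additivity and the third inverse display holds by the identical reasoning, while \cpref{lem:x(t)=kt some k}{lem:x(t)=kt some k (44)} gives $x^{-1}(t)=j^{-1}t$. The first inverse display is then a one-line substitution: fixing $1\le i\le t-1$ and writing $i'=\sigma_x^{-1}(i)$, the defining relation $x(i')=u_{i'}t+i$ yields, after applying $x^{-1}$ and using additivity together with $x^{-1}(t)=j^{-1}t$,
\begin{equation*}
i'=x^{-1}\!\left(u_{i'}t+i\right)=u_{i'}\,x^{-1}(t)+x^{-1}(i)=j^{-1}u_{i'}t+x^{-1}(i),
\end{equation*}
so that $x^{-1}(i)=\sigma_x^{-1}(i)-j^{-1}u_{\sigma_x^{-1}(i)}t\pmod n$; here the coefficient on the $t$-term is the multiplicative inverse $j^{-1}$ of $j$ modulo $\tfrac nt$, consistent with $x^{-1}(t)=j^{-1}t$.

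I expect no conceptual obstacle: once \cref{lem:x(t)=kt some k} is in hand, the linearity does all the structural work and the argument is bookkeeping. The only step demanding genuine care is keeping two moduli straight at the same time — reductions modulo $t$ (which define $\sigma_x$) and modulo $n$ (which control the quotients $u_i$) — together with the pigeonhole verification that the induced residue map $\bar x$ is bijective, since that is what upgrades $\sigma_x$ from a mere assignment of remainders to an honest element of $S_{t-1}$.
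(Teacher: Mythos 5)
Your proof is correct and follows essentially the same route as the paper's: both extract $j$ and the additivity $x(qt+i)=qx(t)+x(i)$ from \cref{lem:x(t)=kt some k}, define $\sigma_x$ as the residue of $x$ modulo $t$, check that this residue map is a bijection of the nonzero classes (you by a pigeonhole count on residue classes of size $\tfrac{n}{t}$, the paper by a short algebraic computation showing $x(i)\equiv x(i')\bmod t$ forces $i=i'$), and then obtain the inverse formulas by composing with $x^{\pm 1}$ and separating the multiple-of-$t$ part from the remainder. One remark: your coefficient $-j^{-1}u_{\sigma_x^{-1}(i)}$ is what the paper's own proof also derives ($l_i=-j^{-1}u_{\sigma_x^{-1}(i)}\bmod\tfrac{n}{t}$), whereas the printed statement of \cref{lem:x generated from smaller sigma} reads $-ju_{\sigma_x^{-1}(i)}$; the statement appears to contain a typo and your version (consistent with $x^{-1}(t)=j^{-1}t$) is the correct one.
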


\no{\it Proof.} Let $x\in S_{n-1}$ with stabilizer $F_x=\langle a^t\rangle$. We have already established in \cref{lem:x deterimined by remainders mod t for Fx=<a^t>} that $x$ is determined by where it sends $1,...,t$. We now claim that $x$ defines a permutation modulo $t$. From, \cref{lem:x(t)=kt some k}, there exists a $j$ coprime to $\frac{n}{t}$ (and so invertible modulo $\frac{n}{t}$) with $x(t)=jt.$ Now, if $i,i'<t$ and $x(i)=x(i')\mod t$, then $x(i)-x(i')$ is a multiple of $t$, say $x(i)-x(i')=qt$ for some value $q$. Then $$x(j^{-1}qt+i')=x(j^{-1}qt)+x(i')=jj^{-1}qt+x(i')=qt+x(i')=x(i)-x(i')+x(i')=x(i).$$ Therefore, because $x$ is a bijection, $j^{-1}qt+i'=i\mod n$ and so $i-i'$ is a multiple of $t$. However, we assumed that both $i$ and $i'$ were positive integers smaller than $t$ so their difference is only a multiple of $t$ if their difference is $0$. Thus, $x\mod t$ actually defines a permutation in $S_{t-1}$.

Specifically, for each $x$, there exists an associated remainder permutation $\sigma_x\in S_{t-1}$ defined by $\sigma_x(i):=x(i)\mod t$ for all $i<t$ and a value $j$ which is coprime to $\frac{n}{t}$ such that $x(t)=jt.$ Furthermore, $x(i)=tu_i+\sigma_x(i)$ for all $1\le i\le t-1$. 

Now, if we let $x^{-1}(i)=tl_i+\sigma_{x^{-1}}(i)$, then we obtain \begin{align*}
    i&=x(x^{-1}(i))=x(tl_i+\sigma_{x^{-1}}(i))\\
    &=l_ix(t)+x(\sigma_{x^{-1}}(i))\\
    &=l_ijt+tu_{\sigma_{x^{-1}}(i)}+\sigma_x(\sigma_{x^{-1}}(i))
\end{align*}

Now, note that $t$ divides $l_ijt+tu_{\sigma_{x^{-1}}(i)}$, however, $\sigma_x,\sigma_{x^{-1}}\in S_{t-1}$ and so they can only output values strictly smaller than $t$. This gives us two relations, first, $\sigma_x(\sigma_{x^{-1}}(i))=i$ for all $i$, and so $\sigma_x^{-1}=\sigma_{x^{-1}}$ which is very desirable, but also $l_ijt=-tu_{\sigma_x^{-1}(i)}$ for all $i$, which we can rewrite to obtain $$l_i= -j^{-1}u_{\sigma_x^{-1}(i)}\mod\frac{n}{t}\qquad\text{ for all }i=1,...,t-1.\eqno\qed$$

\begin{corollary}\label{cor:involution generated from smaller sigma}
{\it Let $x\in S_{n-1}$ have stabilizer $F_x=\langle a^t\rangle$. Then there exists integers $j,u_i$ for $1\le i\le t-1$ and remainder permutation $\sigma_x\in S_{t-1}$ as from \cref{lem:x generated from smaller sigma} such that \begin{align*}
    x(i)&=u_it+\sigma_x(i)\mod n\qquad \text{ for all }1\le i\le t-1\\
    x(qt)&=qjt\mod n\qquad\text{ for all } 1\le q\le \frac{n}{t}\\
    x(qt+i)&=qx(t)+x(i) \mod n
\end{align*} 

Then, if $x$ is an involution, $\sigma_x$ is as well and $u_i=-ju_{\sigma_x(i)}$ for all $1\le i\le t-1$. 

Conversely, every choice of $j$ coprime to $\frac{n}{t}$, involution $\sigma\in S_{t-1}$, and integers $u_i$ satisfying $u_i=-ju_{\sigma(i)}$ will generate an involution $x\in S_{n-1}$ which is stabilized by $a^t$.
}
\end{corollary}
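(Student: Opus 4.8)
The plan is to read off both directions directly from the explicit description of $x$ and $x^{-1}$ furnished by \cref{lem:x generated from smaller sigma}. That lemma writes $x(i)=u_it+\sigma_x(i)$ and $x^{-1}(i)=-ju_{\sigma_x^{-1}(i)}t+\sigma_x^{-1}(i)$ for $1\le i\le t-1$, together with $x(t)=jt$ and $x^{-1}(t)=j^{-1}t$, and shows that the behaviour on all of $\mathbb{Z}/n\mathbb{Z}$ is determined by these values through the relation $x(qt+i)=qx(t)+x(i)$. Since $x$ is an involution exactly when $x=x^{-1}$, the whole corollary reduces to comparing these two formulas coefficient by coefficient, separating the part divisible by $t$ from the remainder lying in $\{1,\dots,t-1\}$.

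For the forward direction I would assume $x=x^{-1}$ and equate the two expressions above. Reducing modulo $t$ kills every multiple of $t$ and leaves $\sigma_x(i)\equiv\sigma_x^{-1}(i)\pmod t$; since both lie in $\{1,\dots,t-1\}$ they are equal, so $\sigma_x=\sigma_x^{-1}$, i.e. $\sigma_x$ is an involution. Comparing the parts divisible by $t$ then gives $u_i\equiv -ju_{\sigma_x^{-1}(i)}\pmod{n/t}$, and substituting $\sigma_x^{-1}=\sigma_x$ yields the asserted relation $u_i=-ju_{\sigma_x(i)}$. Comparing $x(t)=jt$ with $x^{-1}(t)=j^{-1}t$ moreover forces $j^2\equiv 1\pmod{n/t}$, in agreement with \cref{lem:x(t)=kt some k}.

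For the converse I would run the construction in reverse: given $j$ coprime to $n/t$, an involution $\sigma\in S_{t-1}$, and integers $u_i$ with $u_i=-ju_{\sigma(i)}$, define $x$ on $\{1,\dots,t-1\}$ and on the multiples of $t$ by the displayed formulas and extend via $x(qt+i)=qx(t)+x(i)$. First I would check that $x$ is a well-defined permutation fixing $n$: modulo $t$ the outputs are governed by $\sigma$, a bijection of $\{1,\dots,t-1\}$ with the multiples of $t$ mapping among themselves, and within each residue class the map $q\mapsto qj+u_i$ is a bijection of $\mathbb{Z}/(n/t)\mathbb{Z}$ because $j$ is invertible there; hence $x\in S_{n-1}$. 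That $a^t\in F_x$ is then immediate, since the defining relation $x(qt+i)=qx(t)+x(i)$ is exactly the criterion of \cref{lem:x deterimined by remainders mod t for Fx=<a^t>}.

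The substantive step, and the one I expect to be the main obstacle, is verifying $x^2=\mathrm{id}$. From $u_i=-ju_{\sigma(i)}$ and $\sigma^2=\mathrm{id}$ one first derives the companion relation $u_{\sigma(i)}=-ju_i$ by replacing $i$ with $\sigma(i)$. Then for $1\le i\le t-1$, $x(x(i))=x(u_it+\sigma(i))=u_ix(t)+x(\sigma(i))=(u_ij+u_{\sigma(i)})t+\sigma(\sigma(i))=(u_ij-ju_i)t+i=i$, so $x^2$ fixes every non-multiple of $t$. On the multiples of $t$, however, $x(x(qt))=x(qjt)=qj^2t$, which equals $qt$ for all $q$ only when $j^2\equiv 1\pmod{n/t}$. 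This congruence is not implied by $u_i=-ju_{\sigma(i)}$ alone: taking $\sigma=\mathrm{id}$ and all $u_i=0$ gives a valid choice with $x(t)=jt$ yet $x^2(t)=j^2t$. Consequently the clean statement of the converse should carry the extra hypothesis $j^2\equiv 1\pmod{n/t}$, which by the forward direction and \cref{lem:x(t)=kt some k} holds automatically whenever $x$ is genuinely an involution. Granting it, the computation above closes the argument and the remaining verifications are routine bookkeeping modulo $n$.
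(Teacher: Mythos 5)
Your forward direction is essentially the paper's own argument: the paper reads $\sigma_{x^{-1}}=\sigma_x^{-1}$, $l_i=-j^{-1}u_{\sigma_x^{-1}(i)}$ and $x^{-1}(t)=j^{-1}t$ off of \cref{lem:x generated from smaller sigma} and sets $x=x^{-1}$ to conclude $\sigma_x^2=1$, $j=j^{-1}$ and $u_i=-ju_{\sigma_x(i)}$, exactly as you do by comparing the two formulas modulo $t$ and modulo multiples of $t$. For the converse the paper writes only that it ``follows by direct computation,'' and you have actually performed that computation --- and in doing so you have caught a real imprecision in the statement. Your observation is correct: the hypotheses ``$j$ coprime to $\frac{n}{t}$, $\sigma^2=1$, $u_i=-ju_{\sigma(i)}$'' give $x^2(i)=(u_ij+u_{\sigma(i)})t+i=i$ on non-multiples of $t$, but on multiples of $t$ they only give $x^2(qt)=qj^2t$, and the relation $u_i=-ju_{\sigma(i)}$ merely forces $(j^2-1)u_i\equiv 0$, which is vacuous when the $u_i$ vanish. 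Your example ($\sigma=\mathrm{id}$, all $u_i=0$, $j$ coprime to $\frac{n}{t}$ with $j^2\not\equiv 1$) does produce a permutation stabilized by $a^t$ that is not an involution, so the converse needs the additional hypothesis $j^2\equiv 1\pmod{\frac{n}{t}}$, i.e.\ $j\in E_{n/t}$. This is consistent with \cref{lem:x(t)=kt some k} and with every subsequent use of the corollary (the counting formulas such as \cref{prop:number of ord 2 elements for given stab} always sum over $j\in E_{n/t}$), so nothing downstream is affected, but your version of the statement is the correct one and your proof is complete once that hypothesis is added.
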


\no{\it Proof.} \cref{lem:x generated from smaller sigma} gives that $\sigma_{x^{-1}}=\sigma_x^{-1}$ and so if $x^2=1$ then $\sigma_x^2=1$.

Note because $x^{-1}=x$, $j^{-1}=j$, and  $l_i=u_i$ for all $i$. Therefore, $u_i=-j^{-1}u_{\sigma_x^{-1}(i)}=-ju_{\sigma_x(i)}\mod\frac{n}{t}$ for all $i$. This says that for every fixed point of $\sigma_x$, we require that $u_i(j+1)= 0\mod\frac{n}{t}$. 

The converse follows by direct computation. \qed\\

Note that knowing $\sigma_x$ is an involution does {\it not} guarantee that $x$ is as well. For example, if $t=3$, then every $\sigma\in S_2$ has order $2$, but certainly $x$ need not be. 

It must be stressed here that the remainder permutation $\sigma_x$ plays a vital role in understanding (and thus counting) involutions $x$ with certain properties. 

While we do not explore it in this work, we believe that irreps in $J_n$ are in some way connected to and possible even extended from irreps in $J_t$ (for $t$ dividing $n$) and that remainder permutations are the key to understanding this connection.

\subsection{Counting Involutions with Certain Properties}\label{sec:certain properties}

Here we focus on counting formulas for involutions in $x\in S_{n-1}$ with stabilizer $F_x=\langle a^t\rangle$ and certain additional properties. Specifically, we will show how to count involutions with a given number of fixed points and involutions whose orbits contain a given number of involutions.

We will see that the ideas presented in this section and the next will be crucial to constructing a counting formula for the number of irreps of odd dimension with indicator $+1$. 

Although they may seem very different, the ideas used to construct these counting formulas build upon each other. That is, we must understand what involutions with stabilizer $F_x=\langle a^t\rangle$ in order to understand involutions with stabilizer $F_x=\langle a^t\rangle$ and $r$ fixed points, and we must understand involutions with stabilizer $F_x=\langle a^t\rangle$ and $r$ fixed points in order to understand involutions with stabilizer $F_x=\langle a^t\rangle$ and $r$ fixed points smaller than $t.$

\begin{center}

    \begin{mytable}[Big Table of Commonly Cited Values and Sets] \label{fig:table of sets and values}\end{mytable} 

 {\tabulinesep=1.2mm
\begin{tabu}{ c|c} 
 \hline
 Number & Set or Value \\
 \hline
 \hline
(1) & $E_{n/t}=\displaystyle\left\{1\le j\le \frac{n}{t}\,\big|\,j^2=1\mod\frac{n}{t}\right\}$ \\
 \hline
 (2) & $K_{j,n/t}=\displaystyle\left\{1\le u\le \frac{n}{t}\,\big|\,u(j+1)=0\mod\frac{n}{t}\right\}$ \\
 \hline
(3) & $\alpha_{j,n/t}=\displaystyle\text{gcd}\left(j+1,\frac{n}{t}\right)=|K_{j,n/t}|$ \\
 \hline
(4) & $P_{j,n/t}=\displaystyle\left\{q(j-1)\mod\frac{n}{t}\,\big|\,1\le q\le \frac{n}{t}\right\}$ \\
 \hline
 (5) & $P_{j,n/t}^{(c)}=\displaystyle\left\{1\le u\le \frac{n}{t}\,\big|\,u\not\in P_{j,n/t}\right\}\bigcap K_{j,n/t}$ \\
   \hline
  (6) & $\delta^{P^c}=\displaystyle \begin{cases} |P_{j,n/t}^{(c)}| & \text{ if } P_{j,n/t}^{(c)}\not=\varnothing\\ 1 & \text{ otherwise }\end{cases}$ \\
 \hline
 (7) & $\overline{K}_{j',j_{\sigma_x},n,t,s}=${\footnotesize$\displaystyle\left\{1\le u\le \frac{n}{t}\,\big|\,\left(u\frac{t}{s}+m_i\right)(1+j')= 0\mod\frac{n}{s}\text{ for some }m_i\in P_{j_{\sigma_x},t/s}\right\}$} \\
  \hline
 (8) & $\overline{K}_{j',j_{\sigma_x},n,t,s}^{(c)}=${\footnotesize $\displaystyle\left\{1\le u\le \frac{n}{t}\,|\,\left(u\frac{t}{s}+m_i\right)(1+j')= 0\mod\frac{n}{s}\text{ for some }m_i\in P_{j_{\sigma_x},t/s}^{(c)}\right\}$}\\
   \hline
 (9) & $\delta^{K^c}=\displaystyle\begin{cases} |\overline{K}_{j',j_{\sigma_x},n,t,s}^{(c)}| &\text{ if }\overline{K}_{j',j_{\sigma_x},n,t,s}^{(c)}\not=\varnothing\\ 1&\text{otherwise}\end{cases}$ \\
 \hline
  (10) & $\overline{E}_{j_{\sigma_x},n,t,s}=\displaystyle\left\{j_{\sigma_x}+\overline{m}\frac{t}{s}\,\big|\,1\le \overline{m}\le \frac{n}{t}\right\}\bigcap E_{n/s}$ \\
 \hline
 (11) & $\beta_{j,n/t}=\displaystyle\text{gcd}\left(j-1,\frac{n}{t}\right)$ \\
 \hline
 (12) &  $\delta_{j,n/t,r}=\displaystyle\begin{cases} 1 & \text{ if }\beta_{j,n/t}|r\text{ and }r\le t\beta_{j,n/t}\\ 0&\text{ otherwise}\end{cases}$ \\
 \hline
 (13) & $m_{r,j,n/t}=\displaystyle\frac{r}{\beta_{j,n/t}}$\\
 \hline
 (14) & $K_{j,n/2}'=\left\{u\,|(u+1)(j+1)=0\mod\frac{n}{2}\,\right\}$\\
 \hline
 (15) & $\overline{\delta}_{u,j,n/2}=\begin{cases} 1 & \text{ if }j\not=2u+1\mod\frac{n}{2}\\ 0 & \text{ otherwise}\end{cases}$\\
 \hline
 (16) & $\delta_{i,u,n/2}^0=\begin{cases} 1 & \text{ if }-i(u+1)=0\mod\frac{n}{2}\\ 0 & \text{ otherwise}\end{cases}$\\
 \hline
 (17) & $\delta_{i,u,n/2}^{\not=0}=\begin{cases} 1 & \text{ if }-i(u+1)\not=0\mod\frac{n}{2}\\ 0 & \text{ otherwise}\end{cases}$\\
\end{tabu}}
\end{center}
\noindent{\small Note: A table listing the majority of sets and values which are used heavily in the statements in this section.}

\begin{proposition}\label{prop:number of ord 2 elements for given stab} 
{\it Let $T_{n/t}$ be the set of involutions $x\in S_{n-1}$ such that $F_x=\langle a^t\rangle$. Let $E_{n/t}$ be as from (1) and $\alpha_{j,n/t}$ be as from (3) in \cref{fig:table of sets and values} (\nameref{fig:table of sets and values}). Then \begin{align}
    |T_{n/1}|&=|E_{n/1}|\label{eq:T1}\\
    |T_{n/t}|&=\sum_{j\in E_{n/t}}\sum_{l=0}^{\left\lfloor\frac{t-1}{2}\right\rfloor}(\alpha_{j,n/t})^{t-1-2l}\left(\frac{n}{t}\right)^l\frac{(t-1)!}{(t-1-2l)!2^ll!}-\sum_{\substack{s|t \\ s\not=t}}|T_{n/s}|\label{eq:Tt}\\
    |T_{n/n}|&=\sum_{l=0}^{\frac{n-1}{2}}\frac{(n-1)!}{(n-1-2l)!2^ll!}-\sum_{\substack{s|n \\ s\not=n}}|T_{n/s}|\label{eq:Tn}
\end{align} }
\end{proposition}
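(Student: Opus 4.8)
The plan is to parametrize the involutions $x\in S_{n-1}$ satisfying $a^t\in F_x$ using the remainder-permutation machinery of \cref{cor:involution generated from smaller sigma}, count them by an explicit product formula, and then recover $|T_{n/t}|$ by the same recursive subtraction over proper divisors of $t$ used in \cref{prop:number of x with Fx=<a^t>}. Concretely, \cref{lem:x generated from smaller sigma} and \cref{cor:involution generated from smaller sigma} together give a bijection between the involutions $x$ stabilized by $a^t$ and the triples $(j,\sigma,(u_i)_{i=1}^{t-1})$, where $j\in E_{n/t}$ (the condition $x^{-1}=x$ forces $j=j^{-1}$, i.e.\ $j^2\equiv 1 \bmod \tfrac{n}{t}$), $\sigma\in S_{t-1}$ is an involution (the remainder permutation $\sigma_x$), and the $u_i\in\mathbb{Z}/\tfrac{n}{t}\mathbb{Z}$ satisfy the coupling $u_i=-j\,u_{\sigma(i)}$. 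Since an element with $a^t\in F_x$ has stabilizer $\langle a^s\rangle$ for exactly one divisor $s\mid t$, this triple-count equals $\sum_{s\mid t}|T_{n/s}|$, so subtracting $\sum_{s\mid t,\,s\neq t}|T_{n/s}|$ isolates $|T_{n/t}|$.

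First I would fix $j\in E_{n/t}$ and an involution $\sigma\in S_{t-1}$, and count the admissible tuples $(u_i)$. Because $\sigma$ is an involution it splits into fixed points and transpositions, and the coupling $u_i=-j\,u_{\sigma(i)}$ decouples across these blocks. For each fixed point $i$ the condition reads $u_i(1+j)\equiv 0\bmod \tfrac{n}{t}$, i.e.\ $u_i\in K_{j,n/t}$, contributing $|K_{j,n/t}|=\alpha_{j,n/t}$ choices (using the elementary fact that $u(j+1)\equiv 0$ has $\gcd(j+1,\tfrac{n}{t})$ solutions). For each transposition $\{i,i'\}$ I would choose $u_{i'}$ freely among the $\tfrac{n}{t}$ residues and set $u_i=-j\,u_{i'}$; the companion equation $u_{i'}=-j\,u_i=j^2 u_{i'}$ then holds automatically precisely because $j^2\equiv 1$, so each transposition contributes exactly $\tfrac{n}{t}$ choices. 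Hence, if $\sigma$ has $l$ transpositions (and therefore $t-1-2l$ fixed points), the number of admissible $(u_i)$ is $(\alpha_{j,n/t})^{t-1-2l}(\tfrac{n}{t})^{l}$, which depends on $\sigma$ only through $l$.

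Next I would account for the number of involutions $\sigma\in S_{t-1}$ with exactly $l$ transpositions, namely the standard count $\tfrac{(t-1)!}{(t-1-2l)!\,2^l l!}$ obtained by selecting the $2l$ moved points and pairing them via the double factorial. Summing the product of this count with the $u$-count over $0\le l\le\lfloor\tfrac{t-1}{2}\rfloor$ and over $j\in E_{n/t}$ gives the total number of involutions with $a^t\in F_x$, and subtracting the proper-divisor terms yields \eqref{eq:Tt}. The boundary cases fall out directly: for $t=1$ the group $S_0$ is trivial and $1$ has no proper divisors, leaving $\sum_{j\in E_{n/1}}1=|E_{n/1}|$, matching \eqref{eq:T1} (and agreeing with \cref{cor:x(i)=ix(1) for Fx=F}, since then $x(u)=uj$ is an involution iff $j^2\equiv 1\bmod n$); for $t=n$ we have $\tfrac{n}{t}=1$, so $E_{n/n}=\{1\}$ and $\alpha_{1,1}=\gcd(2,1)=1$, collapsing the general formula to the pure involution count of \eqref{eq:Tn}.

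The main obstacle I anticipate is not the arithmetic but verifying cleanly that the correspondence $(j,\sigma,(u_i))\leftrightarrow x$ is genuinely a bijection onto the involutions with $a^t\in F_x$ --- in particular that $j\in E_{n/t}$ is exactly the constraint imposed by $x^2=1$, that the transposition blocks require no condition beyond $j^2\equiv 1$ (so that no admissible tuple is lost or double-counted), and that the recursive subtraction is valid, i.e.\ every involution with $a^t\in F_x$ has a unique minimal stabilizer generator among the divisors of $t$. Once that bookkeeping is pinned down via \cref{lem:x generated from smaller sigma} and \cref{cor:involution generated from smaller sigma}, the remaining content is just the two independent product counts (over $u$-tuples and over $\sigma$) and their assembly.
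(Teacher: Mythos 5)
Your proposal is correct and follows essentially the same route as the paper: parametrize involutions stabilized by $a^t$ via \cref{cor:involution generated from smaller sigma} by the data $(j,\sigma,(u_i))$, count $\alpha_{j,n/t}$ choices per fixed point of $\sigma$ and $\tfrac{n}{t}$ per transposition, multiply by the standard count of involutions in $S_{t-1}$ with $l$ transpositions, and subtract the proper-divisor terms recursively. The bijectivity and consistency points you flag as potential obstacles are exactly what the converse direction of \cref{cor:involution generated from smaller sigma} supplies, so no gap remains.
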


\no{\it Proof.} Let $x\in S_{n-1}$ be an involution with stabilizer $F_x=\langle a^t\rangle$ and let $$E_{n/t}:=\left\{1\le j\le n\,|\,j^2= 1\mod\frac{n}{t}\right\}.$$ From \cref{cor:involution generated from smaller sigma}, let $\sigma_x\in S_{t-1}$, write $x(i)=tu_i+\sigma_x(i)$ for all $1\le i\le t-1$ and $x(t)=jt$ for some $j\in E_{n/t}$ which determines $x$ completely. 

Therefore, the number of $x$ satisfying $x^2=1$ can be counted by fixing a value $j\in E_{n/t}$, then for each involution $\sigma\in S_{t-1}$, we count the number of fixed points of $\sigma$ which requires counting the number of transpositions present in the disjoint cycle decomposition of $\sigma.$

Note, there are $\frac{(t-1)!}{(t-1-2l)!2^ll!}$ permutations $\sigma\in S_{t-1}$ with $\sigma^2=1$ and $\sigma$ being a product of $l$ disjoint cycles. This is because we choose any two integers between $1$ and $t-1$ to put in a transposition (so $\binom{t-1}{2}=\frac{(t-1)(t-2)}{2}$, then we choose any two of the remaining $t-3$ integers to pair up in a second transposition so $\frac{(t-3)(t-4)}{2}$), and we repeat this $l$ times which gives $$\frac{(t-1)(t-2)(t-3)\cdots(t-1-2l)}{2^ll!}$$ where we must divide by $l!$ because the order in which we obtain each transposition does not matter, that is, $(1\pp 2)(3\pp 4)=(3\pp 4)(1\pp 2)$ and there are $l!$ ways to re-order the product of transpositions.

Now, if $x$ has remainder permutation $\sigma$, then from \cref{lem:x generated from smaller sigma}, we know that $u_i= -ju_{\sigma(i)}\mod\frac{n}{t}$ and so each fixed point of $\sigma$ (of which there are $t-1-2l$) forces $u_i(j+1)= 0\mod\frac{n}{t}$ which contributes gcd$(j+1,\frac{n}{t})$ possible choices for $u_i$ where $\sigma(i)=i$ (see last paragraph of the proof of \cref{lem:x generated from smaller sigma}). Furthermore, each transposition $(i\pp i')$ in $\sigma$ which pairs $u_i$ with $u_{i'}$ and so contributes $\frac{n}{t}$ choices for $u_i$ (which then determines $u_{i'}$). 

Let $\alpha_{j,n/t}:=\text{gcd}\left(j+1,\frac{n}{t}\right)$. Therefore, for a fixed $j$, and fixed $\sigma$ comprised of $l$ disjoint transpositions, there are $(\alpha_{j,n/t})^{t-1-2l}\left(\frac{n}{t}\right)^l$ choices for the $u_i$ and so if we multiply this by the number of such $\sigma$, which is $\frac{(t-1)!}{(t-1-2l)!2^ll!}$ and then sum over all $j\in E_{n/t}t$, and subtract those we have already counted, we obtain our result

$$|T_{n/t}|=\underbrace{\sum_{j\in E_{n/t}}}_{\text{choices for }x(t)}\underbrace{\sum_{l=0}^{\left\lfloor\frac{t-1}{2}\right\rfloor}}_{\substack{\#\text{ of fixed} \\ \text{points of }\sigma_x}}\underbrace{(\alpha_{j,n/t})^{t-1-2l}}_{\substack{\text{choices for }x(i) \\ \text{ when }\sigma_x(i)=i}}\underbrace{\left(\frac{n}{t}\right)^l}_{\substack{\text{choices for }x(i) \\ \text{ when }\sigma_x(i)\not=i}}\underbrace{\frac{(t-1)!}{(t-1-2l)!2^ll!}}_{\text{ choice for }\sigma_x}-\underbrace{\sum_{\substack{s|t \\ s\not=t}}|T_{n/s}|}_{\text{remove over count}}$$

In the case where $t=1$, then $l=0$ and all powers are $0$ so we are simply summing $1$ for every $j\in E_{n/1}$.

Of course, if $t=n$, all sets trivialize and so we simply count all involutions in $S_{n-1}$ and then remove those we have already counted.
\qed\\

Again, as with \cref{prop:number of x with Fx=<a^t>}, explicitly writing out the set $T_{n/t}$ is done recursively by using the formula given.

Note that we can now verify the statements made at the end of the discussion under \cref{thm:indicator reduced} that if $n=p^r$ for $p$ and odd prime and $r\in\mathbb{N}$, then there exists only one nontrivial involution $x\in S_{n-1}$ with $F_x=F$. Since $E_{p^r/1}=\{1,-1\}$, we have only two permutations in $$T_{p^r/1}=\{(1),(1\pp p^r-1)(2\pp p^r-2)\cdots(\frac{p^r-1}{2}\pp\frac{p^r+1}{2})\}.$$

\begin{lemma}\label{lem:fixed points of x relating to q and sigma}
{\it Let $x\in S_{n-1}$ be an involution with $F_x=\langle a^t\rangle$. Let $j\in E_{n/t}$ as from \cref{fig:table of sets and values} (\nameref{fig:table of sets and values}) and write $x(t)=jt$ and for all $1\le i<t$, and $x(i)=u_it+\sigma_x(i)$ for some involution $\sigma_x\in S_{t-1}$ and constants $1\le u_i\le \frac{n}{t}$. Let $$\beta_{j,n/t}=\text{gcd}\left(j-1,\frac{n}{t}\right)=\text{gcd}\left(1-j,\frac{n}{t}\right).$$ Then, $x(qt+i)=qt+i$ for $\beta_{j,n/t}$ different values $q$ if and only if $\sigma_x(i)=i$ and $u_i= q(1-j)\mod\frac{n}{t}$ for some $q$.}
\end{lemma}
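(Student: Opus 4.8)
The plan is to compute $x(qt+i)$ directly from the structure formula and reduce the fixed-point equation to a linear congruence in $q$. By \cref{lem:x generated from smaller sigma} (restated for involutions in \cref{cor:involution generated from smaller sigma}), for $1\le i<t$ we have
\[
x(qt+i)=qx(t)+x(i)=qjt+u_it+\sigma_x(i)=(qj+u_i)t+\sigma_x(i)\mod n.
\]
First I would impose $x(qt+i)=qt+i$ and read this equation modulo $t$: the left-hand side is congruent to $\sigma_x(i)$ and the right-hand side to $i$, and both $\sigma_x(i)$ and $i$ lie in $\{1,\ldots,t-1\}$, so the congruence actually forces the equality $\sigma_x(i)=i$. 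This establishes the necessity of the first condition and shows that no fixed point of the form $qt+i$ can occur unless $i$ is a fixed point of the remainder permutation $\sigma_x$.

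Next, assuming $\sigma_x(i)=i$, the equation $(qj+u_i)t+i=qt+i\mod n$ simplifies to $(qj+u_i-q)t=0\mod n$, that is, $u_i=q(1-j)\mod\frac{n}{t}$. Thus a given index $q$ produces a fixed point exactly when $\sigma_x(i)=i$ and $q$ solves this linear congruence, which is precisely the condition appearing on the right of the claimed biconditional.

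The final step is the count. I would let $q$ range over a complete residue system modulo $\frac{n}{t}$ (equivalently, over the indices producing the distinct residues $\equiv i\mod t$) and invoke the elementary number-theoretic fact recalled at the start of this section: the congruence $q(1-j)=u_i\mod\frac{n}{t}$ has either no solution or exactly $\gcd(1-j,\frac{n}{t})=\beta_{j,n/t}$ solutions. Hence the number of $q$ with $x(qt+i)=qt+i$ is either $0$ or exactly $\beta_{j,n/t}$, and it equals $\beta_{j,n/t}$ precisely when $\sigma_x(i)=i$ and $u_i=q(1-j)\mod\frac{n}{t}$ for some $q$ (equivalently $u_i\in P_{j,n/t}$, since $\{q(1-j)\}=\{q(j-1)\}$ as $q$ ranges over all residues). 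Reading the two directions off this dichotomy gives the stated equivalence.

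The computation is essentially routine; the only points that need care are the modular bookkeeping in the mod-$t$ reduction — justifying that $\sigma_x(i)\equiv i\mod t$ genuinely upgrades to $\sigma_x(i)=i$ via the range restriction $1\le i,\sigma_x(i)\le t-1$ — and the correct identification of the coefficient's gcd, namely $\gcd(1-j,\frac{n}{t})=\beta_{j,n/t}$, so that the ``either none or $\gcd$ many'' dichotomy delivers the exact count $\beta_{j,n/t}$ rather than merely a nonzero count.
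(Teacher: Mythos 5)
Your proposal is correct and follows essentially the same route as the paper's proof: expand $x(qt+i)=qjt+u_it+\sigma_x(i)$, use the range restriction $i,\sigma_x(i)<t$ to force $\sigma_x(i)=i$, reduce to the linear congruence $u_i=q(1-j)\bmod\frac{n}{t}$, and invoke the ``no solutions or exactly $\gcd$ many'' dichotomy to get the count $\beta_{j,n/t}$. The only cosmetic difference is that you make the mod-$t$ reduction and the solution-count dichotomy slightly more explicit than the paper does.
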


\no{\it Proof.} 
\begin{enumerate}
    \item[$\implies$] Let $x(qt+i)=qt+i$ be a fixed point of $x$. Then this says that $$x(qt+i)=qx(t)+x(i)=qjt+u_it+\sigma_x(i)=qt+i.$$ Because $\sigma_x(i)<t$ and $i<t$, this forces $\sigma_x(i)=i$. Thus, subtracting we obtain that $$u_it=qt-qjt\implies u_i= q(1-j)\mod\frac{n}{t}.$$ 
\item[$\impliedby$] Let $\sigma_x(i)=i$ and $u_i= q(1-j)\mod\frac{n}{t}$ for some $q$. Now, since there exists a $q$ for which the equation $u_i= q(1-j)\mod\frac{n}{t}$ has a solution, there exists a total of $\beta_{j,n/t}$ solutions and so there are $\beta_{j,n/t}$ possible values $q$ for which $$x(qt+i)=qjt+u_it+\sigma_x(i)=qjt+q(1-j)t+i=qt+i.\eqno\qed$$ 
\end{enumerate}

\begin{proposition}\label{prop:number of ord 2 elements for given stab with certain fix} 
{\it Let $R_{n/t,r}$ be the set of involutions $x\in S_{n-1}$ such that $F_x=\langle a^t\rangle$ and $x$ has exactly $r$ fixed points (including $x(n)=n$ as one of the fixed points). Let $E_{n/t}$, $\beta_{j,n/t}$, $m_{r,j,n/t}$, $\delta_{j,n/t,r}$, $P_{j,n/t}$, $\delta^{P^c}$ as from \cref{fig:table of sets and values} (\nameref{fig:table of sets and values}). Let $\delta_r$ be the Kronecker-Delta step function. Then \begin{align}
  |R_{n/1,r}|&=\sum_{j\in E_{n/1}}\delta_r(\beta_{j,n/1})\label{eq:R1}\\
  |R_{n/t,r}|&=\sum_{j\in E_{n/t}}\sum_{l=0}^{\lfloor\frac{t-m_{r,j,n/t}}{2}\rfloor}\delta_{j,n/t,r}\delta_{j,n/t,r}^l\binom{t-1-2l}{m_{r,j,n/t}-1}|P_{j,n/t}|^{m_{r,j,n/t}-1}(\delta^{P^c})^{t-2l-m_{r,j,n/t}}\nonumber\\
  &\qquad\qquad\qquad\qquad\qquad\qquad\qquad\qquad\left(\frac{n}{t}\right)^l\frac{(t-1)!}{(t-1-2l)!2^ll!}-\sum_{\substack{s|t\\s\not=t}}|R_{n/s,r}|\label{eq:Rt}\\
  |R_{n/n,r}|&=\frac{(n-1)!}{(r-1)!2^{\frac{n-r}{2}}\left(\frac{n-r}{2}\right)!}-\sum_{\substack{s|n\\s\not=n}}|R_{n/s,r}|\label{eq:Rn}\\
  |R_{n/t,r}|&=0\text{ if }n-r\text{ is not even}.\nonumber
\end{align} }
\end{proposition}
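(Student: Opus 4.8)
The plan is to realize each involution $x$ with $F_x=\langle a^t\rangle$ through the combinatorial data furnished by \cref{cor:involution generated from smaller sigma} — a value $j\in E_{n/t}$ with $x(t)=jt$, a remainder involution $\sigma_x\in S_{t-1}$, and residues $u_i\in\mathbb{Z}/\tfrac{n}{t}\mathbb{Z}$ subject to $u_i=-j\,u_{\sigma_x(i)}$ — and then to read the fixed-point count of $x$ off this data via \cref{lem:fixed points of x relating to q and sigma}. The two extreme cases fall out immediately. When $t=1$ there are no residues and $x$ is multiplication by $j$ on $\mathbb{Z}/n\mathbb{Z}$, whose fixed set has size $\gcd(j-1,n)=\beta_{j,n/1}$, so $x$ has $r$ fixed points precisely when $\beta_{j,n/1}=r$, giving \eqref{eq:R1}. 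When $t=n$ the modulus $\tfrac{n}{t}=1$ collapses every auxiliary set and $R_{n/n,r}$ is just the set of involutions of $S_{n-1}$ fixing exactly $r-1$ of the points $1,\dots,n-1$; the standard count of such involutions is $\tfrac{(n-1)!}{(r-1)!\,2^{(n-r)/2}((n-r)/2)!}$ before the over-count is removed, which is \eqref{eq:Rn}. Finally, since any involution of $S_n$ fixing $n$ moves $n-r$ points in disjoint transpositions, $n-r$ must be even, which is the last displayed line.

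The heart of the argument is \eqref{eq:Rt}. First I would fix $j\in E_{n/t}$ and use \cref{lem:fixed points of x relating to q and sigma} to locate all fixed points of $x$: they occur only inside residue blocks $\{qt+i\}$ with $\sigma_x(i)=i$, and such a block contributes exactly $\beta_{j,n/t}$ fixed points when $u_i$ lies in the image set $P_{j,n/t}=\{q(j-1)\}$ and contributes none otherwise. The block of multiples of $t$ behaves like the degenerate case $i=0$, $u_0=0\in P_{j,n/t}$, and always contributes $\beta_{j,n/t}$ fixed points (in particular $x(n)=n$). Hence the total number of fixed points of $x$ equals $m\,\beta_{j,n/t}$, where $m-1$ counts the fixed points $i$ of $\sigma_x$ with $u_i\in P_{j,n/t}$. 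Requiring this to equal $r$ forces $\beta_{j,n/t}\mid r$ and $m=m_{r,j,n/t}=r/\beta_{j,n/t}$, which is exactly the content of the indicator $\delta_{j,n/t,r}$, the bound $r\le t\beta_{j,n/t}$ reflecting that $\sigma_x$ has at most $t-1$ fixed points.

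With this bookkeeping settled, the count factors into independent choices summed over the number $l$ of transpositions of $\sigma_x$. I would select $\sigma_x$ with $l$ transpositions in $\tfrac{(t-1)!}{(t-1-2l)!\,2^l l!}$ ways; designate which $m-1$ of its $t-1-2l$ fixed points are fixed-point-producing in $\binom{t-1-2l}{m_{r,j,n/t}-1}$ ways; assign each of these a residue in $P_{j,n/t}$, contributing $|P_{j,n/t}|^{m_{r,j,n/t}-1}$; assign each of the remaining $t-2l-m_{r,j,n/t}$ fixed points a residue in $P_{j,n/t}^{(c)}$, contributing $(\delta^{P^c})^{t-2l-m_{r,j,n/t}}$; and give a free residue to one member of each transposition, contributing $(n/t)^l$ since the partner is then forced by $u_i=-j\,u_{\sigma_x(i)}$. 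The structural fact that makes the partition clean is that a fixed point $i$ of $\sigma_x$ forces $u_i(j+1)\equiv 0$, i.e. $u_i\in K_{j,n/t}$, together with $P_{j,n/t}\subseteq K_{j,n/t}$ (because $u(j+1)=q(j^2-1)\equiv 0$ when $u=q(j-1)$ and $j^2\equiv 1$), so that $K_{j,n/t}=P_{j,n/t}\sqcup P_{j,n/t}^{(c)}$ splits precisely into fixed-point-producing and non-producing residues. Summing over $l$ and over $j\in E_{n/t}$, then subtracting $\sum_{s\mid t,\,s\ne t}|R_{n/s,r}|$ to discard those $x$ whose stabilizer strictly contains $\langle a^t\rangle$, yields \eqref{eq:Rt}.

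The hard part will be the degenerate conventions rather than the combinatorics. The delicate point is the meaning of $\delta^{P^c}$ when $P_{j,n/t}^{(c)}=\varnothing$: the factor $(\delta^{P^c})^{t-2l-m_{r,j,n/t}}$ must give $0$ whenever a positive number of non-producing fixed points is demanded yet impossible, and $1$ (an empty product) when none is demanded, so I would verify that in the surviving terms of the sum an empty $P^{(c)}$ is ever paired only with a zero exponent. A second point requiring care is the exactness of the over-count subtraction — that each $x$ with $a^t\in F_x$ but $F_x=\langle a^s\rangle$ for a proper divisor $s\mid t$ is tallied exactly once in some $|R_{n/s,r}|$ — which follows from the divisor lattice of $C_n$ and the inclusion–exclusion scheme already deployed in \cref{prop:number of x with Fx=<a^t>} and \cref{prop:number of ord 2 elements for given stab}.
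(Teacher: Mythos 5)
Your proposal follows essentially the same route as the paper's proof: encode $x$ by $(j,\sigma_x,\{u_i\})$ via \cref{cor:involution generated from smaller sigma}, read off fixed points blockwise via \cref{lem:fixed points of x relating to q and sigma}, observe that the fixed-point count is $m\beta_{j,n/t}$ with $m-1$ counting the producing fixed points of $\sigma_x$, and then multiply the independent choices before subtracting the recursive over-count. The base cases, the parity condition, and the decomposition $K_{j,n/t}=P_{j,n/t}\sqcup P_{j,n/t}^{(c)}$ all match the paper.

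The one loose end is exactly where you flagged "the hard part": your hoped-for verification that an empty $P_{j,n/t}^{(c)}$ is only ever paired with a zero exponent is false as stated. The sum runs over all $l\le\lfloor(t-m_{r,j,n/t})/2\rfloor$, and for $2l<t-m_{r,j,n/t}$ the exponent $t-2l-m_{r,j,n/t}$ is positive; since $\delta^{P^c}$ is defined to equal $1$ (not $0$) when $P_{j,n/t}^{(c)}=\varnothing$ precisely to dodge $0^0$, the factor $(\delta^{P^c})^{t-2l-m_{r,j,n/t}}$ does not kill those impossible terms. The stated formula handles this with the separate indicator $\delta^l_{j,n/t,r}$, which is $0$ when $P_{j,n/t}^{(c)}=\varnothing$ and $2l\ne t-m_{r,j,n/t}$ and $1$ otherwise; your factor list omits it, so your count as written would overcount whenever $P_{j,n/t}^{(c)}=\varnothing$ (equivalently $K_{j,n/t}=P_{j,n/t}$, in which case every fixed point of $\sigma_x$ is forced to be producing and only the term $2l=t-m_{r,j,n/t}$ survives). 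This is a small, purely bookkeeping repair, but it is needed to land on the formula in the proposition.
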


\no{\it Proof.} Let $x\in S_{n-1}$ be an involution with stabilizer $F_x=\langle a^t\rangle$. For some $\sigma_x\in S_{t-1}$, write $x(i)=tu_i+\sigma_x(i)$ for all $1\le i\le t-1$ and $x(t)=jt$ for some $j\in E_{n/t}$ which determines $x$ completely as from \cref{cor:involution generated from smaller sigma}.

Then, assume $x$ has $r$ fixed points (including $x(n)=n$ as a fixed point).

Since $x(qt)=jqt=qt$ if and only if $q(j-1)=0\mod\frac{n}{t}$, we immediately get $\beta_{j,n/t}:=$gcd$(j-1,\frac{n}{t})$ fixed points for $x$ from the given choice of $j.$

Additionally, from \cref{lem:fixed points of x relating to q and sigma}, we know that for a fixed $1\le i<t$, $x(qt+i)=qt+i$ for $\beta_{j,n/t}$ different values of $q$ if and only if $\sigma_x(i)=i$ and $u_i=q(1-j)\mod\frac{n}{t}$.

Thus, the total number of fixed points of $x$ is a multiple of $\beta_{j,n/t}$ and so $\beta_{j,n/t}$ must divide $r$. To ensure the existence of an $x$ with the given choice of $j$ and $r$, the value $\delta_{j,n/t,r}$ must be included in our sum where $$\delta_{j,n/t,r}:=\begin{cases} 1 & \text{ if }\beta_{j,n/t}|r\text{ and }r\le t\beta_{j,n/t}\\ 0 &\text{ otherwise }\end{cases}.$$

Furthermore, letting $m_{r,j,n/t}:=\frac{r}{\beta_{j,n/t}}$, the ratio minus one, $m_{r,j,n/t}-1$ tells us how many $u_i$ for $1\le i<t$ must be of the form $q(1-j)$ for some $q$. Let $$P_{j,n/t}:=\left\{q(1-j)\mod\frac{n}{t}\,|\,1\le q\le\frac{n}{t}\right\}.$$

Then $m_{r,j,n/t}-1$ of the $u_i$ come from $P_{j,n/t}$, and $t-1-(m_{r,j,n/t}-1)=t-m_{r,j,n/t}$ of the remaining $u_i$ come from its compliment $$P_{j,n/t}^c:=\left\{u\,|\,u\not=q(1-j)\mod\frac{n}{t}\text{ for all }q\right\}.$$ However, recall that if $\sigma_x(i)=i$, then $u_i(j+1)= 0\mod\frac{n}{t}$ is required by \cref{cor:involution generated from smaller sigma}. Of course, if $u_i\in P_{j,n/t}$, then $u_i=q(j-1)$ for some $q$ and so certainly $u_i(j+1)=q(j-1)(j+1)=q(j^2-1)=0$ since $j^2=1$. However, to guarantee this for $u_i\in P_{j,n/t}^c$, we must actually require that $u_i\in P_{j,n/t}^{(c)}:=P_{j,n/t}^c\cap K_{j,n/t}$ where $$K_{j,n/t}:=\left\{1\le u\le \frac{n}{t}\,|\,u(j+1)= 0\mod\frac{n}{t}\right\}.$$ 

In summary: 
\begin{myenum}
    \item If $\sigma_x(i)=i$, then either:
    \begin{myenum}
        \item $u_i\in P_{j,n/t}$ and so contributes $\beta_{j,n/t}$ fixed points to $x$
        \item or $u_i\in P_{j,n/t}^{(c)}$ and so contributes $0$ fixed points to $x$
    \end{myenum}
    \item If $\sigma_x(i)\not=i$ then $x(qt+i)\not=qt+i$ for any $q$ and so we can allow $u_i$ to take any value between $1$ and $\frac{n}{t}$ and this will not add any fixed points to $x$.
\end{myenum}

Let $t-1-2l$ be the number of fixed points of $\sigma_x$ for some $l$. Then, since $m_{r,j,n/t}-1$ of the $u_i$ come from $P_{j,n/t}$, $\sigma_x$ must have at least $m_{r,j,n/t}-1$ fixed points which means $t-1-2l\ge m_{r,j,n/t}-1$ and so $t-m_{r,j,n/t}\ge 2l$ so finally $l\le \frac{t-m_{r,j,n/t}}{2}$.

Additionally, for the remaining $t-1-2l-(m_{r,j,n/t}-1)=t-2l-m_{r,j,n/t}$ fixed points of $\sigma_x$, $u_i$ must be taken from $P_{j,n/t}^{(c)}$, however, there is an edge case we must address here.

Assume $P_{j,n/t}^{(c)}=\varnothing$. Then either $P_{j,n/t}^c=\varnothing$ (which is possible) or $K_{j,n/t}\subset P_{j,n/t}$. In the first case, we get $P_{j,n/t}=\{1,2,...,n/t\}$ and since $P_{j,n/t}\subset K_{j,n/t}$, we get $K_{j,n/t}=P_{j,n/t}$. In the second case, $K_{j,n/t}\subset P_{j,n/t}$ and so again $K_{j,n/t}=P_{j,n/t}$. In either case, if $i$ is a fixed point of $\sigma_x$, and so $\sigma_x(i)=i$, then because $u_i\in K_{j,n/t}=P_{j,n/t}$ by \cref{cor:involution generated from smaller sigma}, $x$ has $\beta_{j,n/t}$ fixed points of the form $qt+i$ for some $q$.  

Namely, if $P_{j,n/t}^{(c)}=\varnothing$, every fixed point of $\sigma_x$ contributes exactly $\beta_{j,n/t}$ fixed points to $x$ and so the number of fixed points of $\sigma_x$ is equal to $m_{j,n/t}-1$. That is $t-1-2l=m_{j,n/t}-1$ and so $t-m_{j,n/t}=2l$. 

All this is to say that if $P_{j,n/t}^{(c)}=\varnothing$, then we must have $t-m_{j,n/t}=2l$ so we introduce $$\delta_{j,n/t,r}^l=\begin{cases} 0&\text{ if }|P_{j,n/t}^{(c)}|=\varnothing\text{ and }2l\not=t-m_{j,n/t}\\1&\text{otherwise}\end{cases}$$

However, this condition is messy to write down as we do not want to be stuck with a $0^0$ indeterminate in our sum so to avoid this, the set $P_{j,n/t}^{(c)}$ is never used. Instead, we include the messy conditional $$\delta^{P^c}:=\begin{cases} |P_{j,n/t}^{(c)}|&\text{ if }P_{j,n/t}^{(c)}\not=\varnothing\\ 1 &\text{ otherwise}\end{cases}.$$

Now, we are at last ready to fully describe all involutions with stabilizer $F_x=\langle a^t\rangle$ with and $r$ fixed points.

In the base case, if $t=1$, then $x(i)=ij=i$ if and only if $i(j-1)=0\mod n$. So $x$ has $r$ fixed points if and only if the number of solutions to this equation is $r$. Since $i=n$ is always a solutions, the number of solutions to this equation is $\beta_{j,n/1}=$gcd$(j-1,n)$. Let $\delta_r$ be the step function so $$\delta_r(u)=\begin{cases} 1 & \text{ if }r=u\\ 0 & \text{ otherwise}\end{cases}
$$

Then $$|R_{n/1,r}|=\sum_{j\in E_{n/1}}\delta_r(\beta_{j,n/1}).$$ 

Finally,
$$|R_{n/t,r}|=\underbrace{\sum_{j\in E_{n/t}}}_{\text{choices for }x(t)}\underbrace{\sum_{l=0}^{\lfloor\frac{t-m_{r,j,n/t}}{2}\rfloor}}_{\substack{\#\text{ of fixed}\\ \text{points of }\sigma_x\\}}\underbrace{\delta_{j,n/t,r}}_{x\text{ exists}}\underbrace{\delta_{j,n/t,r}^l}_{x\text{ matches }\sigma_x}\underbrace{\binom{t-1-2l}{m_{r,j,n/t}-1}}_{\substack{\text{ choose }u_i\\ \text{ in }P_{j,n/t}}}\,\,\,\underbrace{|P_{j,n/t}|^{m_{r,j,n/t}-1}}_{\substack{\sigma_x(i)=i\text{ and }\\ u_i=q(1-j)}}\underbrace{(\delta^{P^c})^{t-2l-m_{r,j,n/t}}}_{\substack{\sigma_x(i)=i\text{ and }\\ u_i\not=q(1-j)}}$$ $$\qquad\qquad\qquad\qquad\qquad\qquad\qquad\underbrace{\left(\frac{n}{t}\right)^l}_{\sigma_x(i)\not=i}\underbrace{\frac{(t-1)!}{(t-1-2l)!2^ll!}}_{\text{ choices for }\sigma_x} -\underbrace{\sum_{\substack{s|t\\s\not=t}}R_{n/s,r}}_{\text{ remove over count}}$$

Note that this agrees with our base case since if $t=1$ we know that $1\ge m_{r,j,n/t}$ and that $m_{r,j,n/t}$ is an integer so $m_{r,j,n/t}=1$. Thus, necessarily $r=\beta_{j,n/t}$ (or of course no such $x$ exists) and so $l=0$. So the sum becomes $$\sum_{j\in E_{n/1}}\delta_{j,n/t,r}=\sum_{j\in E_{n/1}}\delta_r(\beta_{j,n/t})$$ as desired. 

Furthermore, If $t=n$, all sets trivialize and so we simply count all involutions with $r$ fixed points in $S_{n-1}$ and then remove those we have already counted.

Note also that if $n-r$ is not even, then $x$ cannot be an involution in $S_{n-1}$ with $r$ fixed points, and so we say that $|R_{n/t,r}|=0$ if $n-r$ is not even.
\qed\\

\begin{lemma}\label{lem:P and K relationship} 
{\it Either $P_{j,n/t}=K_{j,n/t}$ and $\alpha_{j,n/t}\beta_{j,n/t}=\frac{n}{t}$ or $|P_{j,n/t}|=\frac{1}{2}|K_{j,n/t}|=|P_{j,n/t}^{(c)}|$ and $\alpha_{j,n/t}\beta_{j,n/t}=2\frac{n}{t}$}
\end{lemma}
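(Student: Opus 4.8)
The plan is to reduce everything to two facts: that $P_{j,n/t}\subseteq K_{j,n/t}$ always holds, and that the product $\alpha_{j,n/t}\beta_{j,n/t}$ can only equal $\frac{n}{t}$ or $2\frac{n}{t}$. Throughout I write $N=\frac{n}{t}$ and invoke the standing hypothesis of this section that $j\in E_{n/t}$, i.e. $j^2\equiv 1\bmod N$, which is precisely what makes all of these quantities interact.

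First I would establish the containment $P_{j,n/t}\subseteq K_{j,n/t}$. Every element of $P_{j,n/t}$ has the form $q(j-1)\bmod N$, and multiplying by $(j+1)$ gives $q(j-1)(j+1)=q(j^2-1)\equiv 0\bmod N$ since $j^2\equiv 1$; hence $q(j-1)\in K_{j,n/t}$. Next I would record the two cardinalities: $|K_{j,n/t}|=\alpha_{j,n/t}=\gcd(j+1,N)$ is immediate from \cref{fig:table of sets and values} (it counts the solutions of $u(j+1)\equiv 0\bmod N$), while $P_{j,n/t}$ is exactly the cyclic subgroup of $\mathbb{Z}/N\mathbb{Z}$ generated by $j-1$, so $|P_{j,n/t}|=N/\gcd(j-1,N)=N/\beta_{j,n/t}$.

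The heart of the argument---and the step I expect to be the main obstacle---is the claim $\alpha_{j,n/t}\beta_{j,n/t}\in\{N,2N\}$. I would prove it prime by prime. For an odd prime $p$, the factors $j-1$ and $j+1$ differ by $2$, so $p$ divides at most one of them; together with $v_p(N)\le v_p(j-1)+v_p(j+1)$ (which is just $N\mid(j-1)(j+1)$), a short computation gives $\min(v_p(j+1),v_p(N))+\min(v_p(j-1),v_p(N))=v_p(N)$, so odd primes contribute equally to $\alpha\beta$ and to $N$. Any discrepancy must occur at $p=2$: if $j$ is even then $j\pm1$ are odd and $N$ is forced odd, contributing nothing; if $j$ is odd then $j-1,j+1$ are consecutive even integers, exactly one of which is $\equiv 2\bmod 4$, so $\min(v_2(j-1),v_2(j+1))=1$, and a case split on $v_2(N)$ (again using $v_2(N)\le v_2(j-1)+v_2(j+1)$) shows the excess $v_2(\alpha)+v_2(\beta)-v_2(N)$ is either $0$ or $1$. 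This pins $\alpha\beta/N$ to $1$ or $2$. I would sanity-check this delicate odd-$j$ subcase against small examples (for instance $N=8,\,j=3$ gives $\alpha\beta=N$, whereas $N=4,\,j=3$ gives $\alpha\beta=2N$) to confirm that both alternatives genuinely occur and that I have not accidentally collapsed them.

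Finally I would assemble the dichotomy from the previous steps. Because $P_{j,n/t}\subseteq K_{j,n/t}$ with $|K_{j,n/t}|=\alpha$ and $|P_{j,n/t}|=N/\beta$: if $\alpha\beta=N$ then $|K_{j,n/t}|=N/\beta=|P_{j,n/t}|$, and a containment of finite sets of equal size forces $P_{j,n/t}=K_{j,n/t}$; if instead $\alpha\beta=2N$ then $|K_{j,n/t}|=2N/\beta=2|P_{j,n/t}|$, so $|P_{j,n/t}|=\tfrac12|K_{j,n/t}|$, and since $P_{j,n/t}^{(c)}=K_{j,n/t}\setminus P_{j,n/t}$ by the containment, it has cardinality $|K_{j,n/t}|-|P_{j,n/t}|=|P_{j,n/t}|$, yielding the second alternative exactly as stated.
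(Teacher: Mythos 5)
Your proof is correct and follows essentially the same route as the paper: establish $P_{j,n/t}\subseteq K_{j,n/t}$, compute $|K_{j,n/t}|=\alpha_{j,n/t}$ and $|P_{j,n/t}|=\frac{n}{t}/\beta_{j,n/t}$, and then settle the dichotomy by gcd arithmetic. The only difference is that you make explicit, via the prime-by-prime valuation argument, the key fact $\alpha_{j,n/t}\beta_{j,n/t}\in\{\frac{n}{t},2\frac{n}{t}\}$ that the paper's proof leaves implicit under ``basic properties of the gcd,'' so your writeup is in fact more complete than the one in the paper.
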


\no{\it Proof.} Recall that $\beta_{j,n/t}:=\text{gcd}\left(1-j,\frac{n}{t}\right)$ and $P_{j,n/t}:=\{q(1-j)\mod\frac{n}{t}\}$. We claim $|P_{j,n/t}|=\frac{n}{t\beta_{j,n/t}}$. This is because $q(1-j)=q'(1-j)$ if and only if $(q-q')(1-j)=0\mod\frac{n}{t}$ if and only if $q=q'\mod\frac{n}{t\beta_{j,n/t}}$ which proves the claim.

Recall now that $P_{j,n/t}\subset K_{J,n/t}\subset\{1,2,...,\frac{n}{t}\}$ is always true and so the results follow by checking the cases where $P_{j,n/t}^{(c)}=\varnothing$ and $P_{j,n/t}^{(c)}\not=\varnothing$ separately and using basic properties of the gcd.   \qed\\

\begin{remark}\label{remark:equal}
By definition $\sum_{r=1}^n|R_{n/t,r}|=|T_{n/t}|$ must be true, however, using some careful computation it can be shown that both equations from \cref{prop:number of ord 2 elements for given stab} and \cref{prop:number of ord 2 elements for given stab with certain fix} satisfy this relation.
\end{remark}

\subsection{Prerequisites for Counting Indicators}\label{sec:prereq}

Now, lets say we want to count the number of $x\in S_{n-1}$ which have indicators of a certain value. Note that if $x^2=1$, then $x=x^{-1}\in\mathcal{O}_x$ and so by \cref{prop:observations}, all indicators are nonnegative. Now, what determines whether the indicator of $x$ is zero or not? Well, $\nu(\hat{V})=1$ if $\zeta_n^{i\frac{x(t)+t}{t}}=1$ and $0$ otherwise.

Therefore, knowing the value of $x(t)$ tells us exactly which (and so how many) $i$ exist for which $\nu(\hat{V})\not=0$. Now, assume that $t$ is odd. Then $\mathcal{O}_x$ is closed under inversion if and only if it contains an element $y$ of order $2$. Since $\mathcal{O}_y=\mathcal{O}_x$, we may relabel so as to only consider orbits $\mathcal{O}_x$ where $x$ is an involution.

And so, in the case where $t$ is odd, the strategy is this: 

\begin{myenum}
    \item $x^{-1}\in\mathcal{O}_x$ if and only if $\mathcal{O}_x$ contains an element of order $2$.
    \item The number of involutions (excluding $x$ itself) in $\mathcal{O}_x$ is given by the number of fixed points of $\sigma_x\in S_{t-1}$ which generates $x$ (by \cpref{lem:y in orbit x}{lem:y in orbit x (2)}). 
    \item The number of representations of $J_n$ induced form $x$ which have positive indicator is given by the number of values $i$ for which $\zeta_n^{i\frac{x(t)+t}{t}}=1$ (by \cref{thm:indicator reduced}).
\end{myenum} 

Namely, if we want to count the nonzero indictors of irreps of $J_n$ of a fixed odd dimension $t$, we must be able to count the number of orbits which contain exactly $r$ involutions.

\begin{proposition}\label{prop:number of ord 2 elements for given stab with certain fixed mod t} 
{\it Let $X_{n/t,r}$ be the set of involutions $x\in S_{n-1}$ with $F_x=\langle a^t\rangle$, and such that $\mathcal{O}_x$ has $r\not=0$ involutions. Let $T_{n/1}$ be as from \cref{prop:number of ord 2 elements for given stab}, $R_{n/t,r}$ as from \cref{prop:number of ord 2 elements for given stab with certain fix}, and we pull the entries (1), (2), (4), (6), (7), (9), (10), (11), (12), (13) from \cref{fig:table of sets and values} (\nameref{fig:table of sets and values}).
Then \begin{align}
    |X_{n/1,r}|&=|T_{n/1}|=|E_{n/1}|\label{eq:X1}\\
    |X_{n/t,r}|&=\sum_{j\in E_{n/t}}(\alpha_{j,n/t})^{r-1}\left(\frac{n}{t}\right)^\frac{t-r}{2}\frac{(t-1)!}{(r-1)!2^\frac{t-r}{2}\left(\frac{t-r}{2}\right)!}-\sum_{\substack{s|t\\s\not=t}}|C_{n,t,s,r}|\label{eq:Xt}\\
    |X_{n/n,r}|&=\frac{(n-1)!}{(r-1)!2^{\frac{n-r}{2}}\left(\frac{n-r}{2}\right)!}-\sum_{\substack{s|n\\s\not=n}}|C_{n,n,s,r}|\label{eq:Xn}\\
    |X_{n/t,r}|&=0\text{ if }t-r\text{ is negative or odd}\nonumber
\end{align}
    where \begin{align}
    |C_{n,t,1,r}|&=\sum_{j_{\sigma_x}\in E_{t/1}}\delta_r(\beta_{j_{\sigma_x},t/1})|\overline{E}_{j_{\sigma_x},n,t,1}|\label{eq:C1}\\
    |C_{n,t,s,r}|&=\sum_{j_{\sigma_x}\in E_{t/s}}\sum_{j'\in \overline{E}_{j_{\sigma_x},n,t,s}}\sum_{l=0}^{\lfloor\frac{s-m_{r,j_{\sigma_x},t/s}}{2}\rfloor}\delta_{j_{\sigma_x},t/s,r}\delta_{j_{\sigma_x},t/s,r}^l\binom{s-1-2l}{m_{r,j_{\sigma_x},t/s}-1}|P_{j_{\sigma_x},t/s}|^{m_{r,j_{\sigma_x},t/s}-1}\nonumber\\
    &\qquad(\delta^{P^c})^{s-2l-m_{r,j_{\sigma_x},t/s}}\frac{(s-1)!}{(s-1-2l)!2^ll!}|\overline{K}_{j',j_{\sigma_x},n,t,s}|^{m_{r,j_{\sigma_x},t/s}-1}(\delta^{K^c})^{s-2l-m_{r,j_{\sigma_x},t/s}}\left(\frac{n}{s}\right)^l\nonumber\\
    &\qquad\qquad\qquad\qquad\qquad\qquad\qquad\qquad\qquad\qquad-\sum_{\substack{p|s\\p\not=d}}|C_{n,t,p,r}|\label{eq:Ct}\\
    |C_{n,n,s,r}|&=|R_{n/s,r}|\label{eq:Cn}
\end{align}}
\end{proposition}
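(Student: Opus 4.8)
The strategy is to reduce the count of involutions whose orbit carries a prescribed number of involutions to a count of remainder permutations with a prescribed number of fixed points, and then to handle the stabilizer-inflation over-count recursively.

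First I would fix the parametrization. By \cref{cor:involution generated from smaller sigma}, every involution $x \in S_{n-1}$ with $\langle a^t\rangle \subseteq F_x$ is determined by a value $j \in E_{n/t}$ (with $x(t)=jt$), an involution $\sigma_x \in S_{t-1}$, and constants $u_i$ ($1 \le i \le t-1$) satisfying $u_i = -j\,u_{\sigma_x(i)} \bmod \frac{n}{t}$; in particular each fixed point $i$ of $\sigma_x$ forces $u_i \in K_{j,n/t}$. The orbit datum is then converted by \cpref{lem:y in orbit x}{lem:y in orbit x (2)}: since $x$ is an involution we have $x^{-1}=x$, so $s=t$ and $y_l$ has order $2$ exactly when $l \equiv x(l) \bmod t$, i.e.\ exactly when $\sigma_x(l)=l$. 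Hence the number of involutions in $\mathcal{O}_x$ equals one (for $x=y_t$ itself) plus the number of fixed points of $\sigma_x$, and requiring this to be $r$ is exactly requiring $\sigma_x$ to have $r-1$ fixed points. This already forces $0 \le r-1 \le t-1$ and $t-r$ even, giving the stated vanishing clause.

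Next I would assemble the leading term of \eqref{eq:Xt}. For fixed $j \in E_{n/t}$ the number of involutions $\sigma_x \in S_{t-1}$ with exactly $r-1$ fixed points (hence $(t-r)/2$ transpositions) is $\tfrac{(t-1)!}{(r-1)!\,2^{(t-r)/2}\,((t-r)/2)!}$. Each of the $r-1$ fixed points of $\sigma_x$ contributes $|K_{j,n/t}| = \alpha_{j,n/t}$ admissible choices for its constant $u_i$, while each of the $(t-r)/2$ transpositions pairs two constants and contributes $\frac{n}{t}$ choices. Summing over $j \in E_{n/t}$ yields the leading term, which counts all involutions with $\langle a^t\rangle \subseteq F_x$ whose $t$-framework remainder permutation has $r-1$ fixed points. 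The base cases $t=1$ (where the orbit is a point, forcing $r=1$ and recovering $|E_{n/1}|$) and $t=n$ (trivial stabilizer, where \cpref{lem:y in orbit x}{lem:y in orbit x (2)} identifies orbit involutions with genuine fixed points of $x$, recovering the involution count with $r$ fixed points, i.e.\ $C_{n,n,s,r}=|R_{n/s,r}|$) fall out directly.

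The delicate part, and the main obstacle, is the over-count term $C_{n,t,s,r}$. The leading term includes every $x$ with $F_x = \langle a^s\rangle$ strictly larger than $\langle a^t\rangle$ (so $s \mid t$, $s \neq t$) whose $t$-framework remainder permutation nonetheless has $r-1$ fixed points, and these must be removed. Such an $x$ carries a genuine $s$-framework remainder permutation $\sigma_x \in S_{s-1}$ and top value $j' \in E_{n/s}$ by \cref{lem:x generated from smaller sigma}, while its $t$-framework remainder permutation $\sigma_x^{(t)} \in S_{t-1}$ is itself the $(t/s)$-lift of $\sigma_x$. I would therefore prove that the number of fixed points of $\sigma_x^{(t)}$ decomposes through the $t/s$ structure exactly as in \cref{lem:fixed points of x relating to q and sigma}, so that counting it recapitulates the $R$-type count of \cref{prop:number of ord 2 elements for given stab with certain fix} at level $t/s$; this produces the factors $\delta_{j_{\sigma_x},t/s,r}$, $\delta^l_{j_{\sigma_x},t/s,r}$, $\binom{s-1-2l}{m-1}$, $|P_{j_{\sigma_x},t/s}|^{m-1}$, $(\delta^{P^c})^{s-2l-m}$, and $\tfrac{(s-1)!}{(s-1-2l)!2^l l!}$ with $m=m_{r,j_{\sigma_x},t/s}$. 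The new ingredient is that the ambient $C_n$-action imposes a second, mod-$\frac{n}{s}$ constraint on the free constants and on the top value: the admissible $j'$ are precisely those reducing to $j_{\sigma_x}$ modulo $\frac{t}{s}$ and lying in $E_{n/s}$, that is $j' \in \overline{E}_{j_{\sigma_x},n,t,s}$, and the constants attached to the fixed points must additionally lie in $\overline{K}_{j',j_{\sigma_x},n,t,s}$ (resp.\ its complement $\overline{K}^{(c)}_{j',j_{\sigma_x},n,t,s}$), which is the source of the $|\overline{K}_{j',j_{\sigma_x},n,t,s}|^{m-1}$, $(\delta^{K^c})^{s-2l-m}$, and $(\frac{n}{s})^l$ factors. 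Finally, as in \cref{prop:number of ord 2 elements for given stab with certain fix}, one peels off the further inflation $F_x = \langle a^p\rangle$ with $p \mid s$ through the recursive term $-\sum_{p\mid s,\,p\neq s} |C_{n,t,p,r}|$, with base value $C_{n,t,1,r}$. The crux is verifying that the two layers of linear constraints (mod $\frac{t}{s}$ from $\sigma_x^{(t)}$ and mod $\frac{n}{s}$ from the ambient action) are captured, without double-counting, by exactly the sets $\overline{E}$ and $\overline{K}$; I would check the whole construction against the identity $\sum_{r} |X_{n/t,r}| = |T_{n/t}|$ in the spirit of \cref{remark:equal} as a consistency test.
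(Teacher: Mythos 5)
Your proposal is correct and follows essentially the same route as the paper: converting the orbit-involution count to counting remainder permutations $\sigma_x\in S_{t-1}$ with $r-1$ fixed points via \cpref{lem:y in orbit x}{lem:y in orbit x (2)}, assembling the leading term from the $\alpha_{j,n/t}$ and $\frac{n}{t}$ choices per fixed point and transposition, and handling the stabilizer-inflation over-count through the two-layer remainder-permutation structure that produces the $\overline{E}$ and $\overline{K}$ sets. The modular-arithmetic derivation of the constraint $\left(\frac{t}{s}u_i+m_i\right)(1+j')=0\bmod\frac{n}{s}$, which you flag as the crux to be verified, is exactly the computation the paper carries out, so your outline matches its proof step for step.
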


\no{\it Proof.} Once again, we count. Assume $x\in S_{n-1}$ is an involution with $F_x=\langle a^t\rangle$ such that $\mathcal{O}_x$ contains exactly $r$ involutions. Our goal is to fully describe the conditions necessary to construct $x$ and then count the possible ways to achieve those conditions.

We know that there exists integers $u_i$ and permutation $\sigma_x\in S_{t-1}$ such that for all $1\le i\le t-1$, $x(i)=tu_i+\sigma_x(i)$. Furthermore, we know that $\sigma_x$ has exactly $r$ fixed points (including $\sigma_x(t)=t$). This is because \cpref{lem:y in orbit x}{lem:y in orbit x (2)} tells us that if we write $\mathcal{O}_x=\{x=y_t,y_1,y_2,...,y_{t-1}\}$, then $y_i\in\mathcal{O}_x$ has order $2$ if and only if $i=x(i)\mod t=\sigma_x(i)$. Since $\mathcal{O}_x$ is assume to have $r$ involutions (including $x$), $\sigma_x$ must then have $r$ fixed points. 

Now, we can adapt the formulas from \cref{prop:number of ord 2 elements for given stab} here, however, there will be a few key differences. Let $j\in E_{n/t}$ be fixed. Then recall that the number of involutions $\sigma\in S_{t-1}$ which are a product of $l$ disjoint cycles is given by $\frac{(t-1)!}{(t-1-2l)!2^ll!}$. In this case, $\sigma$ has $t-1-2l$ fixed points. In our case we want $t-1-2l=r-1$ (we now exclude $\sigma_x(t)$ as a fixed point since we are viewing $\sigma_x$ as strictly a permutation in $S_{t-1}$) and so we obtain that $l=\frac{t-r}{2}$. Note that $t-r$ must be even (since this is the number of permutations $y\in\mathcal{O}_x$ which are not involutions) this value is an integer. Finally, we obtain that there are $\frac{(t-1)!}{(r-1)!2^\frac{t-r}{2}\left(\frac{t-r}{2}\right)!}$ involutions $\sigma\in S_{t-1}$ with $r-1$ fixed points. 

Thus, there should be $$\sum_{j\in E_{n/t}}(\alpha_{j,n/t})^{r-1}\left(\frac{n}{t}\right)^\frac{t-r}{2}\frac{(t-1)!}{(r-1)!2^\frac{t-r}{2}\left(\frac{t-r}{2}\right)!}$$ involutions $x\in S_{n-1}$ with stabilizer $\langle a^t\rangle$ which have $r$ fixed points mod $t$. This should seem familiar as it is the same formula as from \cref{prop:number of ord 2 elements for given stab} with $l$ fixed. 

However, we have--unfortunately--over counted. What this value really gives us, is the number of involutions $x\in S_{n-1}$ which have $r$ fixed points mod $t$ and which are \textit{stabilized} by $a^t$. However, their actual stabilizer set could be larger than $\langle a^t\rangle.$

To compensate for this, we must remove those permutations which have smaller stabilizer. However, we cannot just do this recursively. Note that we are counting the number of fixed points of $x$ modulo $t$, which we will not do if we simply replace every $t$ with an $s$. Rather, we must work harder here. 
Certainly, if $x$ is actually stabilized by $a^s$ for some $s|t$, then there exists a $\tau_x\in S_{s-1}$ such that $x(i)=l_is+\tau_x(i)$ for all $i<s$. This is to say that we have two equivalent ways of writing $x(i)$. Namely, for all $i<s$ we have \begin{align*}
    x(i)&=tu_i+\sigma_x(i)\\
    x(i)&=sl_i+\tau_x(i).
\end{align*}

Namely, $\sigma_x(i)=sl_i-tu_i+\tau_x(i)\mod n$ and so $\sigma_x(i)=\tau_x(i)\mod s$ for all $i<s$ and $\sigma_x(s)=s\mod s$. Note that this actually tells us that $\sigma_x$ is generated from $\tau_x$ in the same way that $x$ is generated from $\sigma_x$. That is, $\sigma_x$ is stabilized by $a_t^s$ where $a_t=(1\pp2\pp\cdots\pp t)$ is the standard $t$-cycle and so there exists integers $1\le m_i\le \frac{t}{s}$ such that $\sigma_x(i)=m_is+\tau_x(i)$. We can now write (for all $i<s)$ \begin{align*}
    x(i)&=tu_i+\sigma_x(i)\qquad 1\le u_i\le \frac{n}{t},\text{ and }\sigma_x\in S_{t-1}\\
    x(i)&=sl_i+\tau_x(i)\qquad 1\le l_i\le \frac{n}{s},\text{ and }\tau_x\in S_{s-1}\\
    \sigma_x(i)&=sm_i+\tau_x(i)\qquad 1\le m_i\le \frac{t}{s}.\\
\end{align*} Note, our goal is to write all possible $l_i$. Furthermore, there exists $j_{\sigma_x}\in E_{t/s}$ such that $\sigma_x(s)=j_{\sigma_x}s$. Now, using again that $\sigma_x(i)=l_is-u_it+\tau_x(i)$, we have that $$l_is=m_is+u_it\implies l_i= u_i\frac{t}{s}+m_i\mod\frac{n}{s}.$$

Note that $x(s)=tu_s+\sigma_x(s)=tu_s+j_{\sigma_x}s=s(\frac{t}{s}u_s+j_{\sigma_x})$ and so if we hope to have $x$ be an involution stabilized by $a^s$ we must have $\frac{t}{s}u_i+j_{\sigma_x}\mod\frac{n}{s}\in E_{n/s}$. Let $$\overline{E}_{j_{\sigma_x},n,t,s}:=\left\{j_{\sigma_x}+\overline{m}\frac{t}{s}\,|\,1\le \overline{m}\le \frac{n}{t}\right\}\bigcap E_{n/s}.$$ Note here that if $x(s)=j's$ then $x(t)=\frac{t}{s}x(s)=\frac{t}{s}j's=tj'=tj\mod n$.

Now, after choosing a $j'\in\overline{E}_{j_{\sigma_x},n,t,s}$ such that $x(s)=j's$, our goal is to construct all such $x$ are stabilzied by $a^s$. This requires determining $\tau_x$, $m_i$ and $u_i$ such that $\sigma_x(i)=sm_i+\tau_x(i)$ for all $i<s$ and $x(i)=tu_i+\sigma_x(i)$. If we determine all such values, then we can let $l_i=u_i\frac{t}{s}+m_i\mod\frac{n}{s}$ and this will give us exactly a permutation $x$ stabilized by both $a^t$ and $a^s$ with remainder permutations $\sigma_x$ and $\tau_x$. Then, for all $i<s$ we have $$x(i)=tu_i+\sigma_x(i)=tu_i+sm_i+\tau_x(i)=s(\frac{t}{s}u_i+m_i)+\tau_x(i)=sl_i+\tau_x(i).$$

Note now that $x(\sigma_x(i))=u_{\sigma_x(i)}t+i$ and $x(\sigma_x(i))=x(m_is+\tau_x(i))=m_ij's+l_{\tau_x(i)}s+i$ where we are assuming $x$ is stabilized by $a^s$ and that $x(s)=j's$. Therefore, we also have that $$l_{\tau_x(i)}s=u_{\sigma_x(i)}t-m_ij's$$

Finally, recall from \cref{cor:involution generated from smaller sigma} that $u_{\sigma_x(i)}t=-ju_it\mod n$ for all $1\le i\le t-1$ and that $jt=x(t)=\frac{t}{s}x(s)=\frac{t}{s}j's=tj'$ so substituting we get $$l_{\tau_x(i)}s=-ju_it-m_ij's\implies l_{\tau_x(i)}=-j'u_i\frac{t}{s}-m_ij's\mod\frac{n}{s}.$$ If $\tau_x(i)=i$, then these two must be equal so $m_is+u_it=-ju_it-m_ij's=-j'tu_i-m_ij's$ which gives that \begin{align*}
    u_i\frac{t}{s}+m_i&=-j'u_i\frac{t}{s}-m_ij's\mod\frac{n}{s}\\
    \implies u_i(1+j')\frac{t}{s}+m_i(1+j')&=0\mod\frac{n}{s}\\
    \implies\left(\frac{t}{s}u_i+m_i\right)(1+j')&= 0\mod\frac{n}{s}.
\end{align*}

Note that if $\sigma_x(i)=i$ for $i<s$, then $u_i(j+1)=0\mod\frac{n}{t}$. However, this will always be the case since if $\sigma_x(i)=i$, then necessarily $\tau_x(i)=i$ and there is no $m_i$ so $m_i=0$. Namely, \begin{align*}
m_is+u_it&=-j'tu_i-m_ij's\\
\implies u_it&=-jtu_i\qquad j't=jt\mod n\\
\implies u_i(1+j)&=0\mod\frac{n}{t}
\end{align*} as desired. 

Namely, if we let $$\overline{K}_{j',j_{\sigma_x},n,t,s}:=\left\{u\,|\,(u\frac{t}{s}+m_i)(1+j')= 0\mod\frac{n}{s}\text{ for any }m_i\in P_{j_{\sigma_x},t/s}\right\}$$ then $\{0\}\subset\overline{K}_{j',j_{\sigma_x},n,t,s}\subset K_{j,n/t}$ where $j=j'\mod \frac{n}{t}$. Let $$\overline{K}_{j',j_{\sigma_x},n,t,s}^{(c)}:=\left\{u\,|\,(u\frac{t}{s}+m_i)(1+j')= 0\mod\frac{n}{s},m_i\in P_{j_{\sigma_x},t/s}^{(c)}\right\}.$$ Note that it is possible for $\overline{K}_{j',j_{\sigma_x},n,t,s}^{(c)}$ to be empty which is why must introduce $$\delta^{K^c}:=\begin{cases} |\overline{K}_{j',j_{\sigma_x},n,t,s}^{(c)}| &\text{ if }\overline{K}_{j',j_{\sigma_x},n,t,s}^{(c)}\not=\varnothing\\ 1&\text{otherwise}\end{cases}$$

This gives the necessary requirements for $x.$ Namely, we must have that if $\tau_x(i)=i$, there is a relationship between the possible $u_i$ for which $x(i)=tu_i+\sigma_x(i)$ and possible $m_i$ for which $\sigma_x(i)=sm_i+\tau_x(i)$. However, if $\tau_x(i)\not=i$, then no such relationship is required and so the values $l_i$ such that $x(i)=sl_i+\tau_x(i)$ may be anything smaller than $\frac{n}{s}$ and their only dependence is that $l_{\tau_x(i)}=-j'l_i\mod\frac{n}{s}$. 

Then we sum.

$$C_{n,t,s,r} = \underbrace{\sum_{j_{\sigma_x}\in E_{t/s}}}_{\text{choices for }\sigma_x(s)}\underbrace{\sum_{j'\in \overline{E}_{j_{\sigma_x},n,t,s}}}_{\text{choices for }x(s)}\underbrace{\sum_{l=0}^{\lfloor\frac{s-m_{r,j_{\sigma_x},t/s}}{2}\rfloor}}_{\substack{\# \text{ of fixed} \\ \text{ points of }\tau_x}}\,\,\,\underbrace{\delta_{j_{\sigma_x},t/s,r}}_{\sigma_x\text{ exists}}\,\,\,\underbrace{\delta_{j_{\sigma_x},t/s,r}^l}_{x\text{ and }\sigma_x\text{ agree}}\,\,\underbrace{\binom{s-1-2l}{m_{r,j_{\sigma_x},t/s}-1}}_{\substack{\text{ choose }m_i\\ \text{ in }P_{j_{\sigma_x},t/s}}}\qquad\,\qquad\,$$
$$\qquad\underbrace{|P_{j_{\sigma_x},t/s}|^{m_{r,j_{\sigma_x},t/s}-1}}_{\substack{\tau_x(i)=i\text{ and }\\ m_i=q(1-j_{\sigma_x})}}\underbrace{(\delta^{P^c})^{s-2l-m_{r,j_{\sigma_x},t/s}}}_{\substack{\tau_x(i)=i\text{ and }\\ m_i\not=q(1-j_{\sigma_x})}}\underbrace{\frac{(s-1)!}{(s-1-2l)!2^ll!}}_{\text{ choices for }\tau_x}$$ $$\qquad\qquad\qquad\underbrace{|\overline{K}_{j',j_{\sigma_x},n,t,s}|^{m_{r,j_{\sigma_x},t/s}-1}}_{\substack{u_i \text{ satisfying req.}\\ \text{ when }\tau_x(i)=i\text{ and }m_i\in P_{j_{\sigma_x},t/s}}}\underbrace{(\delta^{K^c})^{s-2l-m_{r,j_{\sigma_x},t/s}}}_{\substack{u_i \text{ satisfying req.}\\ \text{ when }\tau_x(i)=i\text{ and }m_i\in P_{j_{\sigma_x},t/s}^c}}\underbrace{\left(\frac{n}{s}\right)^l}_{\tau_x(i)\not=i}-\underbrace{\sum_{\substack{p|s\\p\not=d}}C_{n,t,p,r}}_{\text{ remove over count}}$$

Specifically, if $s=1$, then $\sigma_x$ has full stabilizer and so $\sigma_x(i)=ij_{\sigma_x}$ for all $i.$ Thus, since $\sigma_x$ has $r$ fixed points, $\beta_{j_{\sigma_x},n/t}$ which is the number of $\sigma_x$ must equal $r$. Therefore, $$C_{n,t,1,r}=\sum_{j_{\sigma_x}\in E_{t/1}}\sum_{j'\in \overline{E}_{j_{\sigma_x},n,t,1}}\delta_r(\beta_{j_{\sigma_x},t/1})=\sum_{j_{\sigma_x}\in E_{t/1}}\delta_r(\beta_{j_{\sigma_x},t/1})|\overline{E}_{j_{\sigma_x},n,t,1}|.$$ 

In the case where $t=n$, then we are just counting inovlutions $x$ with $r$ fixed points and we then remove all overcounts.


This is the number of involutions $x$ which can be generated by a permutation $\sigma_x\in S_{t-1}$ with $r$ fixed points, and which are stabilized by $a^s$ for $s|t$. \qed\\

\begin{proposition}\label{prop:orbit with r involutions length t}
{\it Let $O_{n/t,r}$ be the set of orbits of length $t$ containing $r$ involutions. Then \begin{equation}|O_{n/t,r}|=\begin{cases} \frac{1}{r}|X_{n/t,r}|&\text{ if }r\not=0\\ \frac{1}{t}|M_{n/t}|-\sum_{r=1}^t\frac{1}{r}|X_{n/t,r}|&\text{ if }r=0\end{cases}.\label{ex:Ot}\end{equation} Where $X_{n/t,r}$ is from \cref{prop:number of ord 2 elements for given stab with certain fixed mod t} and $M_{n/t}$ is from \cref{prop:number of x with Fx=<a^t>}.}
\end{proposition}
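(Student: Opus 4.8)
The plan is to use the fact that the matched-pair action of $F=C_n$ partitions $G=S_{n-1}$ into orbits, together with the observation that an orbit has length exactly $t$ precisely when each of its elements has stabilizer $\langle a^t\rangle$. Indeed, by the orbit-stabilizer theorem $|\mathcal{O}_x|=[F:F_x]$, so $|\mathcal{O}_x|=t$ forces $|F_x|=\frac{n}{t}$; since $C_n$ has a unique subgroup of each order dividing $n$, the subgroup of order $\frac{n}{t}$ is $\langle a^t\rangle$, giving $F_x=\langle a^t\rangle$. Conversely, \cpref{prop:equivalence of sets}{prop:equivalence of sets (2)} guarantees $F_y=F_x$ for every $y\in\mathcal{O}_x$, so an orbit of length $t$ consists entirely of elements with stabilizer $\langle a^t\rangle$, i.e. of elements of $M_{n/t}$. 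Hence $M_{n/t}$ is a disjoint union of orbits of length $t$, and the total number of such orbits is $\frac{1}{t}|M_{n/t}|$.

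For the case $r\neq 0$, I would set up the map $X_{n/t,r}\to O_{n/t,r}$ sending an involution $x$ to its orbit $\mathcal{O}_x$. This is well defined: any $x\in X_{n/t,r}$ has $F_x=\langle a^t\rangle$, hence $|\mathcal{O}_x|=t$, and by definition $\mathcal{O}_x$ contains exactly $r$ involutions, so $\mathcal{O}_x\in O_{n/t,r}$. The map is surjective because any orbit in $O_{n/t,r}$ has $r\ge 1$ involutions, any one of which lies in $X_{n/t,r}$ and maps to that orbit. The crucial point is that every fiber has size exactly $r$: the preimage of $\mathcal{O}\in O_{n/t,r}$ is precisely the set of involutions lying in $\mathcal{O}$, and there are exactly $r$ of these by definition of $O_{n/t,r}$ (each such involution $y$ automatically satisfies $F_y=\langle a^t\rangle$ and $\mathcal{O}_y=\mathcal{O}$, so $y\in X_{n/t,r}$). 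Counting $X_{n/t,r}$ fiber by fiber gives $|X_{n/t,r}|=r\,|O_{n/t,r}|$, which is the claimed formula for $r\neq 0$.

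For the case $r=0$, I would note that every orbit of length $t$ contains some number $r\in\{0,1,\dots,t\}$ of involutions (at most $t$, since the orbit has only $t$ elements), so the orbits of length $t$ are sorted according to their involution count, yielding $\frac{1}{t}|M_{n/t}|=\sum_{r=0}^{t}|O_{n/t,r}|$. Solving for $|O_{n/t,0}|$ and substituting the formula just established for the nonzero counts gives
\[
|O_{n/t,0}|=\frac{1}{t}|M_{n/t}|-\sum_{r=1}^{t}|O_{n/t,r}|=\frac{1}{t}|M_{n/t}|-\sum_{r=1}^{t}\frac{1}{r}|X_{n/t,r}|,
\]
as desired.

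This argument is essentially bookkeeping, so I do not expect a deep obstacle. The one point demanding genuine care is the verification that orbits of length $t$ lie entirely inside $M_{n/t}$, so that no element of such an orbit secretly carries a strictly larger stabilizer; this is exactly what \cpref{prop:equivalence of sets}{prop:equivalence of sets (2)} supplies. The remaining subtlety is simply noting that an orbit of length $t$ cannot contain more than $t$ involutions, which justifies terminating the sum at $r=t$ and ensures the partition into the $|O_{n/t,r}|$ is exhaustive.
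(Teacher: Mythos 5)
Your argument is correct and follows essentially the same route as the paper: count the involutions in $X_{n/t,r}$ by grouping them into their orbits (each orbit in $O_{n/t,r}$ contributing exactly $r$ of them), and obtain the $r=0$ case by subtracting from the total orbit count $\frac{1}{t}|M_{n/t}|$. Your write-up is more careful than the paper's about why orbits of length $t$ consist exactly of elements of $M_{n/t}$ and why the fibers of $x\mapsto\mathcal{O}_x$ have size $r$, but the underlying bookkeeping is identical.
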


\no{\it Proof.} If $r\not=0$, then we have seen an involution $x$ with stabiliser $\langle a^t\rangle$ is in an orbit containing $r$ total involutions if and only if it has $r$ fixed points smaller than $t$. Namely, the number of orbits containing $r$ involutions is $\frac{1}{r}|X_{n/t,r}|.$

If $r=0$, then $O_{n/t,0}$ is the number of orbits containing no involutions. This value can be computed by taking the total number of orbtis containing $t$ permutations which given by $\frac{1}{t}|M_{n/t}|$ and subtracting those orbits which contain at least one involution. Thus, we subtract off $\frac{1}{r}|X_{n/t,r}|$ for all $1\le r\le t$.
\qed\\

Lets take the time now to show that our formula here, unwieldy as it is, aligns perfectly with that of \cite{jm} in \cref{thm:[JM] counting all irrepts prime p} in the case where $n=p$.

\begin{proposition}\label{prop:our formula matches p}
{\it Our formula agrees with those of \cite{jm} in \cref{thm:[JM] counting all irrepts prime p}. That is, \begin{myenum}
    \item \label{prop:our formula matches p (1)} $m_{p,1}=\sum_{r=1}^p|O_{p/p,r}|$\\
    \item \label{prop:our formula matches p (2)} $m_{p,1}+m_{p,0}=\frac{1}{p}|M_{p/p}|$
    \item \label{prop:our formula matches p (3)} $p+1$ of the $p(p-1)$ $1$-dimensional irreps of $J_p$ have indicator $+1$
\end{myenum}}
\end{proposition}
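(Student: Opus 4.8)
The three parts rest on one structural observation: since $p$ is prime, the only subgroups of $C_p$ are $\{1\}$ and $C_p$, so every $x\in S_{p-1}$ satisfies either $F_x=\{1\}$ (equivalently $t=p$, so $|\mathcal{O}_x|=p$) or $F_x=C_p$ (equivalently $t=1$, so $|\mathcal{O}_x|=1$). The plan is to translate each Jedwab--Montgomery quantity in \cref{thm:[JM] counting all irrepts prime p} into one of the sets $M_{p/t}$, $X_{p/t,r}$, $O_{p/t,r}$ under this dichotomy.

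For part (1), I would observe that $m_{p,1}$ is \emph{by definition} the number of orbits of size $p$ containing at least one involution, and that orbits of size $p$ are exactly those with $F_x=\{1\}$, i.e. $t=p$. Since $O_{p/p,r}$ counts the orbits of length $p$ containing exactly $r$ involutions (\cref{prop:orbit with r involutions length t}), summing over $r=1,\dots,p$ collects precisely the orbits of length $p$ with at least one involution, so $m_{p,1}=\sum_{r=1}^{p}|O_{p/p,r}|$ is immediate. For part (2), $m_{p,1}+m_{p,0}$ is the total number of orbits of size $p$; each such orbit has exactly $p$ elements and $M_{p/p}$ is the set of all $x$ lying in such orbits, so $m_{p,1}+m_{p,0}=\tfrac{1}{p}|M_{p/p}|$. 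As a consistency check I would evaluate the recursion of \cref{prop:number of x with Fx=<a^t>}: the only proper divisor of $p$ is $1$, so $|M_{p/p}|=\varphi(1)\cdot 1^{\,p-1}(p-1)!-|M_{p/1}|=(p-1)!-(p-1)$, whence $\tfrac{1}{p}|M_{p/p}|=(p-1)\tfrac{(p-2)!-1}{p}$, matching \cref{thm:[JM] counting all irrepts prime p}.

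For part (3), the dimension-$1$ irreps are exactly those induced from $x$ with $F_x=C_p$ (so $[F:F_x]=1$), and there are $|M_{p/1}|=\varphi(p)=p-1$ such $x$, each contributing the $p$ characters of $C_p$, giving $p(p-1)$ one-dimensional irreps. To count those with indicator $+1$ I would apply \cref{thm:indicator reduced} with $t=1$ and $n/t=p$ odd. Here $\mathcal{O}_x=\{x\}$, so the indicator is nonzero only when $x^{-1}=x$, forcing $x$ to be an involution; by \cref{cor:x(i)=ix(1) for Fx=F} together with $x^2=1$ this gives $x(1)^2\equiv 1\pmod p$, hence $x(1)\in\{1,-1\}$. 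For $x(1)=1$ (the identity) one gets $u_1=x(1)+1=2$, so $\zeta_p^{\,iu_1}=1$ only for $i=0$, a single $+1$; for $x(1)=-1$ one gets $u_1\equiv 0\pmod p$, so $\zeta_p^{\,iu_1}=1$ for all $p$ choices of $i$, giving $p$ indicators equal to $+1$. Summing yields $1+p=p+1$.

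The bookkeeping is otherwise routine; the one place demanding genuine care is part (3), where the asymmetry between the two involutions ($u_1=2$ versus $u_1\equiv 0$) is exactly what separates the single $+1$ coming from the identity from the full block of $p$ coming from the order-$2$ element $x(1)=-1$. I would cross-check this count against cases (a)--(c) of \cref{thm:[JM] indicator Fx=F=Cp}, noting also that non-involutions with $F_x=C_p$ satisfy $x^{-1}\notin\mathcal{O}_x$ and so contribute indicator $0$ by \cref{lem:inv element in orbit means 0 indicator}, confirming that these $p+1$ are all of the $+1$ indicators among the dimension-$1$ irreps.
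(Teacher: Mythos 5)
Parts (2) and (3) of your proposal are correct and follow essentially the same route as the paper: for (2) you evaluate the recursion of \cref{prop:number of x with Fx=<a^t>} to get $|M_{p/p}|=(p-1)!-(p-1)$ and divide by $p$, and for (3) you locate the two involutions with full stabilizer via $x(1)^2\equiv 1\pmod p$, apply \cref{thm:indicator reduced} with $u_1=2$ versus $u_1\equiv 0$, and note that non-involutions contribute $0$ by \cref{lem:inv element in orbit means 0 indicator}. That matches the paper's proof in both substance and detail.

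Part (1) is where there is a genuine gap. You argue that $m_{p,1}$ and $\sum_{r=1}^p|O_{p/p,r}|$ are "immediately" equal because both count orbits of length $p$ containing at least one involution. As a set-theoretic identity that is true, but it is a tautology that carries none of the content of the proposition. The point of \cref{prop:our formula matches p} is to cross-validate the \emph{explicit recursive formula} for $|O_{p/p,r}|=\tfrac{1}{r}|X_{p/p,r}|$ from \cref{prop:number of ord 2 elements for given stab with certain fixed mod t} against the independently known value $m_{p,1}=\tfrac{i_p-1}{p}-1$ of \cite{jm}. The paper therefore actually evaluates $|X_{p/p,r}|=\frac{(p-1)!}{(r-1)!\,2^{(p-r)/2}((p-r)/2)!}-\delta_r(p)-\delta_r(1)$ (which requires unwinding $|C_{p,p,1,r}|$, the sets $\overline{E}_{j_{\sigma_x},p,p,1}$, and the values $\beta_{j,p/1}$), sums $\tfrac{1}{r}|X_{p/p,r}|$ over $r$, and reindexes via $l=\tfrac{p-r}{2}$ to recover $\tfrac{i_p-1}{p}-1$ from the identity $i_p=\sum_l \frac{p!}{(p-2l)!\,2^l l!}$. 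Your argument would go through verbatim even if the recursive formula for $X_{n/t,r}$ contained an error, so it does not establish that "our formula agrees with those of \cite{jm}" in the intended sense. To close the gap you need to carry out (or at least sketch) that evaluation, in particular identifying which $j_{\sigma_x}\in E_{p/1}=\{1,p-1\}$ contribute the correction terms $\delta_r(p)$ and $\delta_r(1)$.
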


\no{\it Proof.} 
\begin{myenum}
    \item Recall, that $m_{p,1}:=\frac{i_p-1}{p}-1$ is the number of orbits $\mathcal{O}_x$ of length $t=p$ containing at least one involution where $i_p$ is the number of involutions (including the identity) in $S_p.$ Note also that in the proof of \cref{prop:number of ord 2 elements for given stab}, we show that $$i_p=\sum_{l=0}^{\lfloor\frac{p}{2}\rfloor}\frac{p!}{(p-2l)!2^ll!}.$$

We would like to show that $$m_{p,1}=\sum_{r=1}^pO_{p/p,r}=\sum_{r=1}^p\frac{1}{r}|X_{p/p,r}|.$$

Now, we simplify, \begin{align*}
    |X_{p/p,r}|&=\frac{(n-1)!}{(r-1)!2^{\frac{n-r}{2}}\left(\frac{n-r}{2}\right)!}-|C_{p,p,1,r}|\\
    &=\frac{(p-1)!}{(r-1)!2^{\frac{p-r}{2}}\left(\frac{p-r}{2}\right)!}-\sum_{j_{\sigma_x}\in E_{p/1}}\delta_r(\beta_{j_{\sigma_x},p/1})|\overline{E}_{j_{\sigma_x},p,p,1}|\\
    &=\frac{(p-1)!}{(r-1)!2^{\frac{p-r}{2}}\left(\frac{p-r}{2}\right)!}-\delta_r(\beta_{1,p/1})|\overline{E}_{1,p,p,1}|-\delta_r(\beta_{p-1,p/1})|\overline{E}_{p-1,p,p,1}|\\
    &=\frac{(p-1)!}{(r-1)!2^{\frac{p-r}{2}}\left(\frac{p-r}{2}\right)!}-\delta_r(p)|\overline{E}_{1,p,p,1}|-\delta_r(1)|\overline{E}_{p-1,p,p,1}|\qquad\beta_{j,p/1}=\text{ gcd}(j-1,p)\\
    &=\frac{(p-1)!}{(r-1)!2^{\frac{p-r}{2}}\left(\frac{p-r}{2}\right)!}-\delta_r(p)-\delta_r(1)\qquad |\overline{E}_{j,p,p,1}|=1\text{ for }j=1,p-1\\
\end{align*}

Therefore, \begin{align*}
    \sum_{r=1}^p\frac{1}{r}|X_{p/p,r}|&=\sum_{r=1}^p\frac{1}{r}\left[\frac{(p-1)!}{(r-1)!2^{\frac{p-r}{2}}\left(\frac{p-r}{2}\right)!}-\delta_r(p)-\delta_r(1)\right]\\
    &=\sum_{r=1}^p\left[\frac{(p-1)!}{r!2^{\frac{p-r}{2}}\left(\frac{p-r}{2}\right)!}-\frac{\delta_r(p)}{r}-\frac{\delta_r(1)}{r}\right]\\
    &=\left[\sum_{r=1}^p\frac{(p-1)!}{r!2^{\frac{p-r}{2}}\left(\frac{p-r}{2}\right)!}\right]-\frac{1}{1}-\frac{1}{p}\\
    &=\left[\sum_{l=0}^\frac{p-1}{2}\frac{(p-1)!}{(p-2l)!2^ll!}\right]-1-\frac{1}{p}\qquad\text{substituting }l=\frac{p-r}{2}\\
    &=\frac{1}{p}\left[\sum_{l=0}^\lfloor\frac{p}{2}\rfloor\frac{p!}{(p-2l)!2^ll!}\right]-1-\frac{1}{p}\qquad\text{since }p\text{ is odd }\left\lfloor\frac{p}{2}\right\rfloor=\frac{p-1}{2}\\
    &=\frac{1}{p}i_p-1-\frac{1}{p}\\
    &=\frac{i_p-1}{p}-1
\end{align*}

as desired.  Of course, \cref{thm:[JM] indicator when Fx=1} tells us that each permutation $x$ satisfying $F_x=F$ and $x^{-1}\in\mathcal{O}_x$ induces a single irrep with indicator $+1$. Thus, there are $pm_{p,1}$ total $p$-dimensional irreps with indicator $+1$.

\item The number of orbits of length $p$ is given by $$\frac{1}{p}|M_{p/p}|==\frac{1}{p}((p-1)!-\varphi(p))=(p-1)\frac{(p-2)!-1}{p}$$ which also aligns with \cref{thm:[JM] counting all irrepts prime p}.

\item Finally, $|M_{p/1}|=\varphi(p)=p-1$, so there are $p(p-1)$ $1$-dimensional irreps of $J_p$. Of these, $|T_{p/1}|=|E_{p/1}|=2$. 

Thus, there are $p(p-3)$ irreps of $J_p$ of dimension $1$ with indicator $0$ (induced from non-involutions). Of the two involutions with full stabilizer, the identity induces the trivial irrep which has indicator $+1$ and the rest have indicator $0$, and a non-identity $x$ which satisfies $x(1)=p-1$ and so plugging into \cref{thm:indicator reduced}, gives all $p$ irreps have indicator $+1$. That is, there are $p+1$ total $1$-dimensional irreps of $J_p$ with indicator $+1$ and the rest have indicator $0$. \qed\\
\end{myenum}

Let us summarize what we have found so far. We have seen that any permutation $x\in S_{n-1}$ with stabilizer $\langle a^t\rangle$ has a remainder permutation $\sigma_x\in S_{t-1}$ such that $x(i)\mod t=\sigma_x(i)$ for all $1\le i<t$. Furthermore, $\sigma_x$ shares certain properties with $x$. That is, if $x^{-1}=a^{-x(s)}xa^s\in\mathcal{O}_x$, then $\sigma_x^{-1}=\sigma_{x^{-1}}=a_t^{-\sigma_x(s)}\sigma_xa_t^s\in\mathcal{O}_{\sigma_x}$ where $a_t$ is the standard $t$ cycle in $S_t$.

We have used remainder permutations to help us determine whether $\mathcal{O}_x$ contains an involution. Most helpfully, we showed that if $x$ is an involution, then the permutation $y_l=a^{-x(l)}xa^l\in\mathcal{O}_x$ is an involution if and only if $\sigma_x(l)=l$ (\cref{lem:y in orbit x}).

This connection between permutations in $S_n$ and $S_t$ was an unexpected one and we believe the matched pair action connects $S_n$ with $S_t$ where $t|n$ in more ways than we have been able to uncover in this paper.

\section{Counting Irreps with a given dimension}\label{sec:counting irreps}

Here we state one of our greatest results. Given an $n$ and any odd divisor $t$, we can explicitly write down and count the number of $t$-dimensional irreps of $J_n$ with indicator $+1$ and indicator $0.$

\subsection{Counting the Number of odd dimensional Irreps with Indicator \texorpdfstring{$+1$}{TEXT}}\label{sec:indicator formula (odd)}

\begin{proposition}\label{prop:orbit with r involutions length t and fixed j}
{\it Let $O_{n/t,r,j}$ be the set of orbits of length $1<t<n$ containing $r$ involutions where for each permutation $y$ in any of these orbits, $y(t)=jt$. Recall entries (1), (2), (4), (6), (7), (9), (10), (11), (12), (13) from \cref{fig:table of sets and values} (\nameref{fig:table of sets and values}). Then, \begin{equation}r|O_{n/t,r,j}|=(\alpha_{j,n/t})^{r-1}\left(\frac{n}{t}\right)^\frac{t-r}{2}\frac{(t-1)!}{(r-1)!2^\frac{t-r}{2}\left(\frac{t-r}{2}\right)!}-\sum_{\substack{s|t\\s\not=t}}\tilde{C}_{n,t,s,r,j}\label{ex:Otj}\end{equation}
    where \begin{align}
    \delta_{j,j',n/t}'&=\begin{cases} 1 &\text{ if } j't=jt\mod n\\ 0&\text{ otherwise}\end{cases}\nonumber\\
    \tilde{C}_{n,t,1,r,j}&=\sum_{j_{\sigma_x}\in E_{t/1}}\sum_{j'\in \overline{E}_{j_{\sigma_x},n,t,1}}\delta_{j,j',n/t}'\delta_r(\beta_{j_{\sigma_x},t/1,r})\label{ex:C't}\\
    \tilde{C}_{n,t,s,r,j}&=\sum_{j_{\sigma_x}\in E_{t/s}}\sum_{j'\in \overline{E}_{j_{\sigma_x},n,t,s}}\sum_{l=0}^{\lfloor\frac{s-m_{r,j_{\sigma_x},t/s}}{2}\rfloor}\delta_{j,j',n/t}'\delta_{j_{\sigma_x},t/s,r}\binom{s-1-2l}{m_{r,j_{\sigma_x},t/s}-1}|P_{j_{\sigma_x},t/s}|^{m_{r,j_{\sigma_x},t/s}-1}\nonumber\\
    &\qquad\qquad(\delta^{P^c})^{s-2l-m_{r,j_{\sigma_x},t/s}}\frac{(s-1)!}{(s-1-2l)!2^ll!}|\overline{K}_{j',j_{\sigma_x},n,t,s}|^{m_{r,j_{\sigma_x},t/s}-1}(\delta^{K^c})^{s-2l-m_{r,j_{\sigma_x},t/s}}\left(\frac{n}{s}\right)^l\nonumber\\
    &\qquad\qquad\qquad\qquad\qquad\qquad-\sum_{\substack{p|s\\p\not=d}}\tilde{C}_{n,t,p,r,j}\label{ex:C'tp}
\end{align} }
\end{proposition}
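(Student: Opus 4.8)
The plan is to refine the counting argument of \cref{prop:number of ord 2 elements for given stab with certain fixed mod t} by exploiting the fact that the integer $j$ determined by $x(t)=jt$ is constant on each orbit. Indeed, \cpref{prop:equivalence of sets}{prop:equivalence of sets (2)} tells us that every $y\in\mathcal{O}_x$ satisfies $y(t)=x(t)$, so $j$ is an orbit invariant, and we may legitimately partition the involutions with stabilizer $\langle a^t\rangle$ whose orbit contains $r$ involutions according to the value of $j\in E_{n/t}$. Writing $X_{n/t,r,j}$ for the involutions $x$ with $F_x=\langle a^t\rangle$, exactly $r$ fixed points modulo $t$, and $x(t)=jt$, the first step is to record that $r|O_{n/t,r,j}|=|X_{n/t,r,j}|$ by the argument of \cref{prop:orbit with r involutions length t}: each orbit of length $t$ containing $r$ involutions shares a single value of $j$ (so lies entirely in one $X_{n/t,r,j}$) and contributes exactly $r$ involutions to it, since by \cpref{lem:y in orbit x}{lem:y in orbit x (2)} the involutions of $\mathcal{O}_x$ correspond to the fixed points of $\sigma_x$.

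First I would count the involutions $x$ that are merely \emph{stabilized} by $a^t$ (so possibly $F_x\supsetneq\langle a^t\rangle$), have $x(t)=jt$ for the fixed $j$, and have $r$ fixed points modulo $t$. This is precisely the summand of \cref{prop:number of ord 2 elements for given stab with certain fixed mod t} \emph{before} summing over $E_{n/t}$, namely $(\alpha_{j,n/t})^{r-1}(n/t)^{(t-r)/2}\tfrac{(t-1)!}{(r-1)!2^{(t-r)/2}((t-r)/2)!}$, and the derivation is identical: fix a remainder permutation $\sigma_x\in S_{t-1}$ with $r-1$ fixed points, contributing the factorial factor, then count the $u_i$ exactly as before—$\alpha_{j,n/t}$ choices at each fixed point of $\sigma_x$ and $n/t$ choices at each transposed pair.

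The second step is to strip off the overcount from permutations whose true stabilizer is $\langle a^s\rangle$ for a proper divisor $s\mid t$. Here the only change from $C_{n,t,s,r}$ is that $j$ must be held fixed at the $t$-level while descending to the $s$-level value $j'$ (where $x(s)=j's$). Since $x(t)=\tfrac{t}{s}x(s)=tj'\bmod n$, compatibility forces $j't=jt\bmod n$, which is exactly the indicator $\delta'_{j,j',n/t}$ inserted into the sum over $j'\in\overline{E}_{j_{\sigma_x},n,t,s}$; every remaining factor of $\tilde C_{n,t,s,r,j}$ is carried over verbatim from $C_{n,t,s,r}$, counting $\tau_x$, the $m_i$, and the $u_i$ subject to the involution constraints of \cref{cor:involution generated from smaller sigma}. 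The recursion $\tilde C_{n,t,s,r,j}=(\cdots)-\sum_{p\mid s,\,p\neq s}\tilde C_{n,t,p,r,j}$ then removes, level by level, the permutations of still-smaller stabilizer, exactly as in the unrestricted case.

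The main obstacle is verifying that fixing $j$ at the top level propagates correctly without disturbing the delicate edge-case bookkeeping (the $\delta^{P^c}$ and $\delta^{K^c}$ conventions, and the consistency factor $\delta^l$) inherited from \cref{prop:number of ord 2 elements for given stab with certain fixed mod t}. Concretely, one must check that each admissible $j'$ reduces to a genuine element of $E_{n/t}$—which holds because $\tfrac{n}{t}\mid\tfrac{n}{s}$ forces $(j')^2\equiv 1\bmod\tfrac{n}{t}$ whenever $(j')^2\equiv 1\bmod\tfrac{n}{s}$—and that summing $\tilde C_{n,t,s,r,j}$ over all $j\in E_{n/t}$ recovers $C_{n,t,s,r}$. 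This last consistency check is the cleanest way to confirm the new constraint miscounts no orbit: summing the proposed identity over $j\in E_{n/t}$ must collapse, term by term, to the statement of \cref{prop:number of ord 2 elements for given stab with certain fixed mod t} combined with \cref{prop:orbit with r involutions length t}.
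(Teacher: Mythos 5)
Your proposal is correct and follows essentially the same route as the paper: the paper's proof is a one-paragraph adaptation of \cref{prop:number of ord 2 elements for given stab with certain fixed mod t} that fixes $j$ (dropping the sum over $E_{n/t}$) and inserts $\delta'_{j,j',n/t}$ into the overcount terms to discard descents with incompatible $j'$, exactly as you describe. Your additional observations---that $j$ is an orbit invariant via \cpref{prop:equivalence of sets}{prop:equivalence of sets (2)}, and that summing over $j\in E_{n/t}$ should recover the unrestricted count---are sound sanity checks the paper leaves implicit.
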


\no{\it Proof.} This is a direct adaptation of \cref{prop:number of ord 2 elements for given stab with certain fixed mod t}. The only change is that we are assuming a fixed $j$ here and so we omit the first sum over the set $E_{n/t}$. Additionally, in the overcount function we must ensure that we only remove those $x$ which have the same $j$ value. Namely, since our choice of $j'$ such that $x(s)=j's$ forces $x(t)=j't$, we must have that $j't=jt$ for our fixed $j.$

Therefore, we include the value $$\delta_{j,j',n/t}'=\begin{cases} 1 &\text{ if } j't=jt\mod n\\ 0&\text{ otherwise}\end{cases}$$ in our overcount function.
\qed\\

\begin{theorem}\label{thm:number of positive indicators of dimension} 
{\it Let $t$ be an odd divisor of $n.$ Let $I_{n/t}^{(+1)}$ be the number of representations of $J_n$ of dimension $t$ with indicator $+1$. Recall (3) from \cref{fig:table of sets and values} (\nameref{fig:table of sets and values}) and $O_{n/t,r,j}$ as from \cref{prop:orbit with r involutions length t and fixed j}. Then \begin{align}
    I_{n/1}^{(+1)}&=\sum_{j\in E_{n/1}}\alpha_{j,n/1}\label{ex:I1+}\\
    I_{n/t}^{(+1)}&=t\sum_{j\in E_{n/t}}\sum_{r=1}^t\alpha_{j,n/t}O_{n/t,r,j}\label{ex:It+}\\
    I_{n/n}^{(+1)}&=n\sum_{r=1}^nO_{n/n,r}\label{ex:In+}
\end{align} Finally, the number of irreps of $J_n$ of dimension $t$ with indicator $0$ is given by \begin{equation}I_{n/t}^{(0)}=\frac{n}{t}|M_{n/t}|-I_{n/t}^{(+1)}\label{ex:It0}\end{equation} where $M_{n/t}$ is from \cref{prop:number of x with Fx=<a^t>}.}
\end{theorem}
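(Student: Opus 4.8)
The plan is to reduce the statement to a count over pairs $(x,i)$ and then feed in the structural results already established. By \cref{thm:[KMM] inducing modules} and \cref{lem:[KMM] dimension of induced rep}, the $t$-dimensional irreps of $J_n$ are precisely those induced from a permutation $x\in S_{n-1}$ with $F_x=\langle a^t\rangle$ together with a choice of irreducible character $\rho_{x,i}$ of $F_x\cong C_{n/t}$, where $0\le i<\tfrac{n}{t}$. I would organize the count over all such permutations $x$ (of which there are $|M_{n/t}|$ by \cref{prop:number of x with Fx=<a^t>}), recording for each the number of indices $i$ that produce indicator $+1$; since every element of a common orbit yields the same induced module with the same stabilizer (\cref{prop:equivalence of sets} and \cref{prop:observations}), this groups naturally by orbit, each length-$t$ orbit being counted through its $t$ representatives. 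The total number of such pairs is $|M_{n/t}|\cdot\tfrac{n}{t}=\tfrac{n}{t}|M_{n/t}|$, which supplies the denominator appearing in \eqref{ex:It0}.

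The heart of the argument is the $+1$ characterization when $t$ is odd. First, \cref{prop:observations} guarantees $\nu(\hat\chi_{x,i})\ge 0$, so every indicator is $0$ or $+1$ and $I_{n/t}^{(0)}$ is forced to be the complement $\tfrac{n}{t}|M_{n/t}|-I_{n/t}^{(+1)}$. Next, a nonzero indicator requires $x^{-1}\in\mathcal{O}_x$, which for odd $t$ is equivalent to $\mathcal{O}_x$ containing an involution (\cref{lem:orbit and stab inv equiv} together with \cref{prop:observations}). Writing $x(t)=jt$, \cref{lem:x(t)=kt some k} shows $j$ is coprime to $\tfrac{n}{t}$ and moreover lies in $E_{n/t}$ whenever the orbit carries an involution. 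Feeding $u_1=\tfrac{x(t)+t}{t}=j+1$ into \cref{thm:indicator reduced} and using $\nu\ge0$ to rule out the value $-1$ in the even-$\tfrac{n}{t}$ branch, I obtain the clean criterion: for odd $t$, $\nu(\hat\chi_{x,i})=+1$ if and only if $x^{-1}\in\mathcal{O}_x$ and $i(j+1)\equiv 0\pmod{n/t}$. By the elementary solution count for linear congruences, the number of admissible $i$ is exactly $\gcd\!\left(j+1,\tfrac{n}{t}\right)=\alpha_{j,n/t}$, a quantity depending only on $j$.

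With this criterion in hand the assembly is combinatorial. For $1<t<n$ I would fix $j\in E_{n/t}$ and the number $r$ of involutions in the orbit, observe that there are $|O_{n/t,r,j}|$ such orbits (\cref{prop:orbit with r involutions length t and fixed j}) hence $t|O_{n/t,r,j}|$ permutations $x$, and that each contributes $\alpha_{j,n/t}$ indicator-$+1$ indices; summing over $r$ and $j$ yields $I_{n/t}^{(+1)}=t\sum_{j\in E_{n/t}}\sum_{r=1}^t\alpha_{j,n/t}|O_{n/t,r,j}|$, which is \eqref{ex:It+}. The two degenerate cases specialize this same bookkeeping: when $t=1$ each orbit is a singleton, $x$ is an involution exactly when $j=x(1)\in E_{n/1}$, and summing $\alpha_{j,n/1}$ over $E_{n/1}$ gives \eqref{ex:I1+}; when $t=n$ we have $F_x=\{1\}$, so $\tfrac{n}{t}=1$ leaves only the unique index with $\alpha=1$, and the count reduces to $n$ times the number of length-$n$ orbits containing an involution, namely $n\sum_r|O_{n/n,r}|$ with $O_{n/n,r}$ from \cref{prop:orbit with r involutions length t}, giving \eqref{ex:In+}. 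Finally \eqref{ex:It0} follows from the nonnegativity noted above.

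I expect the main obstacle to be organizational rather than conceptual: verifying that the orbit-level counts $O_{n/t,r,j}$ partition the relevant permutations without overlap or omission, confirming that $\alpha_{j,n/t}$ is genuinely constant along each orbit (so that it may be pulled outside the orbit sum), and checking that the $t=1$ and $t=n$ specializations of the general formula match the separately stated expressions. The one subtlety requiring explicit care is the interplay between \cref{thm:indicator reduced} and the nonnegativity of \cref{prop:observations}: the former permits the value $-1$ in the even-$\tfrac{n}{t}$ regime, and it is only the oddness of $t$ that eliminates it, so this reduction must be invoked before the congruence count for the admissible indices $i$ is applied.
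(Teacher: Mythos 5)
Your proposal is correct and follows essentially the same route as the paper: reduce to the criterion $i(j+1)\equiv 0 \pmod{n/t}$ via \cref{thm:indicator reduced} (with \cref{prop:observations} eliminating the $-1$ branch for odd $t$), count the admissible $i$ as $\alpha_{j,n/t}=\gcd(j+1,\tfrac{n}{t})$, multiply by the $t$ permutations per orbit using the constancy of the indicator along orbits, and organize by $O_{n/t,r,j}$, with the $t=1$ and $t=n$ cases and the complement formula for $I_{n/t}^{(0)}$ handled exactly as in the paper.
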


\no{\it Proof.} Let $t$ be an odd divisor of $n$ and $x$ a permutation with stabilizer $F_x=\langle a^t\rangle$ such that $x(t)=jt$ and $\mathcal{O}_x$ contains at least one involution. 

Let $\alpha_{j,n/t}=\text{gcd}(j+1,\frac{n}{t})$, this is the number of $i$ such that $\zeta_n^{i\frac{x(t)+t}{t}}=\zeta_n^{i\frac{jt+t}{t}}=\zeta_n^{i(j+1)}=1$. Namely, $\alpha_{j,n/t}$ is the number of (group) irreps $\rho_{x,i}$ such that the indicator of the induced character $\nu(\hat{\chi}_{x,i})=+1$ from \cref{thm:indicator reduced}.

Since \cpref{prop:observations}{prop:observations (1)} tells us that each $y\in\mathcal{O}_x$ gives the same number of indicators of value $+1$, we have $t\alpha_{j,n/t}$ total indicators of value $+1$ coming from this orbit.  

Then, the number of representations $\hat{V}$ of $J_n$ of dimension $t$ with indicator $+1$ is given by $$t\sum_{j\in E_t}\sum_{r=1}^t\alpha_{j,n/t}O_{n/t,r,j}.$$

Of course, if $t=n$ then each $x$ with stabilizer $F_x=\{1\}$ either satisfies that $x^{-1}\in\mathcal{O}_x$ and so induces a single irrep of $J_n$ with indicator $+1$, or $x^{-1}\not\in\mathcal{O}_x$ and so $x$ induces an irrep of indicator $0$ (see \cref{thm:[JM] indicator when Fx=1}). Since $t=n$ is odd by assumption, any orbit closed under inversion must contain at least one involution. Thus, the number of irreps of dimension $n$ with indicator $+1$ is given by counting the number of orbits containing at least one involution and multiplying by $n.$ This gives $$I_{n/n}^{(+1)}=n\sum_{r=1}^nO_{n/n,r}.$$

Finally, since there are $|M_{n/t}|$ permutations with stabilizer $\langle a^t\rangle$ (proved in \cref{prop:number of x with Fx=<a^t>}) and each of these generates $\frac{n}{t}$ irreps of $J_n$, there are $\frac{n}{t}|M_{n/t}|$ total irreprs of $J_n$ of dimension $t.$ Since when $t$ is odd, there are no irreps with indicator $-1$ (from \cref{prop:observations}), we conclude that $I_{n/t}^{(0)}=\frac{n}{t}|M_{n/t}|-I_{n/t}^{(+1)}.$
\qed\\

\cref{thm:number of positive indicators of dimension} gives an explicit method of constructing the irreps of dimension $t$ of $J_n$ which have indicator $+1$ in the case where $t$ is odd. Recall that $|M_{n/t}|$ is the number of permutations $S_{n-1}$ which have stabilizer $F_x=\langle a^t\rangle$ and that this value is equivalent to the number of permutations in $S_{n-1}$ which induce an irrep of $J_n$ of dimension $t$ (see \cref{prop:number of x with Fx=<a^t>} and \cref{thm:[KMM] inducing modules} and \cref{lem:[KMM] dimension of induced rep}).

\begin{corollary}\label{cor:statements}
{\it Let $t$ be an odd divisor of $n$. Then, \begin{myenum}
\item every involution $x\in S_{n-1}$ with $F_x=\langle a^t\rangle$ induces at least one $t$-dimensional irrep of $J_n$ with indicator $+1$
\item {\it NOT} every $t$-dimensional irrep of $J_n$ with indicator $+1$ is induced from an involution
\item every $t$-dimensional irrep of $J_n$ with indicator $+1$ is induced from a permutation whose orbit contains at least one involution
\end{myenum}
}
\end{corollary}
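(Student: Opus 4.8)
The plan is to handle the three assertions separately, since each draws on a different piece of the apparatus already built. Throughout, fix an odd divisor $t$ of $n$ and recall from \cref{lem:[KMM] dimension of induced rep} that any irrep induced from a permutation $x$ with $F_x=\langle a^t\rangle$ has dimension $[C_n:F_x]=n/(n/t)=t$; so ``$t$-dimensional irrep'' is interchangeable with ``irrep induced from some $x$ with $F_x=\langle a^t\rangle$.'' For the first assertion, let $x$ be an involution with $F_x=\langle a^t\rangle$. Then $x=x^{-1}\in\mathcal{O}_x$, so in the decomposition $x^{-1}=a^{-x(s)}xa^s$ I may take $s=t$ (since $a^t$ stabilizes $x$), which forces $u_2=\frac{x(s)+s}{t}=\frac{x(t)+t}{t}=u_1$. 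Feeding the trivial group-representation $\rho_{x,0}$ of $F_x$ (the case $i=0$) into \cref{thm:indicator reduced}, the condition $(\zeta_{n/t})^{iu_1}=1$ holds trivially for $i=0$, placing us in the nonzero branch, and the value is $(\zeta_{n/t})^{-iu_2}=1$ regardless of the parity of $n/t$. Hence $\hat V_{x,0}$ is a $t$-dimensional irrep of indicator $+1$, which proves (1).

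For the third assertion, let $\hat V$ be a $t$-dimensional irrep with $\nu(\hat V)=+1$, induced via \cref{thm:[KMM] inducing modules} from some $x$ with $F_x=\langle a^t\rangle$. Since $\nu\neq 0$, the contrapositive of \cref{lem:inv element in orbit means 0 indicator} gives $x^{-1}\in\mathcal{O}_x$, and then \cpref{lem:orbit and stab inv equiv}{lem:orbit and stab inv equiv (2)} shows $\mathcal{O}_x$ is closed under inversion. Now $|\mathcal{O}_x|=[C_n:F_x]=t$ is odd, and inversion is an involution of this finite set; an involution of an odd-order set must fix a point, and a fixed point of inversion is exactly an element of order at most $2$. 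Thus $\mathcal{O}_x$ contains an involution, which is precisely (3). This is the same parity argument underlying \cpref{prop:observations}{prop:observations (3)}.

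The second assertion is the only genuinely negative claim and is best settled by an explicit witness together with \cpref{prop:observations}{prop:observations (1)}, which says every member of an orbit yields the same indicators. I would exhibit a non-involution $x$ whose orbit still contains an involution; then the $+1$ irrep attached to that orbit is induced from the non-involution $x$, showing that the inducing permutation need not be an involution. Concretely, take $n=9$, $t=3$, and build $x$ from the recipe of \cref{lem:x generated from smaller sigma} with $j=1$, remainder permutation $\sigma_x=(1\ 2)\in S_2$, and $u_1=u_2=1$; this produces the $6$-cycle $x=(1\ 5\ 7\ 2\ 4\ 8)$ with $F_x=\langle a^3\rangle$. A direct check shows $x^{-1}\in\mathcal{O}_x$, so by (1) and \cpref{prop:observations}{prop:observations (1)} the orbit carries a $+1$ irrep, yet $x$ itself is a $6$-cycle, not an involution. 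Hence a $+1$ irrep can be presented as induced from a non-involution, establishing (2).

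The main obstacle is the existence claim supporting (2): one must guarantee an orbit of odd size $t\ge 3$ containing \emph{both} involutions and non-involutions, since an orbit consisting entirely of involutions would undercut the statement. The remark following \cref{cor:involution generated from smaller sigma}—that for $t=3$ every remainder permutation $\sigma_x\in S_2$ is an involution while $x$ itself need not be—is exactly what guarantees such mixed orbits exist, so the one calculation I would carry out in full is the verification that $x^{-1}\in\mathcal{O}_x$ for the chosen witness, ensuring the orbit genuinely contributes a $+1$ indicator rather than only zeros.
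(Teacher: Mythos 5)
Your proof is correct and assembles exactly the ingredients the paper leaves implicit for this corollary (which it states without proof): part (1) via the trivial character $i=0$ in \cref{thm:indicator reduced}, part (3) via the nonvanishing criterion of \cref{lem:inv element in orbit means 0 indicator} plus the odd-orbit fixed-point argument already used in \cpref{prop:observations}{prop:observations (3)}, and part (2) via an explicit mixed orbit, which is the content of the factor $t\alpha_{j,n/t}$ per orbit in \cref{thm:number of positive indicators of dimension}. Your $n=9$ witness $x=(1\;5\;7\;2\;4\;8)$ checks out ($F_x=\langle a^3\rangle$, $x^{-1}=a^{-x(1)}xa\in\mathcal{O}_x$, and $y_2=(1\;8)(2\;4)(5\;7)$ is the involution in its orbit), so no gaps.
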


\subsection{The Case \texorpdfstring{$t=2$}{TEXT}}\label{sec:t=2}

Now, we turn our attention to the otherwise completely illusive even case. Using some computation, we are able to derive formulas when $t=2$, although due to the increased difficulty when considering even $t$, we do not believe this is a viable strategy for deriving relationships in general.

\begin{proposition}\label{prop:number x with t=2}
{\it Let $n$ be even and let $x\in S_{n-1}$ and $F_x=\langle a^2\rangle$ (so $t=2$). Let $x(2)=2j$ where $j$ satisfies $j^2= 1\mod \frac{n}{2}$. Then, the number of $x$ such that $x^{-1}\in\mathcal{O}_x$ with $x^{-1}=a^{-x(s)}xa^s$ is the number $u$ satisfying
\begin{myenum}
    \item \label{prop:number x with t=2 (1)} $j\not= 2u+1\mod \frac{n}{2}$
    \item and either \label{prop:number x with t=2 (2)} \begin{myenum}
        \item \label{prop:number x with t=2 (2) case 1} $u(j+1)= 0 \mod\frac{n}{2}$ in which case $s=t$ (equivalently, $s=0$)
        \item \label{prop:number x with t=2 (2) case 2} or $(u+1)(j-1)= 0\mod \frac{n}{2}$ in which case $s=1$
    \end{myenum} 
\end{myenum}

In fact, the or is exclusive and so the number of $x\in S_{n-1}$ with $F_x=\langle a^2\rangle$ is the number of solutions to $u$ to $u(j+1)= 0 \mod\frac{n}{2}$ (which counts the number of involutions $x$) plus the number of solutions $(u+1)(j-1)= 0\mod \frac{n}{2}$ (which counts the number of non-involutions $x$). Here we are excluding all $u$ for which $j\not= 2u+1\mod \frac{n}{2}$ in both cases. 

Furthermore, all such $x$ are defined by $x(1)=2u+1$ for any $u$ satisfying the above. }
\end{proposition}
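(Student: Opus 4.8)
The plan is to exploit that for $t=2$ the remainder permutation is forced to be trivial, so each admissible $x$ is pinned down by a single integer. First I would apply \cref{lem:x generated from smaller sigma} and \cref{cor:involution generated from smaller sigma}: since $F_x=\langle a^2\rangle$, the remainder permutation $\sigma_x$ lies in $S_{t-1}=S_1$ and hence is the identity, so $x$ is recovered from $x(2)=2j$ (with $j$ coprime to $\frac{n}{2}$) together with one constant $u=u_1$ through $x(1)=2u_1+\sigma_x(1)=2u+1$ and the recursion $x(2q+i)=q\,x(2)+x(i)$. Conversely, any $u$ yields a genuine bijection stabilized by $a^2$, since $j$ coprime to $\frac{n}{2}$ makes the even outputs $\{2qj\}$ and the odd outputs $\{2qj+2u+1\}$ each range over all even, respectively all odd, residues modulo $n$. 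This already gives the final clause that every such $x$ is described by $x(1)=2u+1$.

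Next I would dispatch the first condition, which merely forces the stabilizer to be exactly $\langle a^2\rangle$ rather than all of $C_n$. By \cref{cor:x(i)=ix(1) for Fx=F}, $F_x=C_n$ holds iff $x(v)=v\,x(1)$ for all $v$; since $x$ is determined by its values at $1$ and $2$ (\cref{lem:x deterimined by remainders mod t for Fx=<a^t>}), this is equivalent to the single relation $x(2)=2x(1)$, that is $2j=2(2u+1)\pmod n$, i.e. $j=2u+1\pmod{\frac{n}{2}}$. Therefore $F_x=\langle a^2\rangle$ properly precisely when $j\neq 2u+1\pmod{\frac{n}{2}}$, which is the first listed condition.

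The core is the second condition. The orbit has size $[C_n:\langle a^2\rangle]=2$ by \cref{prop:equivalence of sets}, so $\mathcal{O}_x=\{x,y_1\}$ with $y_1=a^{-x(1)}xa$, and hence $x^{-1}\in\mathcal{O}_x$ iff $x^{-1}=x$ or $x^{-1}=y_1$. For the first alternative, \cref{cor:involution generated from smaller sigma} gives that $x$ is an involution iff $u_1=-j\,u_1\pmod{\frac{n}{2}}$, i.e. $u(j+1)=0\pmod{\frac{n}{2}}$, and then writing $x^{-1}=x=a^{-x(t)}xa^{t}$ records $s=t$ (equivalently $s=0$); this is the involution case. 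For the second alternative I would observe that both $x^{-1}$ and $y_1$ have stabilizer $\langle a^2\rangle$ (using \cref{lem:x(t)=kt some k} and \cref{prop:equivalence of sets}) and both send $2\mapsto 2j$ --- for $x^{-1}$ because $j^{-1}=j\pmod{\frac{n}{2}}$, for $y_1$ because orbit members share the value of $x$ at $t$ --- so by the parametrization of the first paragraph they are equal iff they agree at the single point $1$. Computing $x^{-1}(1)=-2ju+1$ from \cref{lem:x generated from smaller sigma} and $y_1(1)=x(2)-x(1)=2j-2u-1$, the equality $x^{-1}(1)=y_1(1)$ reduces to $2(1-j)(u+1)=0\pmod n$, i.e. $(u+1)(j-1)=0\pmod{\frac{n}{2}}$ with $s=1$; this is the non-involution case.

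Finally I would note that the two alternatives are mutually exclusive: were both to hold we would have $x=x^{-1}=y_1$, contradicting $|\mathcal{O}_x|=2$. Hence the ``or'' is exclusive, and for fixed $j$ the count of $x$ with $F_x=\langle a^2\rangle$ and $x^{-1}\in\mathcal{O}_x$ splits as the number of $u$ solving $u(j+1)=0\pmod{\frac{n}{2}}$ (the involutions) plus the number solving $(u+1)(j-1)=0\pmod{\frac{n}{2}}$ (the non-involutions), in each case discarding the $u$ with $j=2u+1\pmod{\frac{n}{2}}$ excluded by the first condition. I expect the main obstacle to be the second alternative of the orbit argument: showing cleanly that $x^{-1}=y_1$ is governed by the single congruence $(u+1)(j-1)=0$. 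This rests on the two permutations sharing both a stabilizer and their value at $2$, so that coincidence at the one point $1$ forces equality, and on handling the reduction via $j^2\equiv 1$ carefully to keep the distinction between arithmetic modulo $n$ and modulo $\frac{n}{2}$ straight.
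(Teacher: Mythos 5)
Your proposal is correct and follows essentially the same route as the paper's proof: parametrize $x$ by $j$ and $u$ via $x(1)=2u+1$, exclude $j=2u+1\bmod\frac{n}{2}$ to rule out the full stabilizer, and split $x^{-1}\in\mathcal{O}_x$ into the cases $x^{-1}=x$ and $x^{-1}=a^{-x(1)}xa$, which yield the congruences $u(j+1)\equiv 0$ and $(u+1)(j-1)\equiv 0$ modulo $\frac{n}{2}$ respectively. The only cosmetic differences are that you lean on \cref{lem:x generated from smaller sigma} and \cref{cor:involution generated from smaller sigma} where the paper recomputes by hand, and your exclusivity argument via $|\mathcal{O}_x|=2$ is a clean equivalent of the paper's observation that both congruences together force $j=2u+1\bmod\frac{n}{2}$.
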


\no{\it Proof.} \cref{lem:x deterimined by remainders mod t for Fx=<a^t>} tells us that we need only understand $x(1)$ and $x(2)$ to fully determine $x$. Additionally, $x(2)=2j$ where $j^2= 1\mod \frac{n}{2}$ by \cref{lem:x(t)=kt some k}. 

Now, because $x$ sends even numbers to even numbers and sends odd numbers to odd numbers, we can let $x(1)=2u+1$ for some $u$. 

\begin{enumerate}[label=\textbf{(\roman*)}]
    \item Now, assume that $x^2=1$. we have that \begin{align*}
    1&=x^2(1)=x(x_1)=x(2u+1)=ux(2)+x(1)=2uj+x_1\\
    \implies x_1&=1-2uj=2u+1\\ 
    \implies 2u(j+1)&= 0 \mod n\\ 
    \implies u(j+1)&= 0\mod\frac{n}{2}.
\end{align*}

In fact, any $u$ satisfying this formula will guarantee that $x^2=1$ since it ensures that $x^2(1)=1$ and we already have that $x^2(2)=2$. However, similarly to the proof of \cref{prop:number of x with Fx=<a^t>}, we will have over counted. Namely, we must remove all $x$ such that $x^2=1$ and $F_x=\langle a\rangle$. Namely, we cannot have that $$x(2)=2j=2x(1)=2(2u+1)=4u+2\\\implies j= 2u+1\mod\frac{n}{2}$$ so we throw out all $u$ satisfying this formula. 
\item 
We proceed similarly with the second case: $x^{-1}=a^{-x(1)}xa$. This time, we note that \begin{align*}
    1&=x^{-1}(2u+1)=ux^{-1}(2)+x^{-1}(1)\\
    &=ux(2)+x^{-1}(1)=2uj+x^{-1}(1)\\
    &=2uj+x(2)-x(1)=2uj+2j-2u-1\\ 
\end{align*} Which implies that $$0=2uj+2j-2u-2=2j(u+1)-2(u+1)=2(u+1)(j-1).$$ Thus, $(u+1)(j-1)=0\mod\frac{n}{2}$. Again, all $u$ and $j$ satisfying this relation will induce an element $x$ for which $x^{-1}(i)=x(i+1)-x(1)$. Of course, once again, we must remove the case where $x(2)=2j=2x(1)$. 
\end{enumerate}

Since these are the only two cases, and since both are satisfied by the same $u$ exactly when $j= 2u+1\mod\frac{n}{2}$, which we have already ruled out, we obtain all permutations $x$ with stabilizer $F_x=\langle a^2\rangle$ such that $x^{-1}\in\mathcal{O}_x$ broken down into the two disjoint cases where $x^2=1$ and $x^2\not=1.$ 
\qed\\

Actually, \cref{prop:number x with t=2} gives us a full description of what permutations give what indicators.

Note that the way we prove \cref{prop:number x with t=2} is by checking cases for $x$. Since $x(2)$ must be even, we force $x(1)=2u+1$ for some $u$. Then, setting $x=x^{-1}$ or assuming $x\not=x^{-1}$ we rule out options for $u$.

The next smallest even value $t$ can take is $4$. But here our cases explode. $x(4)$ must certainly be a multiple of $4$, but can have any remainder permutation in $S_3$. For example, we could let $x(1)=4u_1+2$ and $x(2)=4u_2+1$ and $x(3)=4u_3+3$ meaning $x$ would have remainder permutation $\sigma_x=(1\pp 2)\in S_3$. From each of these six cases we would need to work through possible choices of $u_1,u_2,u_3$ which lead to either $x$ being an involution or not. This would result in twelve total cases to consider.

Clearly this method cannot scale, since for any given even $t$, we would need to consider all $(t-1)!$ possible ways to write $x$ then double it to check the conditions under which $x$ is an involution or not an involution.

However, we can still get some insight by at least understanding $t=2$.

\begin{theorem}\label{thm:classifying indicators when t=2}
{\it Let $n$ be even and let $x\in S_{n-1}$ and $F_x=\langle a^2\rangle$ (so $t=2$). Let $I_{n/2}^{(+1)}, I_{n/2}^{(-1)}, I_{n/2}^{(0)}$ be the total number of irreps of $J_n$ dimension $2$ which have indicator $+1,-1,0$ respectively. Recall (1), (2), (3), (14), (15), (16), (17) in \cref{fig:table of sets and values} (\nameref{fig:table of sets and values}).

Then, \begin{align}
   I_{n/2}^{(+1)}&=\sum_{j\in E_{n/2}}\sum_{u\in K_{j,n/2}}2\overline{\delta}_{u,j,n/2}\left(\alpha_{j,n/2}+\sum_{i\in K_{j,n/2}}\delta_{i,u,n/2}^0\right)\label{I2+}\\
   I_{n/2}^{(-1)}&=\sum_{j\in E_{n/2}}\sum_{u\in K_{j,n/2}'}\sum_{i\in K_{j,n/2}}2\overline{\delta}_{u,j,n/2}\delta_{i,u,n/2}^{\not=0}\label{I2-}\\
   I_{n/2}^{(0)}&=\frac{n}{2}|M_{n/2}|-I_{n/2}^{(+1)}-I_{n/2}^{(-1)}\label{I20}
\end{align}

 }
\end{theorem}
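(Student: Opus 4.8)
The plan is to count, for every permutation $x\in S_{n-1}$ with $F_x=\langle a^2\rangle$, how many of the $\frac n2$ induced irreps $\hat\chi_{x,i}$ (indexed by the characters $\rho_{x,i}$ of $F_x$) carry indicator $+1$, $-1$ and $0$, and then sum over all such $x$. Since each such $x$ contributes $\frac n2$ irreps and there are $|M_{n/2}|$ of them by \cref{prop:number of x with Fx=<a^t>}, the grand total is $\frac n2|M_{n/2}|$; hence once $I_{n/2}^{(+1)}$ and $I_{n/2}^{(-1)}$ are known, $I_{n/2}^{(0)}$ is forced by subtraction, which is precisely \cref{I20}. First I would use \cref{prop:number x with t=2} to parametrize these permutations by a pair $(j,u)$ via $x(2)=2j$ and $x(1)=2u+1$, so that $x$ is completely determined; the indicator $\overline\delta_{u,j,n/2}$ (entry (15) of \cref{fig:table of sets and values}) records exactly that the stabilizer equals $\langle a^2\rangle$ and has not accidentally grown to all of $C_n$, which would drop the dimension to $1$. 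By \cpref{lem:x(t)=kt some k}{lem:x(t)=kt some k (55)} a nonzero indicator forces $j^2\equiv 1 \bmod \tfrac n2$, i.e. $j\in E_{n/2}$, and \cref{prop:number x with t=2} sorts the $x$ with $x^{-1}\in\mathcal O_x$ into the disjoint involution case ($u\in K_{j,n/2}$, where one may take $s=t$) and the non-involution case (where $s=1$); every remaining $x$ has $x^{-1}\notin\mathcal O_x$ and so contributes only zeros by \cref{lem:inv element in orbit means 0 indicator}.

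Next I would substitute each case into \cref{thm:indicator reduced} with $\tfrac nt=\tfrac n2$. Because $x(2)+2=2(j+1)$, in every case $u_1=j+1$, so $\zeta_{n/2}^{iu_1}=1$ exactly for the $\alpha_{j,n/2}=|K_{j,n/2}|$ values $i\in K_{j,n/2}$, and the indicator vanishes for all other $i$. To get the value of the surviving indicators I would compute $u_2$ in each subcase. In the involution case $x(s)+s=x(2)+2$ gives $u_2=j+1=u_1$, whence $\zeta_{n/2}^{-iu_2}=1$ and all $\alpha_{j,n/2}$ nonzero indicators equal $+1$, matching \cpref{prop:observations}{prop:observations (2)}. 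In the non-involution case $x(s)+s=x(1)+1=2(u+1)$ gives $u_2=u+1$, so the surviving indicator is $\zeta_{n/2}^{-i(u+1)}$; here \cref{prop:x(s)+s=0} is decisive, since it pins $i(u+1)$ to be either $\equiv 0$ (giving $+1$, recorded by $\delta^0_{i,u,n/2}$) or $\equiv\tfrac n4 \bmod \tfrac n2$ (giving $\zeta_{n/2}^{-n/4}=-1$, recorded by $\delta^{\neq 0}_{i,u,n/2}$). When $\tfrac n2$ is odd the second option cannot occur, so $I_{n/2}^{(-1)}=0$ automatically, in agreement with \cpref{prop:observations}{prop:observations (4)}.

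With the per-permutation tallies fixed, each involution $x$ donates $\alpha_{j,n/2}$ ones, while each admissible non-involution donates $\sum_{i\in K_{j,n/2}}\delta^0_{i,u,n/2}$ ones and $\sum_{i\in K_{j,n/2}}\delta^{\neq 0}_{i,u,n/2}$ minus-ones. I would then assemble $I_{n/2}^{(+1)}$ and $I_{n/2}^{(-1)}$ as sums over $j\in E_{n/2}$ and the appropriate $u$-ranges, inserting $\overline\delta_{u,j,n/2}$ to discard the dimension-one collapse and using the membership sets $K_{j,n/2}$ and $K'_{j,n/2}$ (entries (2) and (14)) to select the involution and non-involution values of $u$. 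Because an orbit of length $2$ consists of two permutations carrying identical indicators by \cpref{prop:observations}{prop:observations (1)}, the orbit multiplicity is absorbed into the constant factor appearing in the stated formulas, and $I_{n/2}^{(0)}$ then follows from the total $\frac n2|M_{n/2}|$.

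The main obstacle is the combinatorial bookkeeping rather than any single hard idea: one must verify that the involution and non-involution conditions are genuinely mutually exclusive once $\overline\delta_{u,j,n/2}=1$ is imposed, translate the two values of $u_2$ correctly into the split between $\delta^0$ and $\delta^{\neq 0}$, and ensure each distinct two-dimensional irrep is counted with exactly the multiplicity dictated by its orbit of length $2$ together with the $\frac n2$ choices of $i$. Tracking the empty-set edge cases of $K_{j,n/2}$ and $K'_{j,n/2}$, and sanity-checking the assembled counts both against the identity $I_{n/2}^{(+1)}+I_{n/2}^{(-1)}+I_{n/2}^{(0)}=\frac n2|M_{n/2}|$ and against direct enumeration for a small $n$ with $4\mid n$, is where the care is required.
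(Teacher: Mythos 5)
Your proposal is correct and follows essentially the same route as the paper: the same parametrization of $x$ by $(j,u)$ via \cref{prop:number x with t=2}, the same split into the involution case ($u_2=u_1=j+1$, all surviving indicators $+1$) and the non-involution case ($u_2=u+1$, with \cref{prop:x(s)+s=0} forcing $\zeta_{n/2}^{-i(u+1)}=\pm1$), the same factor of $2$ for the two elements of each orbit, and the same subtraction from $\frac n2|M_{n/2}|$ for $I_{n/2}^{(0)}$. No substantive differences to report.
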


\no{\it Proof.} Let $$\overline{\delta}_{u,j,n/2}=\begin{cases} 1 & \text{ if }j\not=2u+1\mod\frac{n}{2}\\ 0 & \text{ otherwise}\end{cases}$$ and $$K_{j,n/2}'=\left\{u\,|(u+1)(j-1)=0\mod\frac{n}{2}\,\right\}.$$

We break into two cases. 
\begin{myenum}
    \item If $x^2=1$ then, because $u(j+1)=0\mod\frac{n}{2}$ always has a solution (namely, $u=0$) there are $\alpha_{j,n/2}=\text{gcd}(j+1,\frac{n}{2})$ possible $u$ which satisfy this relation. Let $$K_{j,n/2}:=\left\{u\,|\,u(j+1)=0\mod\frac{n}{2}\right\}.$$ Again, $|K_{j,n/2}|=\alpha_{j,n/2}$ Thus, there are $z_{j,n/2}:=\frac{n}{2}-\alpha_{j,n/2}$ possible $u$ which satisfy $(u+1)(j-1)=0\mod\frac{n}{2}$.

Now, \cref{prop:observations}, tells us that if $\mathcal{O}_x$ contains an involution, $\nu(\hat{\chi})\ge0$. Thus, if $s^2=1$ (so we are in the first case), irreps induced from $x$ either have indicator $+1$ or $0$. Using \cref{thm:indicator reduced}, the indicator is $+1$ for all values $1\le i\le \frac{n}{2}$ such that $i(j+1)=0\mod\frac{n}{2}$. There are exactly $\alpha_{j,n/2}$ such $i$. Let $\mathcal{O}_x=\{x,y\}$ (both are involutions in this case), then $x$ contributes $\alpha_{j,n/2}$ indicators which are $+1$ and so does $y.$ Namely, there are $$\underbrace{\sum_{j\in E_{n/2}}}_{\text{choices for }x(2)}\underbrace{\sum_{u\in K_{j,n/2}}}_{\text{choices for }x(1)}\underbrace{\overline{\delta}_{u,j,n/2}}_{x^{-1}\in\mathcal{O}_x}\underbrace{2}_{x\text{ and }y}\underbrace{\alpha_{j,n/2}}_{\substack{\text{ choices for }i\\ \text{ where } i(j+1)=0\mod\frac{n}{2}}}$$ indicators with value $+1$ which are contributed to by involutions. 
    \item If $x^2\not=1$, then because $t=2$ $\mathcal{O}_x=\{x,x^{-1}\}$ so $x^{-1}=a^{-x(1)}xa$. Now, $x(1)+1=2u+2=2(u+1)$ and so $u_2=u+1$ as in \cref{thm:indicator reduced}. Now, for any $i$, if $i(j+1)=0\mod\frac{n}{2}$, then either $-i(u+1)=0\mod\frac{n}{2}$ in which case the indicator is $+1$ for that irrep, or $-i(u+1)=\frac{n}{4}\mod\frac{n}{2}$ in which case the indicator is $-1$. Let $$\delta_{i,u,n/2}^0=\begin{cases} 1 & \text{ if }-i(u+1)=0\mod\frac{n}{2}\\ 0 & \text{ otherwise}\end{cases}$$ $$\delta_{i,u,n/2}^{\not=0}=\begin{cases} 1 & \text{ if }-i(u+1)\not=0\mod\frac{n}{2}\\ 0 & \text{ otherwise}\end{cases}$$

Of course, if $i(j+1)\not=0\mod\frac{n}{2}$, then the indicator is $0.$

Finally, the number of $+1$ indicators coming from this case is $$\underbrace{\sum_{j\in E_{n/2}}}_{\text{choices for }x(2)}\underbrace{\sum_{u\in K_{j,n/2}'}}_{\text{choices for }x(1)}\underbrace{\sum_{i\in K_{j,n/2}}}_{\substack{\text{choices for }i\\ \text{ where } i(j+1)=0\mod\frac{n}{2}}}\underbrace{2}_{x\text{ and }x^{-1}}\underbrace{\overline{\delta}_{u,j,n/2}}_{\text{ ensure }j\not=2u+1}\underbrace{\delta_{i,u,n/2}^0}_{\text{ ensure }-i(u+1)=0}$$

And the number of $-1$ indicators come from the remaining values $i$ for which $i(j+1)=0$ but $-i(u+1)\not=0$. That is, $$\underbrace{\sum_{j\in E_{n/2}}}_{\text{choices for }x(2)}\underbrace{\sum_{u\in K_{j,n/2}'}}_{\text{choices for }x(1)}\underbrace{\sum_{i\in K_{j,n/2}}}_{\substack{\text{choices for }i\\ \text{ where } i(j+1)=0\mod\frac{n}{2}}}\underbrace{2}_{x\text{ and }x^{-1}}\underbrace{\overline{\delta}_{u,j,n/2}}_{\text{ ensure }j\not=2u+1}\underbrace{\delta_{i,u,n/2}^{\not=0}}_{\text{ ensure }-i(u+1)\not=0}$$
\end{myenum}

Finally, we have that the total number of irreps with indicator $+1$ is \begin{align*}
    I_{n/2}^{(+1)}&=\sum_{j\in E_{n/2}}\sum_{u\in K_{j,n/2}}\overline{\delta}_{u,j,n/2}2\alpha_{j,n/2}+\sum_{j\in E_{n/2}}\sum_{u\in K_{j,n/2}'}\sum_{i\in K_{j,n/2}}2\overline{\delta}_{u,j,n/2}\delta_{i,u,n/2}^0\\
    &=\sum_{j\in E_{n/2}}\sum_{u\in K_{j,n/2}}2\overline{\delta}_{u,j,n/2}\left(\alpha_{j,n/2}+\sum_{i\in K_{j,n/2}}\delta_{i,u,n/2}^0\right)\\
\end{align*}

and the minus ones come only from the second case

$$I_{n/2}^{(-1)}=\sum_{j\in E_{n/t}}\sum_{u\in K_{j,n/2}'}\sum_{i\in K_{j,n/2}}2\overline{\delta}_{u,j,n/2}\delta_{i,u,n/2}^{\not=0}$$

and the rest must be zero $$I_{n/2}^{(0)}=\frac{n}{2}|M_{n/2}|-I_{n/2}^{(+1)}-I_{n/2}^{(-1)}\eqno\qed$$

And so we now give a summary of the results we were able to obtain. Fix a positive integer $n.$

\begin{center}
  \begin{mytable}[Table of Obtainable Results]\label{fig:table of obtainable}\end{mytable} 
  
{\begin{tabular}{ c|c|c|c } 
 \hline
 Statement & Conditions & Result & Citation\\
 \hline
 \hline
\multicolumn{1}{m{4cm}|}{List all permutations with a given stabilizer size} & Fix $t|n$ & \multicolumn{1}{m{4.5cm}|}{ There are $|M_{n/t}|$ permutations with stabilizer $F_x=\langle a^t\rangle$} & \cref{prop:number of x with Fx=<a^t>}\\
 \hline
 \multicolumn{1}{m{4cm}|}{List all involutions with a given stabilizer size} & Fix $t|n$ & \multicolumn{1}{m{4.5cm}|}{ There are $|T_{n/t}|$ involutions with stabilizer $F_x=\langle a^t\rangle$} & \cref{prop:number of ord 2 elements for given stab}\\
 \hline
   \multicolumn{1}{m{4cm}|}{List all involutions with given stabilizer size and $r$ fixed points} & Fix $t|n$ and $r\le n$ & \multicolumn{1}{m{4.5cm}|}{ There are $|R_{n/t,r}|$ involutions with $r$ fixed points and stabilizer $F_x=\langle a^t\rangle$} & \cref{prop:number of ord 2 elements for given stab with certain fix}\\
 \hline
  \multicolumn{1}{m{4cm}|}{List all orbits of length $t$ containing $r$ involutions} & Fix $t|n$ and $r\le t$ & \multicolumn{1}{m{4.5cm}|}{ There are $|\frac{X_{n/t,r}}{r}|$ permutations with stabilizer $F_x=\langle a^t\rangle$} & \cref{prop:orbit with r involutions length t}\\
 \hline
   \multicolumn{1}{m{4cm}|}{Count number of $t$-dimensional irreps of $J_n$ with indicator $+1$} & Fix $t|n$ with $t$ odd & \multicolumn{1}{m{4.5cm}|}{There are $I_{n/t}^{(+1)}$ irreps of $J_n$ of dimension $t$ with indicator $+1$} & \cref{thm:number of positive indicators of dimension}\\
 \hline
    \multicolumn{1}{m{4cm}|}{Count number of $t$-dimensional irreps of $J_n$ with indicator $0$} & Fix $t|n$ with $t$ odd & \multicolumn{1}{m{4.5cm}|}{There are $I_{n/t}^{(0)}=\frac{n}{t}|M_{n/t}|-I_{n/t}^{(+1)}$ irreps of $J_n$ of dimension $t$ with indicator $0$} & \cref{thm:number of positive indicators of dimension}\\
 \hline
    \multicolumn{1}{m{4cm}|}{Count number of $t$-dimensional irreps of $J_n$ with indicator $+1$} & Fix $t|n$ with $t$ even & \multicolumn{1}{m{4.5cm}|}{$I_{n/t}^{(+1)}$ is an \textit{underestimate} for total irreps of $J_n$ of dimension $t$ with indicator $+1$} & \cref{thm:number of positive indicators of dimension}\\
 \hline
    \multicolumn{1}{m{4.1cm}|}{Describe indicator of $t$-dimensional irreps of $J_n$} & If $t=2$ & \multicolumn{1}{m{4.5cm}|}{Irreps of dimension $t$ with indicators $+1,-1, 0$ given by $I_{n/2}^{(+1)},I_{n/2}^{(-1)},I_{n/2}^{(0)}$ respectively} & \multicolumn{1}{m{3cm}}{\cref{thm:classifying indicators when t=2} \cref{prop:number x with t=2}}\\
 \hline
\end{tabular}}
\end{center}
\noindent{\small Note: A table summarizing some results which are now obtainable through a counting formula.}\\

 We end this section with a corollary mentioned earlier.
 
\begin{corollary}\label{cor:exists irrep of dim -1}
{\it If $4$ divides $n$ and $n\ge12$, there exists an irrep of $J_n$ of dimension $2$ of with indicator $-1.$}
\end{corollary}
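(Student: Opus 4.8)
The plan is to promote \cref{ex:ind -1} to an explicit infinite family: for every $n$ with $4\mid n$ and $n\ge 12$ I will exhibit a single permutation $x\in S_{n-1}$ with $F_x=\langle a^2\rangle$ (so that $\hat V_{x,i}$ has dimension $t=2$) together with one index $i$ for which the induced character $\hat\chi_{x,i}$ has indicator $-1$. Concretely, take $j=1$ and $u=2$ and let $x$ be the permutation determined by $x(1)=5$ and $x(2)=2$, extended by the linearity relation $x(2q+1)=2q+5$, $x(2q)=2q \pmod n$ of \cref{cor:involution generated from smaller sigma}. Since $j=1$ is coprime to $\tfrac n2$ this is a genuine bijection; it fixes every even residue (hence fixes $n$, so $x\in S_{n-1}$) and acts on the odd residues by $i\mapsto i+4$, giving the product of two $\tfrac n4$-cycles $x=(1\pp 5\pp 9\pp\cdots\pp n-3)(3\pp 7\pp 11\pp\cdots\pp n-1)$, which for $n=16$ is exactly the element of \cref{ex:ind -1}.

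First I would confirm that $F_x=\langle a^2\rangle$ so that the associated irreps are $2$-dimensional. Because $x(2)=2=1\cdot 2$, \cref{prop:for python easier formulas} gives $a^2\in F_x$, so $t\mid 2$; and the full stabilizer is excluded since $x(1)=5\ne 2=2\cdot x(1)/\!\,\ldots$ — more precisely, $F_x=C_n$ would force $x(1)\cdot 2\equiv x(2)\pmod n$ by \cref{cor:x(i)=ix(1) for Fx=F}, i.e.\ $2u\equiv 0\pmod{n/2}$, equivalently $u=2\equiv 0\pmod{n/4}$, which fails precisely when $n/4\ge 3$. Thus $F_x=\langle a^2\rangle$, and this is the first and only place where the hypothesis $n\ge 12$ (given $4\mid n$) is needed.

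Next I would place $x$ in the regime of \cref{prop:number x with t=2} that can produce a negative indicator. With $j=1$ the condition $(u+1)(j-1)\equiv 0\pmod{n/2}$ holds automatically, so by the case $s=1$ of that proposition we have $x^{-1}=a^{-x(1)}xa\in\mathcal O_x$; and since $u(j+1)=2u\not\equiv 0\pmod{n/2}$ (again $u=2\not\equiv 0\pmod{n/4}$), the permutation $x$ is \emph{not} an involution. Hence $x^{-1}\in\mathcal O_x$ while $x^2\ne 1$, which is exactly the situation isolated in the proof of \cref{thm:classifying indicators when t=2} as the source of $-1$'s.

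Finally I would evaluate \cref{thm:indicator reduced} in its even-denominator case, which applies because $\tfrac nt=\tfrac n2$ is even (as $4\mid n$). Here $u_1=\tfrac{x(2)+2}{2}=2$, and since $x^{-1}=a^{-x(s)}xa^s$ with $s=1$ we get $u_2=\tfrac{x(1)+1}{2}=u+1=3$. Choosing $i=\tfrac n4$ gives $(\zeta_{n/2})^{iu_1}=(\zeta_{n/2})^{n/2}=1$, so the indicator is nonzero and equals $(\zeta_{n/2})^{-iu_2}=(\zeta_{n/2})^{-3n/4}$; reducing the exponent modulo $\tfrac n2$ and using that $u$ is even (so $(u+1)\tfrac n4\equiv\tfrac n4$) yields $(\zeta_{n/2})^{\,n/4}=e^{\pi i}=-1$, producing the desired dimension-$2$ irrep. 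The one genuine subtlety — and the reason the threshold is $n\ge 12$ rather than $n\ge 4$ — is the simultaneous requirement that $u$ be even yet not divisible by $n/4$; no such $u$ exists for $n\in\{4,8\}$, and the choice $u=2$ works as soon as $n/4\ge 3$, so the main thing to verify carefully is simply the availability of this $u$ and the consistency of the modular arithmetic between $\bmod\,n$ and $\bmod\,\tfrac n2$.
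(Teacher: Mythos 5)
Your proposal is correct and follows essentially the same route as the paper: the same explicit permutation $x=(1\pp5\pp9\pp\cdots\pp n-3)(3\pp7\pp11\pp\cdots\pp n-1)$ with $j=1$, $u=2$, and the same choice $i=\frac{n}{4}$, the only cosmetic difference being that you evaluate the indicator directly from \cref{thm:indicator reduced} while the paper phrases the final step through the $i(j+1)=0$, $i(u+1)\neq 0$ criterion of \cref{thm:classifying indicators when t=2}. Your explicit verification that $F_x=\langle a^2\rangle$ (rather than $F_x=C_n$), which is where $n\ge 12$ is genuinely used, is a point the paper's proof leaves implicit, and it is handled correctly.
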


\no{\it Proof.} Assume $n\ge12$. Let $t=2$. Then let $j=1\in E_{n/2}$ for any $n\ge 12$ (in fact, $1\in E_{n/t}$ for any $\frac{n}{t}>1$). Let $u=2.$ Then $(u+1)(j-1)=(3)(0)=0\mod\frac{n}{2}$ for any $n\ge12$. Thus, the permutation $$x=(1\pp5\pp9\pp\cdots \pp n-3)(3\pp 7\pp 11\pp \cdots \pp n-1)$$ satisfies that $\mathcal{O}_x=\{x,x^{-1}\}$ and for $i=\frac{n}{4}$, the (group) irrep \begin{align*}
       \rho_{x,\frac{n}{4}}:\langle a^2\rangle &\to\mathbb{C}^\times\\
       a^2&\mapsto (\zeta_{\frac{n}{2}})^{\frac{n}{4}}
   \end{align*} satisfies that $i(j+1)=2\frac{n}{4}=0\mod\frac{n}{2}$ but $i(u+1)=3\frac{n}{4}\not=0\mod\frac{n}{2}$. Therefore, the induced $J_n$ module $\hat{V}_{x,\frac{n}{4}}$ has indicator $-1.$
 \qed\\

\section{Limiting Behavior}\label{sec:limit behavior}

\cref{thm:number of positive indicators of dimension} suggests that the conditions that a permutation $x\in S_{n-1}$ must meet in order to induce an irrep of $J_n$ with indicator $+1$ are very restrictive. This intuition turns out to be correct and can be formalized in the following theorem. However, to prove it, we must give a few bounding relationships.

\begin{lemma}\label{lem:some bounds}
{\it Let $E_{n/t}$, where $\frac{n}{t}\ge 2$, be as from (1) in \cref{fig:table of sets and values} (\nameref{fig:table of sets and values}), $\varphi(m)$ be the Euler totient function which is the number of integers smaller than $m$ which are coprime to $m$, and finally let $\omega(m)$ is the number of distinct prime divisors of $m$. Then for all $m\ge3$ \begin{myenum}
    \item \label{lem:some bounds (1)} $|E_{n/t}|\le 2^{\omega(n/t)+2}$ 
    \item \label{lem:some bounds (2)} $\omega(m)\le 1.3841\frac{\log m}{\log(\log m)}$
    \item \label{lem:some bounds (3)} $\varphi(m)\ge\frac{1}{\sqrt{2}}\sqrt{m}$ 
    \item \label{lem:some bounds (4)} $\displaystyle\lim_{m\to\infty}\frac{2^{1.3841\frac{\log(m)}{\log(\log (m))}}}{\sqrt{m}}=0$
\end{myenum}}
\end{lemma}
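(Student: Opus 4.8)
The plan is to prove the four bounds separately, each by an elementary route, citing only one external analytic estimate.

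For statement (1) I would count the solutions of $j^2\equiv 1\pmod{n/t}$ directly via the Chinese Remainder Theorem. Writing $n/t=2^{a}p_1^{b_1}\cdots p_k^{b_k}$ with the $p_i$ distinct odd primes, the congruence splits into one independent congruence per prime-power factor. For each odd $p_i^{b_i}$ the only solutions of $x^2\equiv 1$ are $x\equiv\pm 1$, contributing exactly $2$; for the $2$-part the number of solutions is $1,1,2,4$ according as $a=0,1,2,\ge 3$. Hence $|E_{n/t}|$ equals $2^{k}$ times the $2$-part count, so $|E_{n/t}|\le 2^{k+2}$. Since $\omega(n/t)\ge k$, this gives $|E_{n/t}|\le 2^{k+2}\le 2^{\omega(n/t)+2}$. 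A quick check of the four cases for $a$ confirms the bound (it is in fact never worse than $2^{\omega+1}$, so $2^{\omega+2}$ is comfortable).

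For statement (3) the plan is to combine multiplicativity of $\varphi$ with sharp factorwise estimates. First I would show $\varphi(p^{b})\ge\sqrt{p^{b}}$ for every odd prime power: since $(p-1)^2\ge p$ for $p\ge 3$, we have $\varphi(p^{b})=p^{b-1}(p-1)\ge p^{b-1}\sqrt{p}=p^{\,b-1/2}\ge\sqrt{p^{b}}$. For the $2$-part, $\varphi(2^{a})=2^{a-1}=2^{(a-1)/2}\cdot 2^{(a-1)/2}\ge \tfrac{1}{\sqrt{2}}\sqrt{2^{a}}$, where the deficit factor $\tfrac{1}{\sqrt{2}}$ is only needed in the extreme case $a=1$. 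Writing $m=2^{a}\cdot(\text{odd part})$ and multiplying these two bounds, the factor $\tfrac{1}{\sqrt{2}}$ is incurred at most once, yielding $\varphi(m)\ge\tfrac{1}{\sqrt{2}}\sqrt{m}$ for all $m\ge 3$. The one point requiring care is verifying that the loss is genuinely incurred only through the $2$-part, so that the constant is $\tfrac{1}{\sqrt2}$ and not $2^{-k/2}$.

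For statement (2) I would invoke the classical explicit bound on the number of distinct prime divisors; the constant $1.3841$ is exactly the one in the standard analytic-number-theory estimate. The underlying idea is that if $\omega(m)=k$ then $m$ is at least the product of the first $k$ primes, and Chebyshev-type bounds on the primorial translate this into $k=O(\log m/\log\log m)$ with this explicit constant valid for $m\ge 3$. This is the only genuinely external input, and hence the main obstacle in that it is cited rather than reproved. Finally, for statement (4) I would simply take logarithms: $\log\!\big(2^{1.3841\log m/\log\log m}/\sqrt{m}\big)=\log m\big(\tfrac{1.3841\log 2}{\log\log m}-\tfrac12\big)$; as $m\to\infty$ the parenthesized factor tends to $-\tfrac12$ while $\log m\to\infty$, so the expression tends to $-\infty$ and the ratio tends to $0$.
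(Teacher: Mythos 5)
Your proposal is correct and follows essentially the same route as the paper: the Chinese Remainder Theorem count for (1), the citation of Robin's explicit bound for (2), multiplicativity of $\varphi$ with the observation that only the prime $2$ costs a factor $\tfrac{1}{\sqrt{2}}$ for (3), and taking logarithms for (4). Your versions of (1) and (3) are merely more explicit than the paper's, which states the CRT count as "well known" and bounds the product $\prod (p_i-1)^2/p_i$ factorwise rather than prime-power by prime-power.
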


\no{\it Proof.} 
\begin{myenum}
    \item This is a well known result which comes from the Chinese Remainder Theorem. Note the special case if $\frac{n}{t}=2$ then $E_{n/t}=\{1\}$ and $\omega(n/t)=\omega(2)=1$ so the bound provided holds. 
\item  Robin proves this in \cite{r}.
\item Let $m=p_1^{\alpha_1}p_2^{\alpha_2}\cdots p_u^{\alpha_u}$. Then since $\alpha_i\ge 1$ $$\frac{\varphi(m)^2}{m}=\prod_{i=1}^u\frac{(p_i^{\alpha_i-1}(p_i-1))^2}{p_i^{\alpha_i}}=\prod_{i=1}^up_i^{\alpha_i-2}(p_i-1)^2\ge\prod_{i=1}^up_i^{-1}(p_i-1)^2=\prod_{i=1}^u\frac{(p_i-1)^2}{p_i}\ge\frac{1}{2}.$$

Note the last step is because if $p_i>2$, then $(p_i-1)^2=p_i^2-2p_i+1=p_i(p_i-2)+1\ge p_i+1\ge p_i$ and so $\frac{(p_i-1)^2}{p_i}\ge 1$. Of course, if $p_i=2$ then $\frac{(p_i-1)^2}{p_i}=\frac{1}{2}$.

Finally, we get that $$\frac{\varphi(m)^2}{m}\ge\frac{1}{2}\implies \varphi(m)\ge \frac{1}{\sqrt{2}}\sqrt{m}.$$
\item This is elementary calculus. Let $L=\lim_{m\to\infty}\frac{2^{1.3841\frac{\log(m)}{\log(\log (m))}}}{\sqrt{m}}=0$. Then a standard logarithm trick gives
\begin{align*}
    \log L&=\lim_{m\to\infty}\left(1.3841\frac{\log(m)}{\log(\log (m))}\log 2-\log\left(\sqrt{m}\right)\right)\\
    &=\lim_{m\to\infty}-\frac{1}{2}\log(m)=-\infty\\
\end{align*}
and so $L=0.$\qed\\
\end{myenum}

Now, we can make precise what we mean by saying it is rare for a permutation to satisfy the conditions we have discussed so far. 
 
But first, we show that there are many more involutions than permutations with stabilizer $\langle a^t\rangle.$

\begin{lemma}\label{lem:M/inv approaches 0}
{\it Fix $t$ and let $n=mt$ for $m\ge2.$ Let $i_{n-1}$ be the number of involutions in $S_{n-1}$. Then $\frac{|M_{n/t}|}{i_{n-1}}\to0$ as $m\to\infty.$}
\end{lemma}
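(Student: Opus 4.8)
The plan is to play the polynomial growth of $|M_{n/t}|$ against the super-exponential growth of the involution count $i_{n-1}$, keeping $t$ fixed and letting $m = n/t \to \infty$. Since $|M_{n/t}|$ turns out to be polynomial in $m$ of degree exactly $t$, while $i_{n-1}$ grows faster than any fixed power of $m$, the ratio collapses to $0$.

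First I would extract a clean upper bound on the numerator directly from \eqref{eq:Mt} in \cref{prop:number of x with Fx=<a^t>}. Every term $|M_{n/s}|$ in the subtracted sum is nonnegative, and $\varphi(m) \le m$, so writing $m = n/t$ gives
\[
|M_{n/t}| \le \varphi(m)\, m^{t-1}(t-1)! \le m^t (t-1)!.
\]
Thus for fixed $t$ the numerator is bounded by a polynomial in $m$ of degree $t$ with constant coefficient $(t-1)!$.

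Next I would bound the denominator from below. Writing $N = n-1 = mt-1$, the number of involutions $i_N$ in $S_N$ is at least the number of involutions carrying the maximal number of $2$-cycles, i.e.\ the (near-)perfect matchings of $\{1,\dots,N\}$, which equals $(N-1)!!$ for $N$ even and $N\,(N-2)!!$ for $N$ odd. Comparing factors of the double factorial against those of a plain factorial (the $i$-th smallest factor is $2i-1 \ge i$) yields the crude but sufficient bound $i_N \ge \lfloor N/2\rfloor!$ for all $N \ge 2$; equivalently this follows from the recursion $i_N = i_{N-1} + (N-1)i_{N-2} \ge (N-1)i_{N-2}$ iterated down to the base case. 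The identity counts as an involution under the paper's convention, so this only helps the lower bound.

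Finally I would combine the two estimates. Setting $r = \lfloor (mt-1)/2\rfloor$, we have $r \to \infty$ as $m \to \infty$ and $r \ge c\,m$ for a constant $c = c(t) > 0$ once $m$ is large (e.g.\ $c = t/4$), whence $m \le r/c$ and $m^t \le c^{-t} r^t$. Therefore
\[
\frac{|M_{n/t}|}{i_{n-1}} \le \frac{m^t(t-1)!}{\lfloor (mt-1)/2\rfloor!} \le \frac{(t-1)!\,c^{-t}\,r^t}{r!},
\]
and since $r!$ dominates any fixed power $r^t$ as $r \to \infty$, the right-hand side tends to $0$. The only mild obstacle is making the lower bound on $i_{n-1}$ both elementary and explicit in $m$; once the factorial lower bound is secured, the ``factorial beats polynomial'' comparison is entirely routine.
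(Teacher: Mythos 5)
Your proof is correct and shares the paper's overall skeleton---a polynomial (in $m$) upper bound on $|M_{n/t}|$ played against a super-polynomial lower bound on $i_{n-1}$---but you secure the denominator by a different and more self-contained route. The paper quotes the Chowla--Herstein--Moore inequality $i_{n-1}\ge\sqrt{n-1}\,i_{n-2}$, iterates it to get $i_{n-1}\ge\sqrt{(n-1)!}\ge 2^{\frac{n-1}{2}}$, and finishes with a polynomial-versus-exponential comparison via L'H\^opital; you instead count only the involutions with the maximal number of transpositions (equivalently, iterate the standard recursion $i_N=i_{N-1}+(N-1)i_{N-2}$) to obtain $i_{n-1}\ge\lfloor (n-1)/2\rfloor!$, and finish with a polynomial-versus-factorial comparison. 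Your lower bound is both elementary (no outside citation needed) and stronger, at the cost of the small bookkeeping step relating $m$ to $r=\lfloor(mt-1)/2\rfloor$ via $m\le r/c$. Your numerator bound $|M_{n/t}|\le\varphi(m)\,m^{t-1}(t-1)!\le m^t(t-1)!$, obtained by discarding the nonnegative subtracted sum in \cref{prop:number of x with Fx=<a^t>}, is essentially the paper's $|M_{n/t}|\le m^t\,t!$ with a slightly better constant; either suffices.
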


\no{\it Proof.} Fix $t$ and let $n=mt$ for $m\ge2$. Note that $|M_{n/t}|\le\left(\frac{n}{t}\right)^tt!$. Now, in the proof of \cref{prop:number of ord 2 elements for given stab}, we show that $$i_{n-1}=\sum_{l=0}^{\lfloor\frac{n-1}{2}\rfloor}\frac{(n-1)!}{(n-1-2l)!2^ll!}.$$

In fact, \cite{chm} show that $i_{n-1}\ge\sqrt{n-1}i_{i-2}$ and so inductively we obtain that $i_{n-1}\ge\sqrt{(n-1)!}\ge2^{\frac{n-1}{2}}$. Note that $(n-1)!\ge 2^{n-1}$ for $n\ge 5$ which is easily provable by induction.

Thus, $$\lim_{m\to\infty}\frac{|M_{n/t}|}{i_{n-1}}\le\lim_{m\to\infty}\frac{\left(m\right)^tt!}{2^{\frac{mt-1}{2}}}=0$$ The last limit is zero using L'Hopital's rule.
\qed\\

Additionally, we can show that involutions with a given stabilizer are rare when compared to general permutations with a given stabilizer. 

\begin{lemma}\label{lem:T/M approaches 0}
{\it Fix $t$ and let $n=mt$ for $m\ge2.$ Then $\frac{|T_{n/t}|}{|M_{n/t}|}\to0$ as $m\to\infty.$}
\end{lemma}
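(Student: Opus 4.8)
The plan is to sandwich the ratio between explicit powers of $m:=n/t$: I will bound the denominator $|M_{n/t}|$ below by a constant times $m^{t-1/2}$ and the numerator $|T_{n/t}|$ above by $|E_{n/t}|$ times a constant times $m^{t-1}$, so that after cancellation the ratio is dominated by $|E_{n/t}|/\sqrt m$, which $\cref{lem:some bounds}$ forces to $0$. Throughout $t$ is a fixed constant and $m\to\infty$, so every $t$-dependent factor may be absorbed into constants.

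For the denominator, $\cref{prop:number of x with Fx=<a^t>}$ gives $|M_{n/t}|=\varphi(m)m^{t-1}(t-1)!-\sum_{s\mid t,\,s\ne t}|M_{n/s}|$. I would bound the leading term below using $\cpref{lem:some bounds}{lem:some bounds (3)}$, namely $\varphi(m)m^{t-1}(t-1)!\ge\tfrac{(t-1)!}{\sqrt2}\,m^{t-1/2}$. Each subtracted term satisfies $|M_{n/s}|\le\varphi(n/s)(n/s)^{s-1}(s-1)!\le (mt/s)^{s}(s-1)!$, and since the largest proper divisor of $t$ is at most $t/2$, every such term is $O(m^{t/2})$ with a constant depending only on $t$. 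As $t-\tfrac12>\tfrac t2$ for $t\ge2$, the leading term dominates the finitely many subtracted terms, so there is a constant $c_t>0$ with $|M_{n/t}|\ge c_t\,m^{t-1/2}$ for all large $m$; the case $t=1$ is immediate since then $|M_{n/1}|=\varphi(m)\ge\tfrac1{\sqrt2}\sqrt m$.

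For the numerator, the subtracted sum in the formula for $|T_{n/t}|$ in $\cref{prop:number of ord 2 elements for given stab}$ is a nonnegative count, so dropping it yields an upper bound. Using $\alpha_{j,n/t}=\gcd(j+1,m)\le m$ and $m\ge1$, each summand obeys $(\alpha_{j,n/t})^{t-1-2l}m^{l}\le m^{t-1-l}\le m^{t-1}$, so summing over $l$ factors out the constant $C_t:=\sum_{l}\tfrac{(t-1)!}{(t-1-2l)!2^{l}l!}$, which is exactly the number of involutions in $S_{t-1}$. Hence $|T_{n/t}|\le |E_{n/t}|\,C_t\,m^{t-1}$, and $\cpref{lem:some bounds}{lem:some bounds (1)}$ together with $\cpref{lem:some bounds}{lem:some bounds (2)}$ give $|E_{n/t}|\le 2^{\omega(m)+2}\le 4\cdot 2^{1.3841\log m/\log\log m}$.

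Combining the two estimates, the factors $m^{t-1}$ cancel against $m^{t-1/2}$ to leave $m^{-1/2}$, so $\frac{|T_{n/t}|}{|M_{n/t}|}\le\frac{4C_t}{c_t}\cdot\frac{2^{1.3841\log m/\log\log m}}{\sqrt m}$, which tends to $0$ by $\cpref{lem:some bounds}{lem:some bounds (4)}$. The only genuinely delicate point is the denominator lower bound: one must check that the recursively subtracted terms $|M_{n/s}|$ are of strictly smaller order in $m$ than the leading term $\varphi(m)m^{t-1}(t-1)!$, which rests on $s\le t/2$; this is the step I would write out most carefully, while everything else is order-of-magnitude bookkeeping feeding into the limit of $\cref{lem:some bounds}$.
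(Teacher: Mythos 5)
Your proof is correct and takes essentially the same route as the paper's: you drop the nonnegative subtracted sum to bound $|T_{n/t}|$ above by $|E_{n/t}|$ times a $t$-dependent constant times $m^{t-1}$, lower-bound $|M_{n/t}|$ using $\varphi(m)\ge\tfrac{1}{\sqrt2}\sqrt m$ while controlling the recursive corrections, and finish with \cpref{lem:some bounds}{lem:some bounds (4)}. Your explicit remark that the subtracted terms $|M_{n/s}|$ are $O(m^{t/2})$ because every proper divisor satisfies $s\le t/2$, so that $t-\tfrac12>\tfrac t2$ makes the leading term dominate, is just a slightly more careful write-up of a step the paper handles only inside its final limit computation.
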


\begin{proof}
\begin{myenum}
    \item To start, let $t=1$, then $$\lim_{m\to\infty}\frac{|T_{n/1}|}{|M_{n/1}|}=\lim_{m\to\infty}\frac{|E_{n/1}|}{\varphi(m)}\le\lim_{m\to\infty}\frac{2^{1.3841\frac{\log(m)}{\log(\log (m))}}}{\sqrt{m}}=0$$ by \cref{lem:some bounds}.
    \item 
Now, assume $t>1$. We will find an upper bound for the numerator and a lower bound for the denominator to bound the entire ratio.

First, we set an upper bound for $|T_{n/t}|$. Let $n=mt$ for $m=\frac{n}{t}\ge2$.

\begin{align*}
    |T_{n/t}|&=\sum_{j\in E_{n/t}}\sum_{l=0}^{\left\lfloor\frac{t-1}{2}\right\rfloor}(\alpha_{j,n/t})^{t-1-2l}\left(\frac{n}{t}\right)^l\frac{(t-1)!}{(t-1-2l)!2^ll!}-\sum_{\substack{s|t \\ s\not=t}}|T_{n/s}|\\
    &\le\sum_{j\in E_{n/t}}\sum_{l=0}^{\left\lfloor\frac{t-1}{2}\right\rfloor}(\alpha_{j,n/t})^{t-1-2l}\left(\frac{n}{t}\right)^l\frac{(t-1)!}{(t-1-2l)!2^ll!}\qquad\text{ forget }-\sum_{\substack{s|t \\ s\not=t}}|T_{n/s}|\\
    &\le |E_{n/t}|\sum_{l=0}^{\left\lfloor\frac{t-1}{2}\right\rfloor}\left(\frac{n}{t}\right)^{t-1-l}\frac{(t-1)!}{(t-1-2l)!2^ll!}\qquad\alpha_{j,n/t}\le\frac{n}{t}\\
    &\le |E_{n/t}|\left(\frac{n}{t}\right)^{t-1}t!\\
    &\le \left(\frac{n}{t}\right)^{t-1}t!2^{1.3841\frac{\log m}{\log(\log m)}+2}\qquad\text{\cpref{lem:some bounds}{lem:some bounds (2)}}
\end{align*}

Second, we set a lower bound for $|M_{n/t}|$ for $t>1$. Note that $|M_{n/s}|\le \left(\frac{n}{s}\right)^s(s-1)!$ and so
\begin{align*}
    \frac{n}{t}|M_{n/t}|&=\varphi\left(\frac{n}{t}\right)\left(\frac{n}{t}\right)^{t-1}(t-1)!-\sum_{\substack{s|t\\ s\not=t}}|M_{n/s}|\\
    &\ge \frac{1}{\sqrt{2}}\sqrt{\frac{n}{t}}\left(\frac{n}{t}\right)^{t-1}(t-1)!-\sum_{\substack{s|t\\ s\not=t}}|M_{n/s}|\qquad\text{ \cpref{lem:some bounds}{lem:some bounds (3)}}\\
    &\ge \frac{1}{\sqrt{2}}\sqrt{\frac{n}{t}}\left(\frac{n}{t}\right)^{t-1}(t-1)!-\sum_{\substack{s|t\\ s\not=t}}\left(\frac{n}{s}\right)^s(s-1)!
\end{align*}

Finally, we get \begin{align*}
    \lim_{m\to\infty}\frac{|T_{n/t}|}{|M_{n/t}|}&\le\lim_{m\to\infty}\frac{\left(\frac{n}{t}\right)^{t-1}t!2^{1.3841\frac{\log m}{\log(\log m)}+2}}{\frac{1}{\sqrt{2}}\sqrt{\frac{n}{t}}\left(\frac{n}{t}\right)^{t-1}(t-1)!-\sum_{\substack{s|t\\ s\not=t}}\left(\frac{n}{s}\right)^s(s-1)!}\\
    &= \lim_{m\to\infty}\frac{\left(m\right)^{t-1}t!2^{1.3841\frac{\log m}{\log(\log m)}+2}}{\sqrt{m}\left(m\right)^{t-1}\left(\frac{1}{\sqrt{2}}(t-1)!-\left(\frac{1}{m}\right)^{t-1}\sum_{\substack{s|t\\ s\not=t}}\left(m\frac{t}{s}\right)^s(s-1)!\right)}\\
    &= \lim_{m\to\infty}4\sqrt{2}t\frac{2^{1.3841\frac{\log m}{\log(\log m)}}}{\sqrt{m}}\\
    &=0\qquad\text{\cpref{lem:some bounds}{lem:some bounds (4)}}\qedhere
\end{align*}
\end{myenum}
\end{proof}

We now give one of our most powerful results from this section.

\begin{theorem}\label{thm:percent +1 ind goes to zero} 
{\it Fix $t$ an odd integer. Let $n=mt$ for some integer $m\ge2$. Then the number ratio of irreps of $J_n$ of dimension $t$ with indicator $+1$ to the total number of irreps of dimension $t$ converges to $0$ as $m\to\infty$.}
\end{theorem}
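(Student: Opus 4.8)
The plan is to bound the numerator $I_{n/t}^{(+1)}$ from above by a fixed multiple of the involution count $|T_{n/t}|$, and then invoke \cref{lem:T/M approaches 0}, which already supplies the decisive asymptotic $|T_{n/t}|/|M_{n/t}|\to 0$. Since $t$ is odd, \cpref{prop:observations}{prop:observations (3)} guarantees that no $t$-dimensional irrep has indicator $-1$, so the total number of $t$-dimensional irreps is $\frac{n}{t}|M_{n/t}| = m|M_{n/t}|$ and the quantity to control is exactly $\frac{I_{n/t}^{(+1)}}{m|M_{n/t}|}$.

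First I would start from the formula $I_{n/t}^{(+1)} = t\sum_{j\in E_{n/t}}\sum_{r=1}^t \alpha_{j,n/t}\,O_{n/t,r,j}$ of \cref{thm:number of positive indicators of dimension}. Using $\alpha_{j,n/t} = \gcd(j+1,\tfrac{n}{t}) \le \tfrac{n}{t} = m$, I can pull out the factor $m$ to obtain
$$I_{n/t}^{(+1)} \le m\,t\sum_{j\in E_{n/t}}\sum_{r=1}^t O_{n/t,r,j} = m\,t\sum_{r=1}^t O_{n/t,r},$$
where $O_{n/t,r}=\sum_{j\in E_{n/t}} O_{n/t,r,j}$ is the number of length-$t$ orbits containing exactly $r$ involutions (\cref{prop:orbit with r involutions length t}). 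The key combinatorial step is then to compare the number of orbits carrying an involution to the total number of involutions: each orbit counted by $O_{n/t,r}$ contains $r\ge 1$ involutions, so $\sum_{r=1}^t O_{n/t,r}\le \sum_{r=1}^t r\,O_{n/t,r} = |T_{n/t}|$, the total number of involutions with stabilizer $\langle a^t\rangle$. Combining the two displays yields $I_{n/t}^{(+1)}\le m\,t\,|T_{n/t}|$, whence
$$\frac{I_{n/t}^{(+1)}}{m|M_{n/t}|} \le \frac{m\,t\,|T_{n/t}|}{m|M_{n/t}|} = t\,\frac{|T_{n/t}|}{|M_{n/t}|}.$$
Since $t$ is fixed, the right-hand side tends to $0$ as $m\to\infty$ by \cref{lem:T/M approaches 0}, which finishes the argument.

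I do not expect a serious obstacle: the heavy analytic lifting (the totient and $\omega$ estimates feeding $|T_{n/t}|/|M_{n/t}|\to 0$) is already packaged in \cref{lem:some bounds} and \cref{lem:T/M approaches 0}. The one point requiring care is the counting convention. Both the formula for $I_{n/t}^{(+1)}$ and the total $m|M_{n/t}|$ tally $(x,i)$ pairs per permutation rather than per orbit, so the orbit-size factor $t$ appears consistently in numerator and denominator and is harmless in the ratio; I would make explicit that $t\sum_{r=1}^t O_{n/t,r}$ is precisely the number of permutations in $M_{n/t}$ whose orbit meets an involution, so that the bound $\le t|T_{n/t}|$ is applied to the correct object, and note that for $t$ odd an orbit is closed under inversion exactly when it contains an involution (\cpref{prop:observations}{prop:observations (3)} together with \cref{lem:orbit and stab inv equiv}), so no indicator-$+1$ irreps are missed by restricting to such orbits.
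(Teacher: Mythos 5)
Your argument is correct, and it takes a genuinely different route from the paper's. The paper proves this theorem by direct computation: it substitutes the explicit expression for $r|O_{n/t,r,j}|$ from \cref{prop:orbit with r involutions length t and fixed j} into the formula for $I_{n/t}^{(+1)}$, discards the subtracted $\tilde{C}$ terms, bounds $\alpha_{j,n/t}\le\frac{n}{t}$ and $|E_{n/t}|$ via \cref{lem:some bounds}, lower-bounds $\frac{n}{t}|M_{n/t}|$, and then evaluates the resulting limit term by term --- essentially redoing the analytic work of \cref{lem:T/M approaches 0} inside the proof. You instead insert the single combinatorial inequality $\sum_{r=1}^{t}O_{n/t,r}\le\sum_{r=1}^{t}r\,O_{n/t,r}=|T_{n/t}|$ (valid because every involution with stabilizer $\langle a^t\rangle$ lies in exactly one length-$t$ orbit, so $|X_{n/t,r}|=r|O_{n/t,r}|$ partitions $T_{n/t}$), which reduces the whole statement to $t\cdot|T_{n/t}|/|M_{n/t}|\to 0$, i.e.\ to \cref{lem:T/M approaches 0} as a black box. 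This is shorter and makes the structure of the estimate more transparent (``few involutions relative to permutations'' is the real reason the ratio vanishes); what it gives up is the sharper explicit upper bound on $I_{n/t}^{(+1)}$ that the paper's direct computation produces along the way. Two small points to make explicit: the formula $I_{n/t}^{(+1)}=t\sum_{j}\sum_{r}\alpha_{j,n/t}O_{n/t,r,j}$ you start from is the $1<t<n$ case of \cref{thm:number of positive indicators of dimension}, so $t=1$ needs the one-line variant $I_{n/1}^{(+1)}=\sum_{j\in E_{n/1}}\alpha_{j,n/1}\le n\,|E_{n/1}|=m\,|T_{n/1}|$ (the paper likewise treats $t=1$ separately); and since $t$ is fixed with $m\ge 2$ the case $t=n$ never arises, so no further case analysis is needed.
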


\begin{proof} The number of permutations $S_{n-1}$ which have stabilizer $F_x=\langle a^t\rangle$ is given by $|M_{n/t}|$ by \cref{prop:number of x with Fx=<a^t>}. There are $\frac{n}{t}=|F_x|$ irreducible (group) representations of $F_x$ and by \cref{thm:[KMM] inducing modules} and \cref{lem:[KMM] dimension of induced rep}, each of these group irreps of $F_x$ induce a unique irrep of $J_n$ of dimension $t=[F:F_x]$.

Thus, there are $\frac{n}{t}|M_{n/t}|$ total irreps of $J_n$ of dimension $t.$

From \cref{thm:number of positive indicators of dimension}, $I_{n/t}^{(+1)}$ is the number of irreps of $J_n$ of dimension $t$ with indicator $+1$ and so the ratio is given by $$\frac{I_{n/t}^{(+1)}}{\frac{n}{t}|M_{n/t}|}.$$

To show convegence to $0$, we will place an upper bound on $I_{n/t}^{(+1)}$ and a lower bound on $\frac{n}{t}|M_{n/t}|$ and then show that this ratio converges to zero. We do this in two cases, first for $t>1$ then for $t=1$.

\begin{myenum}
\item When $t=1$, $|M_{n/1}|=\varphi(n)\ge\frac{1}{\sqrt{2}}\sqrt{n}$ and $$I_{n/1}^{(+1)}=\sum_{j\in E_{n/1}}\alpha_{j,n/1}\le n2^{1.3841\frac{\log(m)}{\log(\log (m))}+2}$$ so $$\lim_{n\to\infty}\frac{I_{n/1}^{(+1)}}{n|M_{n/1}|}\le\lim_{n\to\infty}\frac{4n2^{1.3841\frac{\log(n)}{\log(\log (n))}}}{\frac{1}{\sqrt{2}}n\sqrt{n}}=\lim_{n\to\infty}4\sqrt{2}\frac{2^{1.3841\frac{\log(n)}{\log(\log (n))}}}{\sqrt{n}}=0$$
    \item 
First, an upper bound on $I_{n/t}^{(+1)}$. Let $n=mt$ for $m\ge2$ and $t>1$.
\begin{align*}
    I_{n/t}^{(+1)}&=t\sum_{j\in E_{n/t}}\sum_{r=1}^t\alpha_{j,n/t}O_{n/t,r,j}\\
    &\le \frac{t}{r}\sum_{j\in E_{n/t}}\sum_{r=1}^t\alpha_{j,n/t}(\alpha_{j,n/t})^{r-1}\left(\frac{n}{t}\right)^\frac{t-r}{2}\frac{(t-1)!}{(r-1)!2^\frac{t-r}{2}\left(\frac{t-r}{2}\right)!}\qquad\text{ ignore }-\tilde{C}\text{ terms}.\\
    &\le\sum_{j\in E_{n/t}}\sum_{r=1}^t\left(\frac{n}{t}\right)^r\left(\frac{n}{t}\right)^\frac{t-r}{2}\frac{t!}{r!2^\frac{t-r}{2}\left(\frac{t-r}{2}\right)!}\qquad \alpha_{j,n/t}\le\frac{n}{t}\\
    &\le \sum_{r=1}^t2^{\omega(n/t)+2}\left(\frac{n}{t}\right)^\frac{t+r}{2}\frac{t!}{r!2^\frac{t-r}{2}\left(\frac{t-r}{2}\right)!}\qquad\text{ \cpref{lem:some bounds}{lem:some bounds (1)}}\\
    &\le \sum_{r=1}^t2^{1.3841\frac{\log(n/t)}{\log(\log (n/t))}+2}\left(\frac{n}{t}\right)^\frac{t+r}{2}\frac{t!}{r!2^\frac{t-r}{2}\left(\frac{t-r}{2}\right)!}\qquad\text{ \cpref{lem:some bounds}{lem:some bounds (2)}}
\end{align*}

Now, using the same lower bound as from \cref{lem:T/M approaches 0}, we get $$\frac{n}{t}|M_{n/t}|\ge\frac{n}{t}\left( \frac{1}{\sqrt{2}}\sqrt{\frac{n}{t}}\left(\frac{n}{t}\right)^{t-1}(t-1)!-\sum_{\substack{s|t\\ s\not=t}}\left(\frac{n}{s}\right)^s(s-1)!\right).$$

Finally, noting that $\frac{n}{t}=m$ and $\frac{n}{s}=m\frac{t}{s}$, the ratio converges \begin{align*}
    \lim_{m\to\infty}\frac{I_{n/t}^{(+1)}}{\frac{n}{t}|M_{n/t}|}&\le \lim_{m\to\infty}\frac{\sum_{r=1}^t2^{1.3841\frac{\log(m)}{\log(\log (m))}+2}\left(m\right)^\frac{t+r}{2}\frac{t!}{r!2^\frac{t-r}{2}\left(\frac{t-r}{2}\right)!}}{m\left(\frac{1}{\sqrt{2}}\sqrt{m}\left(m\right)^{t-1}(t-1)!-\sum_{\substack{s|t\\ s\not=t}}\left(m\frac{t}{s}\right)^s(s-1)!\right)}\\
    &=\lim_{m\to\infty}\frac{2^{1.3841\frac{\log(m)}{\log(\log (m))}}\left(\left(m\right)^t+\sum_{r=1}^{t-1}4\left(m\right)^\frac{t+r}{2}\frac{t!}{r!2^\frac{t-r}{2}\left(\frac{t-r}{2}\right)!}\right)}{\sqrt{m}\left(m\right)^t\left(\frac{1}{\sqrt{2}}(t-1)!-\left(\frac{t}{n}\right)^{t-1}\sum_{\substack{s|t\\ s\not=t}}\left(m\frac{t}{s}\right)^s(s-1)!\right)}\\
    &=\lim_{m\to\infty}\frac{2^{1.3841\frac{\log(m)}{\log(\log (m))}}\left(m\right)^t\left(1+\sum_{r=1}^{t-1}4\left(m\right)^\frac{r-t}{2}\frac{t!}{r!2^\frac{t-r}{2}\left(\frac{t-r}{2}\right)!}\right)}{\sqrt{m}\left(m\right)^t\left(\frac{1}{\sqrt{2}}(t-1)!-\left(\frac{1}{m}\right)^{t-1}\sum_{\substack{s|t\\ s\not=t}}\left(m\frac{t}{s}\right)^s(s-1)!\right)}\\
    &=\lim_{m\to\infty}\frac{2^{1.3841\frac{\log(m)}{\log(\log (m))}}}{\sqrt{m}\frac{1}{\sqrt{2}}(t-1)!}=0\qquad\text{ \cpref{lem:some bounds}{lem:some bounds (4)}}\qedhere
\end{align*}
\end{myenum}
\end{proof}

\cref{thm:percent +1 ind goes to zero} tells us that \textit{odd} $t$-dimensional irreps of $J_n$ with nonzero indicator become rare as $n$ grows. In fact, we conjecture that for any $t$, not only odd $t$, the number of irreps of $J_n$ of dimension $t$ with nonzero indicator approaches $0.$

We summarize the results of these limit results as follows.

\begin{corollary}\label{cor:limit statements}
{\it Fix $t$ and let $n=mt$. Then for large $m$ (and so large $n$)
\begin{myenum}
    \item $i_{n-1}$ is much less than $(n-1)!$
    \item $|M_{n/t}|$ is much less than $i_{n-1}$
    \item $|T_{n/t}|$ is much less than $|M_{n/t}|$
    \item if $t$ is odd, $I_{n/t}^{(+1)}$ is much less than $|M_{n/t}|$
\end{myenum}}
\end{corollary}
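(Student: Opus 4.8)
\no{\it Proof (proposal).} The plan is to read each of the four assertions as the statement that a suitable ratio tends to $0$ as $m\to\infty$ (with $t$ fixed and $n=mt$), and then to supply each ratio from the lemmas and the theorem already proved in this section. I would first record the two structural facts I will lean on: by \cref{thm:number of positive indicators of dimension} the total number of dimension-$t$ irreps of $J_n$ is $\tfrac{n}{t}|M_{n/t}|$, and for $t$ odd there are no irreps of indicator $-1$ (\cref{prop:observations}), so in that case the indicator-$+1$ irreps are exactly the irreps with nonzero indicator.

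For items (1)--(3) there is essentially nothing new to do beyond quoting the established results. Item (1), that $i_{n-1}=o\big((n-1)!\big)$, is the classical statement that involutions occupy a vanishing proportion of $S_{n-1}$; it follows at once from the standard asymptotic $i_{k}\sim\tfrac{1}{\sqrt2}k^{k/2}e^{-k/2+\sqrt k-1/4}$ (cf.\ \cite{chm}) compared against Stirling's formula for $k!$. Item (2) is precisely \cref{lem:M/inv approaches 0}, whose conclusion is $|M_{n/t}|/i_{n-1}\to0$, and item (3) is precisely \cref{lem:T/M approaches 0}, whose conclusion is $|T_{n/t}|/|M_{n/t}|\to0$; in each case I would simply cite the limit and read ``much less than'' as ``ratio tends to $0$.''

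The substance lies in item (4). First I would use \cref{cor:statements} to record that, for $t$ odd, every dimension-$t$ irrep of indicator $+1$ is induced from a permutation whose orbit contains an involution, and \cref{prop:observations} to record that all permutations in a common orbit contribute the same indicators. Counting orbit by orbit, using that each involution-containing orbit of length $t$ contributes $t\,\alpha_{j,n/t}$ indicators equal to $+1$ (as in the proof of \cref{thm:number of positive indicators of dimension}), together with $\alpha_{j,n/t}=\gcd\!\big(j+1,\tfrac nt\big)\le\tfrac nt$ and the fact that each such orbit is witnessed by one of its at most $t$ involutions inside $T_{n/t}$, I would obtain the crude bound
\[
I_{n/t}^{(+1)}\;\le\;\tfrac{n}{t}\cdot t\,|T_{n/t}|\;=\;n\,|T_{n/t}|.
\]
Dividing by $|M_{n/t}|$ then reduces item (4) to controlling $n\,|T_{n/t}|/|M_{n/t}|$, into which I would feed the explicit upper bound on $|T_{n/t}|$ and lower bound $|M_{n/t}|\ge\varphi(\tfrac nt)(\tfrac nt)^{t-1}(t-1)!-\sum_{s\mid t,\,s\ne t}|M_{n/s}|$ from the proofs of \cref{lem:T/M approaches 0} and \cref{prop:number of x with Fx=<a^t>}, and finally apply the decay estimate \cref{lem:some bounds}.

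The hard part will be the prefactor $n=mt$. The decay of $|T_{n/t}|/|M_{n/t}|$ that \cref{lem:some bounds} delivers is of order $2^{1.3841\log m/\log\log m}/\sqrt m$, and the genuine obstacle is to verify that this rate is strong enough to absorb the extra factor $n=mt$ against $|M_{n/t}|$ itself, exactly as item (4) literally asks; this is the same factor $\tfrac{n}{t}$ that separates $|M_{n/t}|$ from the full dimension-$t$ irrep count $\tfrac nt|M_{n/t}|$ appearing in \cref{thm:percent +1 ind goes to zero}. Concretely, I would reduce everything to a single quotient in $m$ built from the leading terms $I_{n/t}^{(+1)}=O\!\big((\tfrac nt)^{t}2^{\cdots}\big)$ and the lower bound for $|M_{n/t}|$ above, isolate that $m$-prefactor, and then invoke \cref{lem:some bounds} to pass to the limit. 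Cleanly isolating and dominating this prefactor is the step I expect to demand the most care, and it is where the estimates of \cref{lem:some bounds} must be pushed hardest. \qed
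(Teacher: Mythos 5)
Your items (1)--(3) are fine and essentially match the paper: (2) and (3) are verbatim citations of \cref{lem:M/inv approaches 0} and \cref{lem:T/M approaches 0}, and for (1) the paper is actually lighter than you are --- it does not invoke the asymptotic for $i_k$, only the inequality $i_{n-1}\le\sqrt{n!}$ from \cite{chm}, whence $i_{n-1}/(n-1)!\le\sqrt{n/(n-1)!}\to0$. The genuine divergence is item (4), which you treat as the substance of the corollary but which the paper does not prove at all: its proof opens with ``the only claim we have not proved is the first part,'' i.e.\ it reads (4) as a restatement of \cref{thm:percent +1 ind goes to zero}, whose denominator is $\frac{n}{t}|M_{n/t}|$ (the total number of $t$-dimensional irreps), not $|M_{n/t}|$.

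That distinction is fatal to your plan for (4). Your bound $I^{(+1)}_{n/t}\le n\,|T_{n/t}|$ is correct, but combining it with $|T_{n/t}|/|M_{n/t}|=O\bigl(2^{1.3841\log m/\log\log m}/\sqrt m\bigr)$ from \cref{lem:T/M approaches 0} leaves
\[
\frac{n\,|T_{n/t}|}{|M_{n/t}|}=O\Bigl(t\sqrt m\;2^{1.3841\log m/\log\log m}\Bigr)\longrightarrow\infty,
\]
so the prefactor $n=mt$ that you flag as ``the hard part'' genuinely cannot be absorbed: the gain $2^{o(\log m)}/\sqrt m$ supplied by \cref{lem:some bounds} is simply too weak against a full factor of $m$, and no amount of care with the estimates repairs this. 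Worse, the literal statement you are trying to prove is false. Already for $t=1$ one has $I^{(+1)}_{n/1}=\sum_{j\in E_{n/1}}\alpha_{j,n/1}\ge\alpha_{n-1,n/1}=n>\varphi(n)=|M_{n/1}|$, since the involution $x(i)=-i$ (the case $j=n-1$) has all $n$ of its induced irreps of indicator $+1$. For general fixed odd $t$, take $j=\frac{n}{t}-1\in E_{n/t}$, so $\alpha_{j,n/t}=\frac{n}{t}=m$: the $l=0$ term in \cref{prop:number of ord 2 elements for given stab} produces on the order of $m^{t-1}$ involutions with this $j$, and by \cref{thm:number of positive indicators of dimension} each of their orbits contributes $t\,\alpha_{j,n/t}=n$ indicators $+1$, giving $I^{(+1)}_{n/t}\gtrsim m^{t}$, while $|M_{n/t}|\le\varphi(m)\,m^{t-1}(t-1)!$; hence $I^{(+1)}_{n/t}/|M_{n/t}|\ge 1/(t-1)!$ for large $m$, bounded away from $0$. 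So item (4) is only true under the reading the paper silently adopts --- denominator $\frac{n}{t}|M_{n/t}|$ --- in which case it is exactly \cref{thm:percent +1 ind goes to zero} and there is nothing left to prove; the printed statement should be regarded as a slip, and your item (4) argument should be replaced by that citation.
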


\no{\it Proof.} The only claim we have not proved is the first part. However, this follows from another inequality of \cite{chm} which states that for $n\ge3$, $i_{n-1}\le (\sqrt{n-1}+1)i_{n-2}\le \sqrt{n}i_{n-2}\le\sqrt{n!}$.

Thus, $$\lim_{n\to\infty}\frac{i_{n-1}}{(n-1)!}\le\lim_{n\to\infty}\frac{\sqrt{n!}}{(n-1)!}=\lim_{n\to\infty}\sqrt{\frac{n}{(n-1)!}}=0\eqno\qed$$

Now, using the formulas from \cref{thm:classifying indicators when t=2}, we can again show that the proportion of $2$-dimensional irreps of $J_n$ with nonzero indicator approaches $0.$

\begin{theorem}\label{thm:percent nonzero ind goes to zero} 
{\it Let $n=2m$ for some integer $m\ge2$. Then the number ratio of irreps of $J_n$ of dimension $2$ with either indicator $+1$ or $-1$ (so nonzero indicator) to the total number of irreps of dimension $2$ converges to $0$ as $m\to\infty$.}
\end{theorem}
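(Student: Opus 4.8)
The plan is to estimate the ratio
$$\rho_m=\frac{I_{n/2}^{(+1)}+I_{n/2}^{(-1)}}{\tfrac{n}{2}\,|M_{n/2}|}$$
by bounding the numerator from above and the denominator from below, exactly as in the proofs of \cref{lem:T/M approaches 0} and \cref{thm:percent +1 ind goes to zero}. Here $n=2m$, so $n/t=m$, and the total number of $2$-dimensional irreps is $\tfrac n2|M_{n/2}|=m\,|M_{n/2}|$, while the count of nonzero indicators is supplied verbatim by the formulas \eqref{I2+} and \eqref{I2-} in \cref{thm:classifying indicators when t=2}. Since all the delta-type factors ($\overline{\delta}_{u,j,n/2}$, $\delta^{0}_{i,u,n/2}$, $\delta^{\not=0}_{i,u,n/2}$) take values in $\{0,1\}$, they may simply be dropped when seeking an upper bound.

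For the numerator I would first recall $|K_{j,n/2}|=\alpha_{j,n/2}$ and that $|K'_{j,n/2}|=\beta_{j,n/2}=\gcd(j-1,\tfrac n2)$, both of which are at most $\tfrac n2=m$. Bounding every inner sum $\sum_{i\in K_{j,n/2}}\delta^{0}_{i,u,n/2}$ by $\alpha_{j,n/2}$, the $(+1)$ formula collapses to $I_{n/2}^{(+1)}\le\sum_{j\in E_{n/2}}4\alpha_{j,n/2}^2\le 4\,|E_{n/2}|\,m^2$, and the $(-1)$ formula to $I_{n/2}^{(-1)}\le\sum_{j\in E_{n/2}}2\beta_{j,n/2}\alpha_{j,n/2}\le 2\,|E_{n/2}|\,m^2$. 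Hence the numerator is at most $6\,|E_{n/2}|\,m^2$. Applying $|E_{n/2}|\le 2^{\omega(m)+2}$ from \cpref{lem:some bounds}{lem:some bounds (1)} and then $\omega(m)\le 1.3841\tfrac{\log m}{\log(\log m)}$ from \cpref{lem:some bounds}{lem:some bounds (2)} gives the clean upper bound $I_{n/2}^{(+1)}+I_{n/2}^{(-1)}\le 24\,m^2\,2^{1.3841\frac{\log m}{\log(\log m)}}$.

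For the denominator, \cref{prop:number of x with Fx=<a^t>} gives $|M_{n/2}|=m\,\varphi(m)-\varphi(2m)$. Using the totient lower bound $\varphi(m)\ge\tfrac{1}{\sqrt2}\sqrt m$ from \cpref{lem:some bounds}{lem:some bounds (3)} and the trivial $\varphi(2m)\le 2m$, I obtain $m\,|M_{n/2}|\ge \tfrac{1}{\sqrt2}m^{5/2}-2m^2$. Dividing through by $m^2$, the ratio is therefore at most
$$\rho_m\le\frac{24\,2^{1.3841\frac{\log m}{\log(\log m)}}}{\tfrac{1}{\sqrt2}\sqrt m-2},$$
which tends to $0$ as $m\to\infty$ by \cpref{lem:some bounds}{lem:some bounds (4)}, completing the argument.

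The calculations here are entirely routine crude bounds, so the only real content is the single structural point that must be verified before the estimate is legitimate: that the leading behavior of $|M_{n/2}|=m\varphi(m)-\varphi(2m)$ is genuinely of order $m^{3/2}$ rather than degenerating, i.e.\ that the subtracted overcount term $\varphi(2m)$ cannot swamp $m\varphi(m)$. This is exactly what the totient lower bound of \cpref{lem:some bounds}{lem:some bounds (3)} secures, so I expect no further obstacle; the numerator's sub-polynomial growth (governed by $2^{1.3841\log m/\log\log m}$) is then dominated by the denominator's $\sqrt m$ factor, giving the claimed vanishing ratio.
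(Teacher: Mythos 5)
Your argument is correct and follows essentially the same route as the paper: bound the numerator above using the formulas of \cref{thm:classifying indicators when t=2} together with $|E_{n/2}|\le 2^{\omega(m)+2}$ and the bound on $\omega$, bound $\tfrac{n}{2}|M_{n/2}|$ below via the totient estimate of \cref{lem:some bounds}, and let the $\sqrt{m}$ in the denominator win. The only cosmetic difference is that you treat $I_{n/2}^{(+1)}+I_{n/2}^{(-1)}$ in one combined estimate (and note $|K'_{j,n/2}|=\beta_{j,n/2}$ explicitly) where the paper bounds the two ratios separately and adds the limits.
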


\no{\it Proof.} We follow the same outline as \cref{thm:percent +1 ind goes to zero}. 

From \cref{thm:classifying indicators when t=2}, we can use the following bounds. \begin{align*}
    I_{n/2}^{(+1)}&\le \sum_{j\in E_{n/2}}\sum_{u\in K_{j,n/t}}2(2\alpha_{j,n/t})\qquad |K_{j,n/t}|=\alpha_{j,n/t}\text{ and set both }\delta\text{ functions to }1\\
    &\le \sum_{j\in E_{n/t}}4(\alpha_{j,n/t})^2\qquad |K_{j,n/t}|=\alpha_{j,n/t}\\
    &\le 2^{\omega(n/2)+4}\left(\frac{n}{2}\right)^2\qquad \text{ Lemma }\cpref{lem:some bounds}{lem:some bounds (1)}
\end{align*}

Then, letting $n=2m$ for some $m\ge2$ and using the same lower bound for $\frac{n}{2}|M_{n/2}|$ we obtain $$\lim_{m\to\infty}\frac{I_{n/2}^{(+1)}}{\frac{n}{2}|M_{n/2}|}\le \lim_{m\to\infty}\frac{2^{\omega(m)+4}m^2}{\sqrt{m}m^2\frac{1}{\sqrt{2}}}\le\lim_{m\to\infty}16\sqrt{2}\frac{2^{1.3841\frac{\log(m)}{\log(\log (m))}}}{\sqrt{m}}=0$$ using \cpref{lem:some bounds}{lem:some bounds (4)}. 

Similarly, $$ I_{n/2}^{(-1)}\le 2^{\omega(n/2)+3}\left(\frac{n}{2}\right)^2$$ and so $$\lim_{m\to\infty}\frac{I_{n/2}^{(-1)}}{\frac{n}{2}|M_{n/2}|}\le\lim_{m\to\infty}8\sqrt{2}\frac{2^{1.3841\frac{\log(m)}{\log(\log (m))}}}{\sqrt{m}}=0.$$

Thus, $$\lim_{m\to\infty}\frac{I_{n/2}^{(+1)}+I_{n/2}^{(-1)}}{\frac{n}{2}|M_{n/2}|}=\lim_{m\to\infty}\frac{I_{n/2}^{(+1)}}{\frac{n}{2}|M_{n/2}|}+\lim_{m\to\infty}\frac{I_{n/2}^{(-1)}}{\frac{n}{2}|M_{n/2}|}=0$$ and so the ratio of irreps of dimension $2$ of $J_{2m}$ which have nonzero indicator to the total number of irreps of dimension $2$ approaches $0$ as $m\to\infty.$
 \qed\\

\newpage

\appendix
\section*{Appendices}
\addcontentsline{toc}{section}{Appendices}
\renewcommand{\thesubsection}{\Alph{subsection}}

\section{Discussing the Python Code}\label{sec:app discussing code}

Here we walk through examples that illustrate how use the theorems and propositions of the previous section. We provide graphs of the various formulas and show some direct computations to aid with visualizing the indicators.

Throughout, we will let $t$ be a divisor of $n$.

The process of writing the recursive counting formulas of \cref{sec:explicitly counting} went hand in hand with directly computing examples using Python. In fact, all counting formulas proved in \cref{sec:explicitly counting} were cross checked against what was reasonable to compute directly using Python.

At first, efforts were made to do direct computation by actually multiplying permutations in Python using a series of combinatorics packages similar to those used in GAP. Using this method, we ran out of processing power around $n=12.$ Using the reformulation of the action described in \cref{sec:reinterpretting the action}, we were able to drastically increase the computations possible. For example, when $n<50$ and $t<9$, we can easily describe all indicators of irreps of $J_n$ of dimension $t.$

We are limited by the factorial growth of this problem and so when doing computations it is important to choose either that $n$ is small ($n<20$ computes quickly) or that $t$ is small. For example, if we fix $t=2$, then it took less than 30 minutes to describe all irreps of $J_n$ of dimension $2$ for $n\le 150$ (see \cref{sec:app using thm classifying indicators when t=2}).

Let us understand why the value of $t$ is so closely tied with computational speed. In Python, each permutation is treated as a tuple. For example, if $n=12$, the permutation $$x = (1\pp3\pp5)(2\pp4)(7\pp9\pp11)(8\pp 10)$$ is written in Python as $$x=[x(1),x(2),...,x(n)]=[3,4,5,2,1,6,9,10,11,8,7,12].$$ Now, using \cref{prop:equivalence of sets}, if we want to compute the orbit of $x$ we simply take shifts of all the values of $x$. For example, the permutation $y_1=a^{-x(1)}xa\in\mathcal{O}_x$ is characterized by the fact that $y_1(i)=x(1+i)-x(1)$ and so using the previous example, we obtain easily that $$y_1=[y_1(1),y_1(2),...,y_1(n)]=[x(2)-x(1),x(3)-x(1),...,x(1)-x(1)]=[1,2,11,10,3,6,7,8,5,4,9,12]$$ we can construct all permutation in the orbit of $x$ by taking shifts of the values of $x$. Namely, given a permutation $x\in S_{n-1}$ it is computationally easy to construct the orbit of $x$.

\begin{center}
    \includegraphics[width=0.75\textwidth]{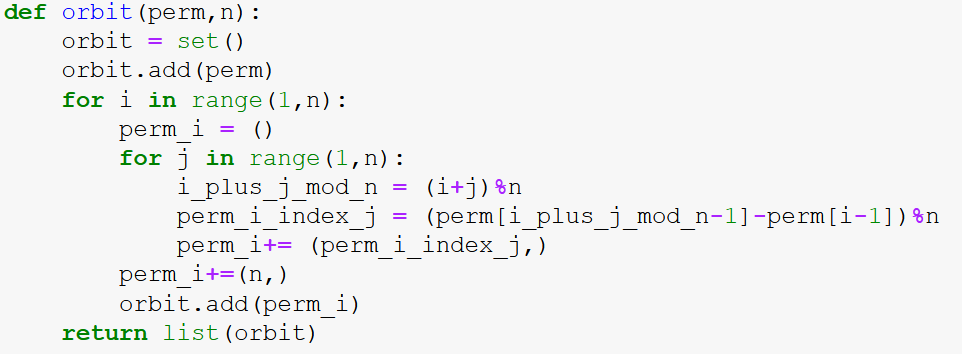}
    \begin{myfigure}\label{fig:code orbit calc}{ Excerpt from python code which calculates the orbit of a given permutation for fixed $n.$ } \end{myfigure}
\end{center}

Now, the smaller $t$ gets, the more restrictions there are on $x$ in order for it to have stabilizer $\langle a^t\rangle$. For example, when $t=1$, $x$ is completely determined by where it sends $1$ and so there are $\varphi(n)$ (the Euler totient function of $n$) permutation which have stabilizer $\langle a\rangle$ (by \cref{prop:number of x with Fx=<a^t>}). If $t=2$ then $x$ is determined by where it sends $1$ and $2$, etc. Thus, as $t$ grows closer to $n$, the number of $x$ with stabilizer $\langle a^t\rangle$ approaches $(n-1)!$.

Lets say now that we want to compute all orbits of size $t$ for a given $n$ which contain $r$ involutions. First, we generate all the permutation in $S_{n-1}$, then we compute all their orbits, then we pick out those of length $t,$ then of those we pick out the ones which contain $r$ involutions. To count the number of involutions in an orbit, we need to manually check if each permutation in the set is its own inverse. As $t$ grows, not only there more permutations within the orbit to compare with their inverses, but also the total number of orbits of length $t$ is approaching $(n-1)!$ which cannot be done efficiently for large $n.$

For this reason, we demanded that $t$ stayed small (usually $t<8$) throughout our computations, especially as $n$ grew large. Note the $y$-axis scaling on many of the graphs is nonlinear. This is because the values differ by so much that they cannot be visualized on the same graph without logarithmically scaling the $y$-axis.

\section{Using \texorpdfstring{\cref{prop:number of x with Fx=<a^t>}}{Proposition \ref{prop:number of x with Fx=<a^t>}}}\label{sec:app using prop number of x with given stab}

Recall from \cref{prop:number of x with Fx=<a^t>}, that $M_{n/t}$ is the set of permutations in $S_{n-1}$ with stabilizer $F_x=\langle a^t\rangle$.

Using the proof of \cref{prop:number of x with Fx=<a^t>}, we can explicitly write down which permutations have stabilizer $\langle a^t\rangle$. 

\begin{center}
    \begin{tikzpicture}[style={font=\small}, node distance = 5.5cm, auto]
     \node[rectangle, scale=0.65, draw, fill=blue!20, text width = 9em, text centered, rounded corners, minimum height = 4em] (xt) {\normalsize choose $x(t)=jt$ for $j\in E_{n/t}$};
     \node[rectangle, scale=0.65, draw, fill=blue!20, text width = 9em, text centered, rounded corners, minimum height = 4em, right of = xt] (x1) {\normalsize choose $x(1)$ in $\{u\,|\,u\not=0\mod t\}$};
     \node[rectangle, scale=0.65, draw, fill=blue!20, text width = 9em, text centered, rounded corners, minimum height = 4em, right of = x1] (x2) {\normalsize choose $x(2)$ in $\{u\,|\,u\not=0, x(1)\mod t\}$};
     \node[rectangle, scale=0.65, text width = 2em, text centered, minimum height = 4em, right of = x2,node distance = 3.5cm] (dots) {\Large $\cdots$};
     \node[rectangle, scale=0.65, draw, fill=blue!20, text width = 9em, text centered, rounded corners, minimum height = 4em, right of = dots,node distance = 3.5cm] (xn-1) {\normalsize choose $x(n-1)$ in $\{u\,|\,u\not=0, x(1), x(2), ... , x(n-2)\mod t\}$};
     \path[draw,->] (xt) -- (x1);
     \path[draw,->] (x1) -- (x2);
     \path[draw] (x2) -- (dots);
     \path[draw,->] (dots) -- (xn-1);
    \end{tikzpicture}
\vspace{-0.25cm}
    \begin{myfigure}\label{fig:chart of constructing x with stab a^t}{ A flow chart illustrating the process of \cref{prop:number of x with Fx=<a^t>}. This will construct all permutations which are stabilized by $a^t$. To get the exact set of permutations which have stabilizer $\langle a^t\rangle$, this process must be done recursively so that all $x$ which are stabilized by $a^s$ for some $s|t$ can be removed from the set.}\end{myfigure} 
\end{center}

\begin{example}\label{ex n=12, find all perms with stab a^t}$\,$

For example, lets say $n=12$ we want to write down all permutations $x\in S_{11}$ with stabilizer $\langle a^2\rangle$. We break this process into steps as follows: we determine all permutations with full stabilizer, we determine all permutations stabilized by $a^2$, then we remove those we have already found.

Since $n=12$ and $t=2$, we will use (5) from \cref{prop:number of x with Fx=<a^t>}.

\begin{myenum}
    \item First need to determine all permutations $x\in S_{11}$ with stabilizer $\langle a\rangle=F$. 
    
    All $x$ with full stabilizer are determined by where they send $1$. Since the number of values coprime to $12$ are $1,5,7,11$ the number of such $x$ is $4$. Furthermore, they are given by \begin{align*}
    j=1: x(i)=i\implies x&=(1)\\
    j=5: x(i)=5i\implies x&=(1\pp5)(2\pp10)(4\pp8)(7\pp 11)\\
    j=7: x(i)=7i\implies x&=(1\pp7)(3\pp9)(5\pp 11)\\
    j=11: x(i)=11i\implies x&=(1\pp11)(2\pp 10)(3\pp 9)(4\pp 8)(5\pp 7)
\end{align*}

\item Now, we compute all permutations with are stabilized by $2$. Since the only $j$ which are coprime to $\frac{12}{2}=6$ are $1$ and $5$, we have that $x(2)=2$ or $x(2)=10$. Then, $x(1)$ can be any odd value and since there are six choices $\{1,3,5,7,9,11\}$, we have $12$ total permutations.

\begin{align*}
    \alignedbox{j=1; x(1)=1: x}{=(1)}\\
    j=1; x(1)=3: x&=(1\pp3\pp5\pp7\pp9\pp11)\\
    j=1; x(1)=5: x&=(1\pp5\pp9)(3\pp7\pp11)\\
        \alignedbox{j=1; x(1)=7: x}{=(1\pp7)(3\pp9)(5\pp11)}\\
    j=1; x(1)=9: x&=(1\pp9\pp5)(3\pp11\pp7)\\
    j=1; x(1)=11: x&=(1\pp11\pp9\pp7\pp5\pp3)\\
    &\\
    j=5; x(1)=1: x&=(2\pp10)(3\pp11)(4\pp8)(5\pp9)\\
    j=5; x(1)=3: x&=(1\pp3)(2\pp10)(4\pp8)(5\pp11)(7\pp9)\\
    \alignedbox{j=5; x(1)=5: x}{=(1\pp5)(2\pp10)(4\pp8)(7\pp11)}\\
    j=5; x(1)=7: x&=(1\pp7)(2\pp10)(3\pp5)(4\pp8)(9\pp11)\\
    j=5; x(1)=9: x&=(1\pp9)(2\pp10)(3\pp7)(4\pp8)\\
    \alignedbox{j=5; x(1)=11: x}{=(1\pp11)(2\pp10)(3\pp9)(4\pp8)(5\pp7)}
\end{align*}

\item Of course, the highlighted permutations were those we found previously to have full stabilizer and so those should be removed from our list. This leaves us with our final total of $M_{12/2}=8$ permutations as counted by \cref{remark3 counting M}. 
\end{myenum}
\end{example}

\begin{center}
    \includegraphics[width=0.95\textwidth]{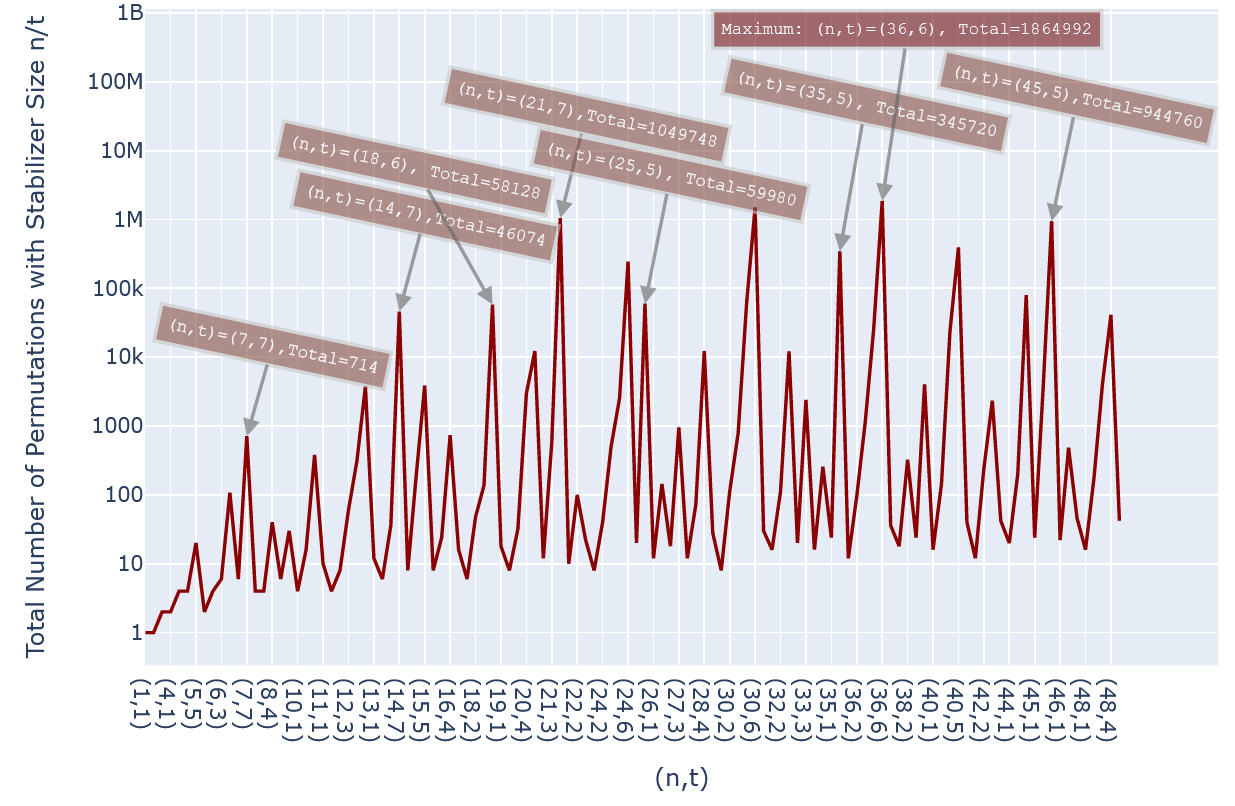}
    \vspace{-0.5cm}
    \begin{myfigure}\label{fig:graph all perms with given stab size} Graph of $M_{n/t}$: number of permutations $x\in S_{n-1}$ with stabilizer $\langle a^t\rangle$. \end{myfigure}
\end{center}

\section{Using \texorpdfstring{\cref{prop:number of ord 2 elements for given stab}}{Proposition \ref{prop:number of ord 2 elements for given stab}}}\label{sec:app using prop number of ord 2 ele for given stab}

Recall from \cref{prop:number of ord 2 elements for given stab}, that $T_{n/t}$ is the set of involutions in $S_{n-1}$ with stabilizer $F_x=\langle a^t\rangle$. 

\begin{center}
    \begin{tikzpicture}[style={font=\small}, node distance = 3.5cm, auto]
     \node[rectangle, scale=0.65, draw, fill=blue!20, text width = 9em, text centered, rounded corners, minimum height = 4em] (xt) {\normalsize choose $x(t)=jt$ for $j\in E_{n/t}$};
     \node[rectangle, scale=0.65, draw, fill=blue!20, text width = 9em, text centered, rounded corners, minimum height = 4em, right of = xt,node distance=5.5cm] (sigma) {\normalsize choose an involution $\sigma_x\in S_{t-1}$};
     \node[rectangle, scale=0.65, draw, fill=blue!20, text width = 9em, text centered, rounded corners, minimum height = 4em, right of = sigma, above of =sigma] (xfix) {\normalsize if $\sigma_x(i)=i$, choose $u_i$ such that $u_i(j+1)=0\mod\frac{n}{t}$};
     \node[rectangle, scale=0.65, draw, fill=blue!20, text width = 9em, text centered, rounded corners, minimum height = 4em, right of = sigma,below of =sigma] (xnotfix) {\normalsize if $\sigma_x(i)\not=i$, choose any $u_i$ which determines $u_{\sigma_x(i)}=-ju_i\mod\frac{n}{t}$};
     \node[rectangle, scale=0.65, draw, fill=blue!20, text width = 9em, text centered, rounded corners, minimum height = 4em, right of = xfix, below of = xfix] (x) {construct $x$};
     \path[draw,->] (xt) -- (sigma);
     \path[draw,->] (sigma) -- (xfix);
     \path[draw,->] (sigma) -- (xnotfix);
     \path[draw,->] (xfix) -- (x);
     \path[draw,->] (xnotfix) -- (x);
    \end{tikzpicture}
    
    \begin{myfigure}\label{fig:chart of constructing involution x with stab a^t}{ A flow chart illustrating the process of \cref{prop:number of ord 2 elements for given stab}. This will construct all involutions which are stabilized by $a^t$. To get the exact set of involutions which have stabilizer $\langle a^t\rangle$, this process must be done recursively so that all $x$ which are stabilized by $a^s$ for some $s|t$ can be removed from the set.}\end{myfigure} 
\end{center}

\begin{example}\label{n=6,t=3 find all inv with stab a^t}$\,$

As an example, say we want to find $T_{6/3}$. As always, the first step is to compute $T_{6/1}$ since these permutations must be removed. Of course, any permutation which is stabilized by $a^3$ and is not stabilized by $a$ must have stabilizer $\langle a^3\rangle.$ Again we break this process into steps.

Since $n=6$ and $t=1$, we will use (6) from \cref{prop:number of ord 2 elements for given stab}

\begin{myenum}
    \item First, we find all $x$ with full stabilizer. $1$ is the only divisor of $3$ not equal to $3$. Each $j\in E_{6/1}=\{1,5\}$ gives a single involution with full stabilizer: \begin{align*}
    j=1: x&=(1)\\
    j=5: x&=(1\pp5)(2\pp4)
\end{align*}

\item Second, we find all permutations stabilized by $a^3.$ There are two involutions in $S_2$ which can act as remainder permutations. They are $\sigma_1=(1)$, $\sigma_2=(1\pp2)$. Here $E_{6/3}=\{1\}$ so $j=1$ is fixed. Now, we group by remainder permutations.
\begin{myenum}
    \item Say $\sigma_x=(1)$ so every $i$ is a fixed point of $\sigma_x$. Then, since $u_i(j+1)=2u_i=0\mod 2=\frac{6}{3}$, $u_i$ can take any value $\{0,1\}$ for $i=1,2$. This gives the possible permutations
    
    \begin{align*}
    \alignedbox{\sigma_x=(1); u_1=0,u_2=0; x(1)=1,x(2)=2: x}{=(1)}\\
    \sigma_x=(1); u_1=0,u_2=1; x(1)=1,x(2)=5: x&=(2\pp5)\\
    \sigma_x=(1); u_1=1,u_2=0; x(1)=4,x(2)=2: x&=(1\pp4)\\
    \sigma_x=(1); u_1=1,u_2=1; x(1)=4,x(2)=5: x&=(1\pp4)(2\pp5)\\
\end{align*}
    
    \item On the other hand, if $\sigma_x=(1\pp2)$ then $\sigma_x$ has no fixed points and so $u_1=-u_2=u_2\mod2$. Again, we use these values to write $x(i)=tu_i+\sigma_x(i)$. This gives the possible permutations
\begin{align*}
    \sigma_x=(1\pp 2); u_1=0,u_2=0; x(1)=2,x(2)=1: x&=(1\pp2)(4\pp5)\\
    \alignedbox{\sigma_x=(1\pp 2); u_1=1,u_2=1; x(1)=5,x(2)=4: x}{=(1\pp5)(2\pp4)}
\end{align*}
\end{myenum}
\item Again, the highlighted involutions are those we already found to have full stabilizer. Remove those leaves us with $4$ involutions with stabilizer $\langle a^3\rangle.$
\end{myenum}
\end{example}

Note how in {\bf (2).(i)} all permutations $x$ are composed of the two $(2\pp 5)$ and $(1\pp 4)$ and their product. These two permutations are in fact all of the involutions of $S_5$ which consist of transpositions $(i\pp i+3)$. Of course $(3\pp 6)$ is not a valid involution in $S_5$.

Throughout our study of the sorts of permutations which satisfy various requirements (such as number of fixed points, size of stabilizer, {\it etc.}), we have stumbled upon many such ``patterns.'' The percieved lack of randomness invites endless questions. Are these patterns indicative of some structure, perhaps induced from $S_{t-1}$?

Of course, the pattern of $(i\pp i+3)$ does not hold for {\bf (2).(ii)}, nor does $(i\pp i+1)$ so are these patterns coincidental?

It is our belief that there is a much easier way to compute the sets defined in \cref{sec:explicitly counting} using some deeper understanding of these patterns. However, we have not explored these questions in this work.

\begin{center}
    \includegraphics[width=0.9\textwidth]{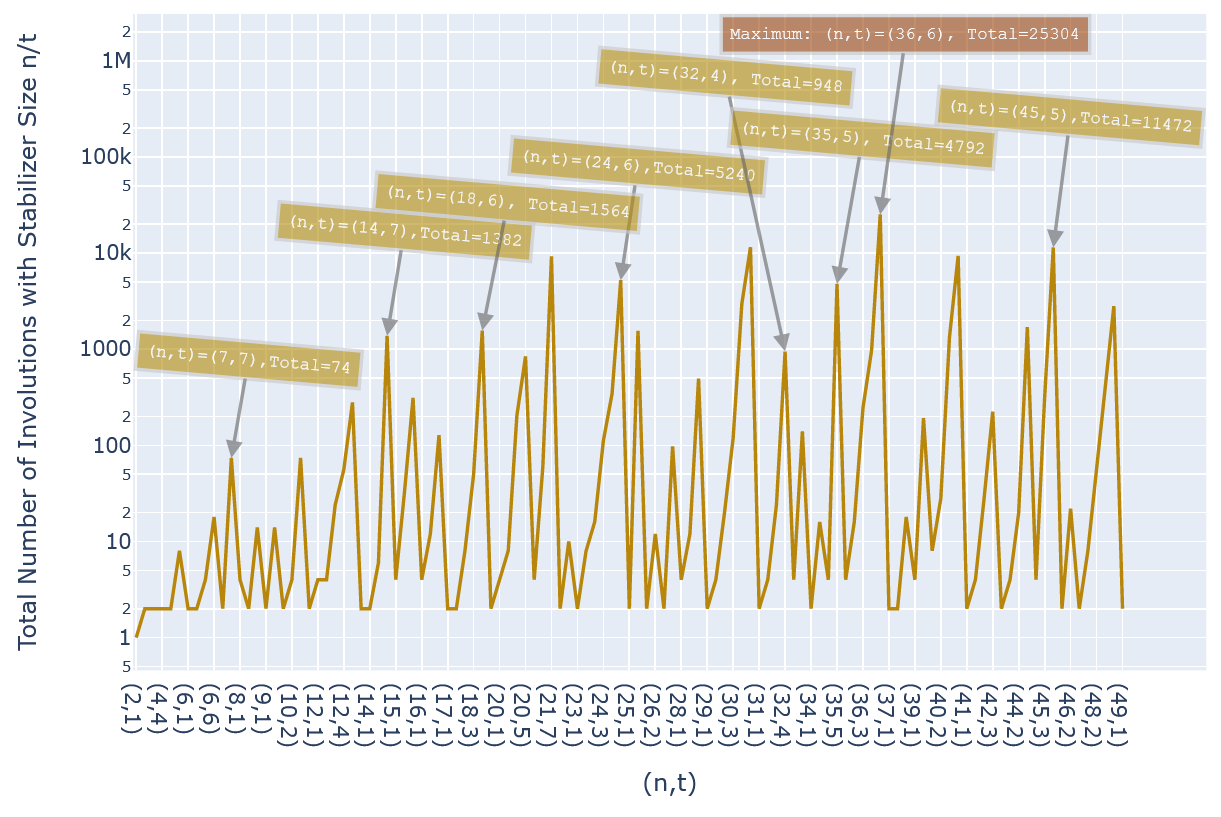}
     \vspace{-0.5cm}
    \begin{myfigure}\label{fig:graph all involutions with given stab size} Graph of $T_{n/t}$: number of involutions $x\in S_{n-1}$ with stabilizer $\langle a^t\rangle$.\end{myfigure}
\end{center}

\section{Using \texorpdfstring{\cref{prop:number of ord 2 elements for given stab with certain fix}}{Proposition \ref{prop:number of ord 2 elements for given stab with certain fix}}}\label{sec:app using prop number of order 2 ele for given stab with fix}

Recall from \cref{prop:number of ord 2 elements for given stab with certain fix}, that $R_{n/t,r}$ is the set of involutions in $S_{n-1}$ with stabilizer $F_x=\langle a^t\rangle$ and exactly $r$ fixed points (including $x(n)=n$ as a fixed point).

\begin{example}\label{n=8,t=4,r=2 all inv with fixed points}$\,$

The more restriction we place on $x$, the more complicated our formulas must be for constructing them. However, we will again showcase how to use \cref{prop:number of ord 2 elements for given stab with certain fix} to construct the set $R_{8/4,2}$ which is the number of involutions in $S_7$ with stabilizer $\langle a^4\rangle$ and $2$ fixed points (including $x(8)=8$ as a fixed point).

Since $n=8$, $t=4$, and $r=2$, we use (10) from \cref{prop:number of ord 2 elements for given stab with certain fix}

First, we need $R_{8/1,2}$ and $R_{8/2,2}$ so the process becomes more complicated here. 

\begin{myenum}
    \item First, we find all involutions with full stabilizer. For $t=1$, $E_{8/1}=\{1,3,5,7\}$. Since $\beta_{1,8/1}=8$, $\beta_{3,8/1}=2$, $\beta_{5,8/1}=4$, $\beta_{7,8/1}=2$ there are only two permutations with $2$ fixed points and full stabilizer coming from $j=3$ and $j=7$. This is because $\delta_{j,n/t,r}=0$ unless $\beta_{j,n/t}$ divides $r$.
\begin{align*}
    j=3: x&=(1\pp3)(2\pp6)(5\pp7)\\
    j=7: x&=(1\pp7)(2\pp6)(3\pp5)
\end{align*}
\item For $t=2$, $E_{8/2}=\{1,3\}$. $\beta_{1,8/2}=4$, $\beta_{3,8/2}=2$ so only $j=3$ is viable here. Since the only $\sigma\in S_1$ is the identity, $u_1\in P_{3,8/2}^{(c)}=\{1,3\}$ and so again we get two permutations

\begin{align*}
    \,\qquad\alignedbox{u_1=1; x(1)=3: x}{=(1\pp3)(2\pp6)(5\pp7)}\\
    \,\qquad\alignedbox{u_1=3; x(1)=7: x}{=(1\pp7)(2\pp6)(3\pp5)}
\end{align*}

Of course, both of these we found to have full stabilizer for $R_{8/2,2}=0.$
\item Now we compute $R_{8/4,2}$. First, $E_{8/4}=\{1\}$ so $j=1$ and $x(4)=4$ is fixed. Now, $\beta_{1,8/4}=\text{gcd}(1-1,\frac{8}{4})=\text{gcd}(0,2)=2$. Thus, $m_{2,1,8/4}=1$ and $\frac{t-m_{2,1,8/4}}{2}=\frac{3}{2}$ so $l=0$ or $l=1$ in the second sum of $R_{n/t,r}$. This tells us that $\sigma_x\in S_3$ can be any involution comprised of either $0$ or $1$ transpositions. In fact, this is all such permutations so we have possible choices for $\sigma_x$ are $(1),(1\pp2),(1\pp3),$ and $(2\pp3)$. 

The sets that we will be pulling from are \begin{align*}
    K_{1,8/4}&=\{0,1\}\\
    P_{1,8/4}&=\{0\}\\
    P_{1,8/4}^{(c)}&=\{1\}
\end{align*}
\begin{myenum}
    \item Say $\sigma_x=(1)$, then this permutation has $t-1-2l=3$ fixed points and of these we choose $m_{2,1,8/4}-1=0$ of the $u_i$ to be in $P_{1,8/4}$ so all $u_i$ are in $P_{1,8/4}^{(c)}$. 
    \item Similarly, if $\sigma_x$ is one of the other transpositions. There is one just one fixed point $i$ of $\sigma_x$ which forces $u_i\in P_{1,8/4}^{(c)}$ for that $i$. The two values swapped by the transposition of $\sigma_x$ equal (via the relation $u_{\sigma_x(i)}=-ju_i\mod\frac{n}{t}$). 
    \item Finally, we can write out all possible involutions. \begin{align*}
    \sigma_x=(1); u_1=1,u_2=1,u_3=1; x(1)=5,x(2)=6,x(3)=7: x&=(1\pp5)(2\pp6)(3\pp7)\\
    &\\
    \sigma_x=(1\pp 2); u_1=0,u_2=0,u_3=1; x(1)=2,x(2)=1,x(3)=7: x&=(1\pp2)(3\pp7)(5\pp6)\\
    \sigma_x=(1\pp 2); u_1=1,u_2=1,u_3=1; x(1)=6,x(2)=5,x(3)=7: x&=(1\pp6)(2\pp5)(3\pp7)\\
    &\\
    \alignedbox{\sigma_x=(1\pp 3); u_1=0,u_2=1,u_3=0; x(1)=3,x(2)=6,x(3)=1: x}{=(1\pp3)(2\pp6)(5\pp7)}\\
    \alignedbox{\sigma_x=(1\pp 3); u_1=1,u_2=1,u_3=1; x(1)=7,x(2)=6,x(3)=5: x}{=(1\pp7)(2\pp6)(3\pp5)}\\
    &\\
    \sigma_x=(2\pp 3); u_1=1,u_2=0,u_3=0; x(1)=5,x(2)=3,x(3)=2: x&=(1\pp5)(2\pp3)(6\pp7)\\
    \sigma_x=(2\pp 3); u_1=1,u_2=1,u_3=1; x(1)=5,x(2)=7,x(3)=6: x&=(1\pp5)(2\pp7)(3\pp6)
\end{align*}
\end{myenum}
\item So there are five involutions in $S_7$ with stabilzier $\langle a^4\rangle$ which have exactly two fixed points.
\end{myenum}

\end{example}

\begin{center}
    \includegraphics[width=0.9\textwidth]{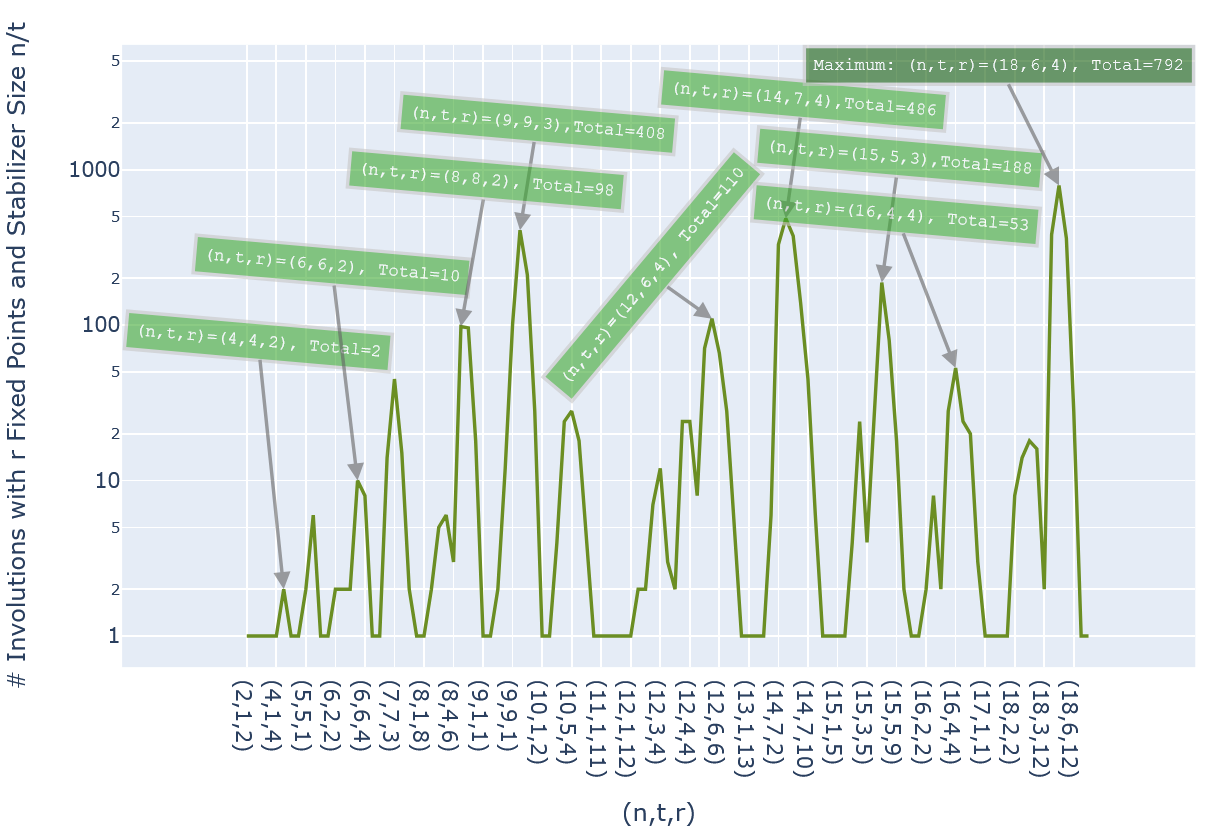}
      \vspace{-0.5cm}
    \begin{myfigure}\label{fig:graph inv with r fixed points} Graph of $R_{n/t,r}$: the number of involutions in $S_{n-1}$ with stabilier $\langle a^t\rangle$ and $r$ fixed points.\end{myfigure}
\end{center}

\section{Using \texorpdfstring{\cref{prop:number of ord 2 elements for given stab with certain fixed mod t}}{Proposition \ref{prop:number of ord 2 elements for given stab with certain fixed mod t}} and Counting Indicators Directly}\label{sec:app using prop number of ord 2 mod t}

Recall from \cref{prop:number of ord 2 elements for given stab with certain fixed mod t}, that $X_{n/t,r}$ is the set of involutions in $x\in S_{n-1}$ with stabilizer $\langle a^t\rangle$ such that $x$ has $r$ fixed points smaller than $t$ (equivalently, such that $\mathcal{O}_x$ contains $r$ involutions).

\begin{example}\label{n=12,t=3,r=1, inv with r inv in orbit}$\,$
Let us again go through an example. Say we want to find $X_{12/3,1}$ the number of involutions whose orbit contains one involution (and so contain $0$ involutions other than $x$ itself). These are all the involutions with $r-1=0$ fixed points smaller than $t=3$. 

Since $n=12$, $t=3,$ and $r=1$, we will use (13) from \cref{prop:number of ord 2 elements for given stab with certain fixed mod t}

\begin{myenum}
    \item First we compute $E_{n/t}$. Since $E_{12/3}=\{1,3\}$ we have two choices for $j\in E_{12/3}$. 
    \item Because there is only one $\sigma\in S_2$ with $r-1=0$ fixed points, we must have $\sigma_x=(1\pp2)$. Because $\sigma_x$ has no fixed points, $1\le u_1\le\frac{12}{3}=4$ can take any value but $u_2=-ju_1\mod\frac{12}{3}=4$. This gives us the following possibilities for $x$
\begin{align*}
    j=1; u_1=0,u_2=0; x(1)=2,x(2)=1: x&=(1\pp2)(4\pp5)(7\pp8)(10\pp11)\\
    j=1; u_1=1,u_2=3; x(1)=5,x(2)=10: x&=(1\pp5)(2\pp10)(4\pp8)(7\pp11)\\
    j=1; u_1=2,u_2=2; x(1)=8,x(2)=7: x&=(1\pp8)(2\pp7)(4\pp11)(5\pp10)\\
    j=1; u_1=3,u_2=1; x(1)=11,x(2)=4: x&=(1\pp11)(2\pp4)(5\pp7)(8\pp10)\\
    &\\
    j=3; u_1=0,u_2=0; x(1)=2,x(2)=1: x&=(1\pp2)(3\pp9)(4\pp11)(5\pp10)(7\pp8)\\
    j=3; u_1=1,u_2=1; x(1)=5,x(2)=4: x&=(1\pp5)(2\pp4)(3\pp9)(7\pp11)(8\pp10)\\
    j=3; u_1=2,u_2=2; x(1)=8,x(2)=7: x&=(1\pp8)(2\pp7)(3\pp9)(4\pp5)(10\pp11)\\
    j=3; u_1=3,u_2=3; x(1)=11,x(2)=10: x&=(1\pp11)(2\pp10)(3\pp9)(4\pp8)(5\pp7)
\end{align*}
\item Now, we must remove those we have over counted. However, for the first time, our recursion does not require us to do the same computation but for a smaller $t$. Because $t$ very much matters here, (we are looking for $x$ with no fixed points below $t$), we use a different formula now to remove overcount. 

We must subtract $C_{12,3,1,2}$. In this case, $E_{3/1}=\{1,2\}$ so we have two choices for $j_{\sigma_x}$. When $j_{\sigma_x}=1$, $\beta_{1,3/1}=3$ which does not divide $r$ so we ignore this case. When $j_{\sigma_x}=2$, $\beta_{2,3/1}=1$, $j_{\sigma_x}+\overline{m}\frac{3}{1}$ are $2,2+3,2+6,$ and $2+9$ so $\overline{E}_{2,12,3,1}=\{5,11\}$. Thus, there are two permutations we must remove: \begin{align*}
    j'=5: x&=(1\pp5)(2\pp10)(4\pp8)(7\pp11)\\
    j'=11: x&=(1\pp11)(2\pp10)(3\pp9)(4\pp8)(5\pp7)
\end{align*}
\item So there are $6$ involutions whose orbit contains one involution which have stabilizer $\langle a^3\rangle$ in $S_{11}.$ Namely, since each of these must be in a different orbit, there are $6$ orbits of length $3$ which contain exactly one involution. Thus, we have also computed $O_{12/3,1}=6$ by \cref{prop:orbit with r involutions length t}.

\end{myenum}
\end{example}

\begin{center}
    \includegraphics[width=0.7\textwidth]{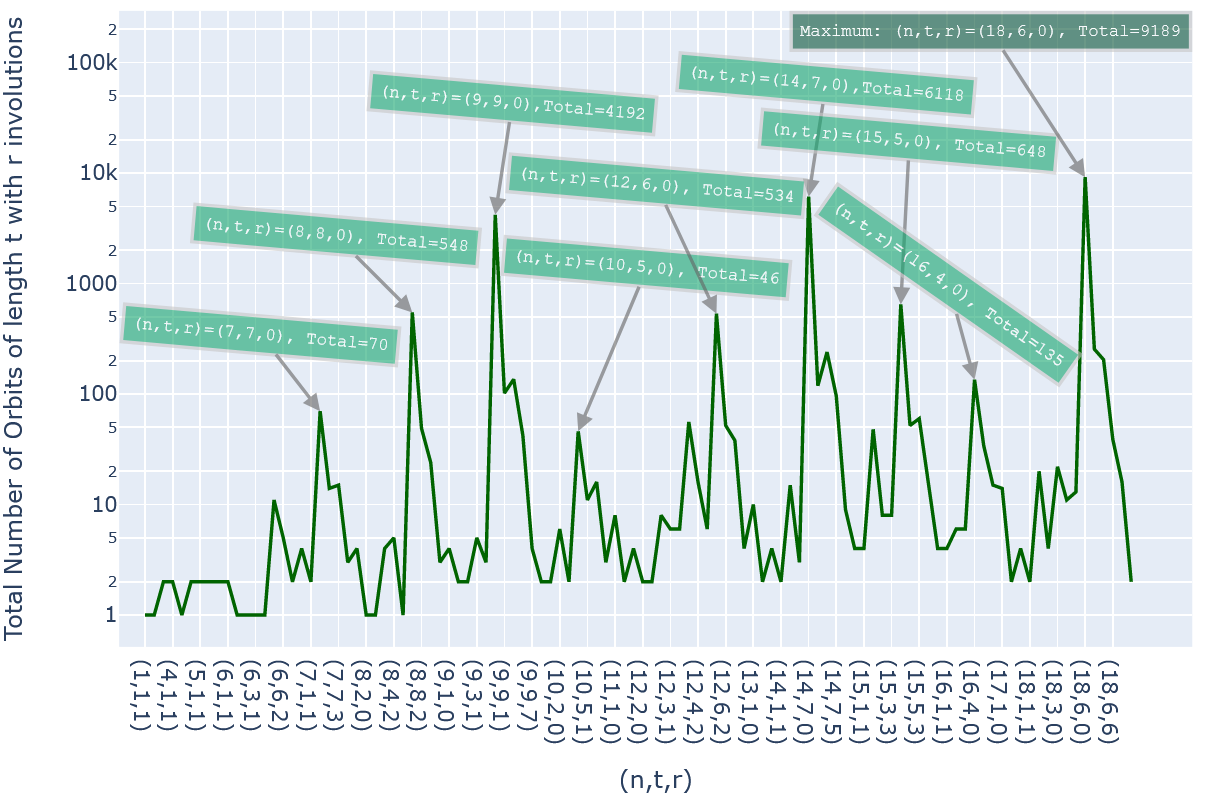}
      \vspace{-0.5cm}
    \begin{myfigure}\label{fig:graph orbits containing r inv} Graph of $O_{n/t,r}$: the number of orbits of length $t$ which contain $r$ involutions.\end{myfigure}
\end{center}

Now, let us explore how using direct computation proved helpful in the case where $t$ is even.

\begin{center}
    \includegraphics[width=0.9\textwidth]{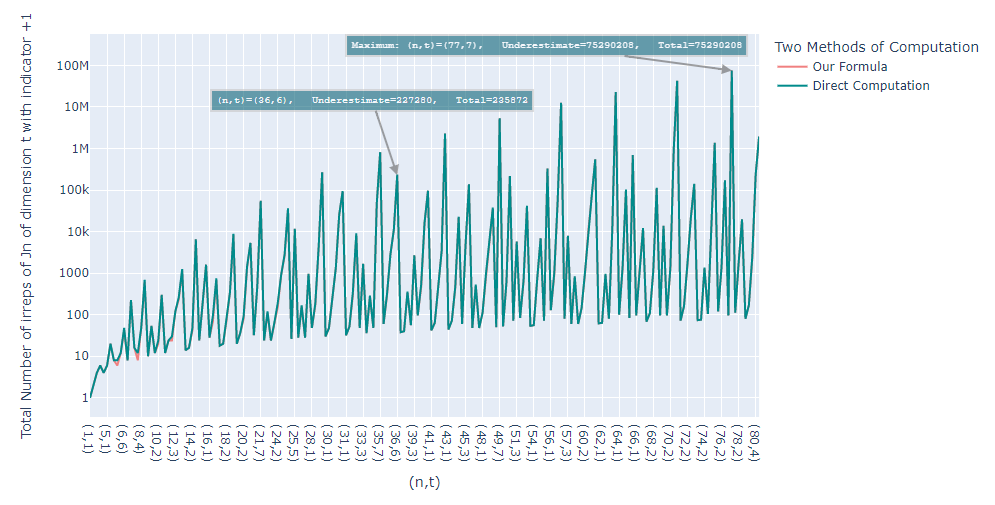}
      \vspace{-0.5cm}
    \begin{myfigure}\label{fig:graph irreps indicator plus one} Graph showing $I_{n/t}^{(+1)}$ the number of irreps of $J_n$ of dimension $t$ with indicator $+1$. Notice that here, when $t$ is even, our formula is an underestimate of the actual value. \end{myfigure}
\end{center}

\begin{center}
    \includegraphics[width=0.9\textwidth]{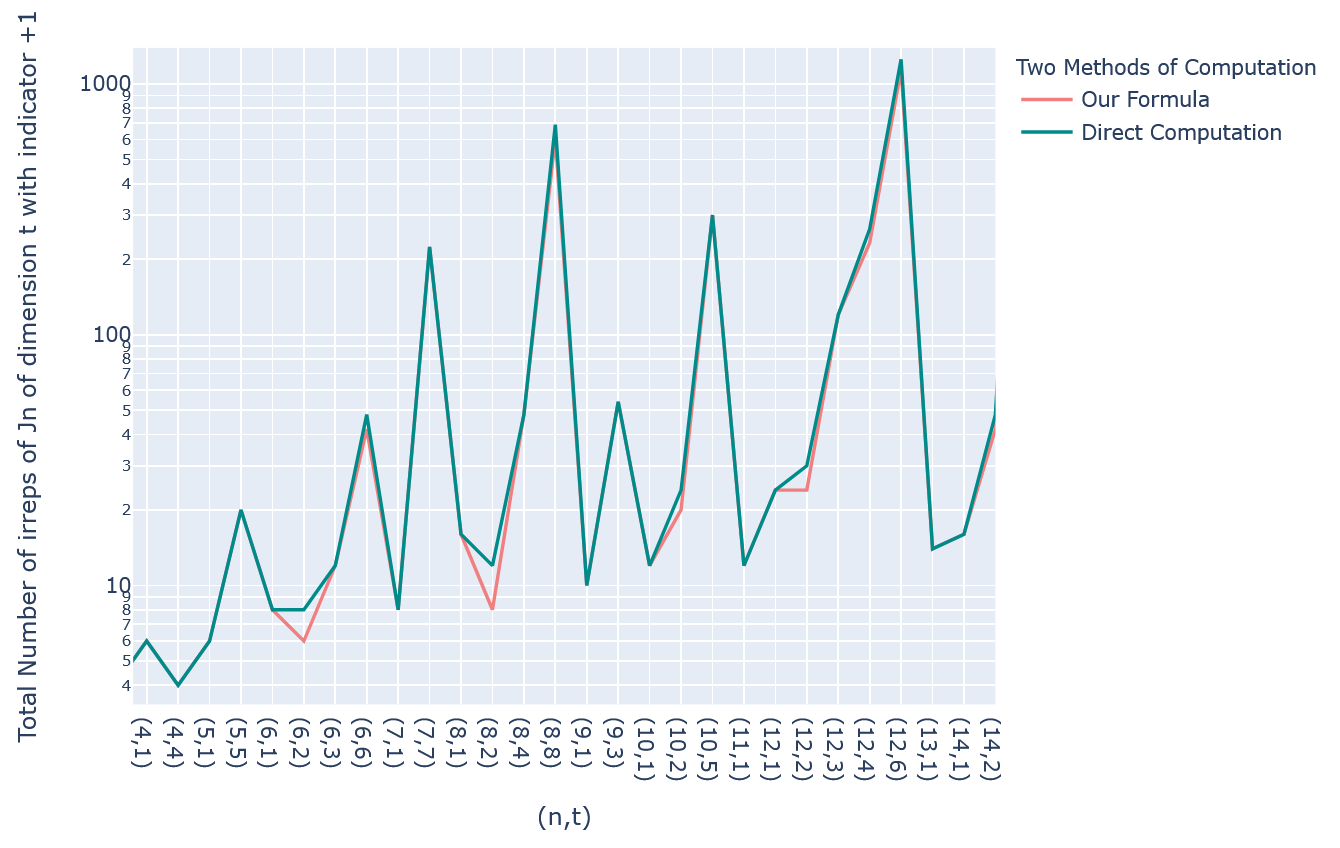}
      \vspace{-0.5cm}
    \begin{myfigure}\label{fig:graph irreps indicator plus one close up} Graph showing a close up of \cref{fig:graph irreps indicator plus one} which highlights the underestimate of our formula whenever $t$ is even.\end{myfigure}
\end{center}

Using Python, it was easy to conjecture that while there exist irreps with indicator $+1$ which are induced from permutations whose orbit does not contain an involution, they are not common. While we have not been able to formalize this in \cref{sec:limit behavior} because we could not come up for a formula to count such irreps, all computational evidence points toward this being true. 

\begin{center}
    \includegraphics[width=0.9\textwidth]{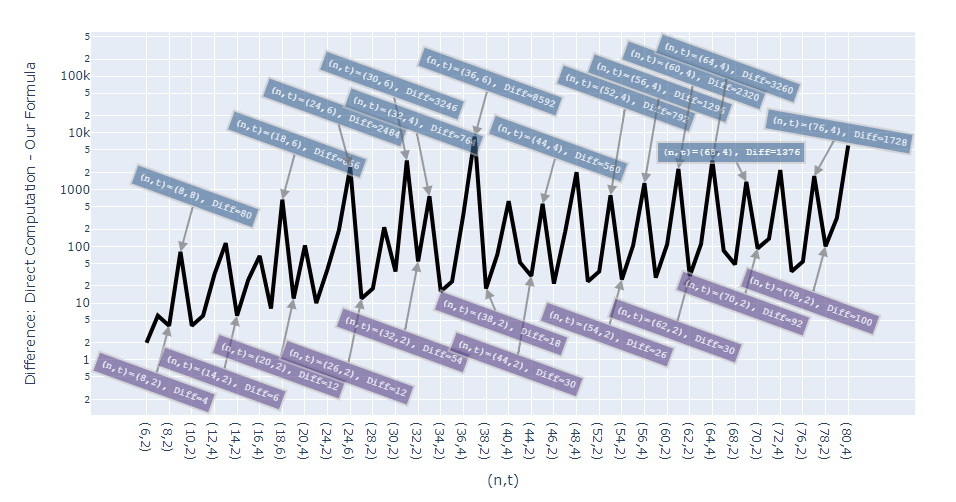}
      \vspace{-0.5cm}
    \begin{myfigure}\label{fig:graph irreps indicator plud one error} Graph showing the difference between direct computation and our formula for irreps of $J_n$ of dimension $t$ (for even $t$) with indicator $+1$ \end{myfigure}
\end{center}

Recall \cref{thm:number of positive indicators of dimension} tells us exactly how many irreps of $J_n$ of dimension $t$ with indicator $+1$ are induced from permutations $x$ whose orbit contains at least one involution. When the dimension $t$ is odd, the indicator can only be $+1$ if the permutation which induces it shares its orbit with an involution (see \cref{prop:observations}). However, when $t$ is even, there may be orbits which are closed under inversion whose elements induce irreps with indicator $+1$.

\cref{fig:graph irreps indicator plud one error} suggests that most of the irreps of $J_n$ of dimension $t$ with indicator $+1$ are induced from permutations $x$ whose orbit contains at least one involution.

For example, \cref{fig:graph irreps indicator plus one} shows that there are $235,872$ irreps of $J_{36}$ of dimension $6$ with indicator $+1$ and of these, $227,280$ are induced from permutations whose orbit contains at least one involution. Namely, only $3.6\%$ of the irreps of $J_{36}$ of dimension $6$ are induced from permutations whose orbit contains no involutions.

Since \cref{thm:percent +1 ind goes to zero} tells us that for fixed {\it odd} $t$, the percent of irreps of $J_{mt}$ of dimension $t$ with indicator $+1$ approaches $0$ as $m\to\infty$, it seems logical to purpose that the same holds true for even $t$.

\begin{conjecture}\label{conj even t percent +1 ind goes to 0}
{\it Fix $t$ an even integer. Let $n=mt$ for some integer $m\ge2$. Then the ratio of irreps of $J_n$ of dimension $t$ with indicator $+1$ to the total number of irreps of dimension $t$ converges to $0$ as $m\to\infty$.}
\end{conjecture}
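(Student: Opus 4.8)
The plan is to sidestep the exact difficulty that left this statement a conjecture. Rather than producing a closed formula for the number of $+1$-indicator irreps of dimension $t$ when $t$ is even — which would force one to understand the inversion-closed orbits containing \emph{no} involution, the objects the authors could not count — I would prove convergence with a single crude overestimate of the numerator that ignores orbit structure entirely. Convergence to $0$ is only a statement about growth rates, so any upper bound on the count of nonzero-indicator irreps suffices, provided it still grows strictly slower than the total $\tfrac{n}{t}|M_{n/t}|$; and an upper bound for the nonzero count is automatically an upper bound for the $+1$ count.

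First I would reuse the denominator estimate from \cref{thm:percent +1 ind goes to zero} verbatim. By \cref{thm:[KMM] inducing modules} and \cref{lem:[KMM] dimension of induced rep} the total number of dimension-$t$ irreps is $\tfrac{n}{t}|M_{n/t}|$, and \cpref{lem:some bounds}{lem:some bounds (3)} together with discarding the strictly lower-order over-count terms $\sum_{s\mid t,\,s\ne t}|M_{n/s}|$ gives $\tfrac{n}{t}|M_{n/t}|\ge \tfrac{1}{\sqrt2}(n/t)^{t+1/2}(t-1)!\,(1-o(1))$ as $m\to\infty$. For the numerator, the key chain of necessary conditions is already available in the paper: every irrep with nonzero indicator is induced from a permutation $x$ with $x^{-1}\in\mathcal{O}_x$ (\cref{lem:inv element in orbit means 0 indicator}), which by \cpref{lem:x(t)=kt some k}{lem:x(t)=kt some k (55)} forces $x(t)=jt$ with $j\in E_{n/t}$; by \cref{thm:indicator reduced} such an $x$ contributes exactly $\alpha_{j,n/t}=\gcd(j+1,n/t)\le n/t$ nonzero indicators; and the counting argument of \cref{prop:number of x with Fx=<a^t>} bounds the number of $x$ with $F_x=\langle a^t\rangle$ and $x(t)=jt$ (for fixed invertible $j$) by $(n/t)^{t-1}(t-1)!$. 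Summing over $j\in E_{n/t}$ yields at most $|E_{n/t}|\,(n/t)^{t}(t-1)!$ nonzero-indicator dimension-$t$ irreps.

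Combining the two estimates, the ratio is at most $\sqrt2\,|E_{n/t}|/\sqrt{n/t}$, and \cpref{lem:some bounds}{lem:some bounds (1)}, \cpref{lem:some bounds}{lem:some bounds (2)}, and \cpref{lem:some bounds}{lem:some bounds (4)} drive this to $0$ exactly as in the odd case. Notably this argument is uniform in the parity of $t$, so it simultaneously recovers \cref{thm:percent +1 ind goes to zero} and \cref{thm:percent nonzero ind goes to zero}; the numerator even absorbs the involution-free contributions that the formula $I_{n/t}^{(+1)}$ of \cref{thm:number of positive indicators of dimension} undercounts, which is precisely why no exact formula is needed.

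The point I expect to be the real content is the realization that the problematic involution-free orbits never have to be enumerated: the lone necessary condition $j\in E_{n/t}$ already thins the admissible permutations by a factor of roughly $\varphi(n/t)/|E_{n/t}|\sim \sqrt{n/t}/2^{\omega(n/t)}$, which supplies exactly the decaying factor. The only step requiring genuine care is the factorial bookkeeping: one must check that the per-$j$ permutation count $(n/t)^{t-1}(t-1)!$ and the per-permutation indicator bound $\alpha_{j,n/t}\le n/t$ combine to the same dominant growth $(n/t)^{t}(t-1)!$ appearing in the odd-$t$ proof, so that the final ratio is controlled purely by \cpref{lem:some bounds}{lem:some bounds (4)} with no residual $t$-dependence spoiling the limit.
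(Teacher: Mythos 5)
Your argument is correct, and it is worth being explicit that the paper offers no proof to compare it against: this statement is left as \cref{conj even t percent +1 ind goes to 0}, supported only by the numerical evidence in the appendices. The authors' route for odd $t$ (\cref{thm:percent +1 ind goes to zero}) and for $t=2$ (\cref{thm:percent nonzero ind goes to zero}) goes through exact (or near-exact) formulas for $I_{n/t}^{(+1)}$, and that route is blocked for general even $t$ precisely because $I_{n/t}^{(+1)}$ is only a lower bound there --- the inversion-closed orbits containing no involution are never enumerated. Your observation that a limit statement needs only an \emph{upper} bound on the numerator, and that the chain ``nonzero indicator $\Rightarrow x^{-1}\in\mathcal{O}_x \Rightarrow j\in E_{n/t}$'' (\cref{lem:inv element in orbit means 0 indicator} together with \cpref{lem:x(t)=kt some k}{lem:x(t)=kt some k (55)}) combined with the per-permutation cap of $\alpha_{j,n/t}\le n/t$ nonzero indicators from \cref{thm:indicator reduced} already supplies one, is exactly the ingredient the paper is missing. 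I checked the bookkeeping: the number of $x$ stabilized by $a^t$ with $x(t)=jt$ fixed is exactly $(n/t)^{t-1}(t-1)!$ by the proof of \cref{prop:number of x with Fx=<a^t>}; the discarded over-count terms in $|M_{n/t}|$ are $O(m^{1+t/2})$ against a main term of order $m^{t+1/2}$, hence lower order for every $t\ge 2$; and both your numerator and the denominator $\tfrac{n}{t}|M_{n/t}|$ count each irrep once per permutation in its orbit, so the multiplicity convention cancels in the ratio. The resulting bound $\sqrt{2}\,|E_{n/t}|/\sqrt{n/t}\,(1+o(1))$ is vacuous for small $m$ (at $n=36$, $t=6$ it exceeds $1$, consistent with the crudeness you advertise), but that is irrelevant for the limit, and \cpref{lem:some bounds}{lem:some bounds (1)}, \cpref{lem:some bounds}{lem:some bounds (2)}, and \cpref{lem:some bounds}{lem:some bounds (4)} finish it. As you note, the same estimate bounds the $-1$ count, so your argument settles the stronger \cref{conj even t percent nonzero ind goes to 0} as well, uniformly in the parity of $t$.
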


Of course, \cref{thm:percent nonzero ind goes to zero} tells us this is true when $t=2$, but for general even $t>2$, we have only a {\it lower bound} for the number of irreps with indicator $+1$ and this is not helpful for bounding the ratio from above.

\begin{conjecture}\label{conj even t percent +1 induced form non-inv goes to 0}
{\it Fix $t$ an even integer. Let $n=mt$ for some integer $m\ge 2$. Then the ratio of irreps of $J_n$ of dimension $t$ with indicator $+1$ which are induced from permutations which do not share their orbit with an involution to the total number of irreps of dimension $t$ with indicator $+1$ convertes to $0$ as $m\to\infty$.}
\end{conjecture}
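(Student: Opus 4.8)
The plan is to prove \cref{conj even t percent +1 induced form non-inv goes to 0} by splitting the dimension-$t$ irreps of $J_n$ with indicator $+1$ into two disjoint families and bounding one against the other. Write the total number of such irreps as $I_{n/t}^{(+1)}+\mathcal{E}_{n/t}$, where $I_{n/t}^{(+1)}$ (the quantity computed in \cref{thm:number of positive indicators of dimension}) counts those induced from permutations whose orbit contains an involution, and $\mathcal{E}_{n/t}$ counts the remaining ones, induced from permutations $x$ with $F_x=\langle a^t\rangle$, $x^{-1}\in\mathcal{O}_x$, but $\mathcal{O}_x$ involution-free. Since the first family is nonempty for all large $m$ (every involution with stabilizer $\langle a^t\rangle$ contributes, by \cref{thm:indicator reduced} with $s=t$), the conjecture is equivalent to $\mathcal{E}_{n/t}/I_{n/t}^{(+1)}\to 0$, so it suffices to bound $\mathcal{E}_{n/t}$ above and $I_{n/t}^{(+1)}$ below.

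First I would record the structural reduction through remainder permutations. By the correspondence established at the end of \cref{sec:explicitly counting} together with \cpref{lem:y in orbit x}{lem:y in orbit x (2)}, the orbit $\mathcal{O}_x$ contains an involution if and only if the orbit $\mathcal{O}_{\sigma_x}$ of the remainder permutation $\sigma_x\in S_{t-1}$ (under the $C_t$-action defining $J_t$) contains one; in particular the identity remainder permutation is excluded from the second family, since $\mathcal{O}_{(1)}=\{(1)\}$ is itself an involution. Because $t$ is fixed, the set $\Sigma_t$ of admissible involution-free remainder permutations is finite and independent of $m=n/t$, and every $\sigma\in\Sigma_t$ is a non-identity permutation. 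Each permutation $x$ in the second family is reconstructed from a choice of $\sigma\in\Sigma_t$, a value $j\in E_{n/t}$ with $x(t)=jt$, and constants $u_i$ (as in \cref{lem:x generated from smaller sigma}) subject to the system of congruences mod $n/t$ encoding $x^{-1}\in\mathcal{O}_x$; by \cref{thm:indicator reduced} the number of indicator-$+1$ irreps produced by such an $x$ equals the number of $i$ with $(\zeta_{n/t})^{iu_1}=1$ and $(\zeta_{n/t})^{iu_2}=1$, where $u_1=j+1$ and $u_2t=x(s)+s$.

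Next I would prove the lower bound $I_{n/t}^{(+1)}\gtrsim m^{t}$. The clean source is the family with $\sigma_x=(1)$ and $j=m-1$: here $x(\ell)=u_\ell t+\ell$ for $\ell<t$ and $x(t)=-t$, every such $x$ is an involution, its whole orbit consists of involutions, and the only constraint is $u_\ell(j+1)\equiv 0\bmod m$, which with $j=m-1$ holds for all $u_\ell$. This yields on the order of $m^{t-1}$ permutations, hence $\sim m^{t-1}/t$ orbits, each carrying $\alpha_{m-1,n/t}=\gcd(m,m)=m$ indicators equal to $+1$, for a contribution of order $m^{t}$ after subtracting the lower-order permutations of strictly larger stabilizer. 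Since $m-1\in E_{n/t}$ always, this term is genuinely present, so $I_{n/t}^{(+1)}$ grows at least like $m^{t}$.

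The main obstacle is the matching upper bound $\mathcal{E}_{n/t}=o(m^{t})$. The naive estimate is insufficient: bounding the second-family orbit count by $m^{t-1}/t$ and the per-permutation $+1$-multiplicity by $\alpha_{j,n/t}\le m$ gives exactly $m^{t}$, the same order as the lower bound. To beat this one must exploit the coupling, peculiar to the non-involutive case, between a large multiplicity and the shift parameter. Concretely, $\alpha_{j,n/t}=\gcd(j+1,m)$ is close to $m$ only when $j\equiv-1$ modulo a large divisor of $m$; but \cref{prop:x(s)+s=0} forces $u_2(u_1-2)\equiv 0\bmod m$, and for an involution-free orbit $u_2\not\equiv 0$, so the true $+1$-multiplicity, the number of $i$ with $u_1i\equiv u_2i\equiv 0\bmod m$, drops strictly below $\alpha_{j,n/t}$, while simultaneously the congruence system defining the admissible $u_i$ loses a free parameter because $\sigma\in\Sigma_t$ has at least one nontrivial cycle. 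The crux is to show that, uniformly over the finitely many $\sigma\in\Sigma_t$ and all $j\in E_{n/t}$, the product of the number of admissible $u$-tuples and the $+1$-multiplicity is $O(2^{\omega(m)}m^{t-1})$ rather than $O(m^{t})$; combined with \cpref{lem:some bounds}{lem:some bounds (1)} and \cpref{lem:some bounds}{lem:some bounds (4)} this would give $\mathcal{E}_{n/t}/I_{n/t}^{(+1)}\lesssim 2^{\omega(m)}/m\to 0$. Establishing this sub-leading bound is exactly the counting problem the present paper leaves open, and it is why the statement remains a conjecture: a complete argument appears to require an explicit handle on involution-free, inversion-closed orbits, that is, precisely the formula that \cref{sec:explicitly counting} does not supply.
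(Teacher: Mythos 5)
This statement is a \emph{conjecture} in the paper: no proof is given, only computational evidence (the discussion surrounding \cref{fig:graph irreps indicator plud one error} and the $n=36$, $t=6$ data, where about $3.6\%$ of the indicator-$+1$ irreps of dimension $6$ come from involution-free orbits). So there is no paper proof to compare against, and your proposal must stand on its own. As a strategy sketch its framing is sound: the decomposition of the indicator-$+1$ count into $I_{n/t}^{(+1)}+\mathcal{E}_{n/t}$ matches the paper's own framing (cf.\ the remarks after \cref{thm:number of positive indicators of dimension} and the Table of Obtainable Results), and your lower bound $I_{n/t}^{(+1)}\gtrsim m^{t}$ via the family $j=m-1$, $\sigma_x=(1)$ is correct and checkable from \cref{cor:involution generated from smaller sigma} and \cref{thm:number of positive indicators of dimension}: since $j+1\equiv 0\bmod m$, every $u$-tuple is admissible and $\alpha_{m-1,n/t}=m$.

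But the proposal is not a proof, and you say so yourself: everything hinges on the upper bound $\mathcal{E}_{n/t}=o(m^{t})$, and your paragraph on it is a heuristic rather than an argument. Two concrete soft spots. First, the claimed strict drop of the $+1$-multiplicity from $\alpha_{j,n/t}=\gcd(u_1,m)$ to $\gcd(u_1,u_2,m)$ when $u_2\not\equiv 0$ is not always strict (take $m=8$, $u_1=u_2=4$), so the saving cannot come from that alone. Second, the assertion that the congruence system for the $u_i$ ``loses a free parameter'' because $\sigma\in\Sigma_t$ has a nontrivial cycle is unsubstantiated: in the paper's own counts a transposition in the remainder permutation contributes a \emph{full} factor of $n/t$ of free choices (see the proof of \cref{prop:number of ord 2 elements for given stab}), and the paper supplies no analogous count for non-involutive, inversion-closed orbits. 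Until that count is carried out uniformly in $j$ and the $u$-tuple, the target bound $\mathcal{E}_{n/t}/I_{n/t}^{(+1)}\lesssim 2^{\omega(m)}/m$ is an aspiration, not a conclusion; the statement remains open, exactly as the paper presents it.
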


\begin{center}
    \includegraphics[width=0.9\textwidth]{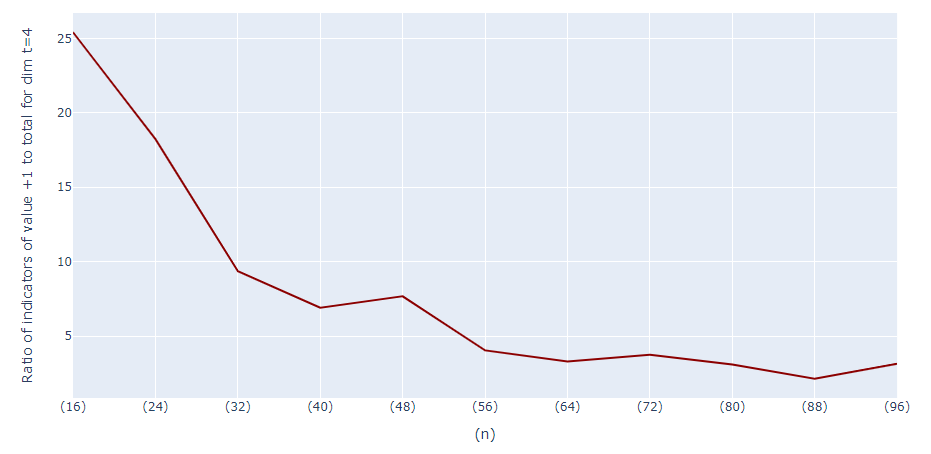}
      \vspace{-0.5cm}
    \begin{myfigure}\label{fig:graph percent +1 t=4} Graph showing percent of $4$-dimensional irreps of $J_n$ with indicator $+1$ to total number of $4$-dimensional irreps of $J_n$ \end{myfigure}
\end{center}

\section{Using \texorpdfstring{\cref{thm:classifying indicators when t=2}}{Proposition \ref{thm:classifying indicators when t=2}}}\label{sec:app using thm classifying indicators when t=2}

\begin{center}
    \begin{tikzpicture}[style={font=\small}, node distance = 3.5cm, auto]
     \node[diamond, scale=0.65, draw, fill=blue!20, text width = 5em, text centered, rounded corners, minimum height = 4em] (j) {\normalsize $x(2)=2j$; choose $j\in E_{n/2}$};
    \node[diamond, scale=0.65, draw, fill=red!20, text width = 5em, text centered, rounded corners, minimum height = 4em, right of=j, above of=j] (kj0) {$x=x^{-1}$; choose $u$ such that $u(j+1)=0$};
    \node[rectangle, scale=0.65, draw, fill=red!20, text width = 5em, text centered, rounded corners, minimum height = 5em, above of=kj0,node distance = 4.5cm] (j2k1) { remove case $\mathcal{O}_x=\{x\}$; ignore $u$ such that $j=2u+1$};
    \node[diamond, scale=0.65, draw, fill=green!20, text width = 5.5em, text centered, rounded corners, minimum height = 4em, below of=j, right of=j] (k+1j0) {$x^{-1}\not=x$; choose $u$ such that \\ {\scriptsize$(u+1)(j-1)=0$}};
    \node[rectangle, scale=0.65, draw, fill=green!20, text width = 5em, text centered, rounded corners, minimum height = 4em, below of=k+1j0,node distance = 4cm] (j2k1also) {remove case $\mathcal{O}_x=\{x\}$; ignore $u$ such that $j=2u+1$};
    \node[diamond, scale=0.65, draw, fill=blue!20, text width = 5em, text centered, rounded corners, minimum height = 4.5em,right of=kj0,below of =kj0] (jnot2k) {\normalsize ensure $j\not=2u+1$, {\scriptsize$x(1)=2u+1$}, $x(2)=2j$};
    \node[diamond, scale=0.65, draw, fill=red!20, text width = 5em, text centered, rounded corners, minimum height = 4em, above of=jnot2k,right of=jnot2k] (xinv) {\normalsize $x^{-1}=x$};
    \node[diamond, scale=0.65, draw, fill=green!20, text width = 5em, text centered, rounded corners, minimum height = 4em, below of=jnot2k,right of=jnot2k] (xinvnot) {$x\not=x^{-1}$ $x^{-1}\in\mathcal{O}_x$};
    \node[diamond, scale=0.65, draw, fill=blue!20, text width = 5em, text centered, rounded corners, minimum height = 4em,right of=xinv, below of =xinv] (i) {\large choose $i$};
    \node[ellipse, scale=0.65, draw, fill=red!20, text width = 5em, text centered, rounded corners, minimum height = 4em, above of=i,right of=i,node distance = 4cm] (indplus) {if $i(j+1)=0$, then $\nu(\hat{\chi}_i)=+1$};
    \node[ellipse, scale=0.65, draw, fill=red!20, text width = 5em, text centered, rounded corners, minimum height = 4em, above of=indplus] (indzero) {if $i(j+1)\not=0$, then $\nu(\hat{\chi}_i)=0$};
    \node[diamond, scale=0.65, draw, fill=green!20, text width = 5em, text centered, rounded corners, minimum height = 4em, below of=i,right of =i] (notindplus) {$i(j+1)=0$};
    \node[ellipse, scale=0.65, draw, fill=green!20, text width = 5em, text centered, rounded corners, minimum height = 4em, above of =notindplus, right of =notindplus,node distance = 2.5cm] (notindplusfin) {$i(u+1)=0$, then $\nu(\hat{\chi}_i)=+1$};
    \node[ellipse, scale=0.65, draw, fill=green!20, text width = 5em, text centered, minimum height = 4em, below of=notindplus,right of =notindplus,node distance = 2.5cm] (notindminusfin) {$i(u+1)\not=0$, then $\nu(\hat{\chi}_i)=-1$};
    \node[ellipse, scale=0.65, draw, fill=green!20, text width = 5em, text centered, minimum height = 4em, below of=notindplus,node distance = 5cm] (notindzero) {$i(j+1)\not=0$, then $\nu(\hat{\chi}_i)=0$};
    \path[draw,->] (j) -- (kj0);
    \path[draw,->] (j) -- (k+1j0);
    \path[draw,->] (kj0) -- (j2k1);
    \path[draw,->] (k+1j0) -- (j2k1also);
    \path[draw,->] (kj0) -- (jnot2k);
    \path[draw,->] (k+1j0) -- (jnot2k);
    \path[draw,->] (jnot2k) -- (xinv);
    \path[draw,->] (jnot2k) -- (xinvnot);
    \path[draw,->] (xinv) -- (i);
    \path[draw,->] (xinvnot) -- (i);
    \path[draw,->] (i) -- (indplus);
    \path[draw,->] (i) -- (indzero);
    \path[draw,->] (i) -- (notindzero);
    \path[draw,->] (i) -- (notindplus);
    \path[draw,->] (notindplus) -- (notindplusfin);
    \path[draw,->] (notindplus) -- (notindminusfin);
    \end{tikzpicture}
    \begin{myfigure}\label{fig:chart of how to determine indicators from x when t=2}{ A flow chart illustrating the process of \cref{thm:classifying indicators when t=2}. The red entries indicate the construction of involutions $x$ and the process of determining which indicators are $+1$ and which are $0$. The green entries indicate the construction of $x$ such that $\mathcal{O}_x=\{x,x^{-1}\}$ (so namely, $x\not=x^{-1}$) and the process of determining which indicators are $+1,-1,0$. The rectangles illustrate that $F_x=F$ if $j=2u+1\mod\frac{n}{2}$, the diamonds illustrate intermediate computation, and the circles illustrate the final conclusion of the indicator.}\end{myfigure} 
\end{center}

\begin{example}\label{ex indicators n=12,t=2}$\,$

For example, let $n=12$. We will to compute $I_{12/2}^{(+1)},I_{12/2}^{(-1)},$ and $I_{12/2}^{(0)}$.

\begin{myenum}
    \item First we compute $E_{n/t}.$ Since $n=12$ and $t=2$, then $E_{12/2}=\{1,5\}$. Now, we go case by case. 
    
    \begin{myenum}
        \item For $j=1$: \begin{myenum}
            \item Again, $j=1$ so $u(j+1)=2u\mod\frac{12}{2}=6$ so $u=0\mod3$ so $u=6,3\mod 6$. Since $1=2(6)+1\mod 6$ and $1=2(3)+1\mod 6$, both values of $u$ are out, so there are no involutions $x$ satisfying $x(2)=2$ with stabilizer $\langle a^2\rangle$.
            \item However, we instead we look at $(u+1)(j-1)=0\mod6$ we get that $u$ can take any value. Since $j=1\not=2u+1$, we cannot use $u=6,u=3$ so there are four $(u=1,2,4,5)$ possible permutations which are not involutions. Explicitly, they are \begin{align*}
    j=1; u=1; x(1)=3: x_1&=(1\pp 3\pp5\pp 7\pp 9\pp11)\\
    j=1; u=2; x(1)=5: x_2&=(1\pp5\pp9)(3\pp7\pp11)\\
    j=1; u=4; x(1)=9: x_3&=(1\pp9\pp5)(3\pp 11\pp7)\\
    j=1; u=5; x(1)=11: x_4&=(1\pp11\pp9\pp7\pp5\pp3)
\end{align*}
        \end{myenum}
        \item Now, consider $j=5$. 
        \begin{myenum}
            \item Then $u(j+1)=0\mod 6$ for any $1\le u\le 6$. Since we cannot have $5=2u+1\mod 6$ we toss out $u=2,5$. This leaves four possible involutions. Now, $K_{5,12/2}=\{1,2,3,4,5,6\}$ since $i(j+1)=0\mod 6$ for any $i$. Thus, all indicators are $+1$ for all of these involutions. The involutions here are \begin{align*}
    j=5; u=6; x(1)=1: x_5&=(2\pp 10\pp)(3\pp11)(4\pp8)\\
    j=5; u=1; x(1)=3: x_6&=(1\pp 3)(2\pp 10)(4\pp 8)(5\pp 11)(7\pp 9)\\
    j=5; u=3; x(1)=7: x_7&=(1\pp 7)(2\pp 10)(3\pp 5)(4\pp 8)(9\pp 11)\\
    j=5; u=4; x(1)=9: x_8&=(1\pp 9)(2\pp 10)(3\pp 7)(4\pp 8)
\end{align*}
\item For the case where $(u+1)(j-1)=4(u+1)=0\mod 6$, we require that $u=-1\mod 3$ so $u=2,5$. However, because for both of these $j=2u+1$ so there are no permutations which are not involutions satisfying that $x(2)=10$ and $F_x=\langle a^2\rangle$.
        \end{myenum}
        \item Finally, we count the indicators of the irreps induced from each $x$. $x_1$ and $x_4$ each contribute $2$ indicators of value $+1$ and $4$ indicators of value $0$. $x_2$ and $x_3$ each contribute $1$ indicator of value $+1$, $1$ indicator of value $-1$, and $4$ indicators of value $0$. $x_5,x_6,x_7$ and $x_8$ each contribute $6$ indicators of value $+1$. In total, we have \begin{align*}
    I_{12/2}^{(+1)}&=2\cdot 2+2\cdot 1+4\cdot 6=4+2+24=30\\
    I_{12/2}^{(-1)}&=2\cdot 0+2\cdot 1+4\cdot 0=2\\
    I_{12/2}^{(0)}&=2\cdot 4+2\cdot 4+4\cdot 0=16
\end{align*}
    \end{myenum}
\end{myenum}

Before concluding, let us go more in depth about the actual representations induced from the permutations we've found. 

Consider the four permutations from \textbf{(1).(i).(b)}. They are grouped into two orbits, where each orbit contains $x$ and $x^{-1}$. Now, $K_{1,12/2}=\{i\,|\,2i=0\mod 6\}=\{0,3\}$. Since $6(u+1)=0$ for any $u=1,2,4,5$ this gives four irreps of $J_n$ of dimension $2$ and indicator $+1$. Similarly, $3(u+1)=0\mod 6$ for $u=1,5$ which adds two more irreps with indicator $+1$. However, for $u=2,4$ we have two irrpes of $J_n$ with dimension $2$ which have indicator $-1$ while the rest are of indicator $0$.

Let's be even more specific. We use the information we found about $u$ and $j$ to construct the associated irreducible representations of $J_n$ which have the indicators described. Consider $x_2=(1\pp5\pp9)(3\pp7\pp11)$ which has stabilizer $\langle a^2\rangle$. Let $\zeta_6$ be a primitive $6^{\text{th}}$ root of unity. There are $6$ irreducible group representations associated to $x_2$ defined by \begin{align*}
    \rho_{x_2,i}:\langle a^2\rangle&\to\mathbb{C}^\times\\
    a^2&\mapsto \zeta_6^i
\end{align*} 

for $1\le i\le 6$. Consider the equivalent $F_x$-module $V_{x_2,i}$ to the representation $\rho_{x_2,i}$. Then the induced $J_n$ module is $$\hat{V}_{x_2,i}=\mathbb{C}\langle a\rangle\otimes_{\mathbb{C}\langle a^2\rangle}V_{x_2,i}$$ which is defined by the action $$(p_y\#a^l)\cdot(a^q\otimes v)=\begin{cases} a^{l+q}\otimes v&\text{ if }x=a^{-y(l+q)}ya^{l+q}\\ 0 &\text{ otherwise} \end{cases}.$$ The associated character $\hat{\chi}_{x_2,i}$ gives rise the following indicators:
\begin{align*}
    \nu(\hat{\chi}_{x_2,1})&=0 & \nu(\hat{\chi}_{x_2,2})&=0\\
    \nu(\hat{\chi}_{x_2,3})&=-1 & \nu(\hat{\chi}_{x_2,4})&=0\\
    \nu(\hat{\chi}_{x_2,5})&=0 & \nu(\hat{\chi}_{x_2,6})&=+1
\end{align*}

$x_3$ induces the same irreps as $x_2$ and so they have the same indicator table. For $x_1$ and $x_4$, $i=3$ gives indicators which are $+1$ instead of $-1$.
\end{example}

 \vspace{-0.5cm}

\begin{center}
    \includegraphics[width=0.9\textwidth]{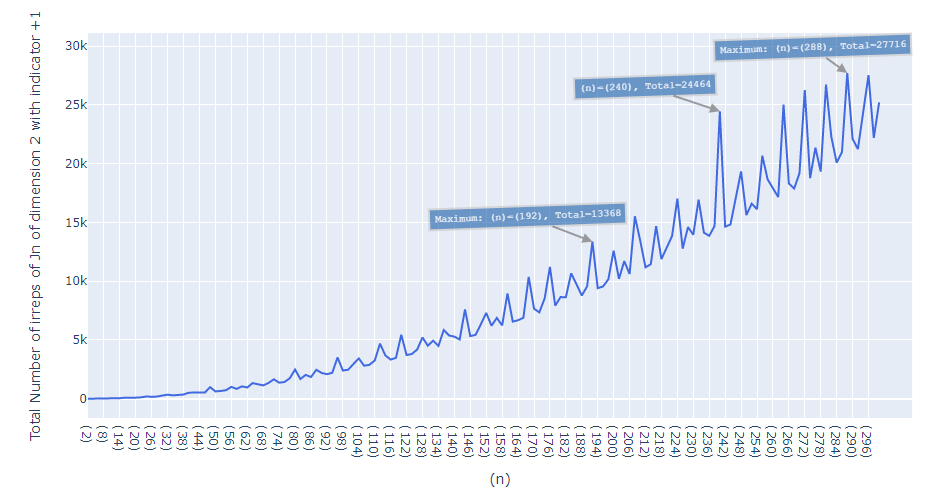}
      \vspace{-0.6cm}
    \begin{myfigure}\label{fig:graph irreps of dim 2 with indicator +1} Graph of $I_{n/2}^{(+1)}$ showing the number of irreps of $J_n$ of dim $2$ with indicator $+1.$\end{myfigure}
\end{center}

\begin{center}
    \includegraphics[width=0.9\textwidth]{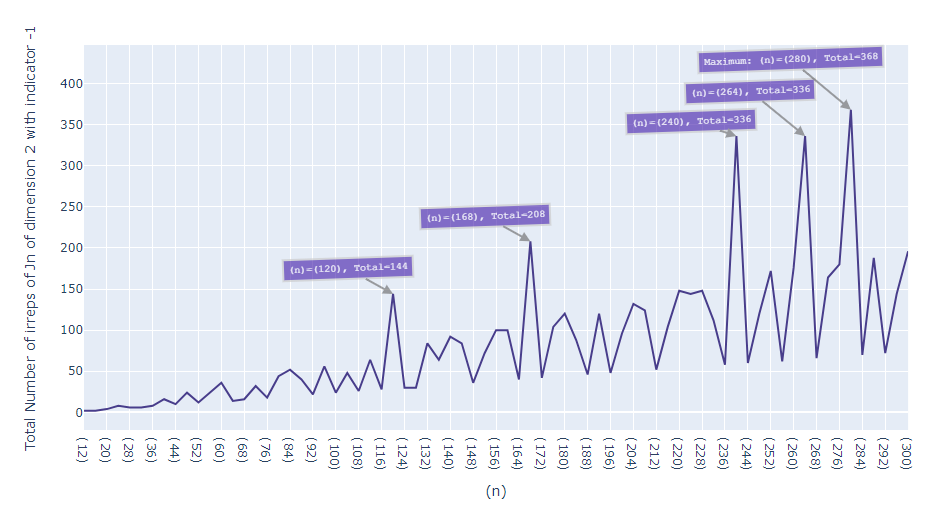}
      \vspace{-0.5cm}
    \begin{myfigure}\label{fig:graph irreps of dim 2 with indicator -1} Graph of $I_{n/2}^{(-1)}$ showing the number of irreps of $J_n$ of dim $2$ with indicator $-1.$\end{myfigure}
\end{center} 

\begin{center}
    \includegraphics[width=0.95\textwidth]{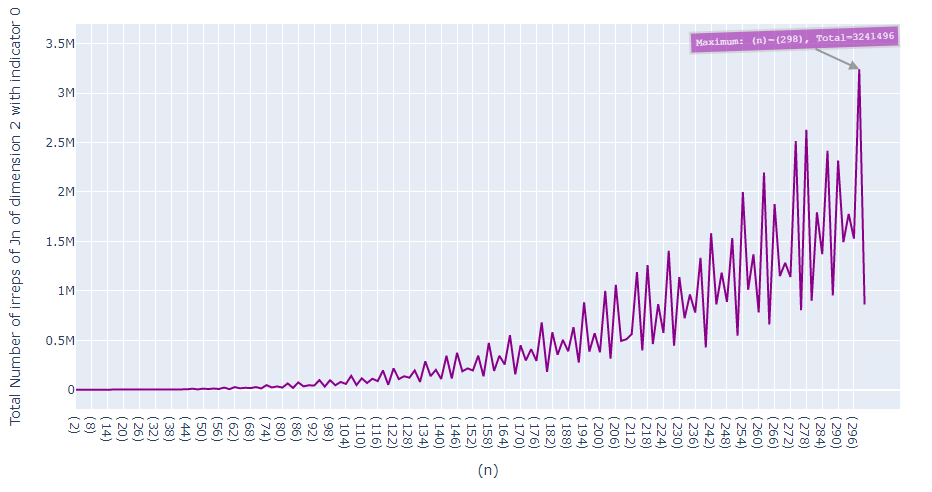}
      \vspace{-0.5cm}
    \begin{myfigure}\label{fig:graph irreps of dim 2 with indicator 0} Graph of $I_{n/2}^{(0)}$ showing the number of irreps of $J_n$ of dim $2$ with indicator $0.$\end{myfigure}
\end{center}

To emphasize the difference in scale, we lay multiple graphs on a single plot.

\begin{center}
    \includegraphics[width=0.95\textwidth]{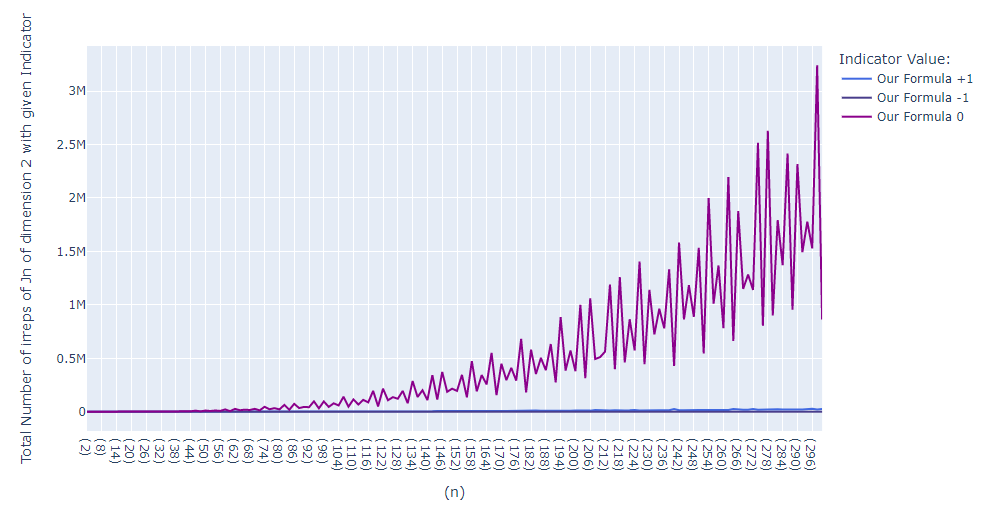}
      \vspace{-0.5cm}
    \begin{myfigure}\label{fig:graph irreps of dim 2 all ind} Plot showing all three of \cref{fig:graph irreps of dim 2 with indicator +1}, \cref{fig:graph irreps of dim 2 with indicator -1}, and \cref{fig:graph irreps of dim 2 with indicator 0} on a single graph.\end{myfigure}
\end{center}

\begin{center}
    \includegraphics[width=0.95\textwidth]{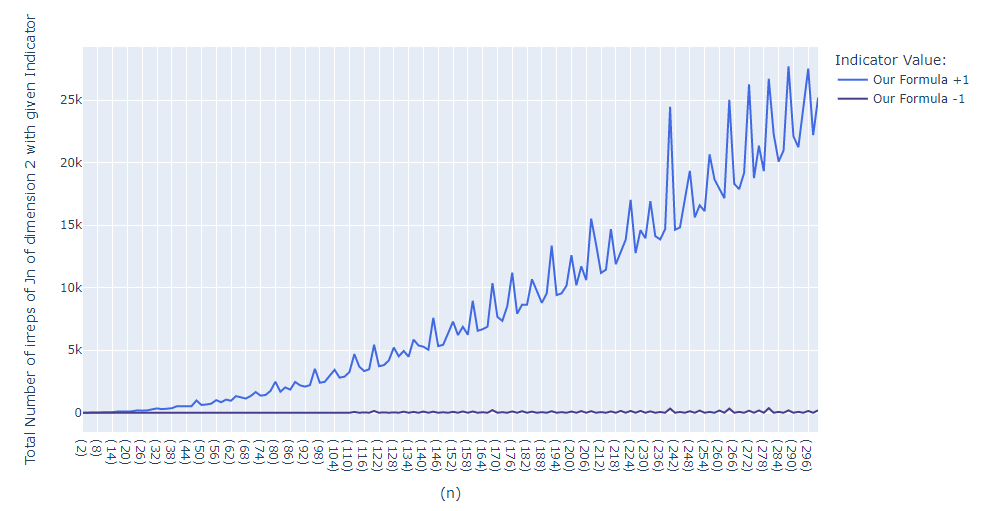}
      \vspace{-0.5cm}
    \begin{myfigure}\label{fig:graph irreps of dim 2 nonzero ind} Plot showing Figures \cref{fig:graph irreps of dim 2 with indicator +1} and \cref{fig:graph irreps of dim 2 with indicator -1} on a single graph.\end{myfigure}
\end{center}

\section{Counting Irreps with Indicator \texorpdfstring{$-1$}{TEXT} Directly}

While we do not have any explicit bounds for the number of irreps with indicator $-1$ as we do for irreps with indicator $+1$, Python was still able to count irreps with indicator $-1$ for some $n$ and $t$.

\begin{mytable}[Table of Negative Indicators] \label{fig:table of negative indicators}\end{mytable}

\noindent \begin{tabular}{c V{3} c|c||c|c||c||c|c|c||c||c|c||c|c||c|c||c||c|c}
\hline
    $n$ & \multicolumn{2}{c||}{$12$} & \multicolumn{2}{c||}{$16$} & \multicolumn{1}{c||}{$20$} & \multicolumn{3}{c||}{$24$} & \multicolumn{1}{c||}{$28$} & \multicolumn{2}{c||}{$32$} & \multicolumn{2}{c||}{$36$} & \multicolumn{2}{c||}{$40$} & \multicolumn{1}{c||}{$44$} & \multicolumn{2}{c}{$48$}\\
    \hline
        $t$ & $2$ & $6$ & $2$ & $4$ & $2$ & $2$ & $4$ & $6$ & $2$ & $2$ & $4$ & $2$ & $6$ & $2$ & $4$ & $2$ & $2$ & $4$\\
        \Xhline{1.2pt}
        $\displaystyle I_{n/t}^{(-1)}$ & $2$ & $42$ & $2$ & $20$ & $4$ & $8$ & $64$ & $816$ &  $6$ & $6$ & $124$ & $8$ & $2976$ & $16$ & $208$ & $10$ & $24$ & $648$\\
        \hline
 \end{tabular}
 
 \noindent{\small Note: Table showing total number of irreps of $J_n$ of dimension $t$ with indicator $-1$. This value was computed directly in Python. Note how $t$ was kept small to make computation possible.}

\begin{center}
    \includegraphics[width=0.95\textwidth]{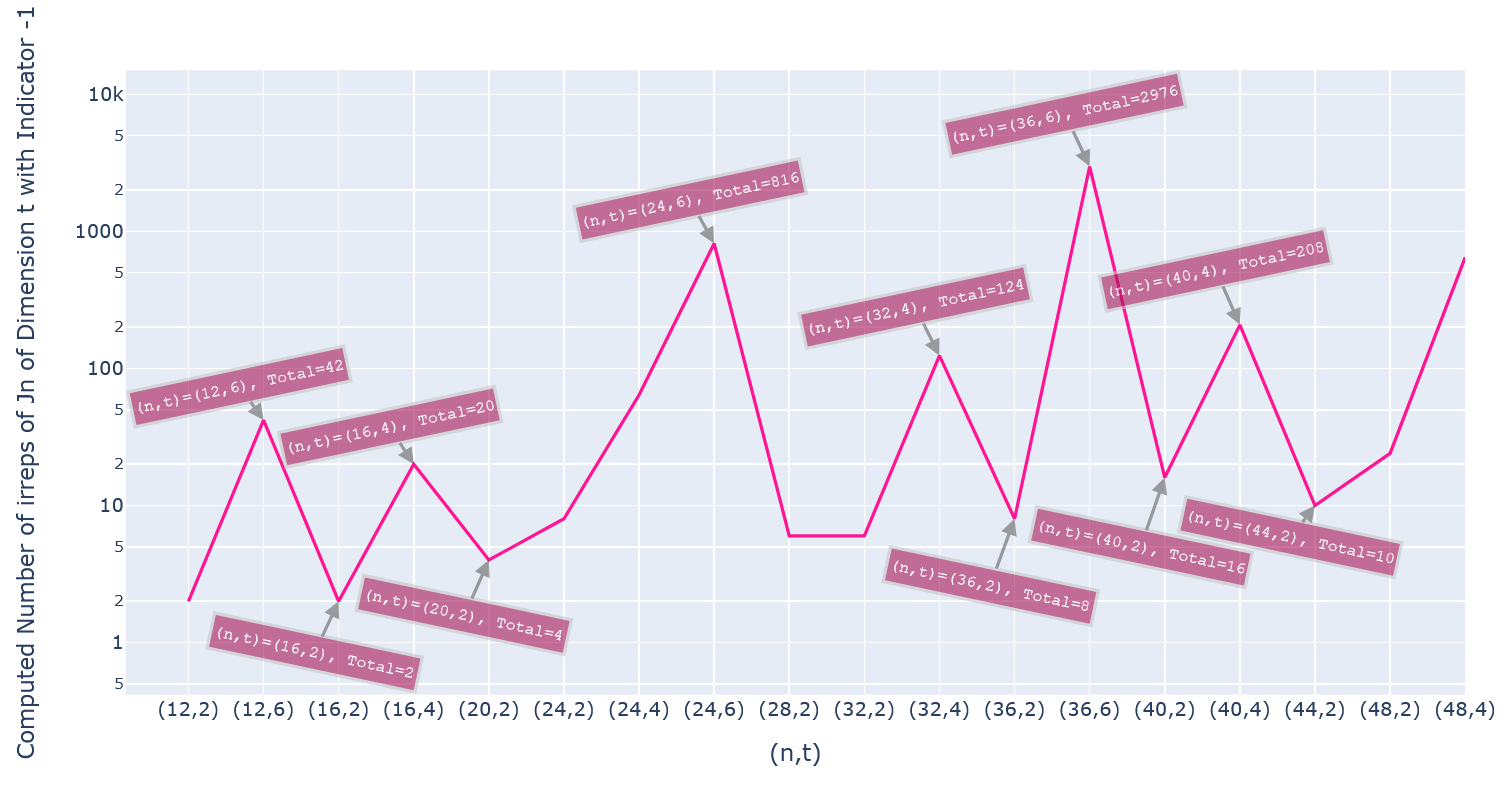}
      \vspace{-0.5cm}
    \begin{myfigure}\label{fig:graph computed irreps dim t ind -1} Graph showing number of irreps of $J_n$ of dimension $t$ with indicator $-1$. This value was computed directly in Python. \end{myfigure}
\end{center}

In comparison with the number of irreps of dimension $t$ with indicator $+1$ in \cref{fig:graph irreps indicator plus one}, the values in \cref{fig:graph computed irreps dim t ind -1} appear phenominally smaller.

Recall from the discussion near the end of Appendix D, that there are $235,872$ irreps of $J_{36}$ of dimension $6$ with indicator $+1$ and of these, $227,280$ are induced from permutations whose orbit contains at least one involution. \cref{fig:graph computed irreps dim t ind -1} shows that there are $2,976$ irreps of $J_{36}$ of dimension $6$ with indicator $-1$, and there are $\frac{36}{6}|M_{36/6}|=11,189,952$ total irreps of $J_{36}$ of dimension $6$.

Namely, when $n=36$ and $t=6$, about $2.1\%$ of the irreps of $J_{36}$ of dimension $6$ have indicator $+1$ and $0.027\%$ have indicator $-1$. Based on our direct analysis, this appears to be a trend, not a fluke and so we end with one final conjecture.

\begin{conjecture}\label{conj even t percent nonzero ind goes to 0}
{\it Fix $t$. Let $n=mt$ for some integer $m\ge2$. Then the number ratio of irreps of $J_n$ of dimension $t$ with nonzero indicator to the total number of irreps of dimension $t$ converges to $0$ as $m\to\infty$.}
\end{conjecture}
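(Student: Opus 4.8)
The plan is to generalize the strategy of \cref{thm:percent nonzero ind goes to zero} from $t=2$ to an arbitrary fixed $t$, replacing the explicit formulas of \cref{thm:classifying indicators when t=2} (which are unavailable for even $t>2$) by a crude but sufficient upper bound on the number of permutations whose orbit is closed under inversion. Throughout write $m=\frac{n}{t}$. By the paper's per-permutation counting convention the total number of irreps of dimension $t$ is $\frac{n}{t}|M_{n/t}|=m|M_{n/t}|$, so it suffices to bound the number $I_{n/t}^{(\neq 0)}$ of dimension-$t$ irreps with nonzero indicator from above and to show $I_{n/t}^{(\neq 0)}/(m|M_{n/t}|)\to 0$ as $m\to\infty$.

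First I would carry out the reduction to counting self-dual orbits. By \cref{lem:inv element in orbit means 0 indicator}, an irrep induced from $x$ can have nonzero indicator only if $x^{-1}\in\mathcal{O}_x$; and for such an $x$ with $x(t)=jt$, \cref{thm:indicator reduced} shows that the number of group irreps $\rho_{x,i}$ of $F_x$ yielding nonzero indicator is exactly the number of $0\le i<m$ with $\zeta_{\frac{n}{t}}^{i(j+1)}=1$, namely $\alpha_{j,n/t}=\gcd\!\left(j+1,m\right)\le m$. Hence, letting $N$ denote the number of $x\in S_{n-1}$ with $F_x=\langle a^t\rangle$ and $x^{-1}\in\mathcal{O}_x$, summing the per-permutation contributions gives $I_{n/t}^{(\neq 0)}\le mN$, so the ratio of interest is at most $N/|M_{n/t}|$.

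Next I would bound $N$ using a single arithmetic constraint. By \cpref{lem:x(t)=kt some k}{lem:x(t)=kt some k (55)}, the hypothesis $x^{-1}\in\mathcal{O}_x$ forces $j^2\equiv 1\pmod m$, i.e. $j\in E_{n/t}$; thus there are at most $|E_{n/t}|$ admissible values of $j$. For each fixed $j$, the count in the proof of \cref{prop:number of x with Fx=<a^t>} shows there are at most $m^{t-1}(t-1)!$ permutations stabilized by $a^t$ with $x(t)=jt$ (one simply discards every condition beyond $a^t$-periodicity and the fixed value of $x(t)$). Therefore $N\le |E_{n/t}|\,m^{t-1}(t-1)!$. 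Combining this with the lower bound $|M_{n/t}|=\varphi(m)m^{t-1}(t-1)!-\sum_{s\mid t,\,s\neq t}|M_{n/s}|\gtrsim \tfrac{1}{\sqrt2}\sqrt{m}\,m^{t-1}(t-1)!$ — the proper-divisor correction terms being $O(m^{t/2})$ and hence of strictly lower order, exactly as in \cref{lem:T/M approaches 0} — yields
\[
\frac{N}{|M_{n/t}|}\;\le\;\frac{|E_{n/t}|\,m^{t-1}(t-1)!}{\tfrac{1}{\sqrt2}\sqrt{m}\,m^{t-1}(t-1)!}\;=\;\sqrt2\,\frac{|E_{n/t}|}{\sqrt m}.
\]
Finally, \cpref{lem:some bounds}{lem:some bounds (1)} and \cpref{lem:some bounds}{lem:some bounds (2)} give $|E_{n/t}|\le 2^{\omega(m)+2}\le 2^{1.3841\frac{\log m}{\log\log m}+2}$, and \cpref{lem:some bounds}{lem:some bounds (4)} then forces the right-hand side to $0$ as $m\to\infty$; the case $t=1$ runs identically with no proper-divisor correction.

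The genuinely new idea — and the reason this should succeed where the analogous indicator-$+1$ statement (\cref{conj even t percent +1 ind goes to 0}) resists — is that the \emph{nonzero}-indicator count only requires a one-sided upper bound on the number of self-dual orbits, for which the single congruence $j^2\equiv 1\pmod m$ already suffices; one never needs the exact enumeration encoded in the $I_{n/t}^{(\pm 1)}$ formulas, which is precisely what is intractable for even $t>2$. Consequently the main obstacle is not a hard estimate but the bookkeeping in the first two paragraphs: one must verify the reduction $I_{n/t}^{(\neq 0)}\le mN$ cleanly under the inflated per-permutation counting, and confirm that the proper-divisor terms of $|M_{n/t}|$ are of strictly lower order in $m$, so as to avoid the off-by-a-factor-$t$ pitfalls that this counting convention invites.
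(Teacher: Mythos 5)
This statement is not proved in the paper at all: it is posed as \cref{conj even t percent nonzero ind goes to 0} and supported only by the computational evidence of the appendices, with rigorous arguments given only for the sub-cases $t$ odd (\cref{thm:percent +1 ind goes to zero}, and only for the $+1$ count) and $t=2$ (\cref{thm:percent nonzero ind goes to zero}). Your proposal is therefore not a re-proof but an actual resolution of the conjecture, and as far as I can check it is correct. The chain ``nonzero indicator $\Rightarrow x^{-1}\in\mathcal{O}_x$ (\cref{lem:inv element in orbit means 0 indicator}) $\Rightarrow j\in E_{n/t}$ (\cpref{lem:x(t)=kt some k}{lem:x(t)=kt some k (55)})'' is valid; the bound of $m^{t-1}(t-1)!$ permutations stabilized by $a^t$ with $x(t)=jt$ fixed is exactly the pre-subtraction count in the proof of \cref{prop:number of x with Fx=<a^t>}; and for fixed $t\ge 2$ the proper-divisor corrections to $|M_{n/t}|$ are $O(m^{t/2})$, which is dominated by $\varphi(m)m^{t-1}(t-1)!\ge\frac{1}{\sqrt2}m^{t-\frac12}(t-1)!$ since $\frac{t}{2}<t-\frac12$, with no correction needed when $t=1$; the limit then follows from \cpref{lem:some bounds}{lem:some bounds (4)} exactly as in \cref{lem:T/M approaches 0}. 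What your route buys is precisely what you identify: by replacing $\alpha_{j,n/t}$ with the trivial bound $\frac{n}{t}$ you discard all information about \emph{which} of the induced irreps are $+1$, $-1$ or $0$ --- the information that is intractable for even $t>2$ and that the paper's proofs of the two known cases rely on via the $O_{n/t,r,j}$ machinery and the $t=2$ classification --- and you retain only the congruence $j^2\equiv 1\pmod{\frac{n}{t}}$, whose solution count $|E_{n/t}|$ is negligible against $\varphi\!\left(\frac{n}{t}\right)$. The only steps to write out carefully in a final version are the reduction $I_{n/t}^{(\neq 0)}\le\frac{n}{t}N$ under the paper's per-permutation counting convention (noting via \cpref{lem:orbit and stab inv equiv}{lem:orbit and stab inv equiv (2)} that the condition $y^{-1}\in\mathcal{O}_y$ is constant on orbits, so no nonzero-indicator irrep escapes the count $N$), and the fact that dimension $t$ forces $F_x=\langle a^t\rangle$ exactly, so that \cpref{lem:x(t)=kt some k}{lem:x(t)=kt some k (55)} applies.
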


\begin{center}
    \includegraphics[width=0.9\textwidth]{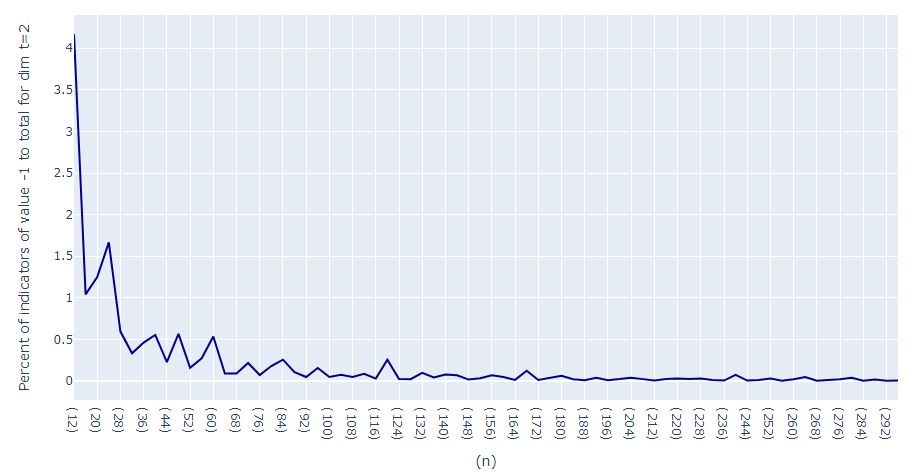}
      \vspace{-0.5cm}
    \begin{myfigure}\label{fig:graph percent -1 t=2} Graph showing percent of $2$-dimensional irreps of $J_n$ with indicator $-1$ to total number of $2$-dimensional irreps of $J_n$ \end{myfigure}
\end{center}

\begin{center}
    \includegraphics[width=0.9\textwidth]{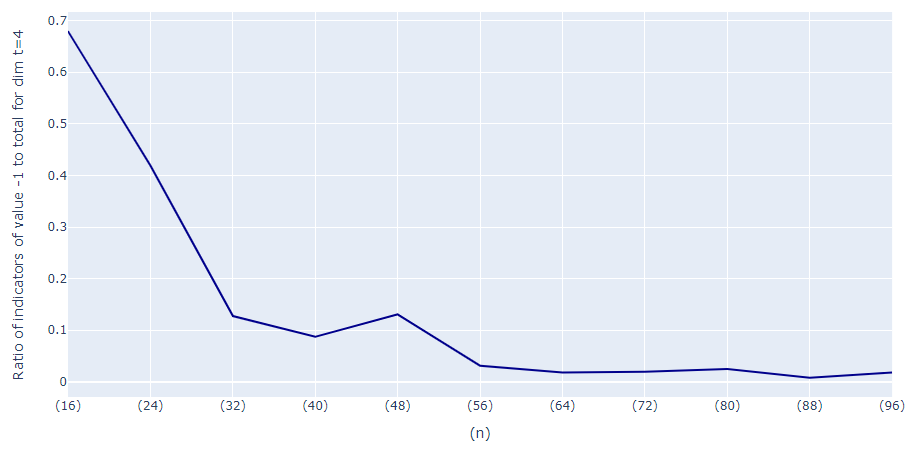}
      \vspace{-0.5cm}
    \begin{myfigure}\label{fig:graph percent -1 t=4} Graph showing percent of $4$-dimensional irreps of $J_n$ with indicator $-1$ to total number of $4$-dimensional irreps of $J_n$ \end{myfigure}
\end{center}

\newpage

\noindent\textbf{Supplementary Information} No supplementary files are included.\\

\noindent\textbf{Acknowledgements} Many thanks to my advisor Susan Montgomery for being a patient and thoughtful advisor. Also thank you to Patrick M. Reardon for teaching me the basics of Python code. This work is part of a PhD thesis at the University of Southern California.\\

\noindent\textbf{Declarations} There are no sources of funding or conflicts of interest in this work. This work is part of a PhD thesis at University of Southern California.\\

\noindent Additionally, the Python notebook used to generate the graphs used in this work is available on GitHub. 

\href{https://github.com/KaylaOrlinsk/USCProject}{https://github.com/KaylaOrlinsk/USCProject}

\newpage


\end{document}